\newtheorem{theorem}{Theorem}[section]
\newtheorem{proposition}{Proposition}[section]
\newtheorem{lemma}{Lemma}[section]
\newtheorem{corollary}{Corollary}[section]
\newtheorem{assumption}{Assumption}[section]
\newtheorem{definition}{Definition}[section]
\declaretheorem[sibling=definition]{remark}
\DeclareMathOperator*{\argmin}{argmin}
\newcommand{\ignore}[1]{}
\newcommand{\R}{\mathbb{R}}
\newcommand{\proxop}{\mathop{\mathrm{prox}}\nolimits}
\newcommand{\prox}{\proxop_{\alpha \psi}}
\newcommand{\eqdef}{\coloneqq} 
\newcommand{\diag}{\mD} 
\newcommand{\trace}{{\rm Trace}}
\newcommand{\cA}{{\cal A}}
\newcommand{\cB}{{\cal B}}
\newcommand{\cD}{{\cal D}}
\newcommand{\cI}{{\cal I}}
\newcommand{\cM}{{\cal M}}
\newcommand{\cO}{{\cal O}}
\newcommand{\cP}{{\cal P}}
\newcommand{\cR}{{\cal R}}
\newcommand{\cS}{{\cal S}}
\newcommand{\cU}{{\cal U}}
\newcommand{\cX}{{\cal X}}
\newcommand{\mA}{{\bf A}}
\newcommand{\mB}{{\bf B}}
\newcommand{\mD}{{\bf D}}
\newcommand{\mG}{{\bf G}}
\newcommand{\mI}{{\bf I}}
\newcommand{\mJ}{{\bf J}}
\newcommand{\mL}{{\bf L}}
\newcommand{\mM}{{\bf M}}
\newcommand{\mN}{{\bf N}}
\newcommand{\mR}{{\bf R}}
\newcommand{\mX}{{\bf X}}
\newcommand{\mY}{{\bf Y}}
\newcommand{\mZ}{{\bf Z}}
\newcommand{\piop}{{\Gamma}}
\newcommand{\EE}[2]{\mathbb{E}_{#1}\left[#2\right] }
\newcommand{\E}[1]{\mathbb{E}\left[#1\right] } 
\newcommand{\ED}[1]{\mathbb{E}_{\cD}\left[#1\right] } 
\newcommand{\Prob}[1]{\mathbb{P}\left(#1\right) }
\newcommand{\norm}[1]{\left \| #1 \right\|}
\newcommand{\dotprod}[1]{\left< #1\right>}
\newcommand{\Tr}[1]{\mbox{Tr}\left( #1\right)}
\providecommand{\Null}[1]{{\rm Null}\left( #1\right)}
\providecommand{\Range}[1]{{\rm Range}\left( #1\right)}
\def\<#1,#2>{\left\langle #1,#2\right\rangle}
\newcommand{\NORMG}[1]{\left \| #1\right  \|^2}
\newcommand{\compactify}{} 
\newcommand{\eL}{{\color{red}e}} 
\newcommand{\eR}{{\color{blue}e}} %
\newcommand{\eRR}{{\color{blue}e_R}} 
\newcommand{\pL}{{\color{red}p}} 
\newcommand{\pR}{{\color{blue}p}} 
\newcommand{\eLi}{{\color{red}e_i}} 
\newcommand{\eRj}{{\color{blue}e_j}} 
\newcommand{\pLi}{{\color{red}p_i}} 
\newcommand{\pRj}{{\color{blue}p_j}} 
\newcommand{\pLL}{{\color{red}p_L}} 
\newcommand{\pRR}{{\color{blue}p_R}} 
\newcommand{\dL}{{\color{red}d}} 
\newcommand{\nR}{{\color{blue}n}} 
\newcommand{\NRt}{{\color{blue}N_t}} 
\newcommand{\tR}{{t}} 
\newcommand{\TR}{{T}} 
\newcommand{\cDR}{{\color{blue}\cD}} 
\newcommand{\cDL}{{\color{red} \cD}} 
\newcommand{\PR}{{\color{blue} \bf P}} 
\newcommand{\qR}{{\color{blue}q}} 
\newcommand{\qRj}{{\color{blue}q_j}} 
\newcommand{\ptRj}{{\color{blue} p^t_j}} 
\newcommand{\ptLi}{{\color{red}p}_{\color{red} i}^{\color{blue}t}} 
\newcommand{\ptR}{{\color{blue} p^t}} 
\newcommand{\ptL}{{\color{red}p^{\color{blue} t}}} 
\newcommand{\PtR}{{\color{blue} {\bf P}^t}} 
\newcommand{\qtR}{{\color{blue} q^t}} 
\newcommand{\qtRj}{{\color{blue} q^t_j}} 
\newcommand{\probx}{{\color{cyan} \rho}}
\newcommand{\proby}{{\color{cyan} \delta }} 
\newcommand{\ugly}{{\eta}} 
\begin{document}

\title{One Method to Rule Them All: Variance Reduction for Data, Parameters and Many New Methods}

\author{Filip Hanzely\footnote{King Abdullah University of Science and Technology, Thuwal, Saudi Arabia} \and Peter Richt\'{a}rik\footnote{King Abdullah University of Science and Technology, Thuwal, Saudi Arabia} }

\maketitle

\begin{abstract}
We propose a remarkably general variance-reduced method suitable for solving regularized empirical risk minimization problems with either a large number of training examples, or a large model dimension, or both. In special cases, our method reduces to several known and previously thought to be unrelated methods, such as {\tt SAGA}~\cite{SAGA}, {\tt LSVRG}~\cite{hofmann2015variance, LSVRG}, {\tt JacSketch}~\cite{gower2018stochastic}, {\tt SEGA}~\cite{hanzely2018sega} and {\tt ISEGA}~\cite{mishchenko201999}, and their arbitrary sampling and proximal generalizations. However, we also highlight a large number of new specific algorithms with interesting properties. We provide a single theorem establishing linear convergence of the method under smoothness and quasi strong convexity assumptions. With this theorem we recover best-known and sometimes improved rates for known methods arising in special cases. As a by-product, we provide the first unified method and theory for stochastic gradient and stochastic coordinate descent type methods. 
\end{abstract}

\section{Introduction}

In this work we are studying stochastic algorithms for solving regularized empirical risk minimization problems, i.e., optimization problems of the form 
\begin{equation}\label{eq:problem}
\compactify \min_{x\in \R^d}   \frac1n \sum \limits_{j=1}^n f_j(x) + \psi(x),
\end{equation}
We assume  that the functions $f_j:\R^d \to \R$ are smooth and convex, and $\psi:\R^d\to \R\cup \{+\infty\}$ is a proper, closed and convex regularizer, admitting a cheap proximal operator. We write $f\eqdef \tfrac{1}{n}\sum_j f_j$. 

{\bf Proximal gradient descent.} A baseline method for solving problem \eqref{eq:problem} is  {\em (proximal) gradient descent} ({\tt GD}). This method performs a gradient step in $f$, followed by a proximal step\footnote{The proximal operator is defined via $\prox(x) \eqdef \argmin_{u\in \R^d} \{ \alpha\psi(u) + \frac{1}{2}\|u-x\|^2\}$.} in $\psi$, i.e., 
\begin{equation}\label{eq:PGM} x^{k+1} = \prox( x^k - \alpha \nabla f(x^k)),\end{equation}
where $\alpha>0$ is a stepsize. {\tt GD} performs well when both $n$ and $d$ are not too large. However, in the big data (large $n$) and/or big parameter (large $d$) case,  the formation of the gradient becomes overly expensive, rendering {\tt GD} inefficient in both theory and practice. A typical remedy is to replace the gradient by a cheap-to-compute random approximation. Typically, one replaces $\nabla f(x^k)$ with a random vector $g^k$ whose mean is the gradient: $\E{g^k} = \nabla f(x^k)$, i.e., with a stochastic gradient. This results in the {\em (proximal) stochastic gradient descent} ({\tt SGD}) method:
\begin{equation}\label{eq:PSGD} x^{k+1} = \prox( x^k - \alpha g^k).\end{equation}

Below we comment on the typical approaches to constructing $g^k$ in the big $n$ and big $d$ regimes.

{\bf Proximal {\tt SGD}.} In the big $n$ regime, the simplest choice is to set \begin{equation} \label{eq:nbu9gff}g^k = \nabla f_j(x^k)\end{equation} for an index $j\in [n]\eqdef \{1,2,\dots,n\}$ chosen uniformly at random. By construction, it is $n$ times cheaper to compute this estimator than the gradient, which is a key driving force behind the efficiency of this variant of {\tt SGD}. However, there is an infinite array of other possibilities of constructing an unbiased estimator~\cite{needell2016batched, gower2019sgd}. Depending on how $g^k$ is formed, \eqref{eq:PSGD} specializes to one of the many existing variants of proximal {\tt SGD}, each with different convergence properties and proofs.

{\bf Proximal {\tt RCD}.} In the big $d$ regime (this is interesting even if $n=1$), the simplest choice is to set \begin{equation} \label{eq:b98gf98f} g^k = d  \langle \nabla f(x^k), \eLi \rangle \eLi,\end{equation} where $\langle x,y\rangle =\sum_i x_i y_i$ is the standard Euclidean inner product, $\eLi$ is the $i$-th standard unit basis vector in $\R^d$, and $i$ is chosen uniformly at random from $[d]\eqdef \{1,2,\dots,d\}$. With this estimator, \eqref{eq:PSGD} specializes to  (proximal) randomized coordinate decent ({\tt RCD}). There are situations where it is $d$ times cheaper to compute the partial derivative $\nabla_i f(x^k)\eqdef \langle \nabla f(x^k), \eLi \rangle$ than the gradient, which is a key driving force behind the efficiency of {\tt RCD} \cite{RCDM}.  However, there is an infinite array of other possibilities for constructing an unbiased estimator of the gradient in a similar way~\cite{NSync,PCDM,ESO}. 

{\bf Issues.}  For the sake of argument in the rest of this section, assume that $f$ is a $\sigma$-strongly convex function, and let $x^*$ be the (necessarily) unique solution of \eqref{eq:problem}.  It is well known that in this case, method \eqref{eq:PSGD}  with estimator $g^k$ defined as in \eqref{eq:nbu9gff} does {\em not} in general converge to $x^*$. Instead, {\tt SGD}  converges linearly to a neighborhood of $x^*$ of size proportional to the stepsize $\alpha$, noise $\nu^2\eqdef \frac{1}{n}\sum_j \norm{\nabla f_j(x^*)}^2$, and inversely proportional to $\sigma$~\cite{moulines2011non, NeedellWard2015}.  In the generic regime with $\nu^2>0$, the neighbourhood is nonzero, causing issues with convergence. This situation does not change even when tricks such as {\em mini-batching} or {\em importance sampling} (or a combination of both) are applied~\cite{needell2014stochastic,needell2016batched, gower2019sgd}. While these tricks affect both the (linear convergence) rate and the size of the neighbourhood, they are incapable\footnote{Unless, of course, in the special case when one  uses the full batch approximation $g^k = \nabla f(x^k)$.} of ensuring convergence to the solution. However, a remedy does exist: the situation with non-convergence can be  resolved by using one of the many {\em variance-reduction}  strategies for constructing $g^k$ developed over the last several years~\cite{SAG, SAGA, johnson2013, mairal2013optimization, SDCA}.  Further, while it is well known that method \eqref{eq:PSGD}  with estimator $g^k$  defined as in \eqref{eq:b98gf98f} (i.e., randomized coordinate descent) converges to $x^*$ for $\psi\equiv 0$~\cite{RCDM, UCDC, NSync}, it is also known that it does {\em not}  generally converge to $x^*$ unless the regularizer $\psi$ is separable (e.g., $\psi(x) = \norm{x}_1$ or $\psi(x)=c_1 \|x\|_1 + c_2 \|x\|_2^2$). In~\cite{hanzely2018sega}, an alternative estimator (known as {\tt SEGA}) was constructed from the same (random) partial derivative information $\nabla f_i(x^k)$, one that does not suffer from this incompatibility with general regularizers $\psi$. This work  resolved a long standing open problem in the theory of {\tt RCD} methods.


\section{Contributions} 

Having experienced a ``Cambrian explosion'' in the last 10 years, the world of efficient {\tt SGD} methods is remarkably complex. There is a large and growing set of rules for constructing the gradient estimators $g^k$, with differing levels of sophistication and varying theoretical and practical properties. It includes the classical estimator \eqref{eq:nbu9gff}, as well as an infinite array of mini-batch \cite{Li2014} and importance sampling \cite{NeedellWard2015, IProx-SDCA} variants, and a growing list of variance-reduced variants \cite{SAGA}. Furthermore, there are estimators of the coordinate descent variety, including the simplest one based on \eqref{eq:b98gf98f}~\cite{RCDM}, more elaborate variants utilizing the arbitrary sampling paradigm \cite{ALPHA}, and variance reduced methods capable of handling general non-separable regularizers~\cite{hanzely2018sega}.

$\triangleright$ {\bf New general method and a single convergence theorem}. In this paper we propose a {\em general method}---which we call {\tt GJS}---which reduces to many of the aforementioned classical and several recently developed {\tt SGD} type methods in special cases.  We provide a {\em single convergence theorem}, establishing a linear convergence rate for {\tt GJC}, assuming $f$ to be smooth and quasi strongly convex. In particular, we obtain  the following methods in special cases, or their generalizations, always recovering the best-known convergence guarantees or improving upon them: {\tt SAGA}~\cite{SAGA, qian2019saga, gazagnadou2019optimal}, {\tt JacSketch}~\cite{gower2018stochastic},  {\tt LSVRG}~\cite{hofmann2015variance, LSVRG}, {\tt SEGA}~\cite{hanzely2018sega},  and {\tt ISEGA}~\cite{mishchenko201999}  (see Table~\ref{tbl:all_special_cases}, in which we list 17 special cases).  This is the first time such a direct connection is made between many of these methods, which previously required different intuitions and dedicated analyses. Our general method, and hence also all special cases we consider, can work with a regularizer. This provides novel (although not hard) results for some methods, such as {\tt LSVRG}.

$\triangleright$ {\bf Unification of {\tt SGD} and {\tt RCD}.} As a by-product of the generality of {\tt GJS}, we obtain the {\em unification of variance-reduced {\tt SGD} and variance reduced {\tt RCD} methods.} To the best of our knowledge, there is no algorithm besides {\tt GJS}, one whose complexity is captured by a single theorem, which specializes to {\tt SGD} and {\tt RCD} type methods at the same time and recovers best known rates in both cases.\footnote{A single theorem (not a single algorithm) to obtain rates for both variance-reduced {\tt SGD} and variance reduced {\tt RCD} methods was done in the concurrent work~\cite{sigmak}. However, ~\cite{sigmak} does not capture the best known rates for {\tt RCD} methods and focuses in orthogonal direction instead -- includes non-variance reduced methods.}

$\triangleright$ {\bf Generalizations to arbitrary sampling.}  Many specialized methods we develop are cast in a very general {\em arbitrary sampling} paradigm~\cite{NSync, quartz, ALPHA}, which allows for the estimator $g^k$ to be formed through information contained in a random subset $R^k\subseteq [n]$ (by computing $\nabla f_j(x^k)$ for $j\in \R^k$) or a random subset $L^k\subseteq [d]$ (by computing $\nabla_i f(x^k)$ for $i\in L^k$), where these subsets are allowed to follow an arbitrary distribution. In particular, we  extend {\tt SEGA}~\cite{hanzely2018sega}, {\tt LSVRG}~\cite{hofmann2015variance, LSVRG} or {\tt ISEGA}~\cite{mishchenko201999} to this setup.  Likewise, {\tt GJS} specializes to an arbitrary sampling extension of the {\tt SGD}-type method {\tt SAGA}~\cite{SAGA, qian2019saga}, obtaining state-of-the-art rates. As a special case of the arbitrary sampling paradigm, we obtain \emph{importance sampling} versions of all mentioned methods.

$\triangleright$ {\bf New methods.} {\tt GJS} can be specialized to many new specific methods. To illustrate this, we construct 10 specific {\em new} methods in special cases, some with intriguing structure and properties (see Section~\ref{sec:Special_Cases}; Table~\ref{tbl:all_special_cases}; and Table~\ref{tbl:all_special_cases_theory} for a summary of the rates). 


$\triangleright$ {\bf Relation to {\tt JacSketch}.} Our method can be seen as a vast generalization of the recently proposed Jacobian sketching method {\tt JacSketch}~\cite{gower2018stochastic} in several directions, notably by enabling {\em arbitrary randomized linear} (i.e., sketching) operators, allowing different linear operators to learning Jacobian and constructing control variates, extending the analysis to the proximal case, and replacing strong convexity assumption by quasi strong convexity or strong growth (see Appendix~\ref{sec:sg}). In particular, from all methods we recover, only variants of {\tt SAGA} can be obtained from {\tt JacSketch}~\cite{gower2018stochastic} (even in that case, rates obtained from~\cite{gower2018stochastic} are suboptimal).


$\triangleright$  {\bf Limitations.} 
We  focus on developing methods capable of enjoying a linear convergence rate with a fixed stepsize $\alpha$ and do not consider the non-convex setting.  Although there exist several {\em accelerated} variance reduced algorithms~\cite{lan2018optimal, allen2017katyusha, pmlr-v80-zhou18c, pmlr-v89-zhou19c, LSVRG, kulunchakov2019estimate}, we do not consider such methods here. 

{\em Notation.} Let $\eR$ (resp.\ $\eL$) be the vector of all ones in $\R^n$ (resp.\ $\R^d$), and  $\eRj$ (resp.\ $\eLi$) be the $j$-th (resp.\ $i$-th) unit basis vector in $\R^n$ (resp.\ $\R^d$).  By $\|\cdot \|$ we denote the standard Euclidean norm in $\R^d$ and $\R^n$.  Matrices are denoted by upper-case bold letters. Given $\mX,\mY\in \R^{d\times n}$, let $\langle \mX, \mY\rangle \eqdef \Tr{\mX^\top \mY}$ and  $\|\mX\|\eqdef \langle \mX, \mX \rangle^{1/2}$ be the Frobenius norm. By $\mX_{:j}$ (resp.\ $\mX_{i:}$) we denote the $j$-th column (resp.\ $i$-th row) of matrix $\mX$.  By $\mI_n$ (resp.\ $\mI_d$) we denote the $n\times n$ (resp.\ $d\times d$) identity matrices. Upper-case calligraphic letters, such as $\cS,\cU, \cI, \cM, \cR$, are used to denote (deterministic or random) linear operators mapping $\R^{d\times n}$ to $\R^{d\times n}$. Most used notation is summarized in Table~\ref{tbl:notation}  in Appendix~\ref{sec:notation_table}.

\section{Sketching}

A key object in this paper is the Jacobian matrix $\mG(x) = [\nabla f_1(x),\dots, \nabla f_n(x)] \in \R^{d\times n}.$
Note that  \begin{equation}\label{eq:nbifg98dz}
\compactify \nabla f(x) = \frac{1}{n}\mG(x) \eR.\end{equation} 
Extending the insights from \cite{gower2018stochastic}, one of the key observations of this work is that {\em random linear transformations} (sketches) of $\mG$ can be used to {\em construct} unbiased estimators of the gradient of $f$.  For instance,  $\mG(x^k) \eRj$  leads to the simple {\tt SGD} estimator \eqref{eq:nbu9gff},  and $\frac{d}{n} \eLi \eLi^\top \mG(x^k) \eR $ gives the simple {\tt RCD} estimator \eqref{eq:b98gf98f}. We will consider more elaborate examples later on.  It will be useful to embed these estimators into $\R^{d\times n}$. For instance, instead of $\mG(x^k)\eRj$ we consider the matrix  $\mG(x^k) \eRj \eRj^\top$. Note that all columns of this matrix are zero, except for the $j$-th column, which is equal to  $\mG(x^k)\eRj$. Similarly, instead of $\frac{d}{n} \eLi \eLi^\top \mG(x^k) \eR $ we will consider the matrix $\frac{d}{n} \eLi \eLi^\top \mG(x^k)$. All rows of this matrix are zero, except for the $i$-th row, which consists of the $i$th partial derivatives of functions $f_j(x^k)$ for $j\in [n]$, scaled by $\frac{d}{n}$.

{\bf Random projections.}
 Generalizing from these examples, we  consider a random linear operator (``sketch'') $\cA:\R^{d\times n}\to \R^{d\times n}$. By $\cA^\ast$ we denote the adjoint of $\cA$, i.e., linear operator  satisfying $\langle \cA \mX, \mY \rangle = \langle \mX, \cA^\ast \mY \rangle$ for all $\mX,\mY\in \R^{d\times n}$.  Given $\cA$, we let $\cP_{\cA}$ be the (random) projection operator onto $\Range{\cA^\ast}$. That is, \[\cP_{\cA}(\mX) = \arg\min_{\mY \in \Range{\cA^\ast}} \norm{\mX - \mY } = \cA^\ast (\cA \cA^\ast)^\dagger \cA \mX,\]
where ${}^\dagger$ is the Moore-Penrose pseudoinverse. The identity operator is denoted by $\cI$. We say that $\cA$ is {\em identity in expectation}, or {\em unbiased} when $\E{\cA} = \cI$; i.e., when if $\E{\cA  \mX} =\mX$ for all $\mX\in \R^{d\times n}$.

\begin{definition} We will often consider the following\footnote{The algorithm we develop is, however, not limited to such sketches.} sketching operators $\cA$:

(i) {\bf Right sketch.} Let $\mR\in \R^{n\times n}$ be a random matrix. Define $\cA$ by $\cA \mX = \mX \mR$ (``R-sketch''). Notice that $\cA^\ast \mX = \mX \mR^\top$. In particular, if $R$ is random subset of $[n]$, we can define $\mR = \sum_{j \in R} \eRj \eRj^\top$. The resulting operator $\cA$ (``R-sampling'') satisfies: $\cA = \cA^\ast = \cA^2 = \cP_{\cA}$.  If we let $\pRj \eqdef \Prob{j \in R}$, and instead define $\mR = \sum_{j \in R} \tfrac{1}{\pRj} \eRj \eRj^\top$, then $\E{\mR} = \mI_n$ and hence  $\cA$ is unbiased.\\
(ii) {\bf Left sketch.} Let $\mL\in \R^{d\times d}$ be a random matrix. Define $\cA$ by $\cA \mX = \mL \mX $ (``L-sketch'').  Notice that $\cA^\ast \mX = \mL^\top \mX $. In particular, if $L$ is random subset of $[d]$, we can define $\mL = \sum_{i\in L} \eLi \eLi^\top$. The resulting operator $\cA$ (``L-sampling'') satisfies: $\cA = \cA^\ast = \cA^2 = \cP_{\cA}$. If we let $\pLi\eqdef \Prob{i\in L}$, and instead define $\mL = \sum_{i\in L} \tfrac{1}{\pLi} \eLi \eLi^\top$, then $\E{\mL} = \mI_d$  and hence $\cA$  us unbiased.\\
(iii) {\bf Scaling/Bernoulli.} Let $\xi$ be a Bernoulli random variable, i.e., $\xi = 1$ with probability $\probx $ and $\xi = 0$ with probability $1-\probx $, where $\probx \in [0,1]$.  Define $\cA$ by $\cA \mX = \xi \mX$ (``scaling''). Then $\cA = \cA^\ast = \cA^2 = \cP_{\cA}$. If we instead define $\cA \mX = \frac{1}{\probx } \xi \mX$, then  $\cA$ is unbiased.\\
(iv)   {\bf LR sketch.}  All the above operators can be combined. In particular, we can define $\cA \mX = \xi \mL \mX \mR$. All of the above arise as special cases of this: (i) arises for $\xi\equiv 1$ and $\mL\equiv \mI_d$, (ii) for  $\xi\equiv 1$ and $\mR\equiv \mI_n$, and (iii) for $\mL\equiv \mI_d$ and $\mR\equiv \mI_n$.

\end{definition}


\section{Generalized Jacobian Sketching ({\tt GJS})}
We are now ready to describe our method (formalized as Algorithm~\ref{alg:SketchJac}). 
\begin{algorithm}[!h]
\begin{algorithmic}[1]
\State \textbf{Parameters:} Stepsize $\alpha>0$, random projector $\cS$ and unbiased sketch $\cU$
\State \textbf{Initialization:} Choose  solution estimate $x^0 \in \R^d$ and Jacobian estimate $ \mJ^0\in \R^{d\times n}$ 
\For{$k =  0, 1, \dots$}
\State Sample realizations of $\cS$ and $\cU$, and perform sketches $\cS\mG(x^k)$ and $\cU\mG(x^k)$
\State  $\mJ^{k+1} = \mJ^k - \cS(\mJ^k - \mG(x^k))$ \quad \hfill update the Jacobian estimate via  \eqref{eq:nio9h8fbds79kjh}
\State $g^k = \frac1n \mJ^k \eR + \frac1n \cU \left(\mG(x^k) -\mJ^k\right)\eR$  \hfill construct the gradient estimator via \eqref{eq:ni98hffs}
    \State $x^{k+1} = \prox (x^k - \alpha g^k)$ \label{eq:alg_update} \hfill perform the proximal {\tt SGD} step \eqref{eq:PSGD}
\EndFor
\end{algorithmic}
\caption{Generalized {\tt JacSketch} ({\tt GJS}) }
\label{alg:SketchJac}
\end{algorithm}
Let $\cS$ be a random linear operator (e.g., right sketch, left sketch, or scaling)  such that $\cS = \cP_\cS$ and let $\cU$ be an unbiased operator. We propose to construct the gradient estimator as
\begin{equation}\label{eq:ni98hffs}
\compactify g^k = \frac{1}{n} \mJ^k \eR + \frac{1}{n}\cU (\mG(x^k) - \mJ^k) \eR,
\end{equation}
where the matrices $\mJ^k\in \R^{d\times n}$ are constructed iteratively. Note that, taking expectation in $\cU$, we get 
\begin{equation}\label{eq:unbiased_xx}
\compactify \E{g^k} \overset{\eqref{eq:ni98hffs}}{=}\frac{1}{n} \mJ^k \eR +  \frac{1}{n}(\mG(x^k) - \mJ^k) \eR = \frac{1}{n} \mG(x^k)e \overset{\eqref{eq:nbifg98dz}}{=} \nabla f(x^k),\end{equation}
and hence $g^k$ is indeed unbiased. We will construct $\mJ^k$ so that  $\mJ^k\to \mG(x^*)$. By doing so, the variance of $g^k$ decreases throughout the iterations, completely vanishing at $x^*$. The sequence $\{\mJ^k\}$ is updated as follows:
\begin{equation}\label{eq:nio9h8fbds79kjh}\mJ^{k+1} = \arg\min_{\mJ} \left\{ \|\mJ - \mJ^k\| \;:\; \cS \mJ = \cS \mG(x^k) \right\} = \mJ^k - \cS(\mJ^k - \mG(x^k)).\end{equation}
That is, we sketch the Jacobian $\mG(x^k)$, obtaining the sketch $\cS \mG(x^k)$, and seek to use this information to construct a new matrix $\mJ^{k+1}$ which is consistent with this sketch, and as close to $\mJ^k$ as possible. The intuition here is as follows: if we repeated the sketch-and-project process \eqref{eq:nio9h8fbds79kjh} for fixed $x^k$, the matrices $\mJ^k$ would converge to $\mG(x^k)$, at a linear rate~\cite{SIMAX2015, inverse}. This process can be seen as {\tt SGD} applied to a certain quadratic stochastic optimization problem~\cite{ASDA, gower2018stochastic}. Instead, we take just one step of this iterative process, change $x^k$, and repeat. Note that the unbiased sketch $\cU$ in \eqref{eq:ni98hffs} also claims access to $\mG(x^k)$. Specific variants of {\tt GJS} are obtained by choosing specific operators $\cS$ and $\cU$ (see Section~\ref{sec:Special_Cases}).



\section{Theory}

We now describe the main result of this paper, which depends on a relaxed strong convexity assumption and a more precise smoothness assumption on $f$.

\begin{assumption} \label{as:smooth_strongly_convex}
Problem \eqref{eq:problem} has a unique minimizer $x^*$, and $f$ is $\sigma$-quasi strongly convex, i.e., 
\begin{equation} 
\compactify f(x^*) \geq   f(y) + \< \nabla f(y) ,x^*- y> + \frac{\sigma}{2} \norm{y - x^*}^2, \quad \forall y\in \R^d, \label{eq:strconv3}
\end{equation}
Functions $f_j$ are convex and $\mM_j$-smooth for some $\mM_j\succeq 0$, i.e.,
\begin{equation} 
\compactify f_j(y) + \< \nabla f_j(y) ,x- y> \leq f_j(x)\leq    f_j(y) + \< \nabla f_j(y) ,x- y> +\frac{1}{2} \norm{y - x}_{{\mM_j}}^2, \quad \forall x,y\in \R^d.  \label{eq:smooth_ass}
\end{equation}
\end{assumption}

Assumption~\ref{eq:smooth_ass} generalizes classical $L$-smoothness, which is obtained in the special case $\mM_j=L\mI_d$. The usefulness of this assumption comes from i) the fact that ERM problems typically satisfy \eqref{eq:smooth_ass} in a non-trivial way~\cite{ESO, gower2019sgd}, ii) our method is able to utilize the full information contained in these matrices for further acceleration (via increased stepsizes). Given matrices $\{\mM_{j}\}$ from Assumption~\ref{as:smooth_strongly_convex}, let $\cM$ be the linear operator defined via
$\left(\cM \mX\right)_{:j} = \mM_j \mX_{:j}$ for $j\in [n]$. It is easy to check that this operator is self-adjoint and positive semi-definite, and that its square root is given by $\left(\cM^{\nicefrac{1}{2}}  \mX\right)_{:j} = \mM_j^{\nicefrac{1}{2}} \mX_{:j}$. The pseudoinverse $\cM^\dagger$ of this operator  plays an important role in our  main result.

\begin{theorem}\label{thm:main} Let Assumption~\ref{as:smooth_strongly_convex} hold. Let $\cB$ be any linear operator commuting with $\cS$, and assume ${\cM^\dagger}^{\nicefrac{1}{2}}$ commutes with $\cS$. Let $\cR$ be any linear operator for which $\cR(\mJ^k) = \cR(\mG(x^*))$ for every $k\geq 0$. 
Define the Lyapunov function 
\begin{eqnarray}\label{eq:Lyapunov}
\Psi^k & \eqdef & \norm{ x^k - x^* }^2 + \alpha \NORMG{ \cB {\cM^\dagger}^{\frac12} \left( \mJ^k - \mG(x^*)\right)},
\end{eqnarray}
where $\{x^k\}$ and $\{\mJ^k\}$ are the random iterates produced by Algorithm~\ref{alg:SketchJac} with stepsize $\alpha>0$. Suppose that $\alpha$ and $\cB$ are chosen so that 
\begin{eqnarray}\label{eq:small_step}
\compactify \frac{2\alpha}{n^2}  \E{ \norm{ \cU  \mX \eR }^2 }   +   \NORMG{  \left(\cI - \E{\cS} \right)^{\frac12}\cB  {\cM^\dagger}^{\frac12} \mX }  &\leq & \compactify (1-\alpha \sigma) \NORMG{ \cB {\cM^\dagger}^{\frac12}\mX }
\end{eqnarray}
whenever  $ \mX\in \Range{\cR}^\perp$ and
\begin{eqnarray}
\compactify \frac{2\alpha}{n^2} \E{  \norm{ \cU  \mX  \eR }^2  } 
+   \NORMG{\left(\E{\cS}\right)^{\frac12}  \cB  {\cM^\dagger}^{\frac12}\mX  }   &  \leq & \compactify \frac{1}{n} \norm{{\cM^\dagger}^{\frac12}\mX}^2.\label{eq:small_step2}
\end{eqnarray}
for all $\mX\in \R^{d\times n}$.  Then  for all $k\geq 0$, we have
$\E{\Psi^{k}}\leq \left( 1-\alpha\sigma\right)^k \Psi^{0}.$
\end{theorem}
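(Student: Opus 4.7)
The plan is to prove the one-step contraction $\E_k[\Psi^{k+1}]\leq(1-\alpha\sigma)\Psi^k$, where $\E_k[\cdot]$ denotes conditional expectation given the history up through iterate $k$; the theorem then follows by the tower property and induction. First, since $x^*=\prox(x^*-\alpha\nabla f(x^*))$ and the prox operator is $1$-Lipschitz, I would start from
\begin{equation*}
\|x^{k+1}-x^*\|^2 \;\leq\; \|x^k - x^* - \alpha(g^k-\nabla f(x^*))\|^2,
\end{equation*}
expand, and take $\E_k$. Unbiasedness \eqref{eq:unbiased_xx} turns the cross term into $-2\alpha\langle\nabla f(x^k)-\nabla f(x^*),x^k-x^*\rangle$, and quasi strong convexity \eqref{eq:strconv3} yields $-2\alpha\langle\nabla f(x^k)-\nabla f(x^*),x^k-x^*\rangle\leq -\alpha\sigma\|x^k-x^*\|^2-2\alpha D_f(x^k,x^*)$, where $D_f$ denotes the Bregman divergence from $x^*$.

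Next I would control $\E_k\|g^k-\nabla f(x^*)\|^2$. Writing $g^k-\nabla f(x^*)=\frac{1}{n}(\cI-\cU)(\mJ^k-\mG(x^*))e+\frac{1}{n}\cU(\mG(x^k)-\mG(x^*))e+\frac{1}{n}(\mG(x^k)-\mG(x^*))e\cdot 0$ with the mean-zero splitting via $\E[\cU]=\cI$, and using $\E\|Z\|^2=\|\E Z\|^2+\E\|Z-\E Z\|^2$, the dominant terms are $\frac{1}{n^2}\E_k\|\cU(\mJ^k-\mG(x^k))e\|^2$ plus $\|\nabla f(x^k)-\nabla f(x^*)\|^2$. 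The key smoothness tool is the matrix-cocoercivity consequence of \eqref{eq:smooth_ass}: each $f_j$ convex and $\mM_j$-smooth implies $\|\nabla f_j(x^k)-\nabla f_j(x^*)\|_{\mM_j^\dagger}^2\leq 2D_{f_j}(x^k,x^*)$, which in operator form gives $\|\cM^{\dagger 1/2}(\mG(x^k)-\mG(x^*))\|^2\leq 2n\,D_f(x^k,x^*)$. This is what will absorb the noise term in Step 4 against the $-2\alpha D_f(x^k,x^*)$ produced above.

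For the Jacobian part of $\Psi^{k+1}$, from \eqref{eq:nio9h8fbds79kjh} I have $\mJ^{k+1}-\mG(x^*)=(\cI-\cS)(\mJ^k-\mG(x^*))+\cS(\mG(x^k)-\mG(x^*))$. Since $\cS=\cP_\cS$ is an orthogonal projector, and since $\cB$ and $\cM^{\dagger 1/2}$ commute with $\cS$, applying $\cB\cM^{\dagger 1/2}$ and squaring splits into orthogonal pieces:
\begin{equation*}
\NORMG{\cB\cM^{\dagger 1/2}(\mJ^{k+1}-\mG(x^*))}=\NORMG{(\cI-\cS)^{1/2}\cB\cM^{\dagger 1/2}(\mJ^k-\mG(x^*))}+\NORMG{\cS^{1/2}\cB\cM^{\dagger 1/2}(\mG(x^k)-\mG(x^*))}.
\end{equation*}
Taking $\E_k$ replaces $\cS$ by $\E[\cS]$ in both terms.

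Finally, assembling $\E_k[\Psi^{k+1}]$ from these pieces, I apply hypothesis \eqref{eq:small_step} with $\mX=\mJ^k-\mG(x^*)$, using the condition on $\cR$ (together with the natural self-adjointness that places $\Null(\cR)\subseteq\Range(\cR)^\perp$) to justify $\mX\in\Range(\cR)^\perp$; this absorbs the Jacobian noise plus the residual $(\cI-\E\cS)$-term into $(1-\alpha\sigma)\cdot\alpha\|\cB\cM^{\dagger 1/2}(\mJ^k-\mG(x^*))\|^2$. I then apply \eqref{eq:small_step2} with $\mX=\mG(x^k)-\mG(x^*)$, whose right-hand side $\frac{1}{n}\|\cM^{\dagger 1/2}\mX\|^2$ is bounded by $2D_f(x^k,x^*)$ via the cocoercivity of Step 2; this cancels the $\alpha\|\E[\cS]^{1/2}\cB\cM^{\dagger 1/2}(\mG(x^k)-\mG(x^*))\|^2$ plus the $\cU$-variance term against the $-2\alpha D_f$ surplus. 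The hard part will be this last bookkeeping: matching exactly which term of the accumulated bound is absorbed by \eqref{eq:small_step} versus \eqref{eq:small_step2}, ensuring the coefficient of $\alpha$ on each side lines up (this is why the hypotheses are scaled by $\frac{2\alpha}{n^2}$ and $\frac{1}{n}$ respectively), and keeping the commutativity of $\cB$, $\cS$ and $\cM^{\dagger 1/2}$ visible so that the projector identity $\cS=\cP_\cS$ continues to split cross-terms cleanly after the operator moves.
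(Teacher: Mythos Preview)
Your overall strategy matches the paper's proof almost exactly: prox nonexpansiveness, quasi strong convexity to extract $-\alpha\sigma\|x^k-x^*\|^2-2\alpha D_f(x^k,x^*)$, matrix cocoercivity $\|\cM^{\dagger 1/2}(\mG(x^k)-\mG(x^*))\|^2\leq 2nD_f(x^k,x^*)$, the orthogonal split of $\mJ^{k+1}-\mG(x^*)$ via the projector $\cS$ and commutativity, and then applying \eqref{eq:small_step} with $\mX=\mJ^k-\mG(x^*)$ and \eqref{eq:small_step2} with $\mX=\mG(x^k)-\mG(x^*)$.

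There is one genuine wrinkle in your noise step. You propose the bias--variance split $\E\|g^k-\nabla f(x^*)\|^2=\|\nabla f(x^k)-\nabla f(x^*)\|^2+\mathrm{Var}(g^k)$ and then bound the variance by $\frac{1}{n^2}\E\|\cU(\mJ^k-\mG(x^k))e\|^2$. After a further Young split of $\mJ^k-\mG(x^k)$ around $\mG(x^*)$, you would arrive at
\[
\E\|g^k-\nabla f(x^*)\|^2\leq \|\nabla f(x^k)-\nabla f(x^*)\|^2+\tfrac{2}{n^2}\E\|\cU(\mJ^k-\mG(x^*))e\|^2+\tfrac{2}{n^2}\E\|\cU(\mG(x^k)-\mG(x^*))e\|^2.
\]
The extra $\alpha^2\|\nabla f(x^k)-\nabla f(x^*)\|^2$ has no slot in \eqref{eq:small_step} or \eqref{eq:small_step2}; those hypotheses are calibrated to absorb \emph{exactly} the two $\frac{2\alpha}{n^2}\E\|\cU\cdot\|^2$ terms against $\frac{\alpha}{n}\|\cM^{\dagger 1/2}\mX\|^2$ and the $\cB$-weighted pieces, with nothing left over. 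The paper avoids this by skipping the bias--variance decomposition and instead applying $\|a+b\|^2\leq 2\|a\|^2+2\|b\|^2$ directly to
\[
g^k-\nabla f(x^*)=\underbrace{\tfrac{1}{n}\cU(\mG(x^k)-\mG(x^*))e}_{a}+\underbrace{\tfrac{1}{n}(\cI-\cU)(\mJ^k-\mG(x^*))e}_{b},
\]
followed by $\E\|b\|^2\leq\frac{1}{n^2}\E\|\cU(\mJ^k-\mG(x^*))e\|^2$. This yields the clean bound with no bias term, and the coefficients then match \eqref{eq:small_step}--\eqref{eq:small_step2} on the nose. Use that splitting instead and your argument goes through verbatim.
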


The above theorem is very general as it applies to essentially arbitrary random linear operators $\cS$ and $\cU$. It postulates a linear convergence rate of a Lyapunov function composed of two terms: distance of $x^k$ from $x^*$, and weighted distance of the Jacobian $\mJ^k$ from $\mG(x^*)$. Hence, we obtain convergence of both the iterates and the Jacobian to $x^*$ and $\mG(x^*)$, respectively. 
Inequalities \eqref{eq:small_step} and \eqref{eq:small_step2} are mainly assumptions one stepsize $\alpha$, and are used to define suitable weight operator $\cB$. See Lemma~\ref{lem:existence} for a general statement on when these inequalities are satisfied. However, we give concrete and simple answers in all special cases of {\tt GJS} in the appendix. For a summary of how the operator $\cB$ is chosen in special cases, and the particular complexity results derived from this theorem, we refer to Table~\ref{tbl:all_special_cases_theory}.

\begin{remark}We use the trivial choice $\cR\equiv 0$ in almost all special cases. With this choice of $\cR$,  the condition $\cR(\mJ^k) = \cR(\mG(x^*))$ is automatically satisfied, and inequality \eqref{eq:small_step2} is requested to hold for {\em all} matrices $\mX\in \R^{d\times n}$. However, a non-trivial choice of $\cR$ is sometimes useful;  e.g., in the analysis of a subspace variant of {\tt SEGA}~\cite{hanzely2018sega}.  Further, the results of Theorem~\ref{thm:main} can be generalized from a quasi strong convexity to a strong growth condition \cite{karimi2016linear}  on $f$~(see Appendix~\ref{sec:sg}). While interesting, these are not the key results of this work and we therefore suppress them to the appendix. 
\end{remark}


\section{Special Cases} \label{sec:Special_Cases}

As outlined in the introduction, {\tt GJS} (Algorithm~\ref{alg:SketchJac}) is a surprisingly versatile method. In Table~\ref{tbl:all_special_cases} we list {\em 7 existing methods }(in some cases, generalizations of existing methods), and construct also {\em 10 new variance reduced methods.} We also provide a summary of all specialized iteration complexity results, and a guide to the corollaries which state them (see Table~\ref{tbl:all_special_cases_theory} in the appendix).

\begin{table}[!t]
\begin{center}
\tiny
\begin{tabular}{|c|c|c|c|c|c|}
\hline
\multicolumn{2}{|c|}{\bf  Choice of random operators $\cS$ and $\cU$ defining Algorithm~\ref{alg:SketchJac}} &  \multicolumn{4}{|c|}{\bf Algorithm} \\
\hline
$\cS \mX $ & $ \cU \mX $  &  \#  &  Name  & Comment & Sec.  \\
\hline
\hline
$\mX \eRj \eRj^\top$ \text{w.p.}\; $\pRj = \frac1n$ & $\mX n \eRj \eRj^\top $ \text{w.p.}\; $\pRj = \frac1n$ & \ref{alg:SAGA} &  {\tt SAGA} & basic variant of {\tt SAGA} \cite{SAGA} & \ref{sec:saga_basic}  \\
\hline
 $\mX \sum \limits_{j\in R} \eRj \eRj^\top$ \text{w.p.}\; $\pRR$ & $\mX \sum \limits_{j\in R} \frac{1}{\pRj} \eRj \eRj^\top$ \text{w.p.}\; $\pRR$ &  \ref{alg:SAGA_AS_ESO} & {\tt SAGA} & 
 {\tt SAGA} with AS \cite{qian2019saga}   &  \ref{sec:saga_as} \\
\hline
  $\eLi \eLi^\top \mX $ \text{w.p.}\; $\pLi = \frac{1}{d}$
 & $d \eLi \eLi^\top  \mX $ \text{w.p.}\; $\pLi = \frac{1}{d}$ &   \ref{alg:SEGA} & {\tt SEGA} & basic variant  of  {\tt SEGA} \cite{hanzely2018sega} & \ref{sec:sega}  \\
\hline
 $\sum \limits_{i\in L}\eLi \eLi^\top \mX $ \text{w.p.}\; $\pLL$
 & $\sum \limits_{i\in L} \frac{1}{\pLi} \eLi \eLi^\top  \mX $ \text{w.p.}\; $\pLL$ &   \ref{alg:SEGAAS} & {\tt SEGA} &   {\tt SEGA}  \cite{hanzely2018sega} with AS and prox & \ref{sec:sega_is_v1}  \\
\hline
 $
=\begin{cases}
    0  & \text{w.p.}\; 1- \probx \\
    \mX              & \text{w.p.}\;  \probx
\end{cases} $
 & $\sum \limits_{i\in L} \frac{1}{\pLi} \eLi \eLi^\top \mX $ \text{w.p.}\; $\pLL$ & \ref{alg:SVRCD} & {\tt SVRCD} & {\bf NEW}  & \ref{sec:svrcd_is2} \\
\hline
 0 & $\mX \sum \limits_{j\in R} \frac{1}{\pRj} \eRj \eRj^\top$ \text{w.p.}\; $\pRR$ &  \ref{alg:SGD_AS} & {\tt SGD-star} & {\tt SGD-star}~\cite{sigmak} with AS & \ref{sec:SGD-AS-star}  \\
\hline
 $
=\begin{cases}
    0  & \text{w.p.}\; 1- \probx \\
    \mX              & \text{w.p.}\;  \probx
\end{cases} $
 & $\mX \sum \limits_{j\in R} \frac{1}{\pRj} \eRj \eRj^\top$ \text{w.p.}\; $\pRR$ &   \ref{alg:LSVRG-AS} & {\tt LSVRG}  & {\tt LSVRG} \cite{LSVRG} with AS and prox & \ref{sec:LSVRG-AS}  \\
\hline
$
=\begin{cases}
    0  & \text{w.p.}\; 1- \probx \\
    \mX              & \text{w.p.}\;  \probx
\end{cases} $
 &   
 $=\begin{cases}
    0  & \text{w.p.}\; 1- \proby \\
   \frac{1}{\proby} \mX              & \text{w.p.}\;  \proby
\end{cases} $ & \ref{alg:B2} & {\tt B2} & {\bf NEW} & \ref{sec:B2}  \\
\hline
$\mX \sum \limits_{j\in R} \eRj \eRj^\top  $ \text{w.p.}\; $\pRR$
 &   
 $=\begin{cases}
    0  & \text{w.p.}\; 1- \proby \\
   \frac{1}{\proby} \mX              & \text{w.p.}\;  \proby
\end{cases} $ &   \ref{alg:invsvrg}  & {\tt LSVRG-inv} &  {\bf NEW} & \ref{sec:SVRG-1}  \\
\hline
$\sum \limits_{i\in L} \eLi \eLi^\top \mX $ \text{w.p.}\; $\pLL$
 &   
 $=\begin{cases}
    0  & \text{w.p.}\; 1- \proby \\
  \frac{1}{\proby} \mX              & \text{w.p.}\;  \proby
\end{cases} $ & \ref{alg:B_sega} & {\tt SVRCD-inv} & {\bf NEW} &  \ref{sec:SVRCD-inv}\\
\hline
$\mX \sum \limits_{j\in R} \eRj \eRj^\top$ \text{w.p.}\;  $\pRR
$ 
& 
$\sum \limits_{i\in L} \frac{1}{\pLi} \eLi  \eLi^\top \mX$  \text{w.p.}\;  $\pLL
$ 
& \ref{alg:RL} & {\tt RL} & {\bf NEW} &  \ref{sec:RL} \\
\hline
$\sum \limits_{i\in L} \eLi  \eLi^\top \mX$ \text{w.p.}\;  $\pLL
$ 
& 
 $\mX \sum \limits_{j\in R} \frac{1}{\pRj} \eRj \eRj^\top$ \text{w.p.}\;  $\pRR
$ 
& \ref{alg:LR} & {\tt LR} & {\bf NEW} &  \ref{sec:LR} \\
\hline
$ \mI_{L:} \mX \mI_{:R}$ \text{w.p.}\; $\pLL \pRR$
 &   
 $  \mI_{L:}\left( \left(    \pL^{-1} \left(\pR^{-1}\right)^\top  \right) \circ \mX\right)\mI_{:R}$ \text{w.p.}\; $\pLL \pRR$& \ref{alg:saega} & {\tt SAEGA} & {\bf NEW} &  \ref{sec:SAEGA} \\
 \hline
$=\begin{cases}
0 & \text{w.p.}\; 1-\probx  \\
\mX & \text{w.p.}\; \probx \\
\end{cases}$
 &   
 $  \mI_{L:}\left( \left( \pL^{-1} \left(\pR^{-1}\right)^\top\right) \circ \mX\right)\mI_{:R}$ \text{w.p.}\; $\pLL \pRR$&  \ref{alg:svrcdg} & {\tt SVRCDG} & {\bf NEW} & \ref{sec:SVRCDG}  \\
 \hline
  $\sum \limits_{\tR=1}^\TR  \mI_{L_{\tR}:}\mX_{:\NRt} \mI_{:R_{\tR}} $
  &   
$\sum \limits_{\tR=1}^\TR   \left( {(\ptL)^{-1}} {(\ptR)^{-1}}^\top\right) \circ \left(\mI_{L_{\tR}:}\mX_{:\NRt}  \mI_{:R_{\tR}}\right)$ &  \ref{alg:isaega} & {\tt ISAEGA} & {\bf NEW} (reminiscent of \cite{mishchenko201999}) &  \ref{sec:ISAEGA}  \\
\hline
  $\sum \limits_{\tR=1}^\TR  \mI_{L_{\tR}:}\mX_{:\NRt} $ 
  &   
$\sum \limits_{\tR=1}^\TR   \left( {(\ptL)^{-1}} {\eR}^\top\right) \circ \left(\mI_{L_{\tR}:}\mX_{:\NRt}  \right)$ &  \ref{alg:isega} & {\tt ISEGA} &   {\tt ISEGA} \cite{mishchenko201999} with AS&  \ref{sec:ISAEGA}  \\
\hline
  $\mX \mR $  & $\mX \mR \E{\mR}^{-1} $ &   \ref{alg:jacsketch} & {\tt JS}  & {\tt JacSketch} \cite{gower2018stochastic}  with AS and prox & \ref{sec:jacsketch}  \\
\hline
\end{tabular}
\end{center}
\caption{Selected special cases of {\tt GJS} (Alg.~\ref{alg:SketchJac}) arising by choosing operators $\cS$ and $\cU$ in particular ways. $R$ is a random subset of $[n]$, $L$ is a random subset of $[d]$, $\pLi = \Prob{i\in L}$, $\pRj = \Prob{j\in R}$.}
\label{tbl:all_special_cases}
\end{table}

$\triangleright$ {\tt SGD-star.} In order to illustrate why variance reduction is needed in the first place, let us start by describing one of the new methods---{\tt SGD-star}  (Algorithm~\ref{alg:SGD_AS})---which happens to be particularly suitable to shed light on this issue. In {\tt SGD-star} we assume that the Jacobian at optimum, $\mG(x^*)$, is known. While this is clearly an unrealistic assumption, let us see where it leads us.
If this is the case, we can choose $\mJ^0 = \mG(x^*)$, and  let $\cS \equiv 0$. This implies that $\mJ^k=\mJ^0$ for all $k$.  We then choose $\cU$ to be the right unbiased sampling operator, i.e.,  $\cU \mX =  \mX  \sum_{j\in R} \frac{1}{\pRj} \eRj \eRj^\top$, which gives
\[\compactify g^k = \frac{1}{n}\sum \limits_{j=1}^n \nabla f_j(x^*)  +\sum \limits_{j\in R^k} \frac{1}{n \pRj} \left( \nabla f_j(x^k) -\nabla f_j(x^*) \right).\]
This method does not need to learn the Jacobian at $x^*$ as it is known, and instead moves in a direction of average  gradient at the optimum, perturbed by a random estimator of the direction $\nabla f(x^k) - \nabla f(x^*)$ formed via sub-sampling $j\in R^k\subseteq [n]$ . What is special about this perturbation? As the method converges, $x^k\to x^*$ and the perturbations converge to zero, for {\em any} realization of the random set $R^k\subseteq [n]$. So,  gradient estimation stabilizes, we get $g^k\to \nabla f(x^*)$, and hence the variance of $g^k$ converges to zero. In view of Corollary~\ref{cor:sgd} of our main result (Theorem~\ref{thm:main}), the iteration complexity of {\tt SGD-star} is $\max_j  \frac{v_j}{\sigma  n \pRj} \log \frac{1}{\epsilon}$, where $\sigma$ is the quasi strong convexity parameter of $f$, and the smoothness constants $v_j$ are defined in Appendix~\ref{sec:SGD-AS-star}.

Since knowing  $\mG(x^*)$ is unrealistic,  {\tt GJS} is instead  {\em learning} these perturbations on the fly. Different variants of {\tt GJS} do this differently, but ultimately all attempt to learn the gradients $\nabla f_j(x^*)$ and use this information to stabilize the gradient estimation. Due to space restrictions, we do not describe all remaining 9 new methods in the main body of the paper, let alone the all 17 methods. We will briefly outline 2 more (not necessarily the most interesting) new methods:

$\triangleright$ {\tt SVRCD.} This method belongs to the {\tt RCD} variety, and constructs the gradient estimator via the rule
\[\compactify g^k = h^k+\sum \limits_{i\in L^k}  \frac{1}{\pLi}(\nabla_i f(x^k) - h_i^k) \eLi\; ,\]
where $L^k \subseteq [d]$  is sampled afresh in each iteration.
The auxiliary vector $h^k$ is updated using a simple biased coin flip: $h^{k+1} =   h^k$ with probability $1- \probx$, and   $h^{k+1} =  \nabla f(x^k) $        with probability $\probx$. So, a full pass over all coordinates is made in each iteration with probability $\probx$, and a partial derivatives $\nabla_i f(x^k)$ for $i\in L^k$  are computed in each iteration. This method has  a similar structure to {\tt LSVRG}, which instead sub-sampling coordinates sub-samples functions $f_j$ for $j\in R^k$ (see Table~\ref{tbl:all_special_cases}). The iteration complexity of this method is $\left(\frac{1}{\probx} + \max_i \frac{1}{\pLi}\frac{4 m_i}{\sigma}\right)\log \frac{1}{\epsilon} $, where $m_i$ is a smoothness parameter of $f$ associated with coordinate $i$ (see Table~\ref{tbl:all_special_cases_theory} and Corollary~\ref{cor:svrcd}).

$\triangleright$  {\tt ISAEGA.} In~\cite{mishchenko201999}, a strategy of running {\tt RCD} on top of a parallel implementation  of optimization algorithms such as {\tt GD}, {\tt SGD} or {\tt SAGA} was proposed. Surprisingly, it was shown that the runtime of the overall algorithm is unaffected whether one computes and communicates {\em all entries} of the stochastic gradient on each worker, or only a {\em fraction} of all entries of size  inversely proportional to the number of all workers. However, {\tt ISAGA}~\cite{mishchenko201999} (distributed {\tt SAGA} with {\tt RCD} on top of it), as proposed, requires the gradients with respect to the data owned by a given machine to be zero at the optimum. On the other hand, {\tt ISEGA}~\cite{mishchenko201999} does not have the issue, but it requires a computation of the exact partial derivatives on each machine and thus is expensive. As a special case of {\tt GJS} we propose {\tt ISAEGA} -- a method which cherry-picks the best properties from both {\tt ISAGA} (allowing for stochastic partial derivatives) and {\tt ISEGA} (not requiring zero gradients at the optimum). Further, we present the method in the arbitrary sampling paradigm. See Appendix~\ref{sec:ISAEGA} for more details.

\section{Experiments}
We perform extensive numerical testing for various special cases of Algorithm~\ref{alg:SketchJac}. Due to space limitations, we only give a quick taste using a single experiment here. The complete numerical evaluation is presented in Appendix~\ref{sec:extra_exp}.

In particular, in Appendix~\ref{sec:sega_exp} we demonstrate that {\tt SEGA} with importance sampling outperforms both basic {\tt SEGA} and proximal gradient descent, often significantly. Next,  Appendix~\ref{sec:svrcd_exp} demonstrates that, as predicted by theory, convergence speed of {\tt SVRCD} is influenced by the choice of $\probx$ only weakly. Further, in Appendix~\ref{sec:ISAEGA_exp} we demonstrate the claimed linear parallel scaling of {\tt ISAEGA} (in the sense of~\cite{mishchenko201999}). Lastly, in Appendix~\ref{sec:extra_lsvrg} we demonstrate the superiority of {\tt LSVRG} with importance sampling (a new variant of {\tt LSVRG} obtained here) to plain {\tt LSVRG}, plain {\tt SAGA} and {\tt SAGA} with importance sampling. We only outline the last experiment here; a complete description is given in Appendix~\ref{sec:extra_lsvrg}.

We consider a logistic regression problem on LibSVM~\cite{chang2011libsvm}. In order to conduct fair testing, we only compare methods where the expected minibatch size is fixed. We set $\E{|R|}=\tau$ and compare {\tt LSVRG} with $\probx = \frac{1}{2n}$, and {\tt SAGA} with importance sampling (imp) and uniform sampling (unif). The results are presented in Figure~\ref{fig:LSVRG_main}.

\begin{figure}[!h]
\centering
\begin{minipage}{0.3\textwidth}
  \centering
\includegraphics[width =  \textwidth ]{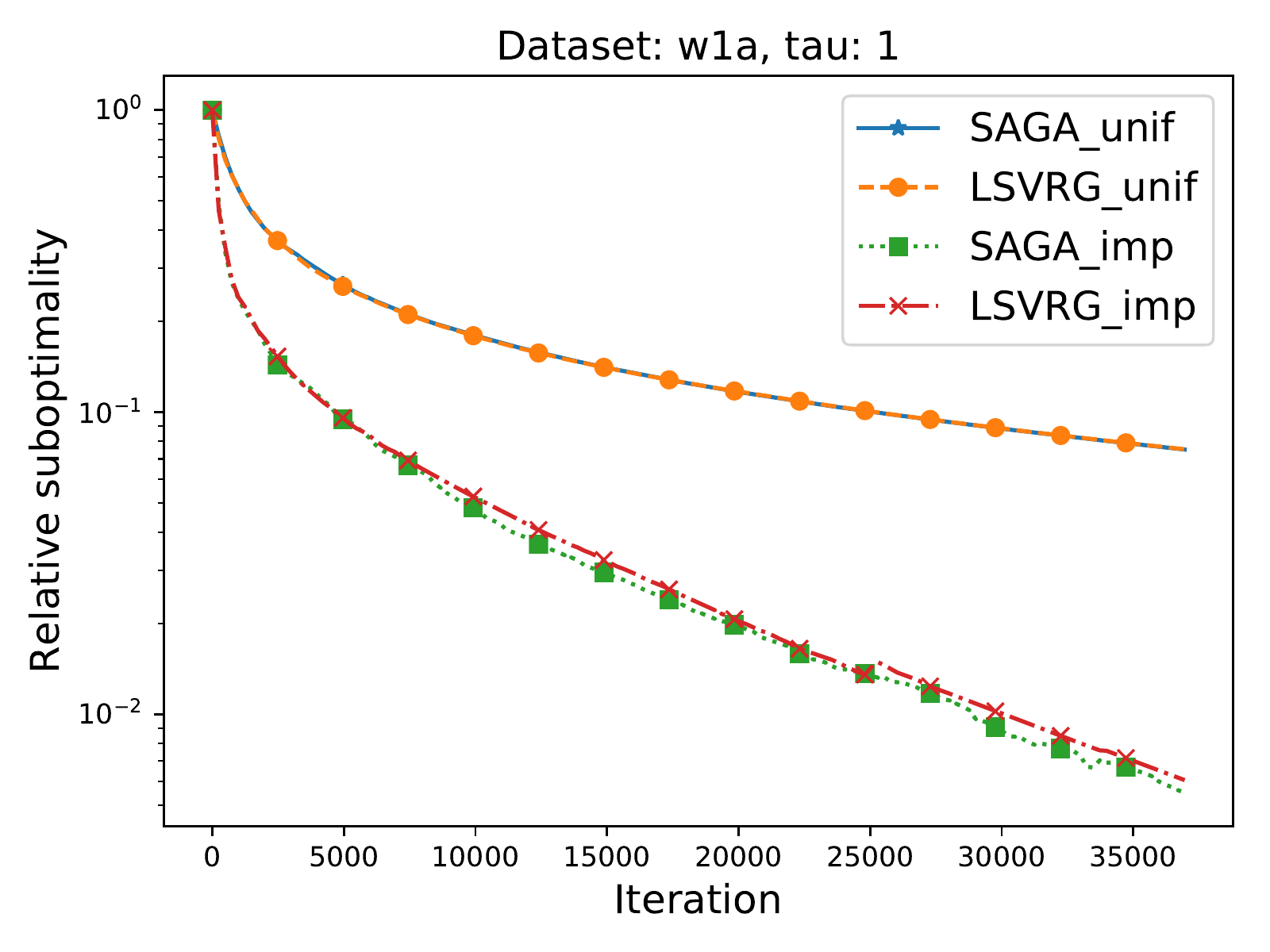}
\end{minipage}%
\begin{minipage}{0.3\textwidth}
  \centering
\includegraphics[width =  \textwidth ]{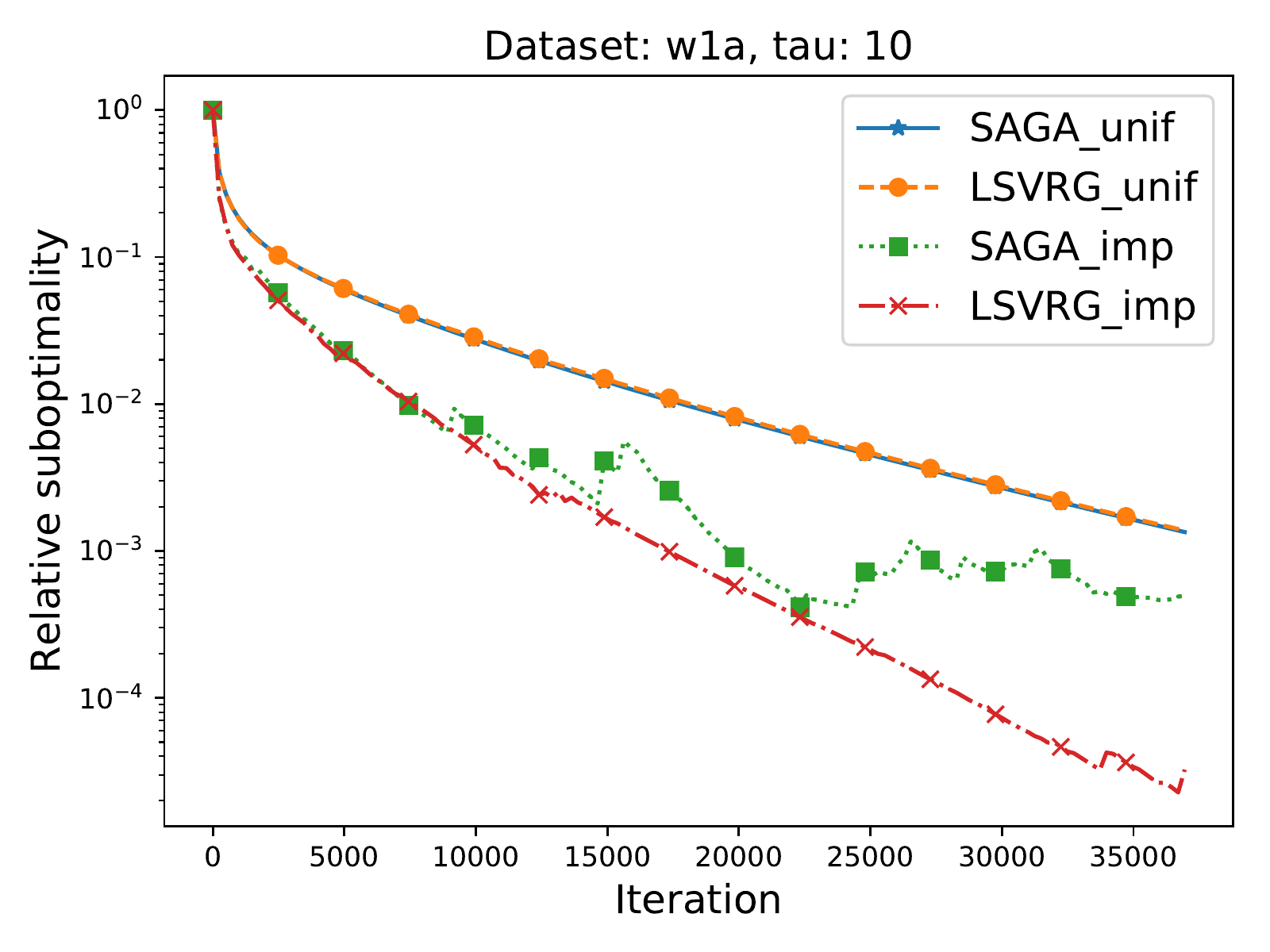}
\end{minipage}%
\begin{minipage}{0.3\textwidth}
  \centering
\includegraphics[width =  \textwidth ]{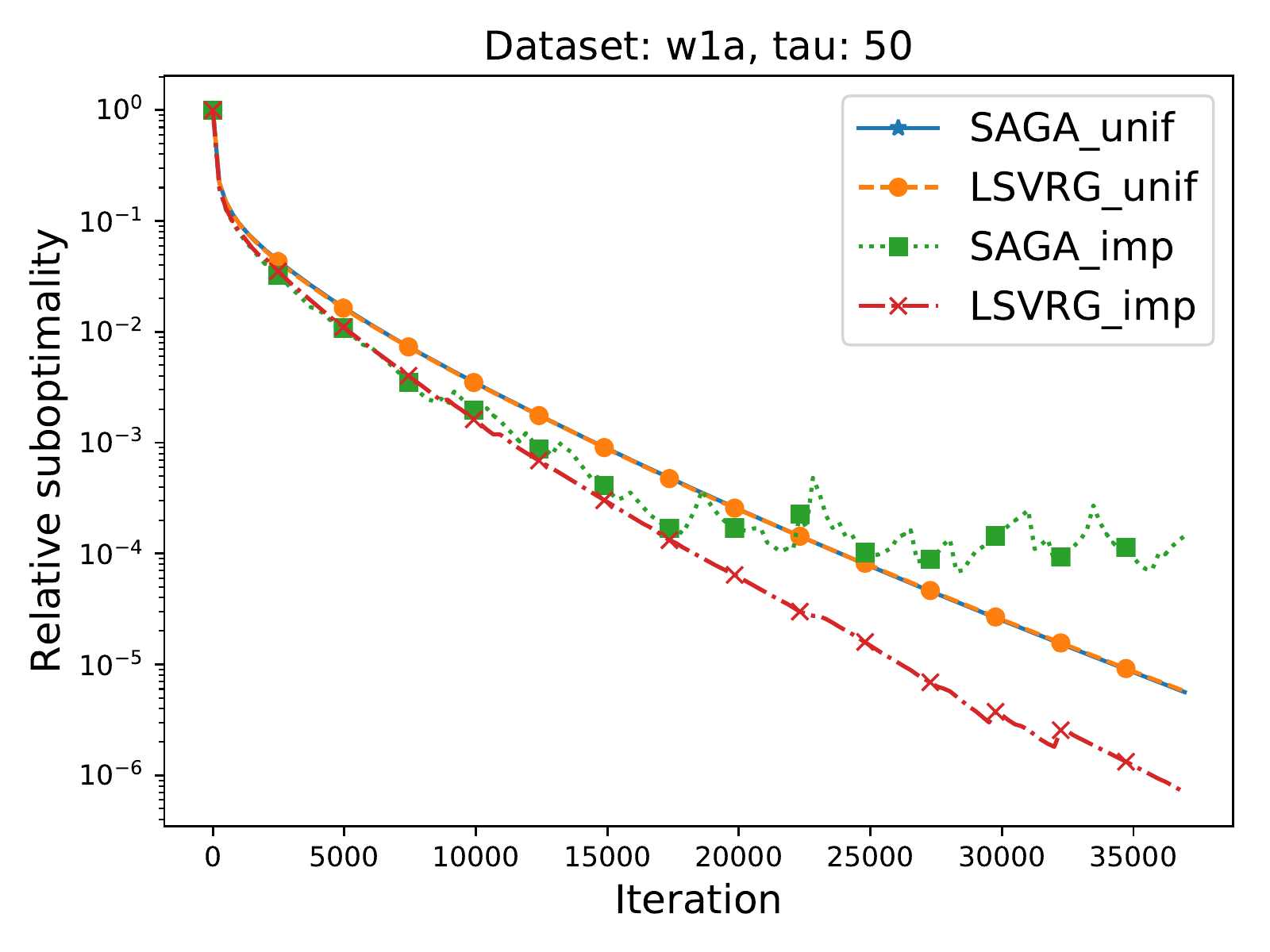}
\end{minipage}%
\\
\begin{minipage}{0.3\textwidth}
  \centering
\includegraphics[width =  \textwidth ]{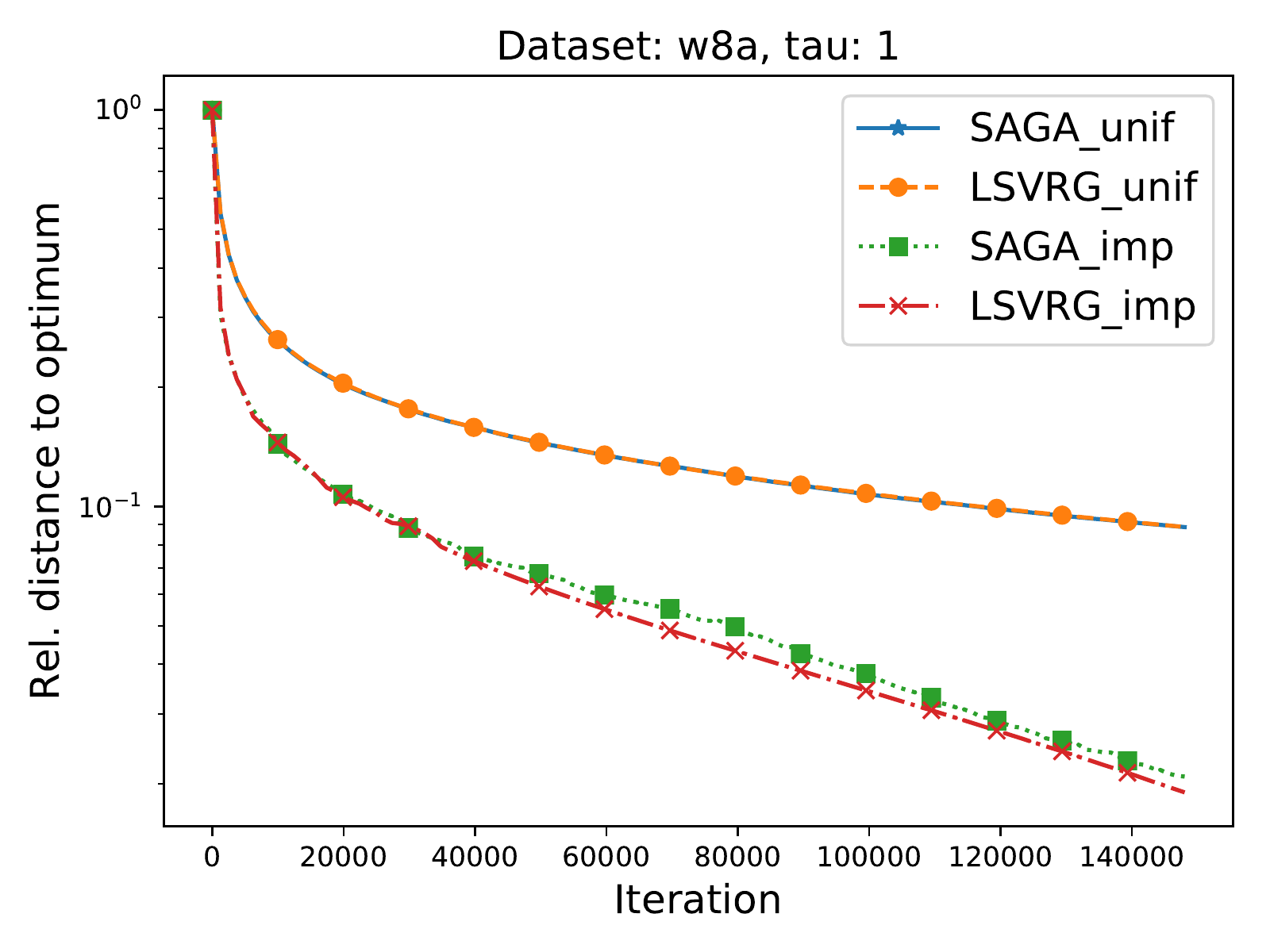}
\end{minipage}%
\begin{minipage}{0.3\textwidth}
  \centering
\includegraphics[width =  \textwidth ]{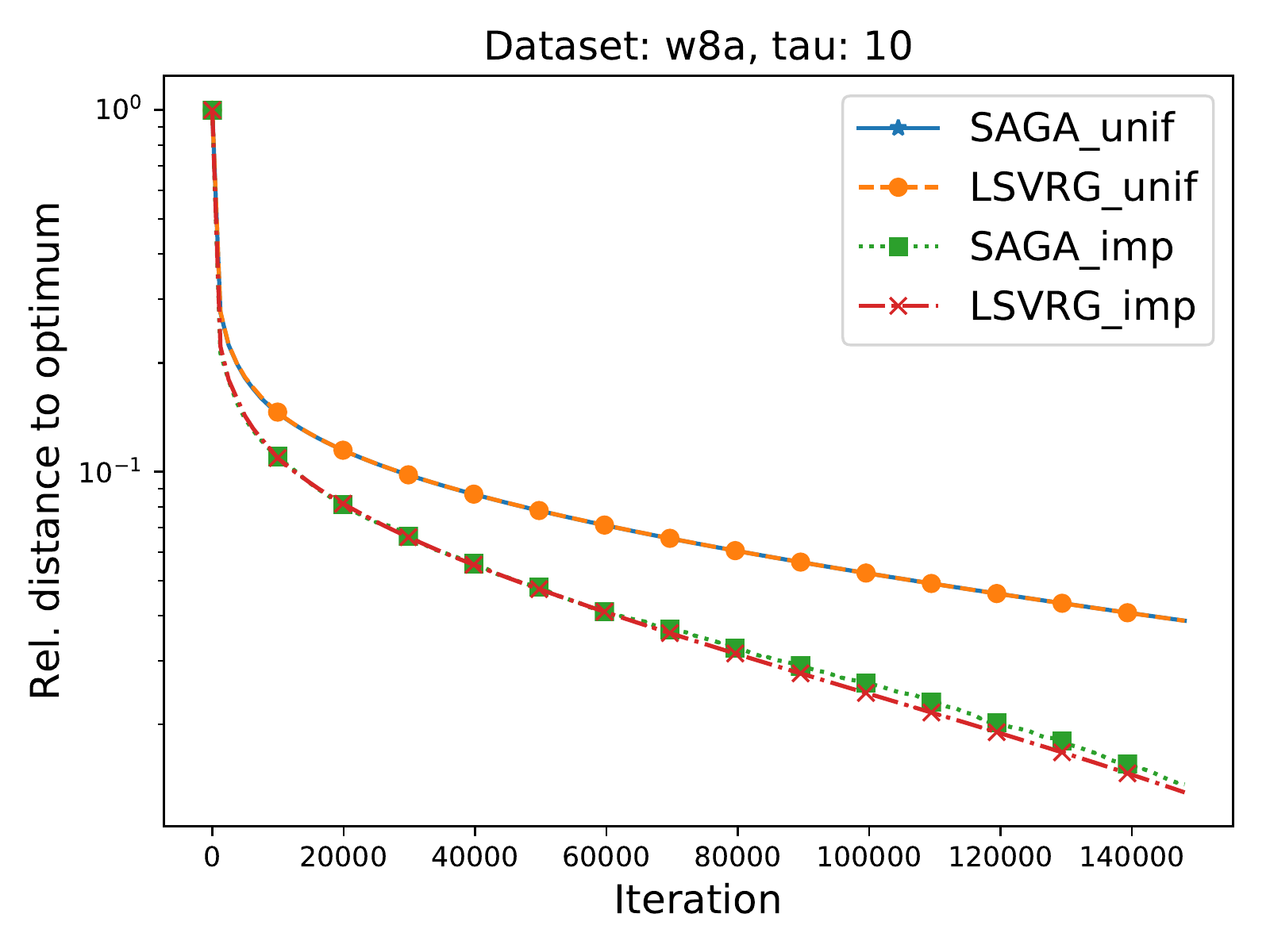}
\end{minipage}%
\begin{minipage}{0.3\textwidth}
  \centering
\includegraphics[width =  \textwidth ]{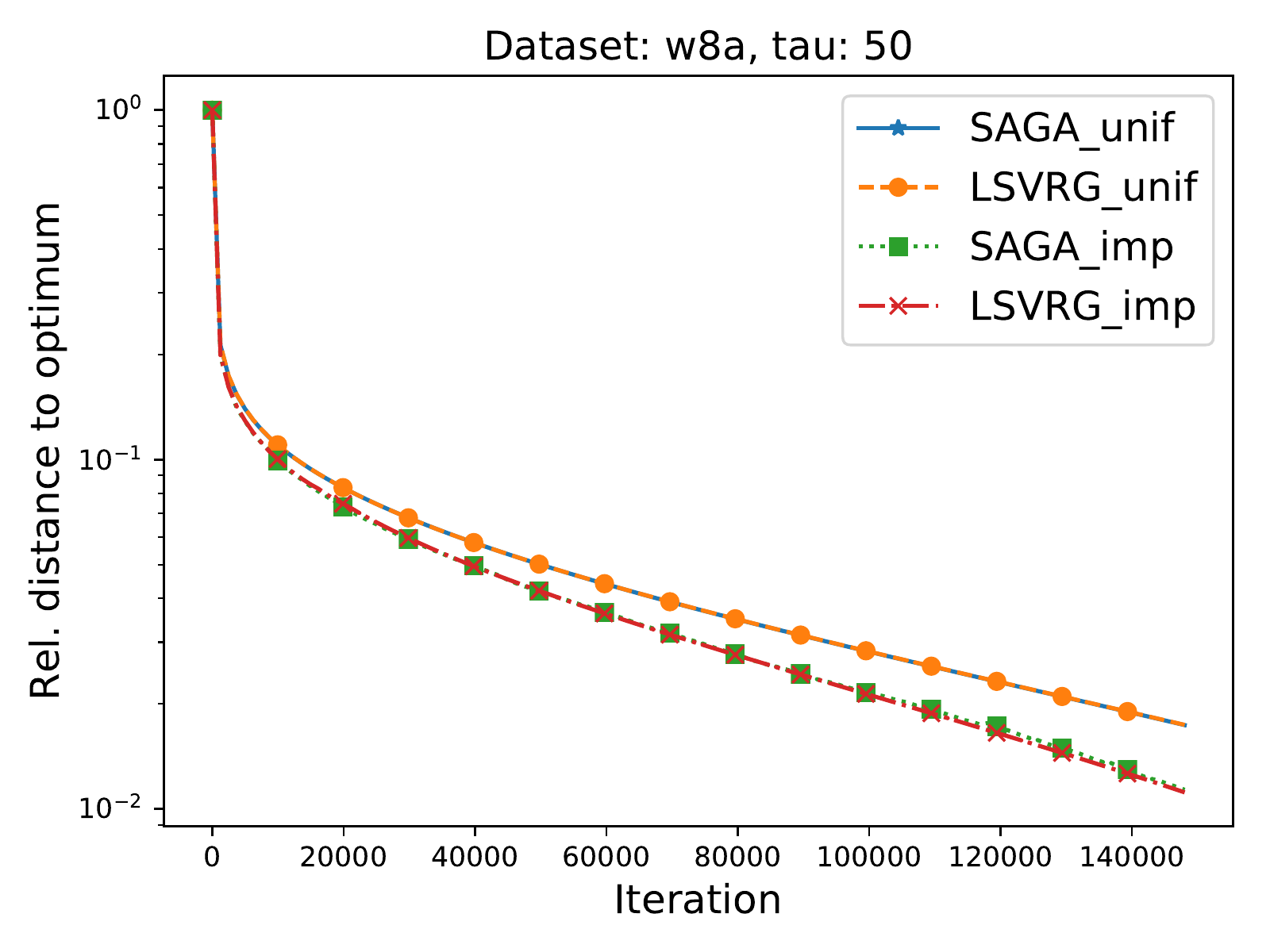}
\end{minipage}
\caption{Comparison of {\tt LSVRG} \& {\tt SAGA} with importance and uniform sampling.} 
\label{fig:LSVRG_main}
\end{figure}

In all cases, {\tt LSVRG} with importance sampling  was the fastest method, while uniform {\tt LSVRG} and {\tt SAGA} performed almost identically. The gain from importance sampling is noticable for small $\tau$. For larger $\tau$, importance sampling is less significantly superior. Note that this behavior is predicted by theory. However, our experiments indicate the superiority of {\tt LSVRG} to  {\tt SAGA} in the importance sampling setup. In particular, proposed stepsize $\gamma$ (see Appendix~\ref{sec:extra_lsvrg}) is often too large for {\tt SAGA}. Note that the optimal stepsize requires the prior knowledge of the quasi strong convexity constant\footnote{Or more generally, knowledge of the strong growth constant is required; see Appendix~\ref{sec:sg}.} $\sigma$, which is in our case unknown (see the importance serial sampling proposed in~\cite{gower2018stochastic}). While one can still estimate it as the $L2$ regularization constant,  this would be a weak estimate and yield suboptimal performance.


\clearpage
\vskip 0.2in
\bibliography{../literature}
\clearpage 
\appendix

\part*{Appendix \\ \Large  One Method to Rule Them All: Variance Reduction for Data, Parameters and Many New Methods }

\section{Table of Contents}

For easier navigation through the paper and the appendices, we include a table of contents.

\tableofcontents

\clearpage
\section{Summary of Complexity Results}

We provide a comprehensive table for faster navigation through special cases and their iteration complexities. In particular, for each special case of {\tt GJS}, we provide the leading complexity term  (i.e., a $\log \nicefrac{1}{\varepsilon}$ factor is omitted in all results) and a reference to the corresponding corollary where this result is established. We also indicate how the operator $\cB$ appearing in the Lyapunov function is picked (this is not needed to run the method; it is only used in the analysis). All details can be found later in the Appendix.

\begin{table}[!h]
\begin{center}
\footnotesize
\begin{tabular}{|c|c|c|c|c|}
\hline
 \multicolumn{2}{|c|}{\bf Algorithm} & \multicolumn{3}{|c|}{\bf Theory} \\
\hline
 \#  &  Name  &  Cor.\ of Thm~\ref{thm:main} &  $\cB \mX $ & Leading complexity term (i.e., $\log \tfrac{1}{\varepsilon}$ factor omitted) \\
\hline
\hline
 \ref{alg:SAGA} &  {\tt SAGA} &  Corollary~\ref{cor:saga} & $\beta \mX$ & $ \nR + \frac{4 m}{\sigma}$ \\
\hline
\ref{alg:SAGA_AS_ESO} & {\tt SAGA}  & Corollary~\ref{cor:saga_as2}  & $\mX\diag(b)$& $\max  \limits_j  \left(   \frac{1}{\pRj} +   \frac{1}{  \pRj} \frac{4 v_j }{\sigma n}  \right)$ \\
\hline
   \ref{alg:SEGA} & {\tt SEGA} &  Corollary~\ref{cor:sega} &  $\beta \mX$ & $ \dL+  \dL        \frac{4  m}{\sigma}  $ \\
\hline
 \ref{alg:SEGAAS} & {\tt SEGA} &  Corollary~\ref{cor:sega_is_11} &  $\diag(b)\mX$ & $  \max  \limits_i\left(    \frac{1}{\pLi} +  \frac{1}{\pLi} \frac{4 m_i}{\sigma} \right) $ \\
\hline
  \ref{alg:SVRCD} & {\tt SVRCD} & Corollary~\ref{cor:svrcd} & $\beta \mX$& $\frac{1}{\probx} + \max  \limits_i \frac{1}{\pLi}\frac{4 m_i}{\sigma}   $ \\
\hline
  \ref{alg:SGD_AS} & {\tt SGD-star} & Corollary~\ref{cor:sgd}  & $0$&  $\max  \limits_j  \frac{1}{ \pRj}\frac{v_j}{\sigma  n} $ \\
\hline
  \ref{alg:LSVRG-AS} & {\tt LSVRG}  & Corollary~\ref{cor:lsvrg_as} & $\beta \mX$& $\frac{1}{ \probx } +  \max  \limits_j \frac{1}{ \pRj} \frac{4v_j }{\sigma n }  $ \\
\hline
\ref{alg:B2} & {\tt B2} & Corollary~\ref{cor:B2} &  $\beta \mX$ & $ \frac{1}{\probx} + \frac{1}{\proby}\frac{4 m}{\sigma}  $ \\
\hline
  \ref{alg:invsvrg}  & {\tt LSVRG-inv}  & Corollary~\ref{cor:inverse_svrg} & $ \mX \diag(b)$ & $\max \limits_j  \frac{1}{\pRj}   + \frac{1}{\proby}\frac{4 m}{\sigma}  $ \\
\hline
\ref{alg:B_sega} & {\tt SVRCD-inv} & Corollary~\ref{cor:SVRCD-inv} &  $\diag(b) \mX$&  $ \max \limits_i \frac{1}{\pLi}  + \frac{1}{\proby}\frac{4 m}{\sigma}    $ \\
\hline
 \ref{alg:RL} & {\tt RL} &  Corollary~\ref{cor:RL} &  $\mX \diag(b)$ & $\max \limits_{i,j} \left( \frac{1}{\pRj} + \frac{1}{\pLi} \frac{4 m_i^j}{\sigma }  \right)$  \\
 \hline
 \ref{alg:LR} & {\tt LR} &  Corollary~\ref{cor:LR} &  $\diag(b) $ \mX  &  $\max \limits_{i,j} \left(  \frac{1}{\pLi}  + \frac{1}{\pRj} \frac{4 v_j}{\sigma }\right) $ \\
 \hline
\ref{alg:saega} & {\tt SAEGA} & Corollary~\ref{cor:saega} &  $\mB \circ \mX$& $ \max \limits_{i,j}   \left(\frac{1}{\pLi \qRj}  +  \frac{1}{\pLi \qRj} \frac{4 m^j_i}{\sigma n }   \right)$ \\
 \hline
  \ref{alg:svrcdg} & {\tt SVRCDG} & Corollary~\ref{cor:svrcdg} &  $\beta  \mX$& $ \frac{1}{\probx} + \max \limits_{i,j}   \frac{1}{\pLi \qRj}  \frac{4 m^j_i}{\sigma n}  $ \\
 \hline
 \ref{alg:isaega} & {\tt ISAEGA} & Corollary~\ref{cor:isaega}  & $\mB \circ \mX$& $  \max \limits_{j\in N_\tR, i,\tR}   \left(\frac{1}{\ptLi \qtRj}  + \left( 1+ \frac{1}{ n\ptLi \qtRj} \right) \frac{4 m_i^j}{\sigma}  \right)$ \\
\hline
 \ref{alg:isega} & {\tt ISEGA} & Corollary~\ref{cor:isega}  & $\mB \circ \mX$& $  \max \limits_{j\in N_\tR, i,\tR}   \left(\frac{1}{\ptLi |\NRt|}  + \left( 1+ \frac{1}{ n\ptLi |\NRt|} \right) \frac{4 m_i^j}{\sigma}  \right)$ \\
\hline
  \ref{alg:jacsketch} & {\tt JS}  & Corollary~\ref{cor:jacsketh} & $\beta\mX\mB$& $ \frac{ 4n^{-1}\ugly \sigma^{-1} \lambda_{\max}\left(  \mB^\top \E{\mR} \mB  \right) + \lambda_{\max} \left(\mB^\top \mB\right) }{\lambda_{\min} \left(\mB^\top \E{\mR} \mB \right) }  $ \\
\hline
\end{tabular}
\end{center}
\caption{Iteration complexity of selected special cases of {\tt GJS} (Algorithm~\ref{alg:SketchJac}). Whenever $m$ appears in a result, we assume that $\mM_j = m \mI_d$ for all $j$ (i.e., $f_j$ is $m$-smooth).  Whenever $m_i$ appears in a result, we assume that $f$ is $\mM$-smooth with $\mM= \diag(m_1,\dots,m_d)$. Whenever $m_i^j$ appears in a result, we assume that $\mM_j=\diag(m_1^j,\dots,m_d^j)$. Quantities $\pLi$ for $i \in [d]$, $\pRj$ for $j\in [n]$, $\probx$ and $\proby$ are probabilities defining the algorithms. }
\label{tbl:all_special_cases_theory}
\end{table}

 \clearpage
 \section{Table of Frequently Used Notation} \label{sec:notation_table}

Due to the generality of Algorithm~\ref{alg:SketchJac}, which gives rise to a large number existing and new methods in particular cases, we appreciate that this paper is rather notation-heavy -- and this is still the case after us having spent a considerable amount of time simplifying and optimizing the notation. In an attempt to make the paper more easy to read, here we include a table of the most frequently used notation. We recommend the reader to consult this table while studying our results.

{
 \begin{longtable}{| p{.20\textwidth} | p{.80\textwidth}| } 
  \caption{Frequently used notation.}  \label{tbl:notation}\\
  \hline
  \multicolumn{2}{|c|}{\bf Functions} \\
   \hline
   $f_j :\R^d \to \R$ & a differentiable convex function  \\
   $f :\R^d \to \R$ & $f(x) = \tfrac{1}{n}\sum_{j=1}^n f_j(x)$ \\
      $\nabla f :\R^d \to \R^d$ & gradient of $f$\\
   $\nabla_i f :\R^d \to \R$ & $i$-th partial derivative of $f$ \\
   $\psi$  & regularizer $\R^d \to \R\cup \{+\infty\}$ (a proper closed convex function)  \\
   $x^* \in \R^d$ & unique minimizer of $f+\psi$  \\ 
   $\sigma$ & (a positive) quasi strong  convexity constant of $f$ (see \eqref{eq:strconv3}) \\
  \hline
 \multicolumn{2}{|c|}{\bf Sets} \\
 \hline
 $[n]$ & the set $\{1,2,\dots,n\}$ \\
  $[d]$ & the set $\{1,2,\dots,d\}$ \\
  $R$ &  random subset (``sampling'') of $[n]$ \\
   $R^k$ &  random subset (``sampling'') of $[n]$ drawn at iteration $k$ \\
  $L$ & a random subset (``sampling'') of $[d]$ \\
   $L^k$ &  random subset (``sampling'') of $[d]$ drawn at iteration $k$ \\ 
  $\pRj$ & probability that $j\in R$ \\
    $\pLi$ & probability that $i\in L$ \\   
 \hline
  \multicolumn{2}{|c|}{\bf Spaces $\R^n$ and $\R^d$} \\
   \hline
  $\eR \in \R^n$ & vector of all ones in $\R^n$ \\
  $\eL \in \R^d$ & vector of all ones in $\R^d$ \\
    $\eRj \in \R^n$ & $j$-th standard unit basis vector in $\R^n$ \\
    $\eLi \in \R^n$ & $i$-th standard unit basis vector in $\R^d$ \\   
    $x^k \in \R^d$  & the $k$-th iterate produced by Algorithm~\ref{alg:SketchJac} \\
    $\pL \in \R^d$ & the vector $(\pL_1, \dots,  \pL_d)$ \\
        $\ptL \in \R^d$ & the vector $(\ptL_1, \dots,  \ptL_d)$ \\
    $\pR \in \R^n$ & the vector $(\pR_1, \dots,  \pR_n)$ \\
      $\ptR \in \R^n$ & the vector $(\ptR_1, \dots,  \ptR_n)$ \\
     $\qR \in \R^n$ & the vector $(\qR_1, \dots,  \qR_n)$ \\
         $\qtR \in \R^n$ & the vector $(\qtR_1, \dots,  \qtR_n)$  \\
        $v \in \R^n$ & any vector for which~\eqref{eq:ESO_saga} holds  \\
    $\langle x, y \rangle$ & standard Euclidean inner product \\
     $\|x\|$ &  standard Euclidean norm of vector $x$: $\|x\| = \langle x, x \rangle^{\nicefrac{1}{2}}$\\
          $x^{-1}$ &  elementwise inverse of $x$ \\
     $g^k$ & estimator of the gradient $\nabla f(x^k)$ produced by Algorithm~\ref{alg:SketchJac} \\
  \hline    
 \multicolumn{2}{|c|}{\bf Matrices in $\R^{d\times d}$, $\R^{d\times n}$ and $\R^{n\times n}$} \\
 \hline 
 $\mI_d \in \R^{d\times d}$ & $d\times d$ identity matrix \\
  $\mI_n \in \R^{n\times n}$ & $n\times n$ identity matrix \\
  $\mG(x) \in \R^{d\times n}$ & the Jacobian matrix, i.e., $\mG(x) = [\nabla f_1(x), \dots, \nabla f_n(x)]$ \\
  $\mJ^k \in \R^{d\times n}$ & estimator of the Jacobian produced by Algorithm~\ref{alg:SketchJac} \\
 $\mM_j \in \R^{d\times d}$ & smoothness matrix of $f_j$ (if $\mM_j = m^j \mI_d$, then this specializes to $m^j$-smoothness)\\
 $\mR \in \R^{n\times n}$ & a random matrix we use to multiply $\mJ$ or $\mG$ from the right \\
 $\mR_{R} \in \R^{n\times n}$ & the random matrix  $\mR_R \eqdef \sum_{j\in R} \eRj \eRj^\top$  \\
 $\mL  \in \R^{d\times d}$ & a random matrix we use to multiply $\mJ$ or $\mG$ from the left \\
  $\mL_{L} \in \R^{d\times d}$ & the random matrix  $\mL_L \eqdef \sum_{i\in L} \eLi \eLi^\top$  \\
 $\langle \mX, \mY \rangle$ & trance inner product of matrices $\mX$ and $\mY$: $\langle \mX, \mY \rangle \eqdef \Tr{\mX^\top \mY}$ \\
  $\|\mX\| $ & Frobenius norm of matrix $\mX$: $\|\mX\| = \langle \mX, \mX\rangle^{\nicefrac{1}{2}} $\\
  $\mX \circ \mY$ & Hadamard product: $(\mX \circ \mY)_{ij} = \mX_{ij} \mY_{ij}$  \\
    $\mX \otimes \mY$ & Kronecker product  \\
  $\diag(x)$ & diagonal matrix with vector $x$ on the diagonal \\
    $\PR \in \R^{n\times n}$ & Matrix defined by $\PR_{jj'}=\Prob{j\in R, j'\in R}$\\
        $\PtR \in \R^{n\times n}$ & Matrix defined by $\PtR_{jj'}=\Prob{j\in R_\tR, j'\in R_\tR}$\\
  \hline
 \multicolumn{2}{|c|}{\bf Linear operators $\R^{d\times n} \to \R^{d\times n}$} \\
 \hline
  $\cA$ & a generic  linear operator \\ 
  $\cA^*$ & the adjoint of $\cA$: $\langle \cA \mX, \mY \rangle \equiv \langle  \mX, \cA^*\mY \rangle $ for all $\mX,\mY\in \R^{d\times n}$\\ 
   $\cA^\dagger$ & the Moore Penrose pseudoinverse of $\cA$ \\ 
   $\Range{\cA}$ & image (range space) of $\cA$: $\Range{\cA} \eqdef \{\cA \mX \;:\; \mX\in \R^{d\times n}\}$ \\
   $\Range{\cA}^\top$ & orthogonal complement of $\Range{\cA}$ \\   
      $\Null{\cA}$ & kernel (null space) of $\cA$: $\Null{\cA} \eqdef \{ \mX \in \R^{d\times n} \;:\; \cA \mX = 0\}$ \\
 $\cI$ & identity operator: $\cI \mX \equiv \mX$\\ 
  $\cU$ & any unbiased operator: $\E{\cU \mX} \equiv \mX$, i.e., $\E{\cU} \equiv \cI$\\ 
 $\cS$ &  any random projection operator \\
 $\cM$  & operator defined via $(\cM \mX)_{:j} = \mM_j \mX_{:j}$ \\
 $\cB$   & (a technical) operator used to define the Lyapunov function \eqref{eq:Lyapunov}   \\
  $\cR$   & (a technical) operator such that $\mJ^k - \mG(x^*)\in \Range{\cR}$  \\

 \hline  
 \multicolumn{2}{|c|}{\bf Miscellaneous} \\
 \hline
 $\alpha$ & stepsize used in  Algorithm~\ref{alg:SketchJac} \\
  $\Gamma$  & Random operator  $\Gamma : \R^{d\times n} \to \R^d$ defined by $\Gamma \mX = \cU \mX \eR$ \\
  $\prox(x)$ & the proximal operator of $\psi$: $\prox(x) \eqdef \argmin_{u\in \R^d} \{ \alpha\psi(u) + \frac{1}{2}\|u-x\|^2\}$ \\
 \hline  
 \end{longtable}
 }

 \clearpage
 \section{Additional Experiments \label{sec:extra_exp}}

\subsection{{\tt SEGA} and {\tt SVRCD} with importance sampling \label{sec:sega_exp}}
In Sections~\ref{sec:sega_is_v1} and~\ref{sec:svrcd_is2} we develop an arbitrary (and thus importance in special case) sampling for {\tt SEGA}, as well as new method {\tt SVRCD} with arbitrary sampling.  In this experiment, we compare them to its natural competitors -- basic {\tt SEGA} from~\cite{hanzely2018sega} and proximal gradient descent. 

Consider artificial quadratic minimization with regularizer $\psi$ being an indicator of the unit ball\footnote{In such case, proixmal operator of $\psi$ becomes a projection onto the unit ball.}:
\[ f(x) = x^\top \mM x  - b^\top x, \quad \psi(x) = \begin{cases} x& 0\leq 1 \\ \infty & \| x\| > 1\end{cases} .
\]
 Specific choices of $\mM,b$ are given by by Table~\ref{tbl:quadratics}. As both {\tt SEGA} and {\tt SVRCD} (from Section~\ref{sec:sega_is_v1} and~\ref{sec:svrcd_is2}) require a diagonal smoothness matrix, we shall further consider vector $m$ such that the upped bound $\mM\preceq \diag(m)$ holds. As the choice of $m$ is not unique, we shall choose the one which minimizes $\sum_{i=1}^d m_i$ for importance sampling and $m = \lambda_{\max}(\mM) \eL$  for uniform. Further, stepsize $\gamma = \frac{1}{4\sum_{i=1}^d m_i}$ was chosen in each case.
 Figure~\ref{fig:sega_cmp} shows the results of this experiment. As theory suggests, importance sampling for both {\tt SEGA} and {\tt SVRCD} outperform both plain {\tt SEGA} and proximal gradient always. The performance difference depends on the data; the closer $\mM$ is to a diagonal matrix with non-uniform elements, the larger stronger is the effect of importance sampling.

 \begin{table}[!h]
\begin{center}
\begin{tabular}{|c|c|c|}
\hline
Type & $\mM $ & $b$ \\
 \hline
 1   & $\diag\left(1.3^{[d]}\right)$ & $\gamma u$  \\
 \hline
  2   & $\diag((d,1,1,\dots, 1))$ & $\gamma u$ \\
\hline
3   & $\diag\left(1.1^{[d]}\right)+\mN \mN^\top \frac{1.1^d}{1000 d}$, $\mN~\sim N(0,\mI)$ & $\gamma  u$\\
\hline
4   & $\mN \mN^\top$, $\mN~\sim N(0, \mI)$ & $\gamma u$ \\
\hline
\end{tabular}
\end{center}
\caption{Four types of quadratic problems. We choose $u \sim N(0,\mI_d)$, and  $\gamma$ to be such that $\|\gamma \mM^{-1} u\| = \nicefrac32$. Notation $c^{[d]}$ stands for a vector $(c, c^2, \dots c^d)$.}
\label{tbl:quadratics}
\end{table}

\begin{figure}[!h]
\centering
\begin{minipage}{0.25\textwidth}
  \centering
\includegraphics[width =  \textwidth ]{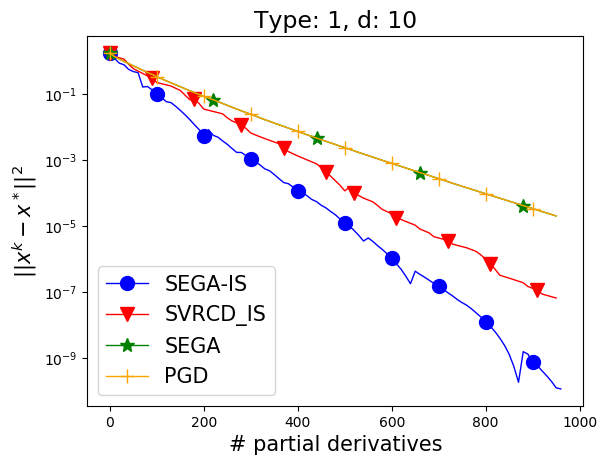}
\end{minipage}%
\begin{minipage}{0.25\textwidth}
  \centering
\includegraphics[width =  \textwidth ]{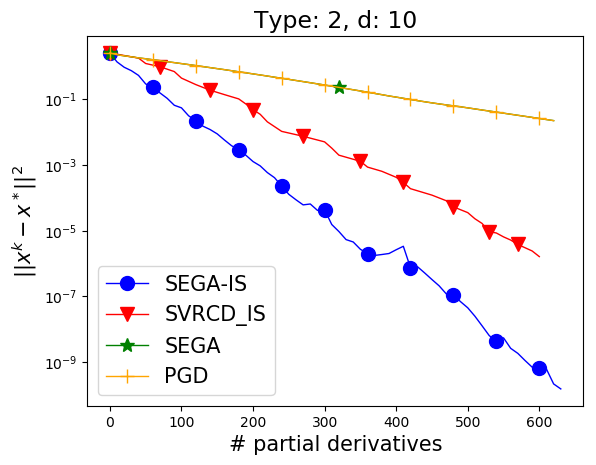}
\end{minipage}%
\begin{minipage}{0.25\textwidth}
  \centering
\includegraphics[width =  \textwidth ]{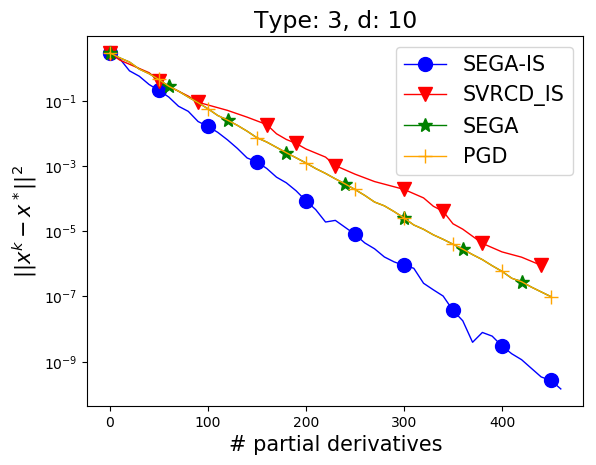}
\end{minipage}%
\begin{minipage}{0.25\textwidth}
  \centering
\includegraphics[width =  \textwidth ]{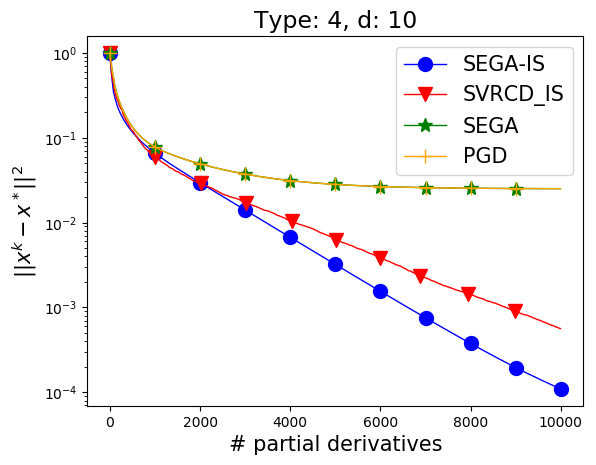}
\end{minipage}%
\\
\begin{minipage}{0.25\textwidth}
  \centering
\includegraphics[width =  \textwidth ]{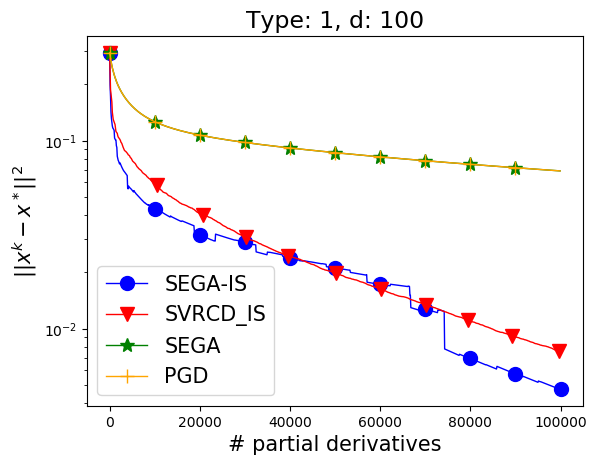}
\end{minipage}%
\begin{minipage}{0.25\textwidth}
  \centering
\includegraphics[width =  \textwidth ]{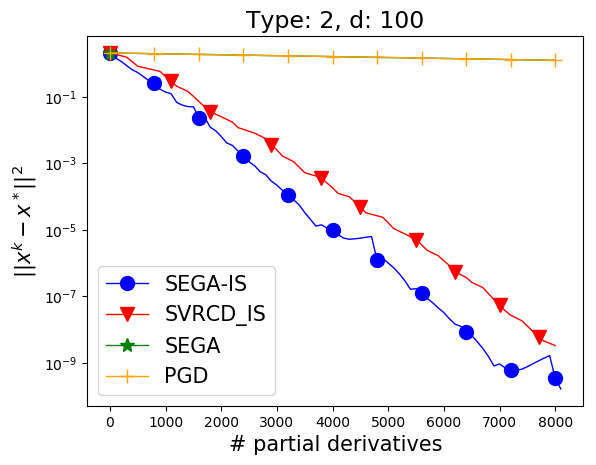}
\end{minipage}%
\begin{minipage}{0.25\textwidth}
  \centering
\includegraphics[width =  \textwidth ]{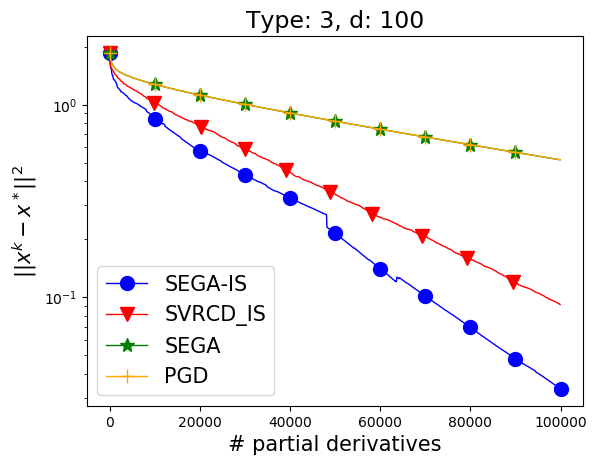}
\end{minipage}%
\begin{minipage}{0.25\textwidth}
  \centering
\includegraphics[width =  \textwidth ]{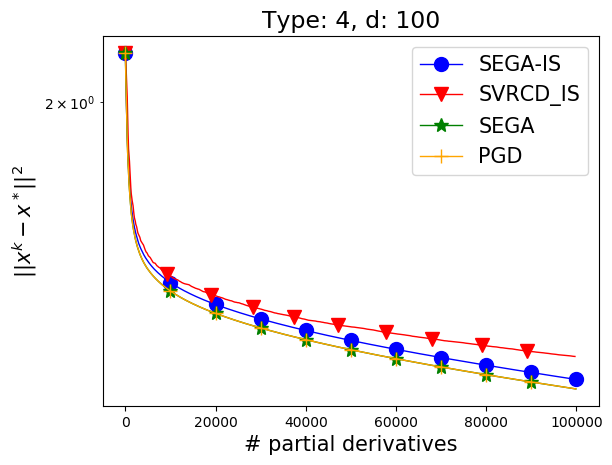}
\end{minipage}%
\\
\begin{minipage}{0.25\textwidth}
  \centering
\includegraphics[width =  \textwidth ]{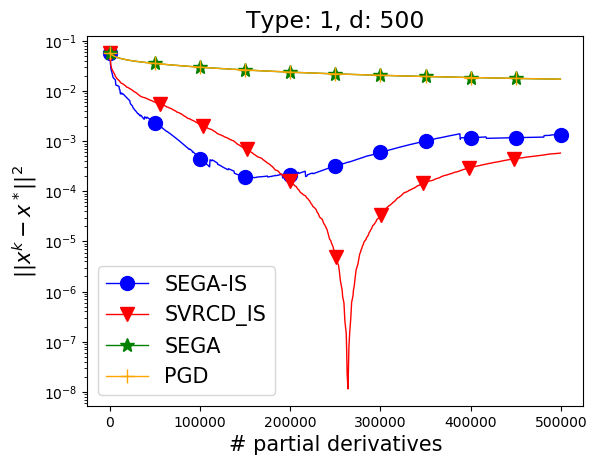}
\end{minipage}%
\begin{minipage}{0.25\textwidth}
  \centering
\includegraphics[width =  \textwidth ]{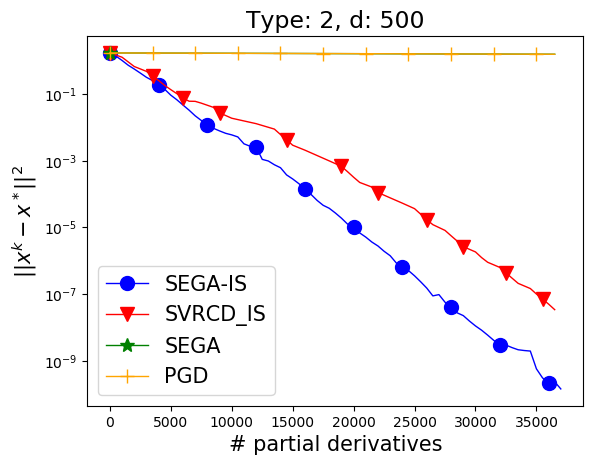}
\end{minipage}%
\begin{minipage}{0.25\textwidth}
  \centering
\includegraphics[width =  \textwidth ]{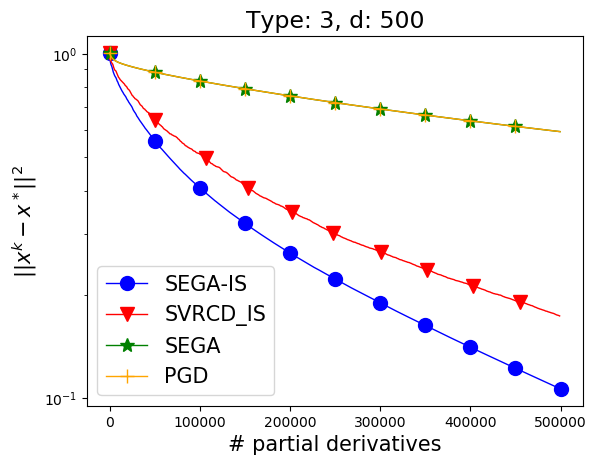}
\end{minipage}%
\begin{minipage}{0.25\textwidth}
  \centering
\includegraphics[width =  \textwidth ]{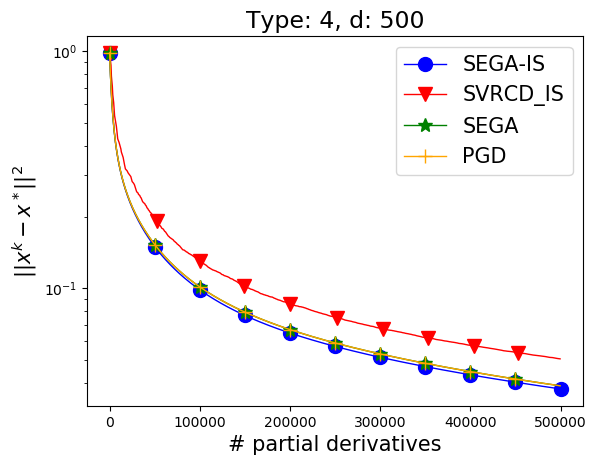}
\end{minipage}%
\caption{Comparison of {\tt SEGA-AS}, {\tt SVRCD-AS}, {\tt SEGA} and proximal gradient on 4 quadratic problems given by Table~\ref{tbl:quadratics}. {\tt SEGA-AS}, {\tt SVRCD-AS} and {\tt SEGA} compute single partial derivative each iteration ({\tt SVRCD} computes all of them with probability $\probx$), {\tt SEGA-AS}, {\tt SVRCD-AS} with probabilities proportional to diagonal of $\mM$.  }
\label{fig:sega_cmp}
\end{figure}

\subsection{{\tt SVRCD}: Effect of $\probx$ \label{sec:svrcd_exp}}
In this experiment we demonstrate very broad range of $\probx$ can be chosen to still attain almost best possible rate for {\tt SVRCD} for problems from Table~\ref{tbl:quadratics} and $m,\gamma$ as described in Section~\ref{sec:sega_exp} Results can be found in Figure~\ref{fig:svrcd}. They indeed show that in many cases, varying $\probx$ from $\frac1n$ down to $\frac{2\lambda_{\min}(\mM)}{\sum_{i=1}^d m_i}$ does not influences the complexity significantly. However, too small $\probx$ leads to significantly slower convergence. Note that those findings are in accord with Corollary~\ref{cor:svrcd}. Similar results were shown in~\cite{LSVRG} for {\tt LSVRG}. 

\begin{figure}[!h]
\centering
\begin{minipage}{0.25\textwidth}
  \centering
\includegraphics[width =  \textwidth ]{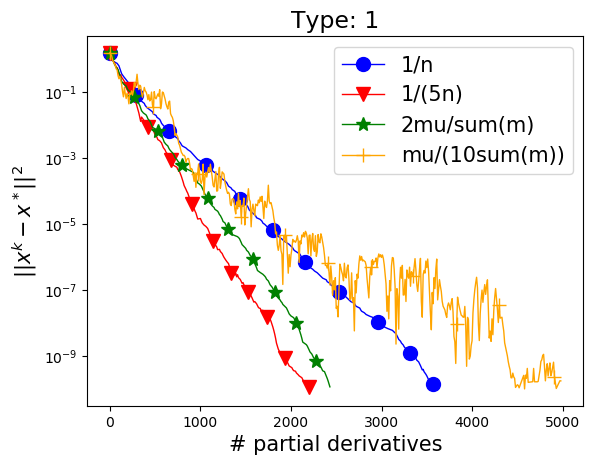}
\end{minipage}%
\begin{minipage}{0.25\textwidth}
  \centering
\includegraphics[width =  \textwidth ]{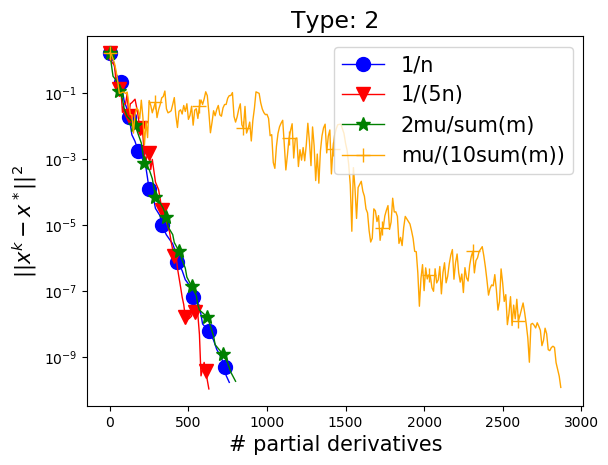}
\end{minipage}%
\begin{minipage}{0.25\textwidth}
  \centering
\includegraphics[width =  \textwidth ]{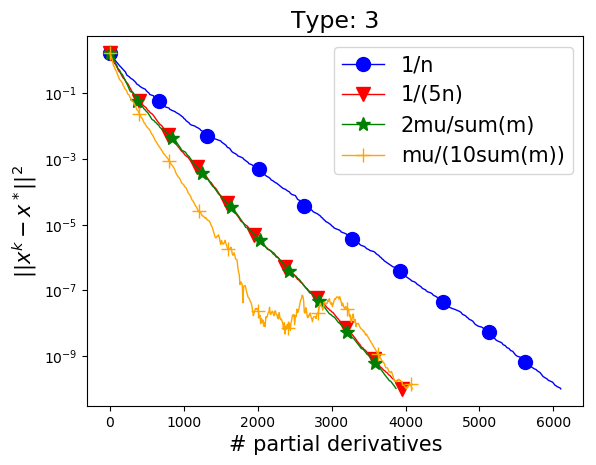}
\end{minipage}%
\begin{minipage}{0.25\textwidth}
  \centering
\includegraphics[width =  \textwidth ]{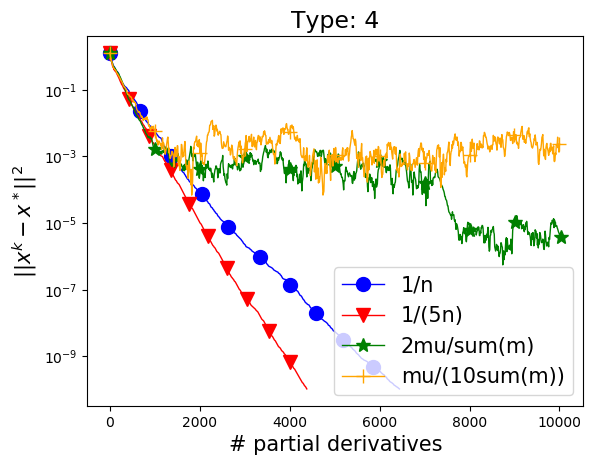}
\end{minipage}%
\\
\begin{minipage}{0.25\textwidth}
  \centering
\includegraphics[width =  \textwidth ]{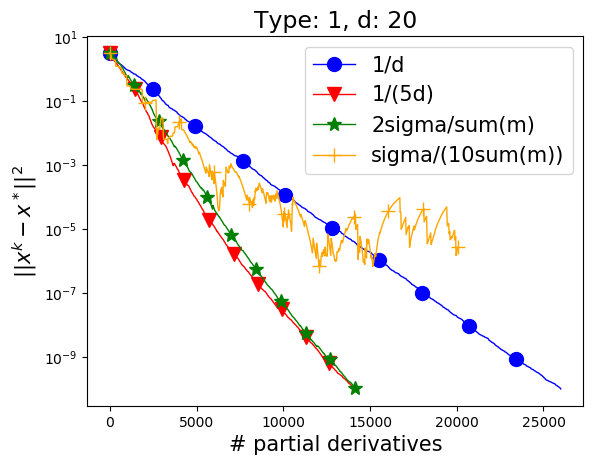}
\end{minipage}%
\begin{minipage}{0.25\textwidth}
  \centering
\includegraphics[width =  \textwidth ]{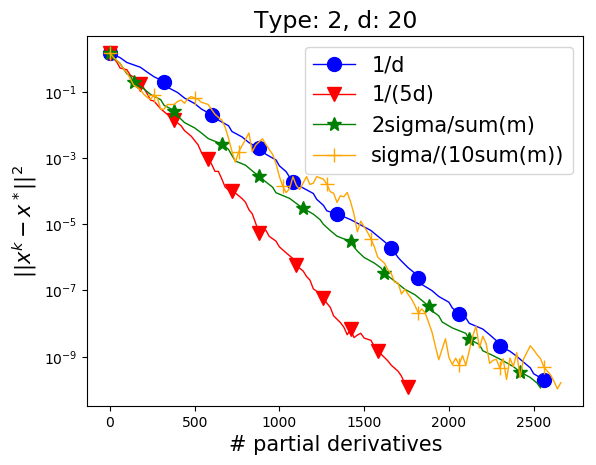}
\end{minipage}%
\begin{minipage}{0.25\textwidth}
  \centering
\includegraphics[width =  \textwidth ]{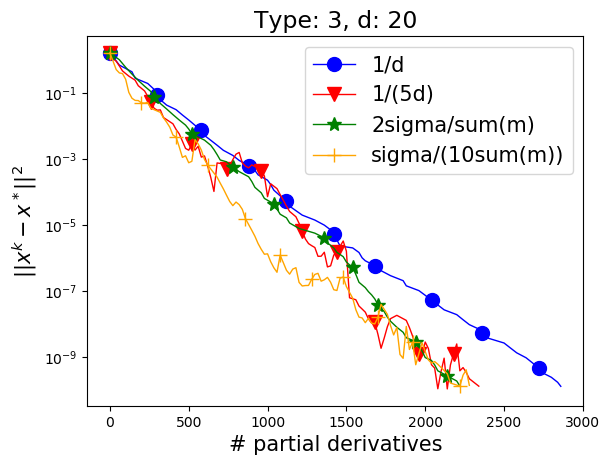}
\end{minipage}%
\begin{minipage}{0.25\textwidth}
  \centering
\includegraphics[width =  \textwidth ]{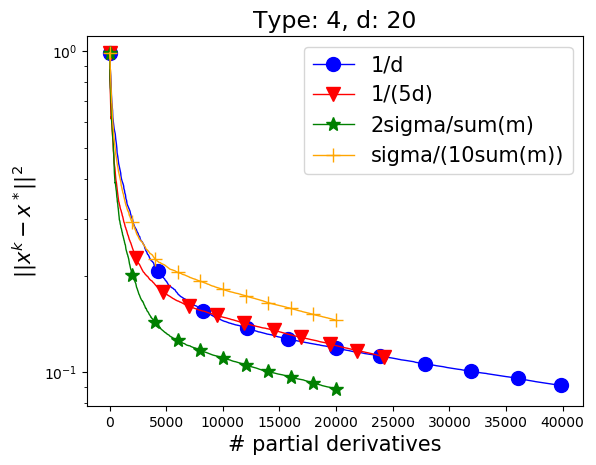}
\end{minipage}%
\\
\begin{minipage}{0.25\textwidth}
  \centering
\includegraphics[width =  \textwidth ]{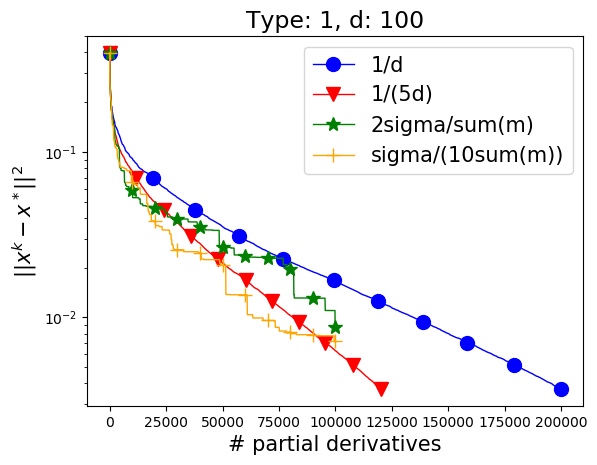}
\end{minipage}%
\begin{minipage}{0.25\textwidth}
  \centering
\includegraphics[width =  \textwidth ]{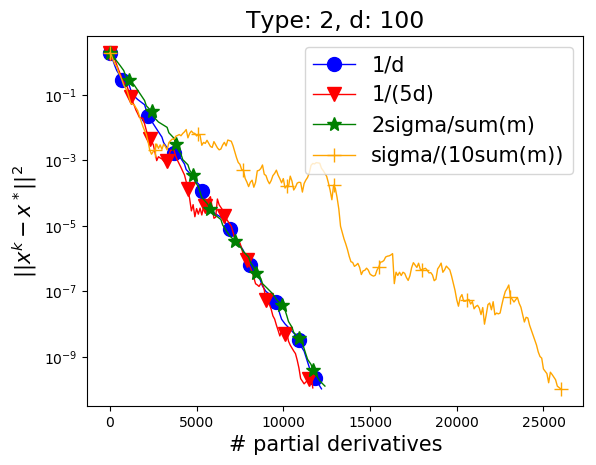}
\end{minipage}%
\begin{minipage}{0.25\textwidth}
  \centering
\includegraphics[width =  \textwidth ]{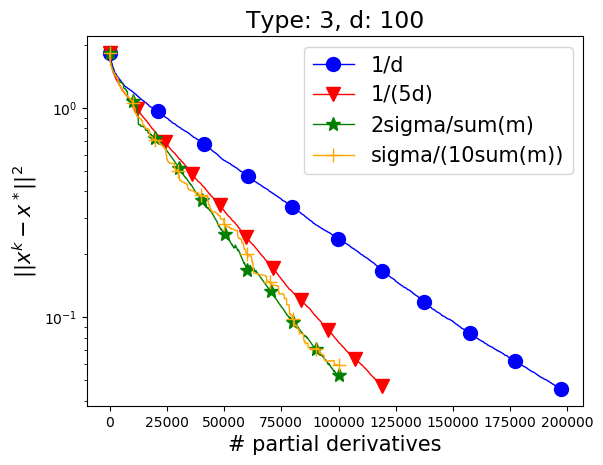}
\end{minipage}%
\begin{minipage}{0.25\textwidth}
  \centering
\includegraphics[width =  \textwidth ]{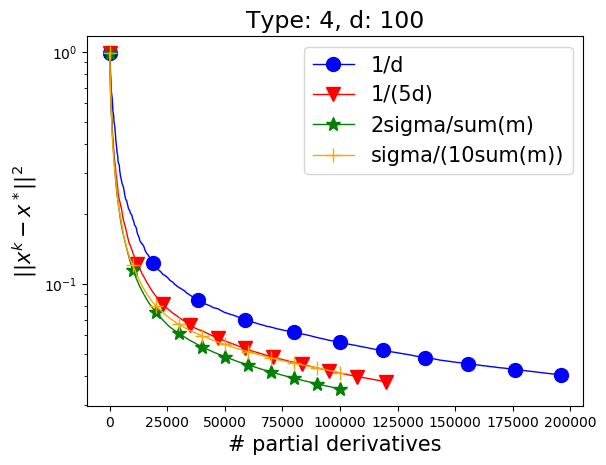}
\end{minipage}%
\\
\caption{The effect of $\probx$ on convergence rate of {\tt SVRCD} on quadratic problems from Table~\ref{tbl:quadratics}. In every case, probabilities were chosen proportionally to the diagonal of $\mM$ and only a single partial derivative is evaluated in $\cS$.}
\label{fig:svrcd}
\end{figure}

\subsection{{\tt ISAEGA} \label{sec:ISAEGA_exp}}

In this section we test a simple version of {\tt ISAEGA} (Algorithm~\ref{alg:isaega})\footnote{The full description of {\tt ISAEGA}, together with convergence guarantees are provided in Section~\ref{sec:ISAEGA}}. As mentioned, {\tt ISAEGA} is an algorithm for distributed optimization which, at each iteration, computes a subset of partial derivatives of stochastic gradient on each machine, and constructs corresponding Jacobian estimate and stochastic gradient.

For simplicity, we consider only the simple version which assumes $\mM_j = m \mI_d$ for all $j$ (i.e. we do not do importance sampling), and we suppose that $|R_\tR |=1$ always for all $\tR$ (i.e. each machine always looks at a single function from the local finite sum). Further, we consider $\psi(x)=0$. Corollary~\ref{cor:isaega} shows that, if the condition number of the problem is not too small, {\tt ISAEGA} with $|L_{\tR} | \approx \frac1\TR$ (where $\TR$ is a number of parallel units) enjoys, up to small constant factor, same rate as {\tt SAGA} (which is, under a convenient smoothness, the same rate as the convergence rate of gradient descent). Thus, {\tt ISAEGA} scales linearly in terms of partial derivative complexity in parallel setup. In other words, given that we have twice more workers, each of them can afford to evaluate twice less partial derivatives\footnote{Practical implications of the method are further explained in~\cite{mishchenko201999}.}. The experiments we propose aim to verify this claim. 

We consider l2 regularized logistic regression (for the binary classification). In particular, 
\[
\forall j: \quad f_j(x) \eqdef  \log \left(1+\exp\left(\mA_{j,:}x\cdot  y_i\right) \right)+\frac{\lambda}{2} \| x\|^2,
\]
where $\mA\in \R^{n\times d}$ is a data matrix, $y\in \{-1,1\}^{n}$ is a vector of labels and $\lambda\geq 0$ is the regularization parameter.  
Both $\mA,y$ are provided from LibSVM~\cite{chang2011libsvm} datasets: {\tt a1a}, {\tt a9a}, w1a, {\tt w8a}, {\tt gisette}, {\tt madelon}, {\tt phishing} and {\tt mushrooms}. Further,  $\mA$ was normalized such that $\| \mA_{j,:}\|^2=1$. 
Next, it is known that $f_j$ is $(\frac14+ \lambda)$-smooth, convex, while $f$ is $\lambda$-strongly convex. Therefore, as a stepsize for all versions of ${\tt ISAEGA}$, we set $\gamma = \frac{1}{6\lambda + \frac{3}{2}}$ (this is an approximation of theoretical stepsize). 

In each experiment, we compare 4 different setups for {\tt ISEAGA} -- given by 4 different values of $\TR$. Given a value of $\TR$, we set $|L_\tR|=\frac1\TR$ for all $\tR$. Further, we always sample $L_\tR$ uniformly. The results are presented in Figure~\ref{fig:ISAEGA}. Indeed, we observe the almost perfect parallel linear scaling.

For completeness, we provide dataset sized in Table~\ref{tbl:libsvm}. 

 \begin{table}[!h]
\begin{center}
\begin{tabular}{|c|c|c|}
\hline
Name & $n $ & $d$ \\
 \hline
  \hline
{\tt a1a}   & $1605$ & $123$  \\
  \hline
{\tt a9a}   & $32561$ & $123$  \\
  \hline
{\tt w1a}   & $2477$ & $300$  \\
  \hline
{\tt w8a}   & $49749$ & $300$  \\
  \hline
{\tt gisette}   & $6000$ & $5000$  \\
  \hline
{\tt madelon}   & $2000$ & $500$  \\
  \hline
{\tt phishing}   & $11055$ & $68$  \\
  \hline
{\tt mushrooms}   & $8124$ & $112$  \\
\hline
\end{tabular}
\end{center}
\caption{Table of LibSVM data used for our experiments. }
\label{tbl:libsvm}
\end{table}

\begin{figure}[!h]
\centering
\begin{minipage}{0.24\textwidth}
  \centering
\includegraphics[width =  \textwidth ]{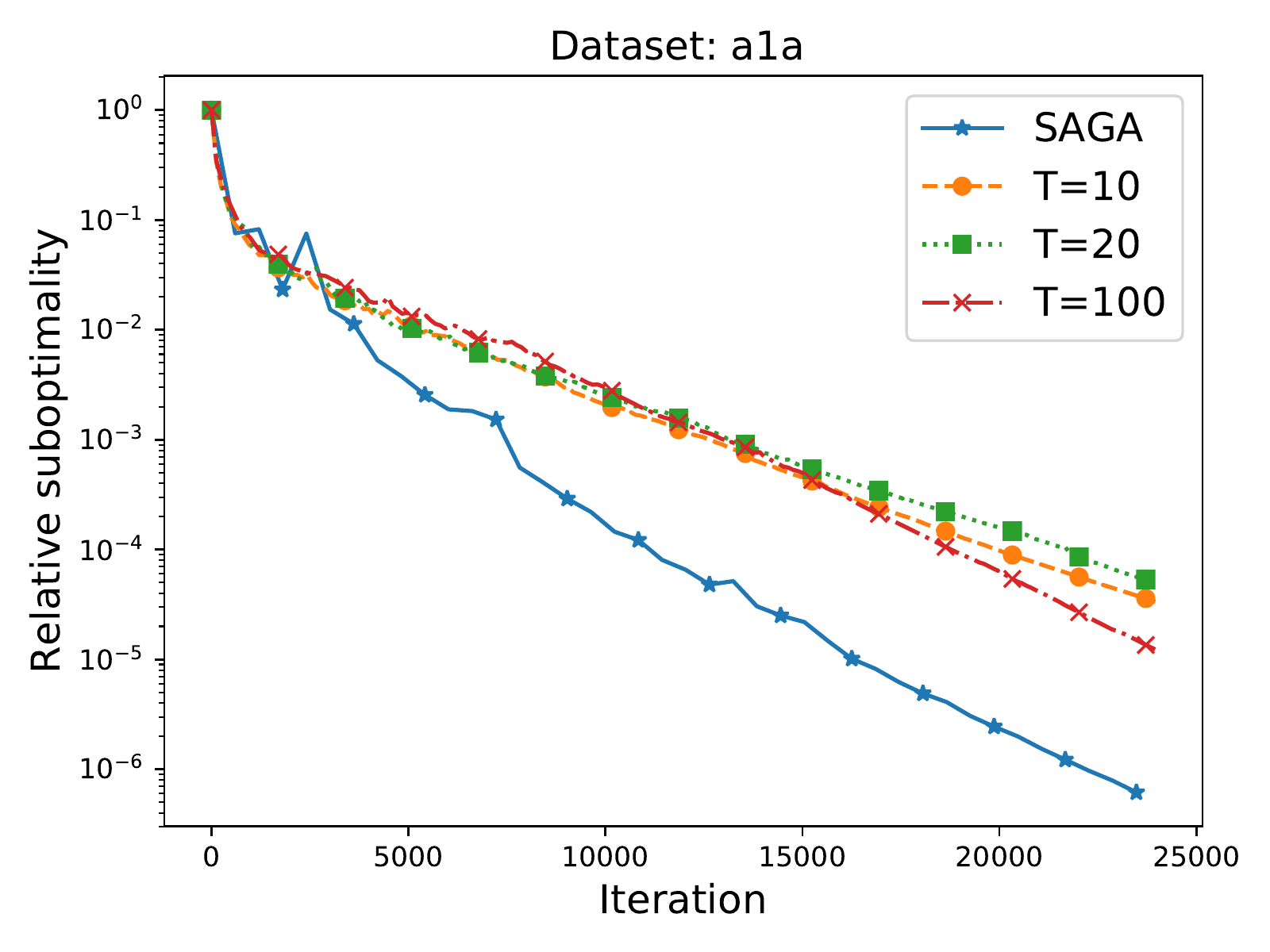}
\end{minipage}%
\begin{minipage}{0.24\textwidth}
  \centering
\includegraphics[width =  \textwidth ]{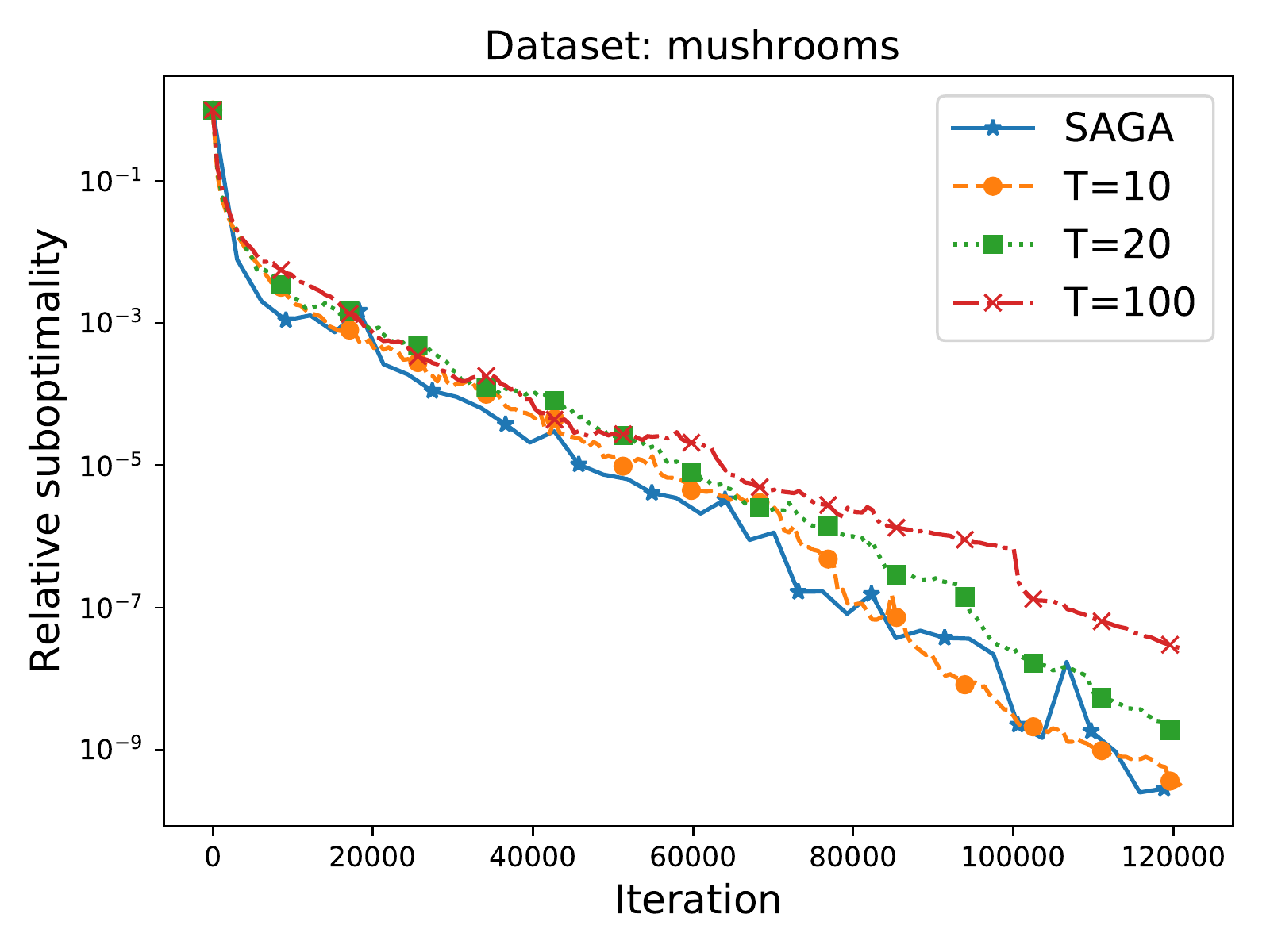}
\end{minipage}%
\begin{minipage}{0.24\textwidth}
  \centering
\includegraphics[width =  \textwidth ]{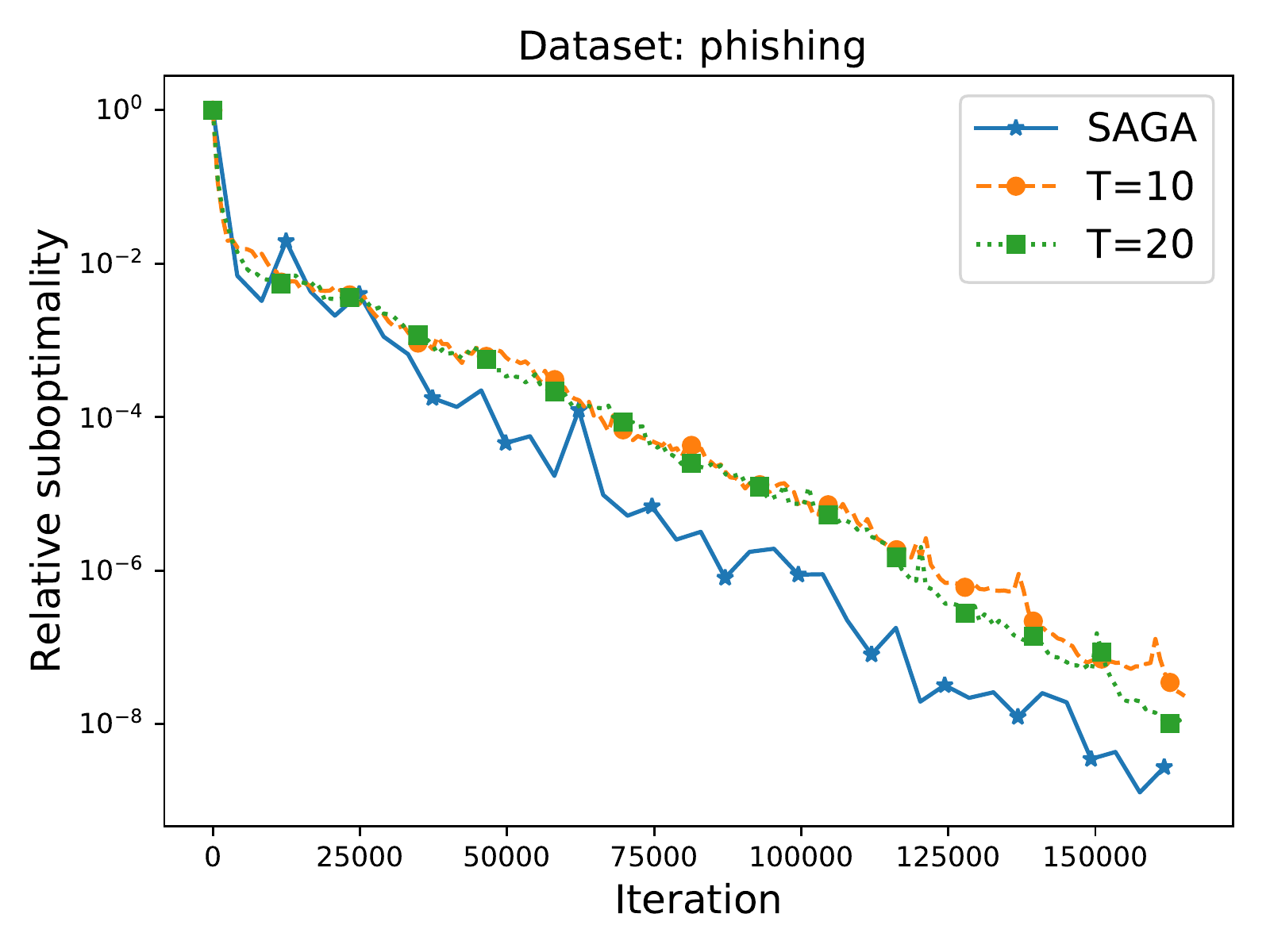}
\end{minipage}%
\begin{minipage}{0.24\textwidth}
  \centering
\includegraphics[width =  \textwidth ]{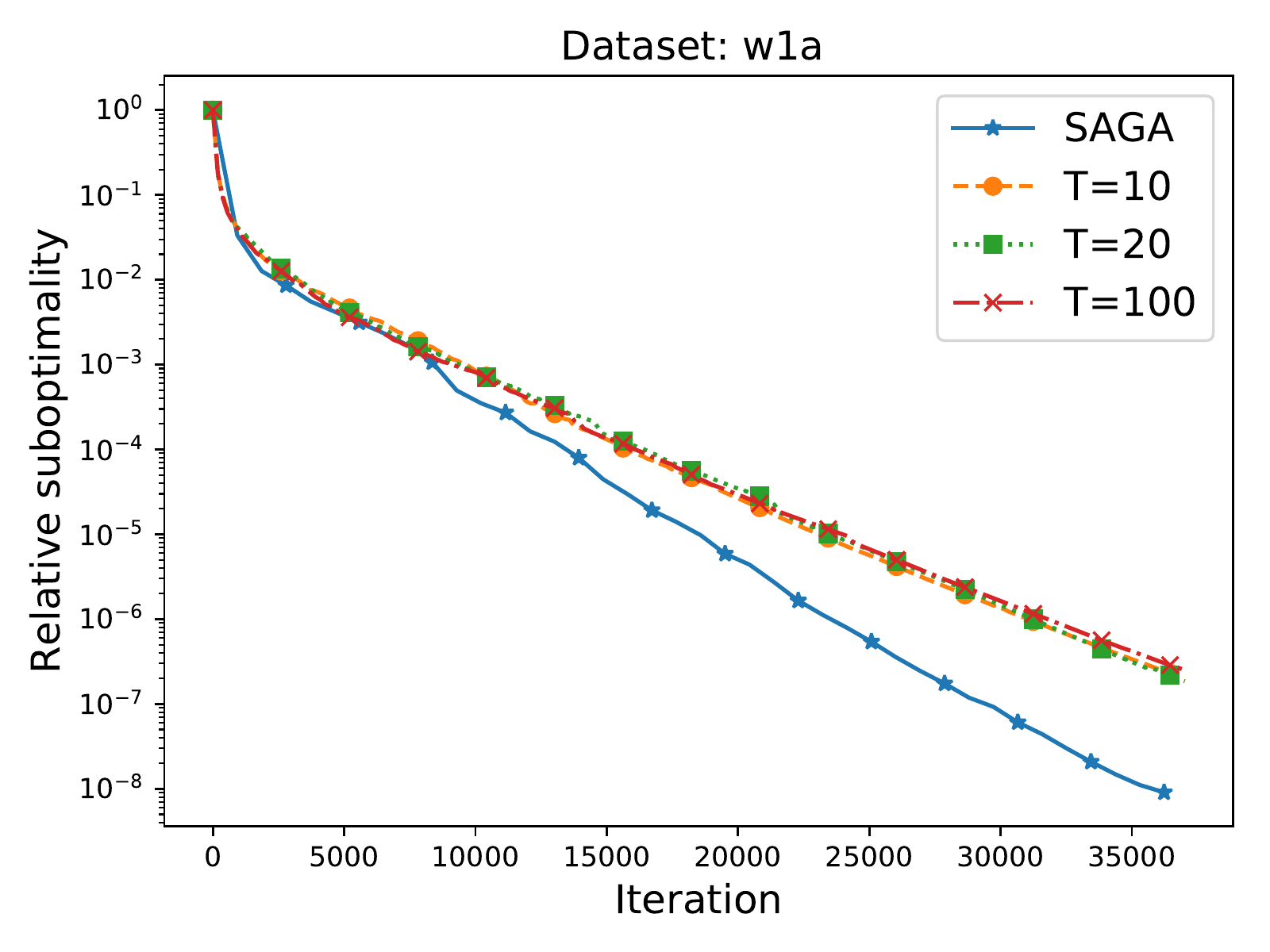}
\end{minipage}%
\\
\begin{minipage}{0.24\textwidth}
  \centering
\includegraphics[width =  \textwidth ]{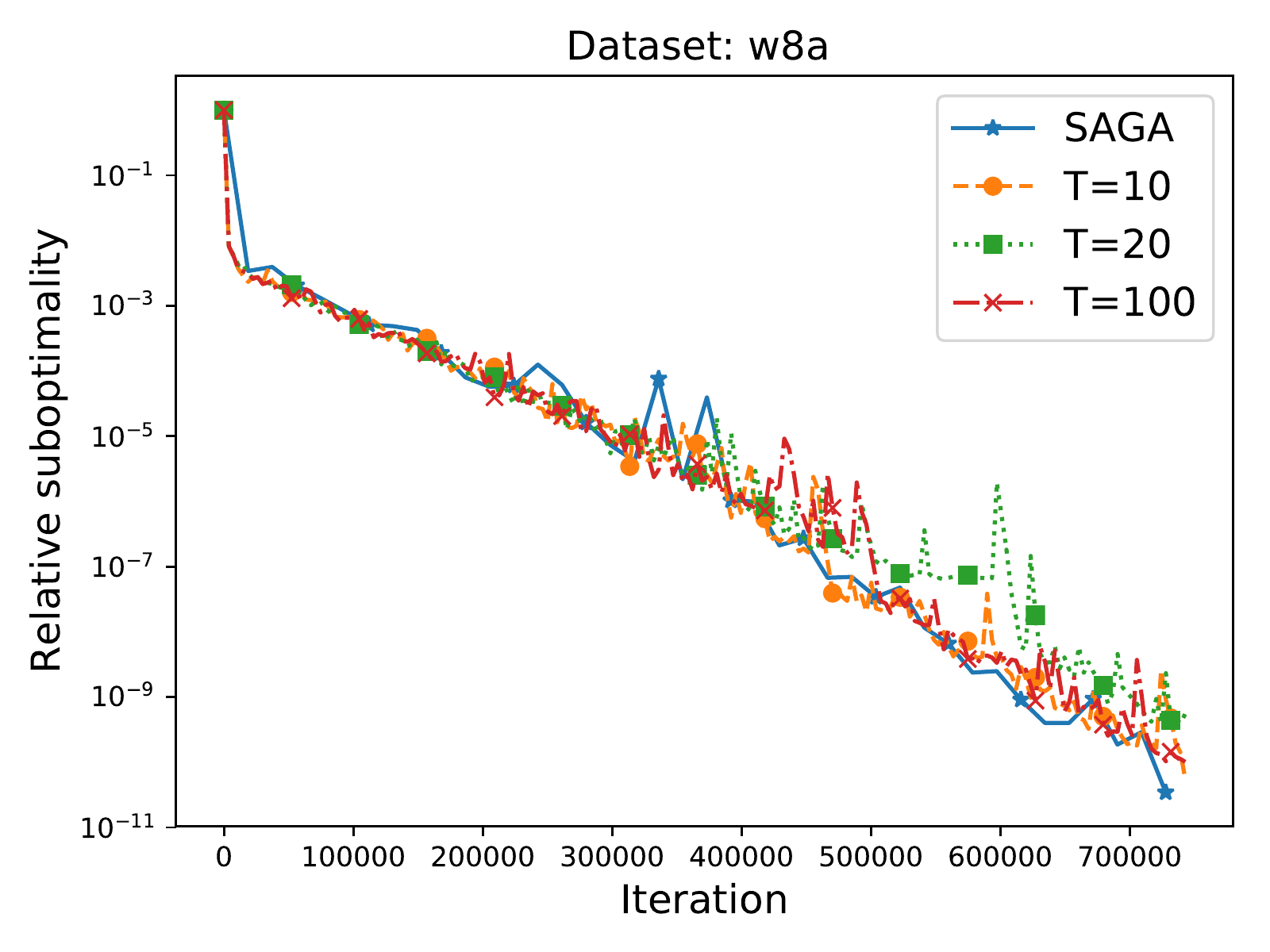}
\end{minipage}%
\begin{minipage}{0.24\textwidth}
  \centering
\includegraphics[width =  \textwidth ]{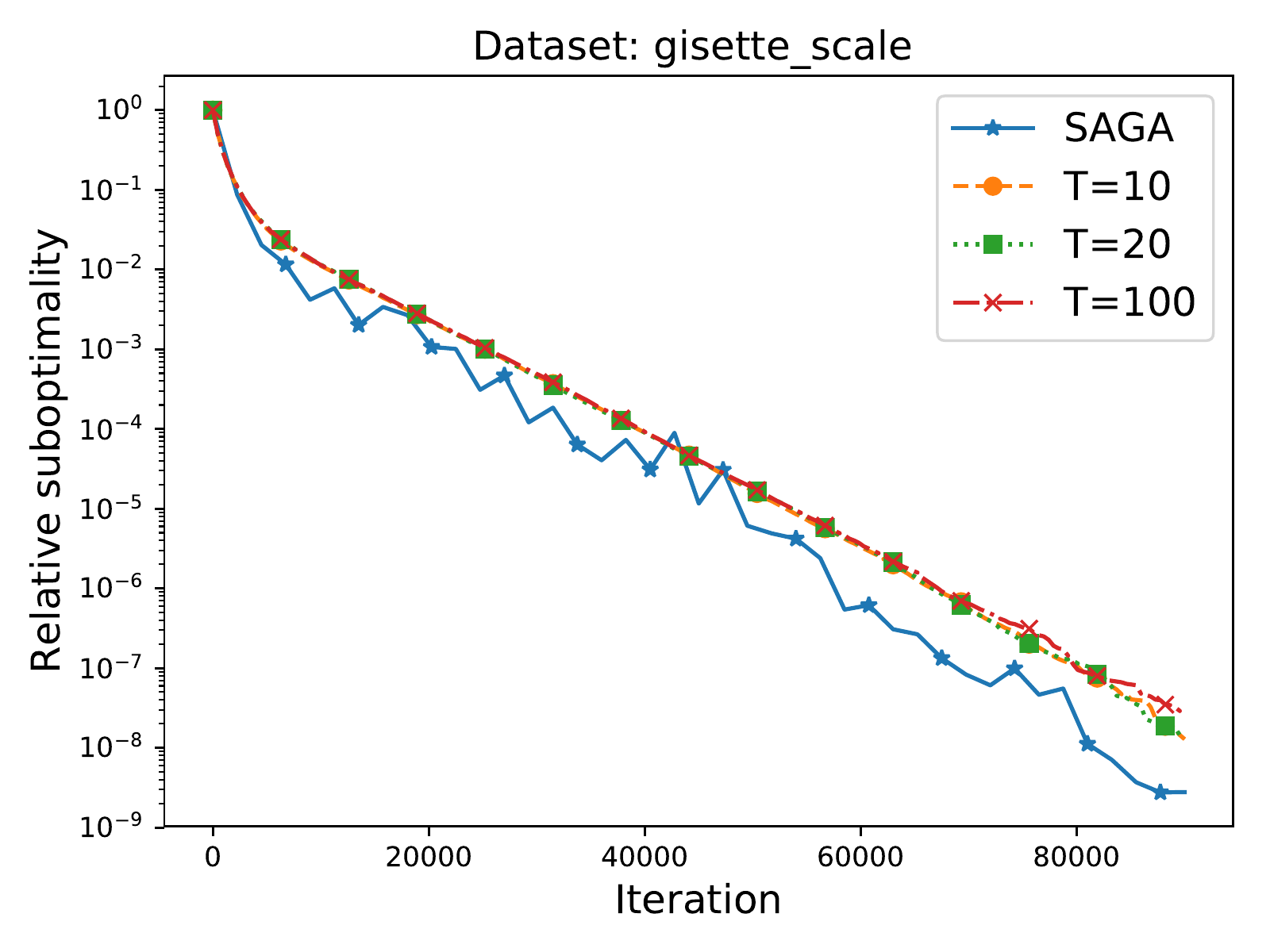}
\end{minipage}%
\begin{minipage}{0.24\textwidth}
  \centering
\includegraphics[width =  \textwidth ]{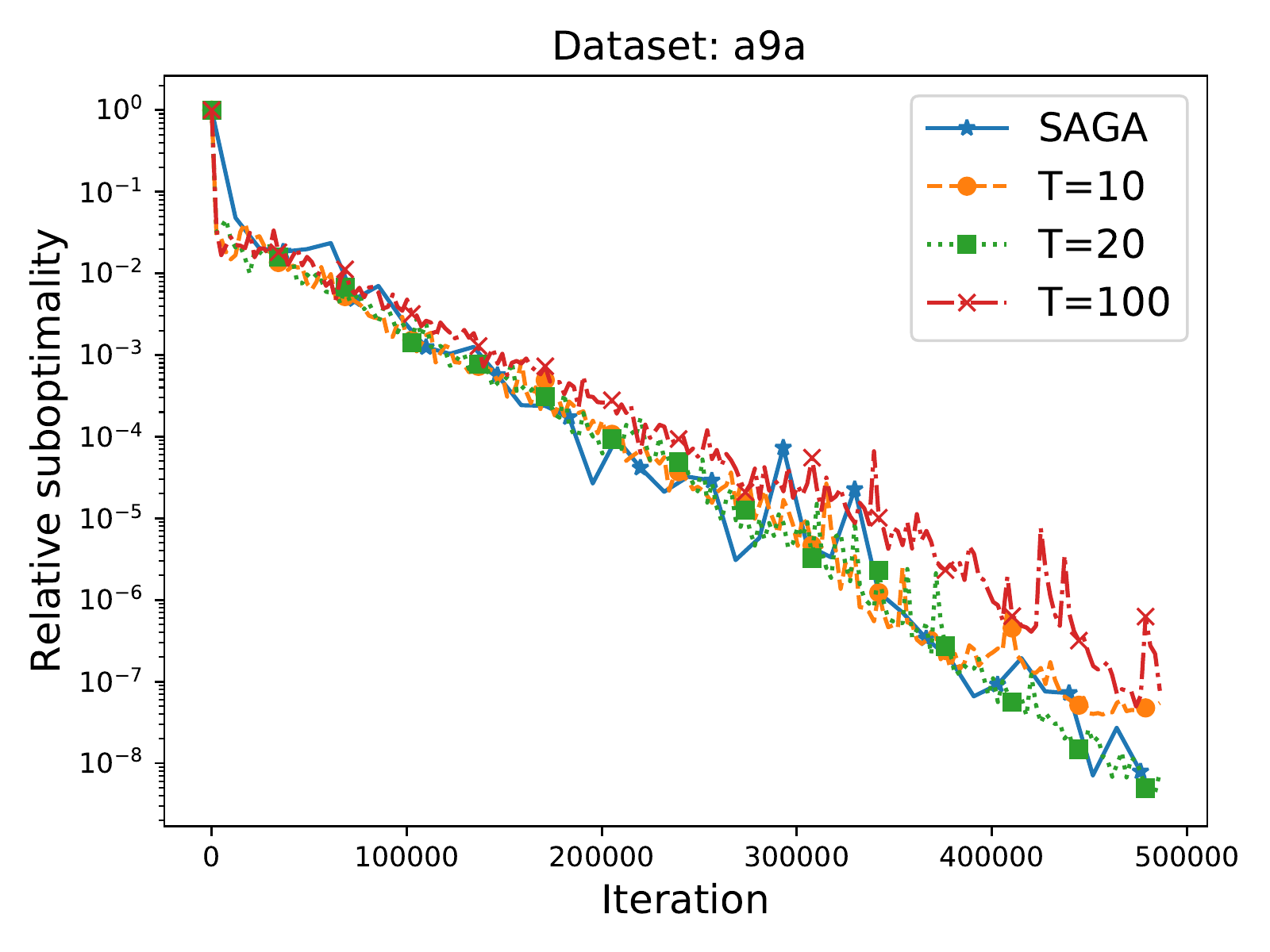}
\end{minipage}%
\begin{minipage}{0.24\textwidth}
  \centering
\includegraphics[width =  \textwidth ]{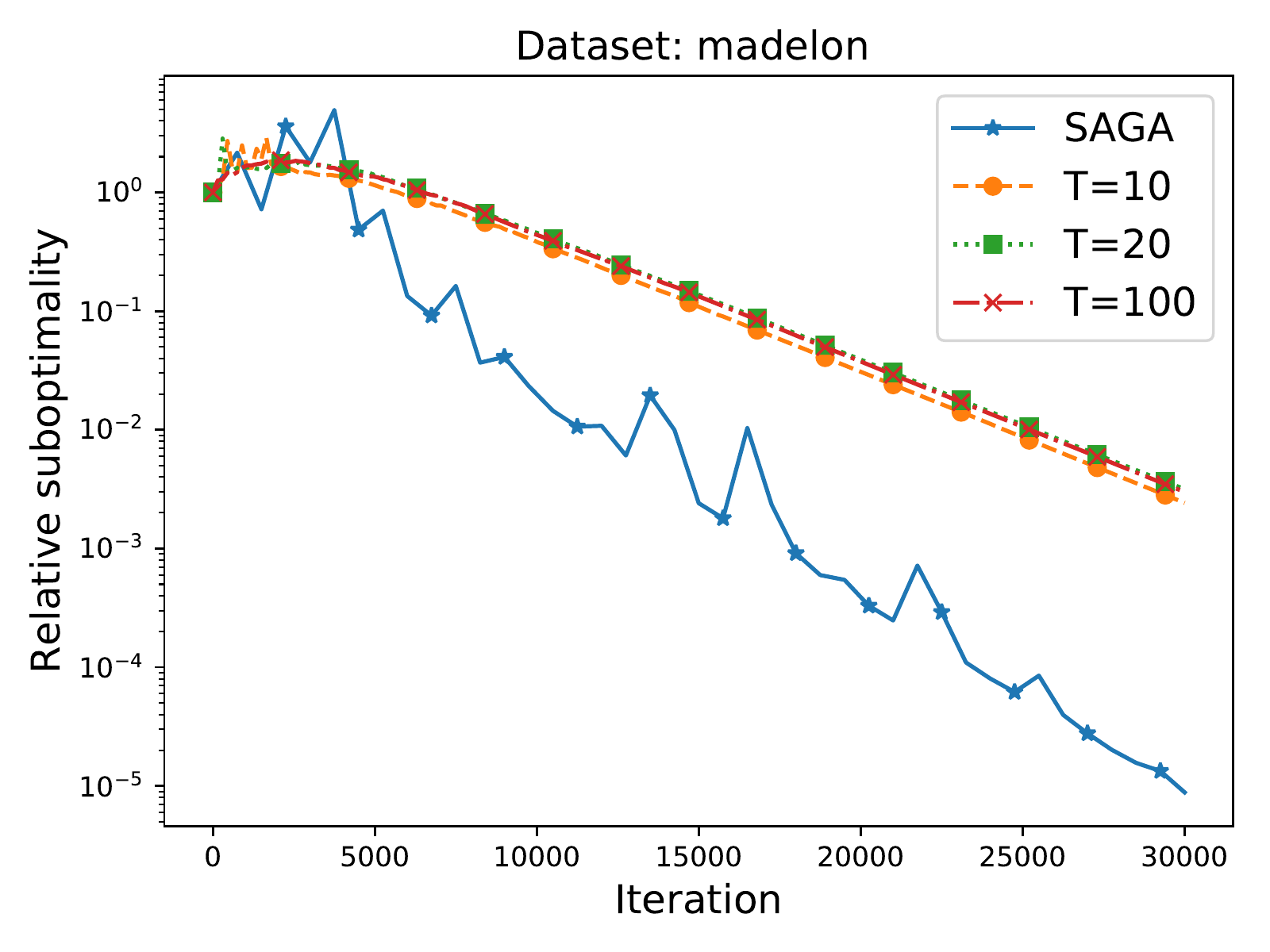}
\end{minipage}%
\caption{{\tt ISAEGA} applied on LIBSVM~\cite{chang2011libsvm} datasts with $\lambda = 4\cdot 10^{-5}$. Axis $y$ stands for relative suboptimality, i.e. $\frac{f(x^k)-f(x^*)}{f(x^k)-f(x^0)}$.}
\label{fig:ISAEGA}
\end{figure}

\subsection{ {\tt LSVRG} with importance sampling \label{sec:extra_lsvrg}}
As mentioned, one of the contributions of this work is {\tt LSVRG} with arbitrary sampling. In this section, we demonstrate that designing a good sampling can yield a significant speedup in practice. We consider logistic regression problem on LibSVM~\cite{chang2011libsvm} data, as described in Section~\ref{sec:ISAEGA_exp}. However, since LibSVM data are normalized, we pre-multiply each row of the data matrix by a random scaling factor. In particular, the scaling factors are proportional to $l^2$ where $l$ is sampled uniformly from $[1000]$ such that the Frobenius norm of the data matrix is $n$. For the sake of simplicity, consider case $\lambda=0$.

\paragraph{Choice vector $v$.} 
Note that since $\mM_j = \mA_{j:}^\top \mA_{j:}$, the following claim must hold: \emph{ Consider fixed $v$. Then if~\eqref{eq:ESO_saga} holds for any set of vector $\{h_j\}_{j=1}^n$ such that $h_j$ is parallel to $\mA_{j:}$, then ~\eqref{eq:ESO_saga} holds for any set of vector $\{h_j\}_{j=1}^n$.} Thus, we can set $h_j = c_j\mA_{j:}^\top/\| \mA_{j:}\|$ without loss of generality. Thus, $\mM_j^{\frac12} h_j= c_j \mA_{j:}^\top$, and~\eqref{eq:ESO_saga} becomes equivalent to $\PR \circ \left(\mA^\top \mA\right)\preceq \diag(p\circ v)$ where $\PR_{jj'} = \Prob{j\in R, j' \in R}$. Note that this is exactly \emph{expected separable overapproximation (ESO)} for coordinate descent~\cite{ESO}. Thus we choose vector $v$ to be proportional to $\pR$ such that $\PR \circ \left(\mA^\top \mA\right)\preceq \diag(p\circ v)$ holds (as proposed in~\cite{AccMbCd}). In order to compute the scaling constant, one needs to evaluate maximum eigenvalue of PSD $n\times n$ matrix, which is of $\cO(n^2)$ cost. We do so in the experiments. Note that there is a suboptimal, but cheeaper way to obtain $v$ described in~\cite{qian2019saga}. Lastly, if $\lambda>0$, we set $v$ such that $\PR \circ \left(\mA^\top \mA +\lambda \mI \right)\preceq \diag(p\circ v)$.

\paragraph{Choice of probabilities.}
In order to be fair, we only compare methods where $\E{|R|}=\tau$. For the case $\tau =1$, we consider a sampling such that $|R|=1$ according to a given probability vector $\pR$. For uniform sampling, we have $\pR=n^{-1}\eR$, while for importance sampling, we set $\pR_j= \frac{\lambda_{\max}(\mM_j)}{\sum_{j'=1}^n \lambda_{\max}(\mM_{j'})}$. In the case $\tau >1$, we consider independent sampling from~\cite{AccMbCd}. In particular, $\Prob{j\in R} = \pRj$ with $\sum \pRj = \tau$ and binary random variables $(j\in R)$ are jointly independent. For uniform sampling we have $\pR = \tau n^{-1} \eR$. For importance sampling, probability vector $\pR$ is chosen such that $p_j =  \frac{\lambda_{\max}(\mM_j)}{\varrho+ \lambda_{\max}(\mM_{j}}$, where $\varrho$ is such that $\sum \pRj = \tau$. The mentioned sampling was proven to be superior over uniform minibatching in~\cite{AccMbCd}. Next, stepsize $\gamma = \frac{1}{6}\min_j\frac{n\pRj}{v_j}$ was chosen for all methods.

Lastly, $\probx = \frac{1}{2n}$ was chosen for {\tt LSVRG}. The results are presented in Figures~\ref{fig:LSVRG1} and~\ref{fig:LSVRG2} (a subset of the results was already presented in Figure~\ref{fig:LSVRG_main}).

In all cases, {\tt LSVRG} with importance sampling was the fastest method. As provided theory suggests, it outperformed methods with importance sampling especially significantly for small $\tau$; and the larger $\tau$, the smaller the effect of importance sampling is. However, our experiments indicate the superiority of {\tt LSVRG} to  {\tt SAGA} in the importance sampling setup. In particular, stepsize $\gamma = \frac{1}{6}\min_j\frac{n\pRj}{v_j}$ is often too large for {\tt SAGA}. Note that both optimal stepsize and optimal probabilities require the prior knowledge of the quasi strong convexity constant $\sigma$\footnote{Or more generally, strong growth constant, see Appendix~\ref{sec:sg}} which is, in our case unknown (see the importance serial sampling proposed in~\cite{gower2018stochastic}, and {\tt SAGA} is more sensitive to that choice. One can still estimate it as $\lambda$, however, this would yield suboptimal performance as well.

\begin{figure}[!h]
\centering
\begin{minipage}{0.3\textwidth}
  \centering
\includegraphics[width =  \textwidth ]{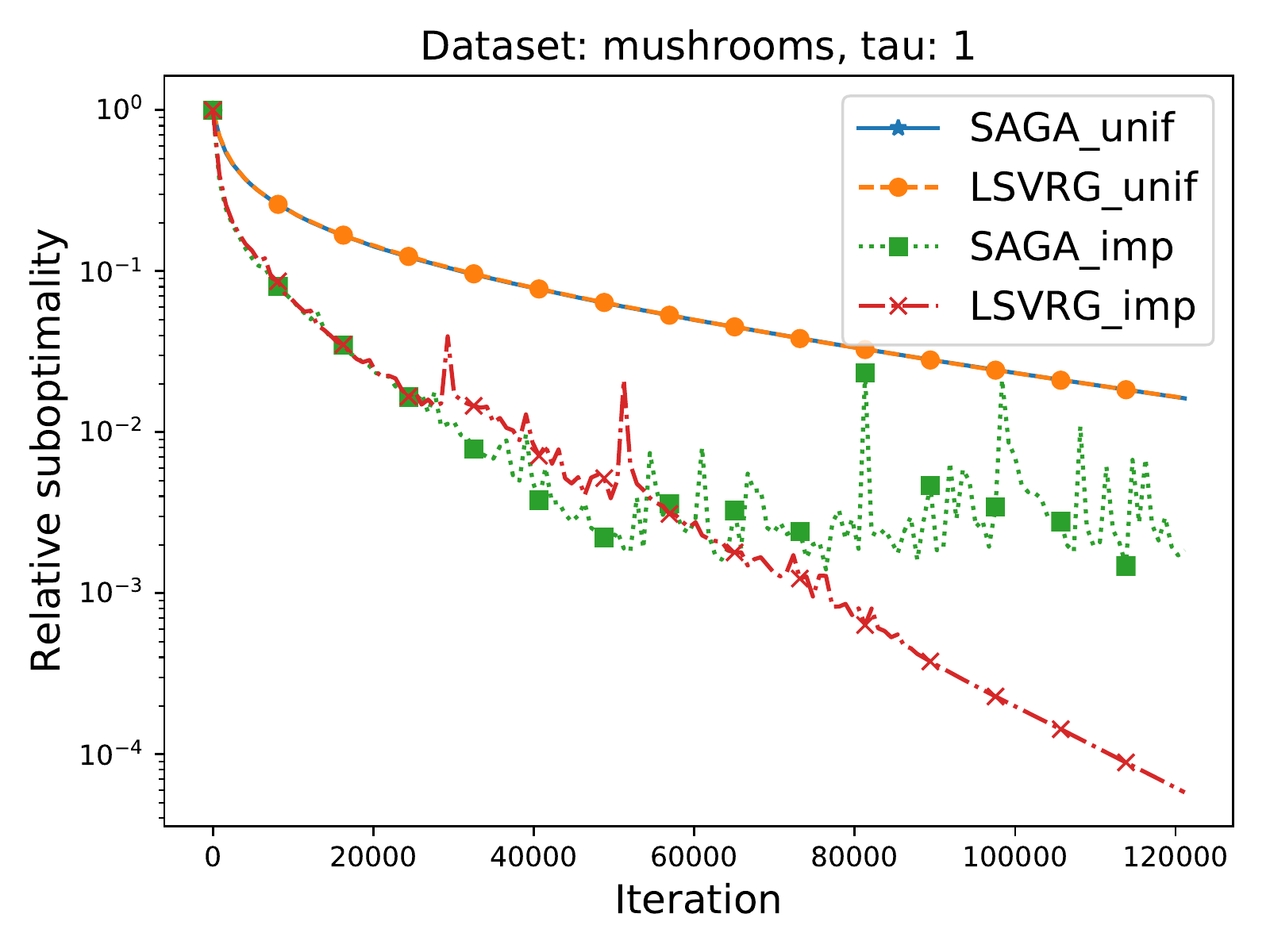}
\end{minipage}%
\begin{minipage}{0.3\textwidth}
  \centering
\includegraphics[width =  \textwidth ]{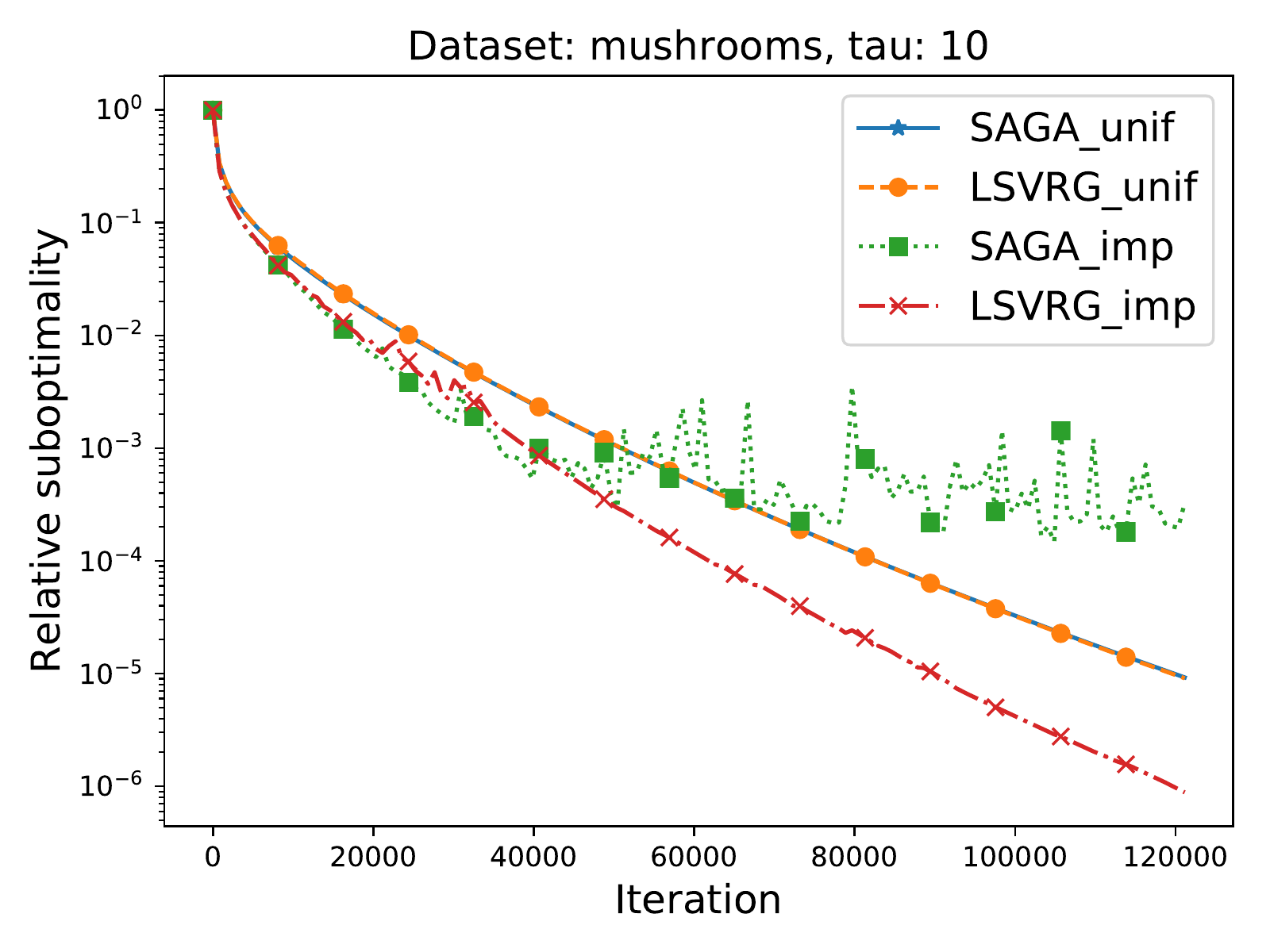}
\end{minipage}%
\begin{minipage}{0.3\textwidth}
  \centering
\includegraphics[width =  \textwidth ]{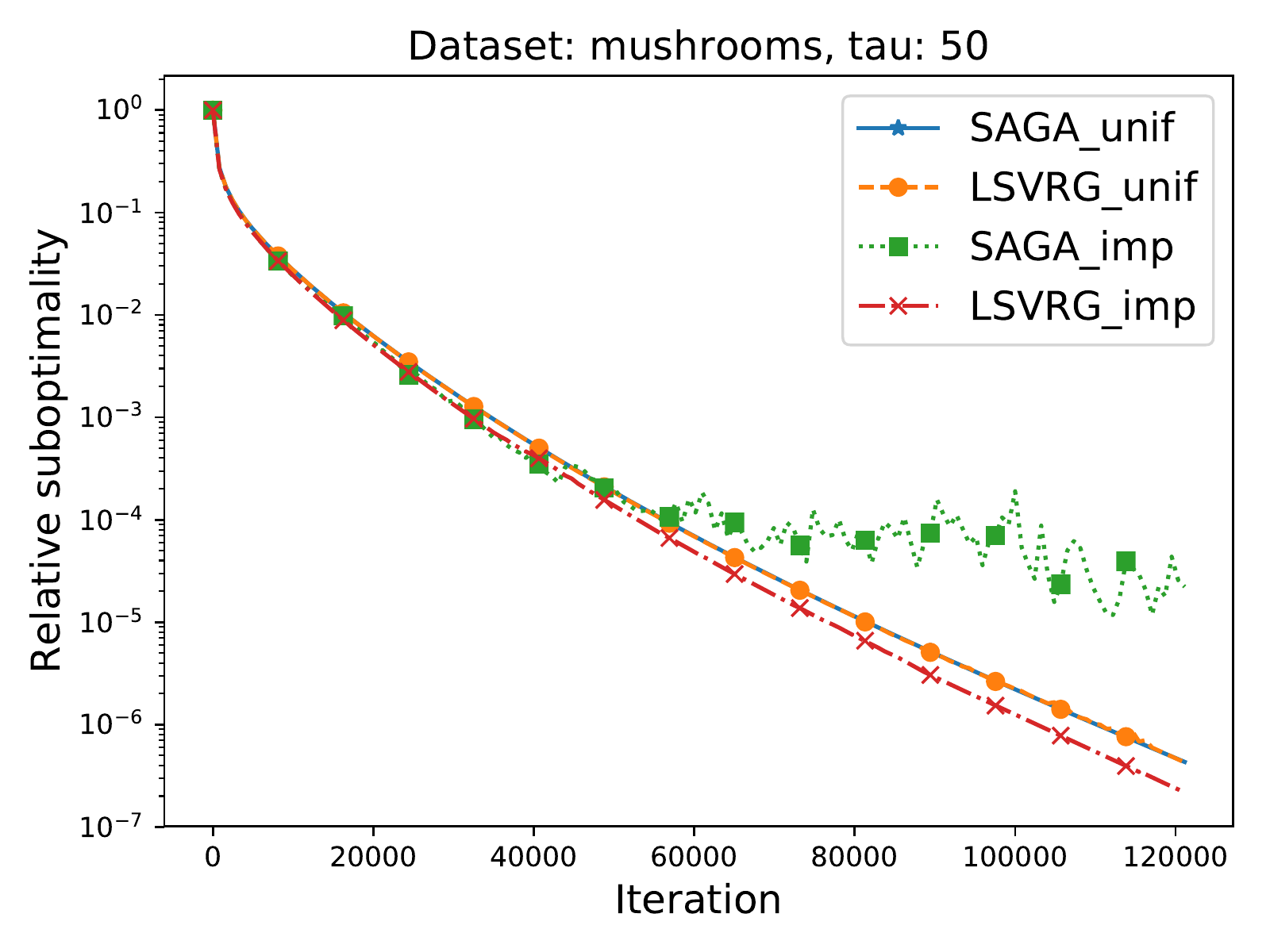}
\end{minipage}%
\\
\begin{minipage}{0.3\textwidth}
  \centering
\includegraphics[width =  \textwidth ]{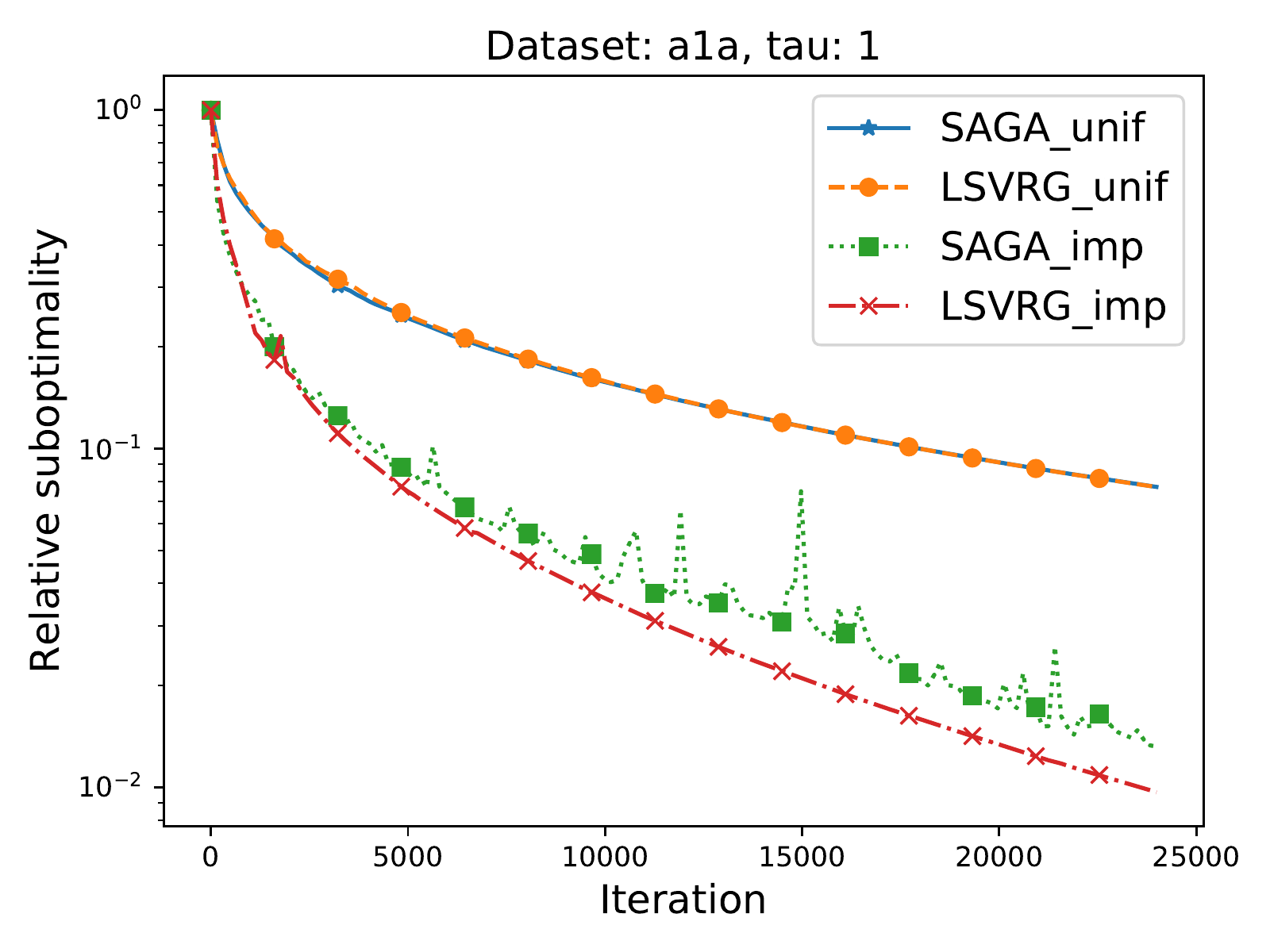}
\end{minipage}%
\begin{minipage}{0.3\textwidth}
  \centering
\includegraphics[width =  \textwidth ]{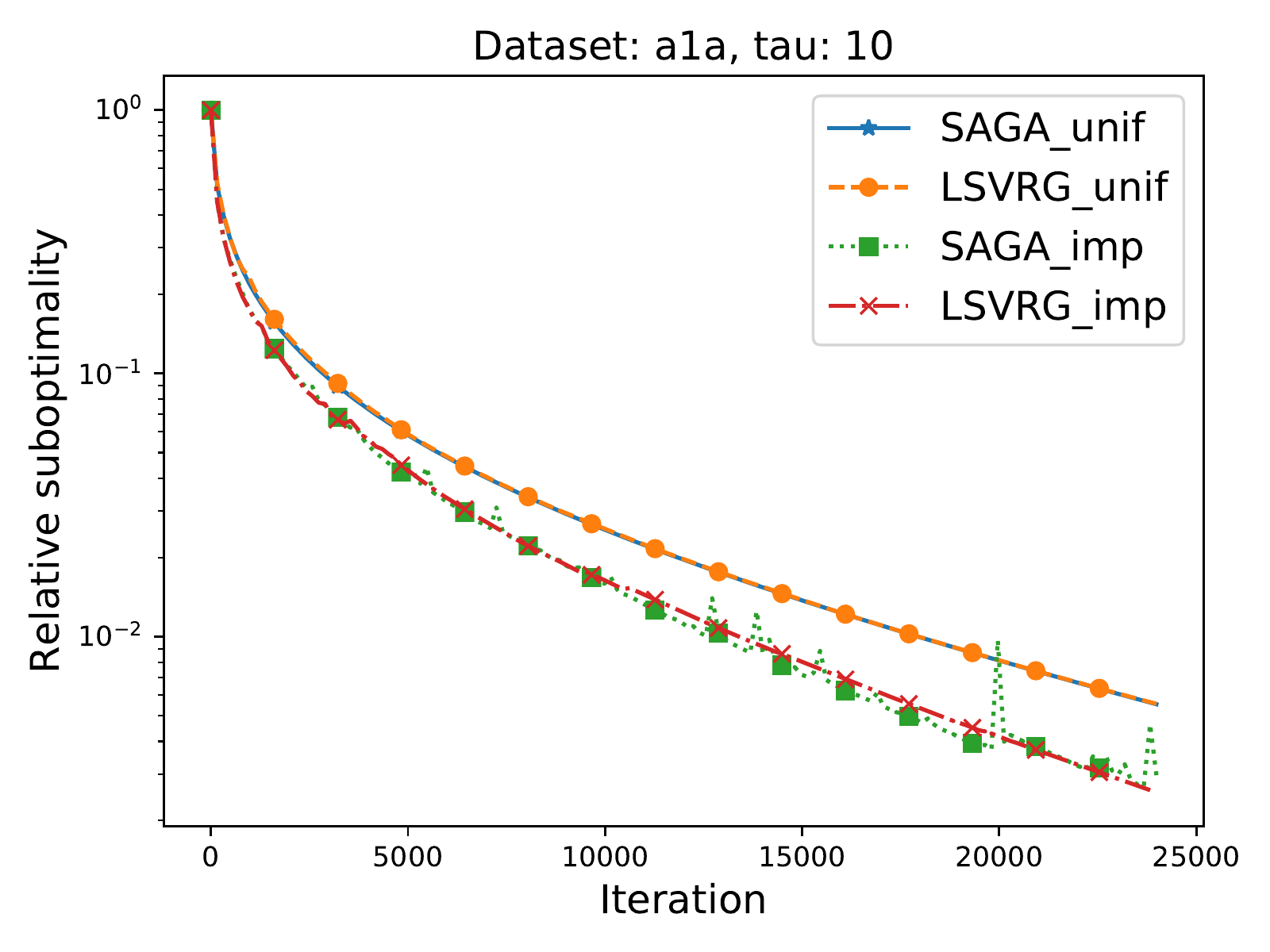}
\end{minipage}%
\begin{minipage}{0.3\textwidth}
  \centering
\includegraphics[width =  \textwidth ]{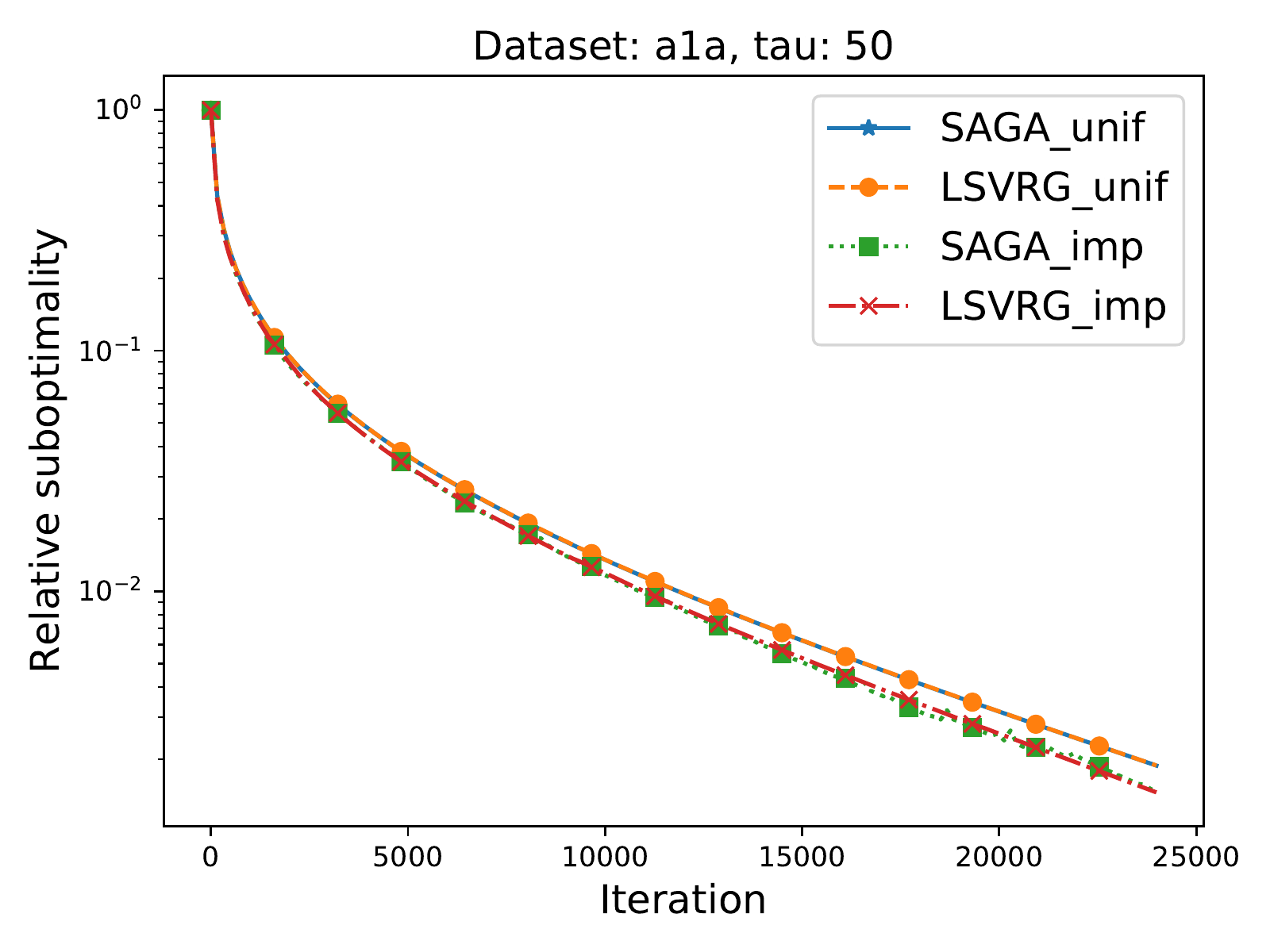}
\end{minipage}%
\\
\begin{minipage}{0.3\textwidth}
  \centering
\includegraphics[width =  \textwidth ]{LSVRGDatasetw1atau1meth3.pdf}
\end{minipage}%
\begin{minipage}{0.3\textwidth}
  \centering
\includegraphics[width =  \textwidth ]{LSVRGDatasetw1atau10meth3.pdf}
\end{minipage}%
\begin{minipage}{0.3\textwidth}
  \centering
\includegraphics[width =  \textwidth ]{LSVRGDatasetw1atau50meth3.pdf}
\end{minipage}%
\\
\begin{minipage}{0.3\textwidth}
  \centering
\includegraphics[width =  \textwidth ]{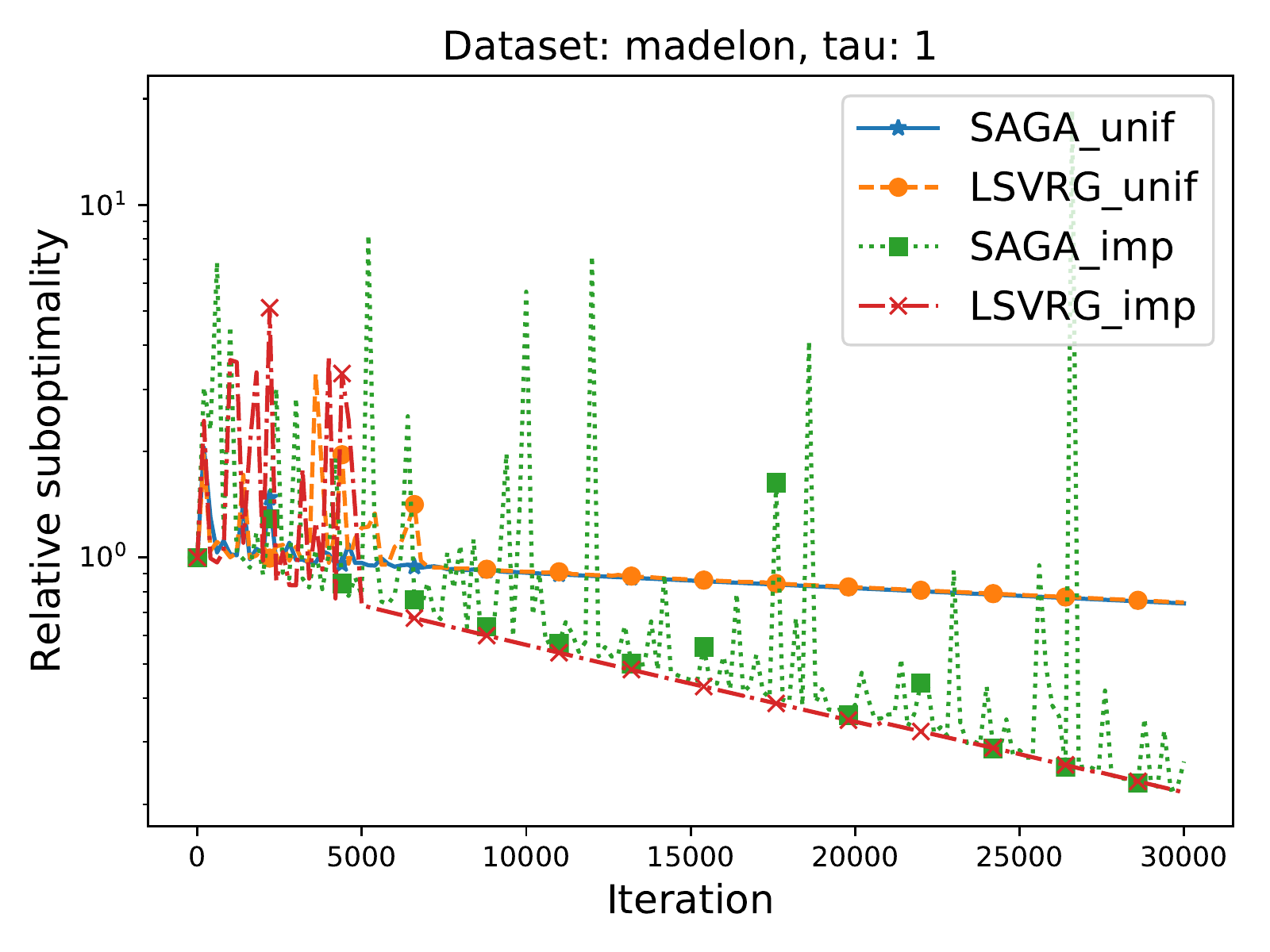}
\end{minipage}%
\begin{minipage}{0.3\textwidth}
  \centering
\includegraphics[width =  \textwidth ]{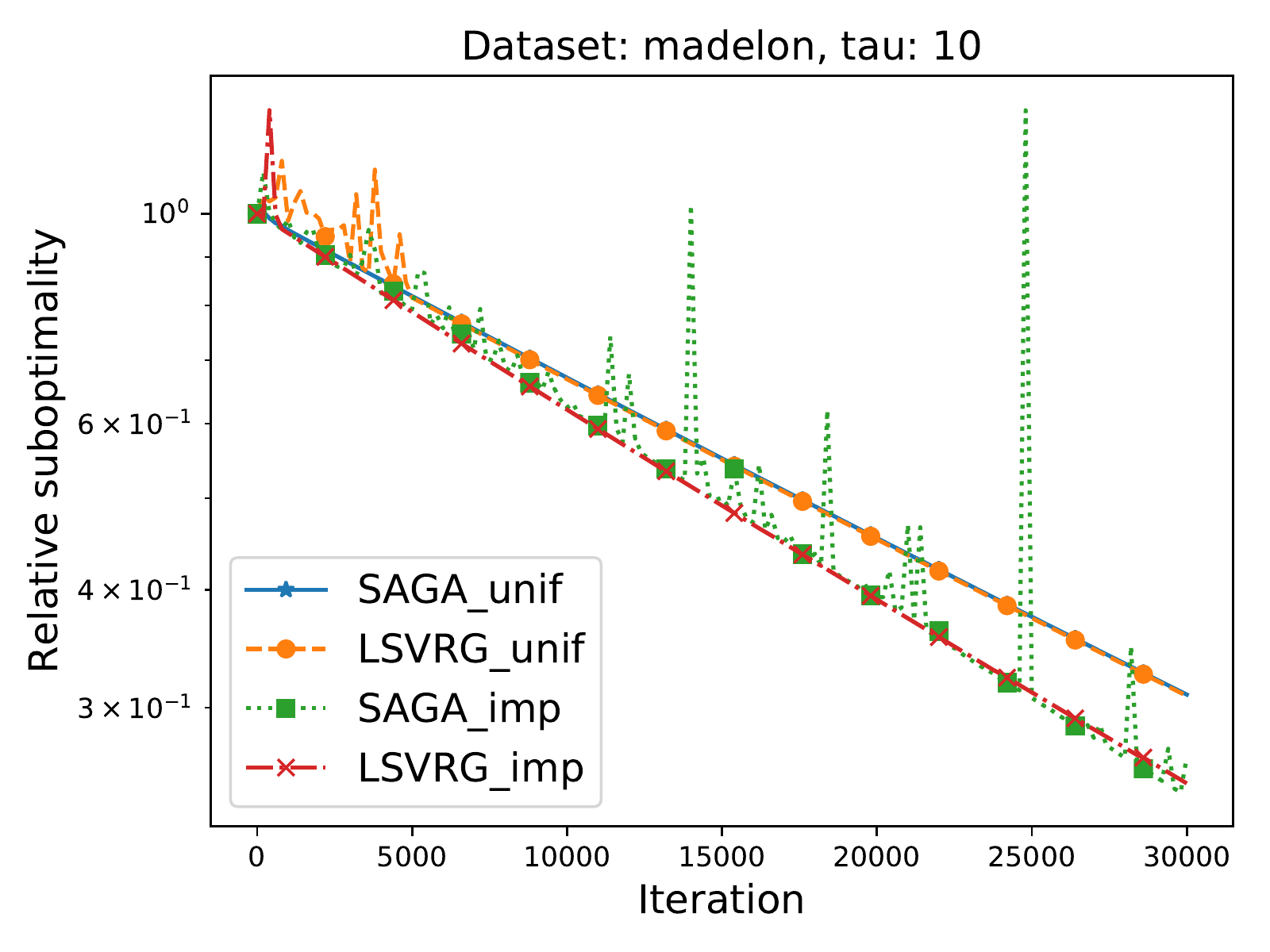}
\end{minipage}%
\begin{minipage}{0.3\textwidth}
  \centering
\includegraphics[width =  \textwidth ]{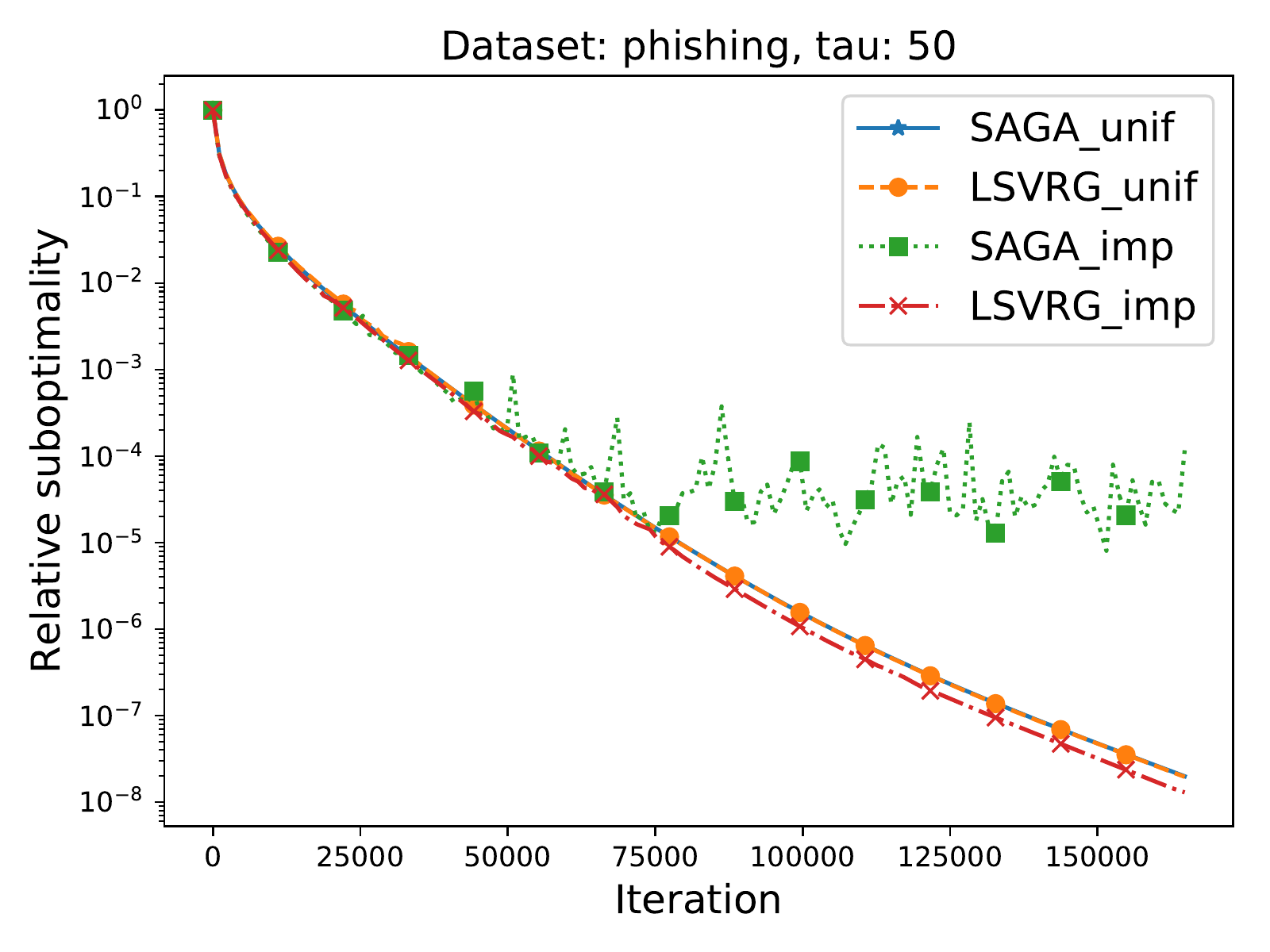}
\end{minipage}%
\\
\begin{minipage}{0.3\textwidth}
  \centering
\includegraphics[width =  \textwidth ]{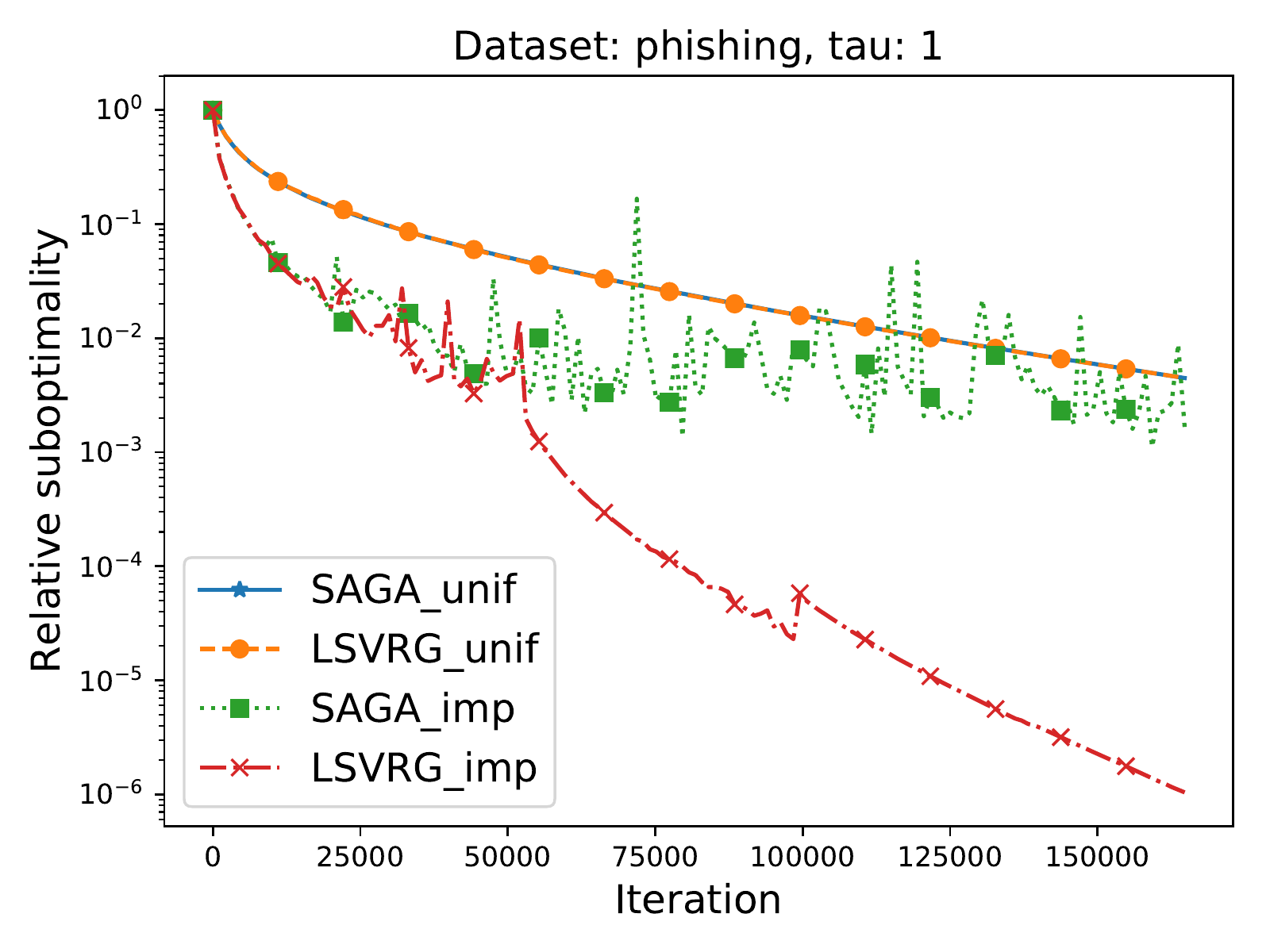}
\end{minipage}%
\begin{minipage}{0.3\textwidth}
  \centering
\includegraphics[width =  \textwidth ]{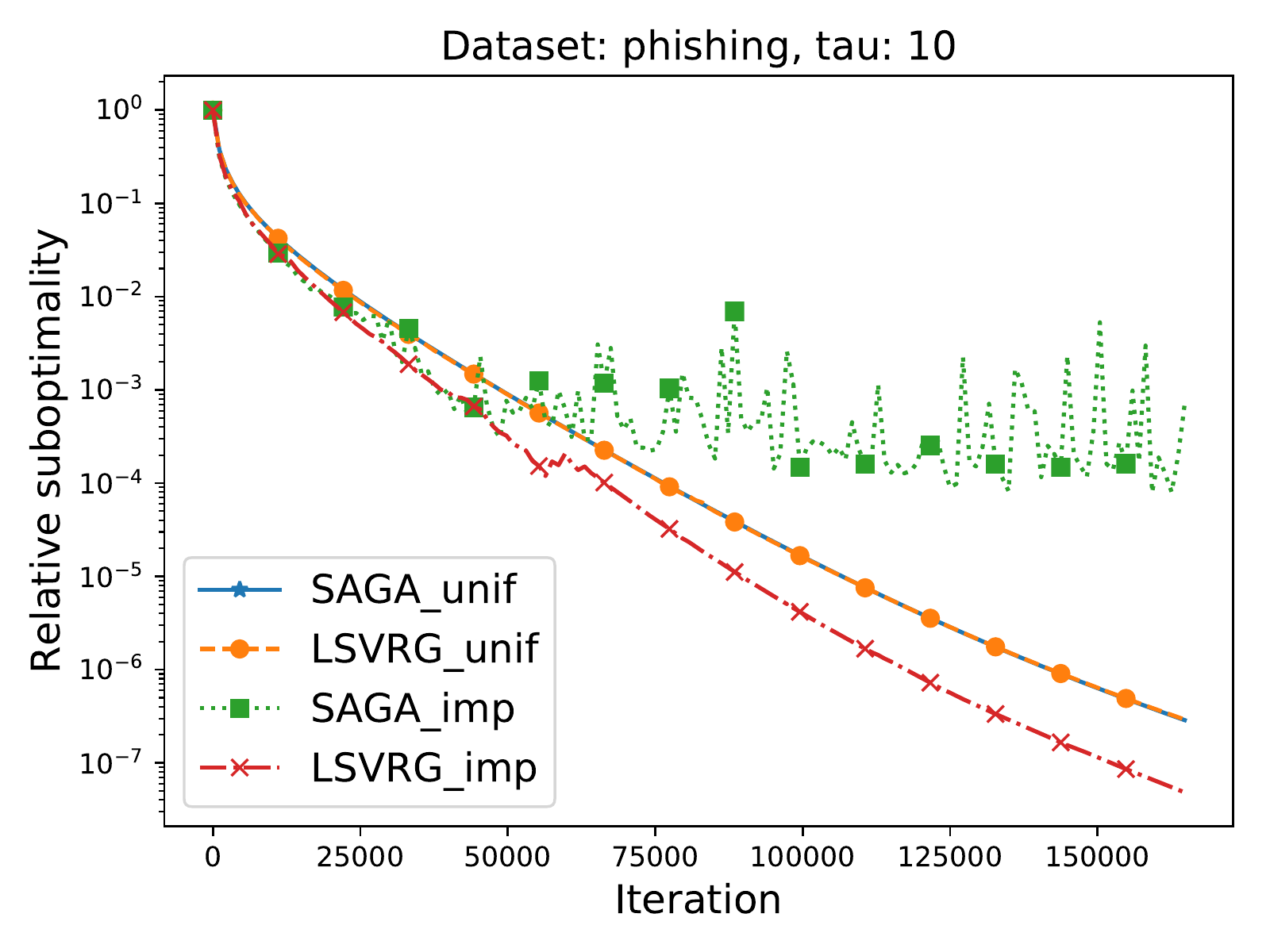}
\end{minipage}%
\begin{minipage}{0.3\textwidth}
  \centering
\includegraphics[width =  \textwidth ]{LSVRGDatasetphishingtau50meth3.pdf}
\end{minipage}%
\\
\begin{minipage}{0.3\textwidth}
  \centering
\includegraphics[width =  \textwidth ]{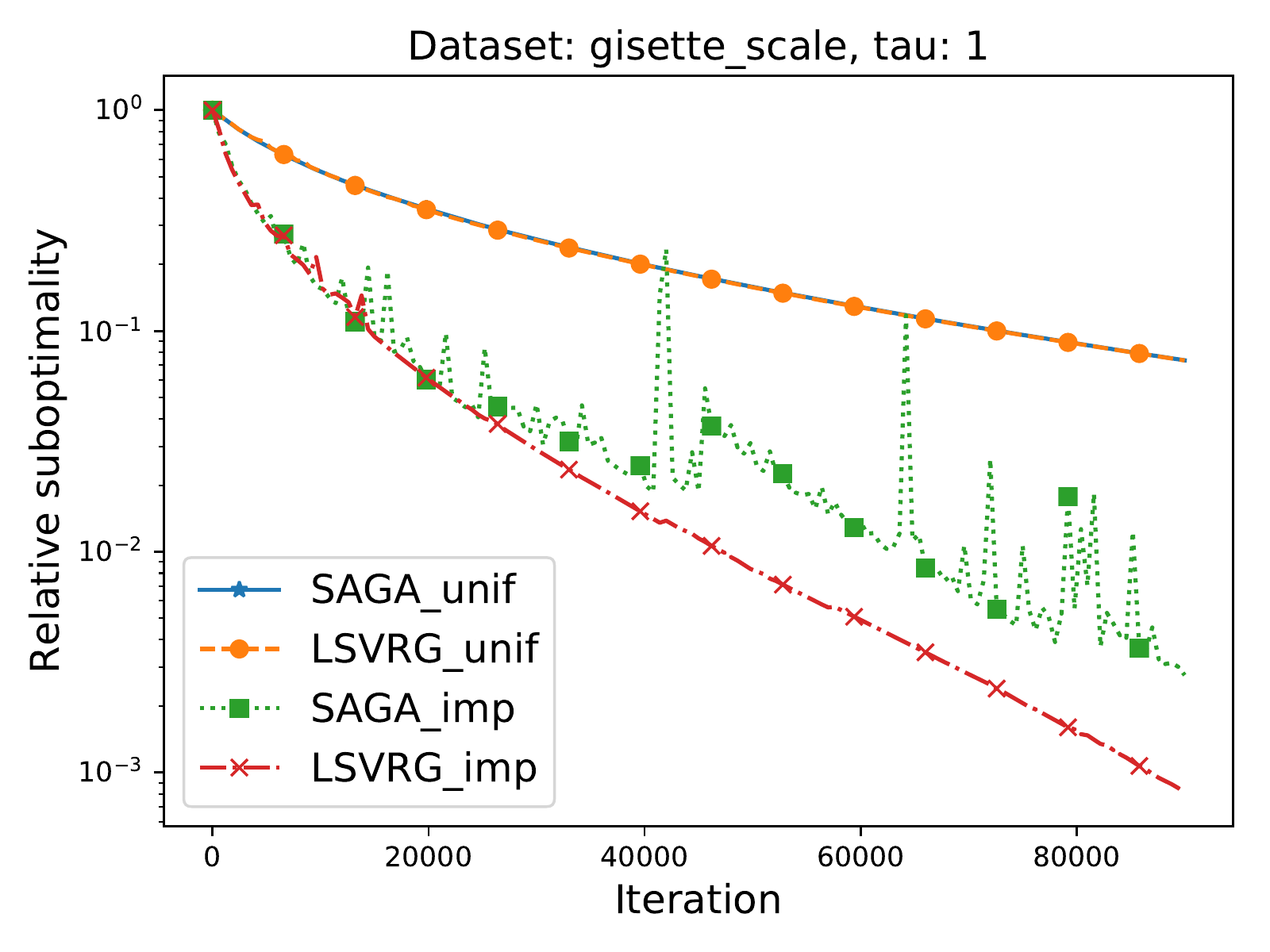}
\end{minipage}%
\begin{minipage}{0.3\textwidth}
  \centering
\includegraphics[width =  \textwidth ]{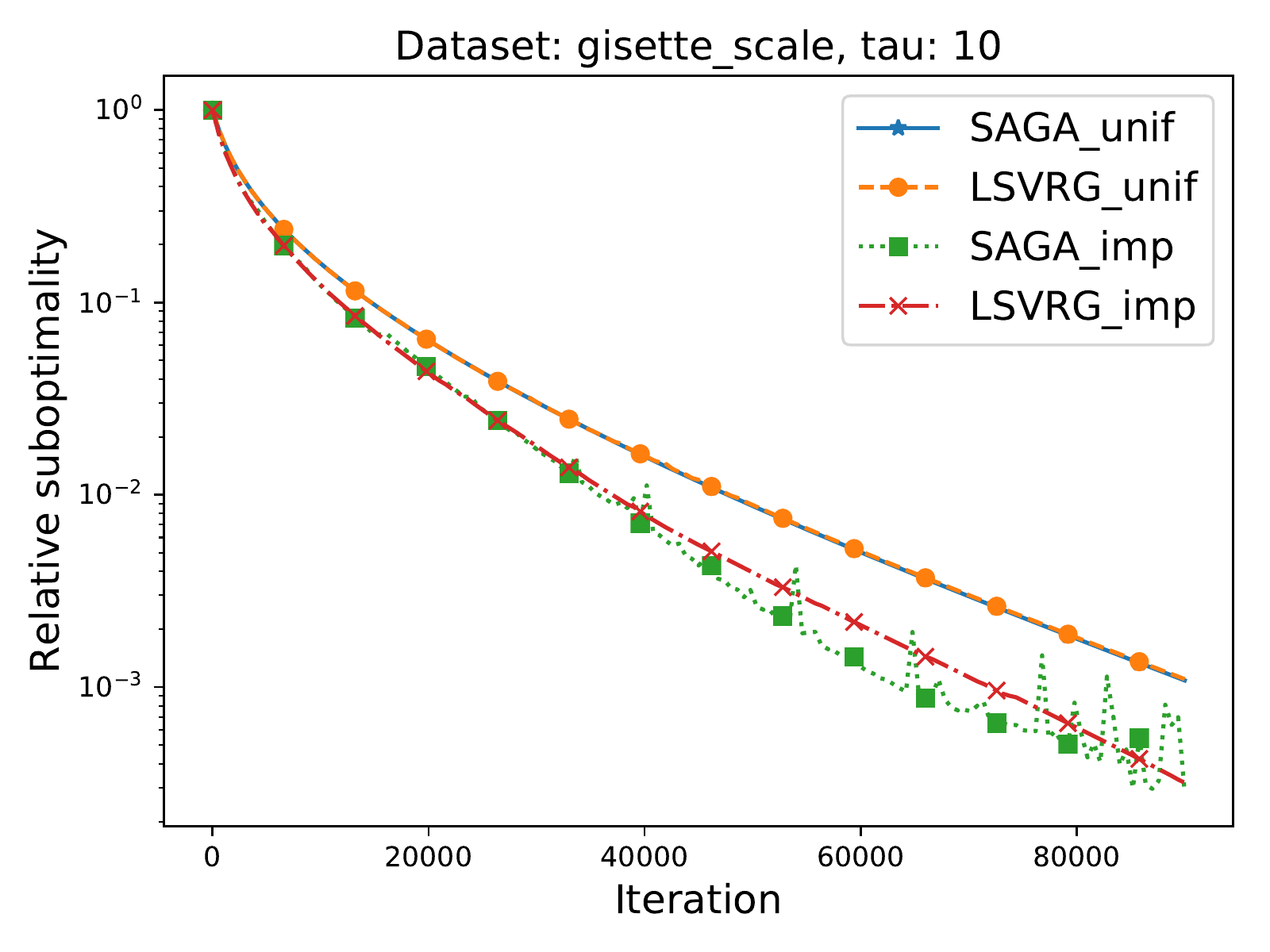}
\end{minipage}%
\begin{minipage}{0.3\textwidth}
  \centering
\includegraphics[width =  \textwidth ]{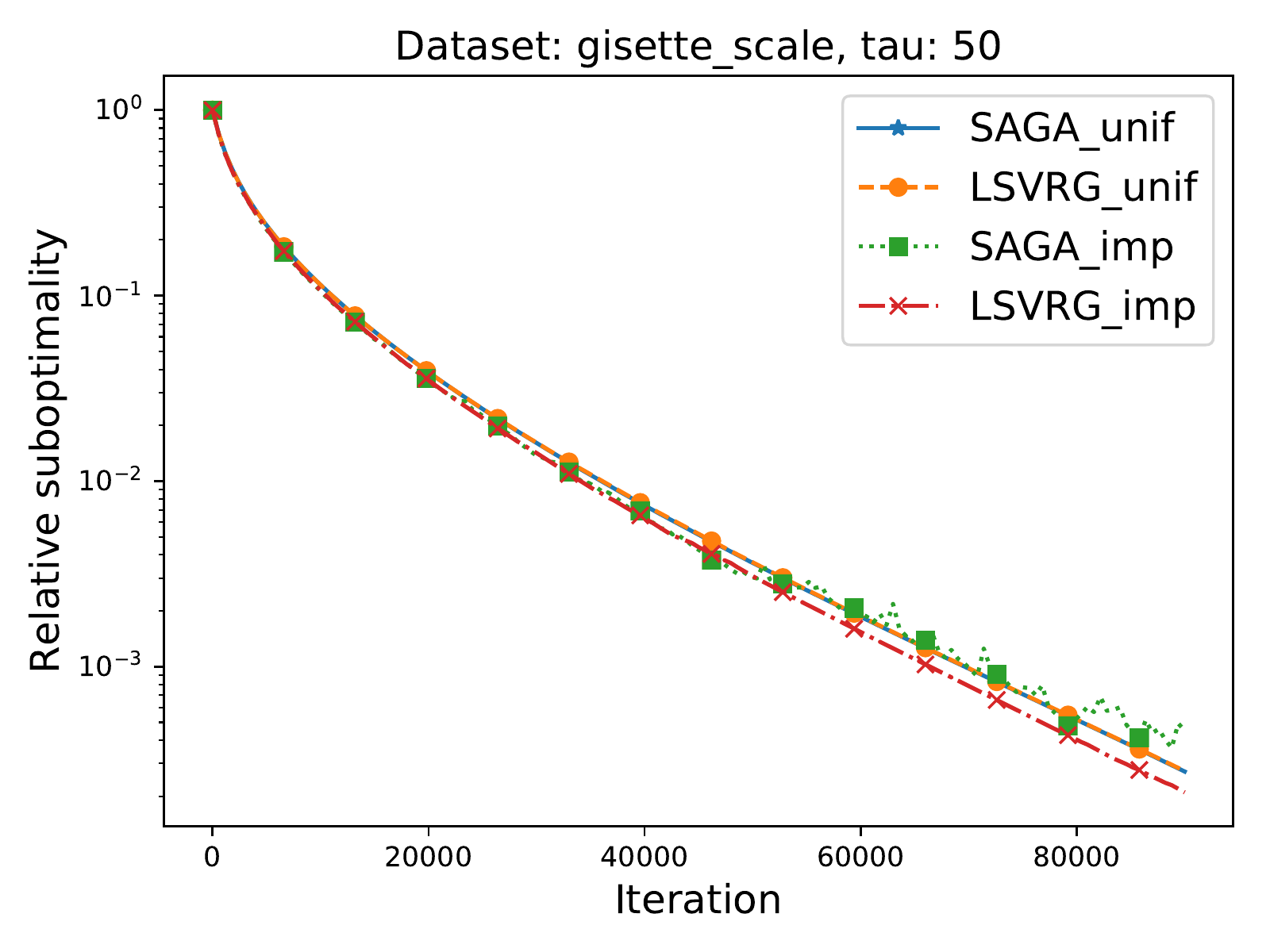}
\end{minipage}
\caption{{\tt LSVRG} applied on LIBSVM~\cite{chang2011libsvm} datasets with $\lambda = 10^{-5}$. Axis $y$ stands for relative suboptimality, i.e. $\frac{f(x^k)-f(x^*)}{f(x^k)-f(x^0)}$.}
\label{fig:LSVRG1}
\end{figure}

\begin{figure}[!h]
\centering
\begin{minipage}{0.3\textwidth}
  \centering
\includegraphics[width =  \textwidth ]{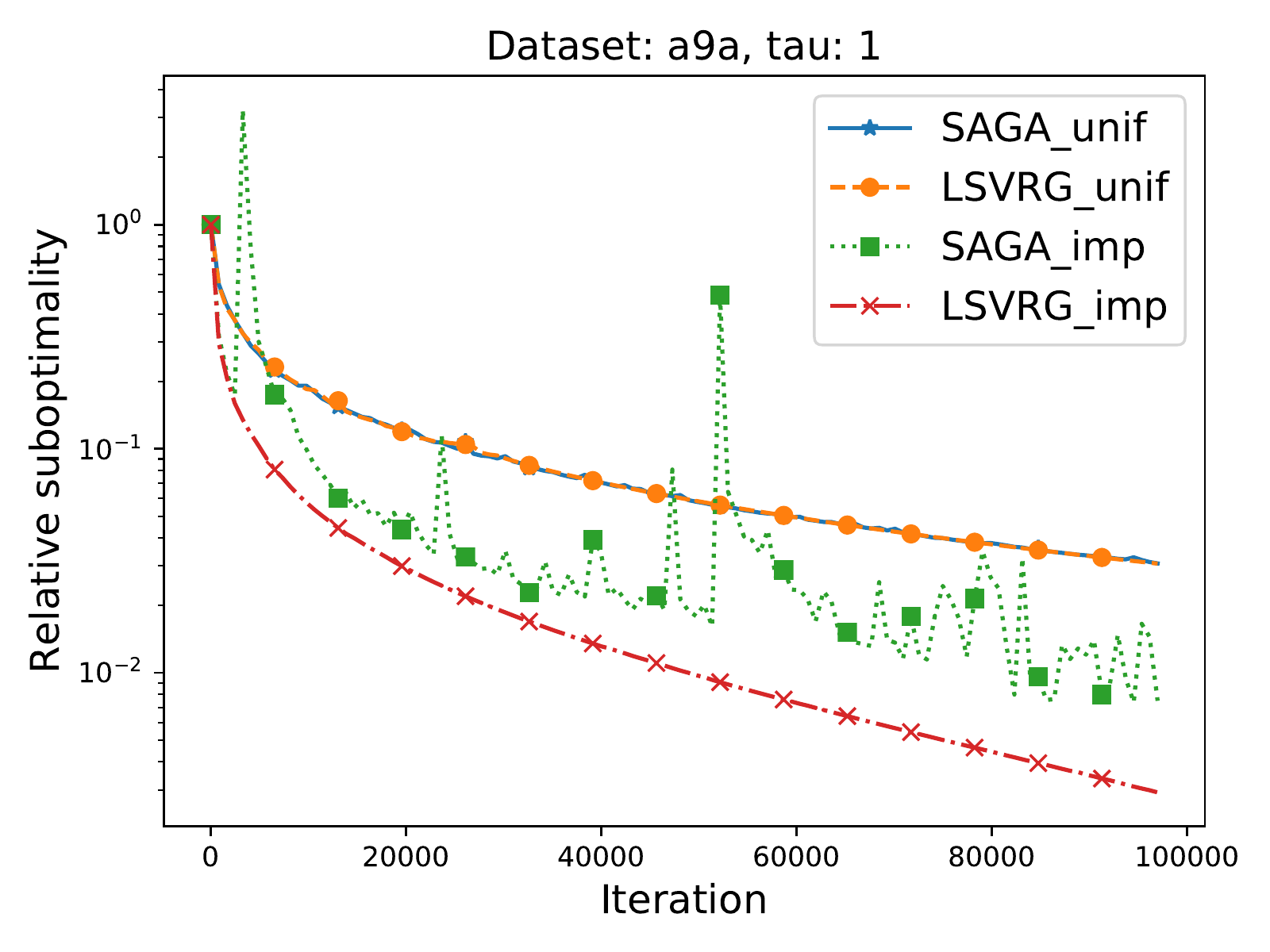}
\end{minipage}%
\begin{minipage}{0.3\textwidth}
  \centering
\includegraphics[width =  \textwidth ]{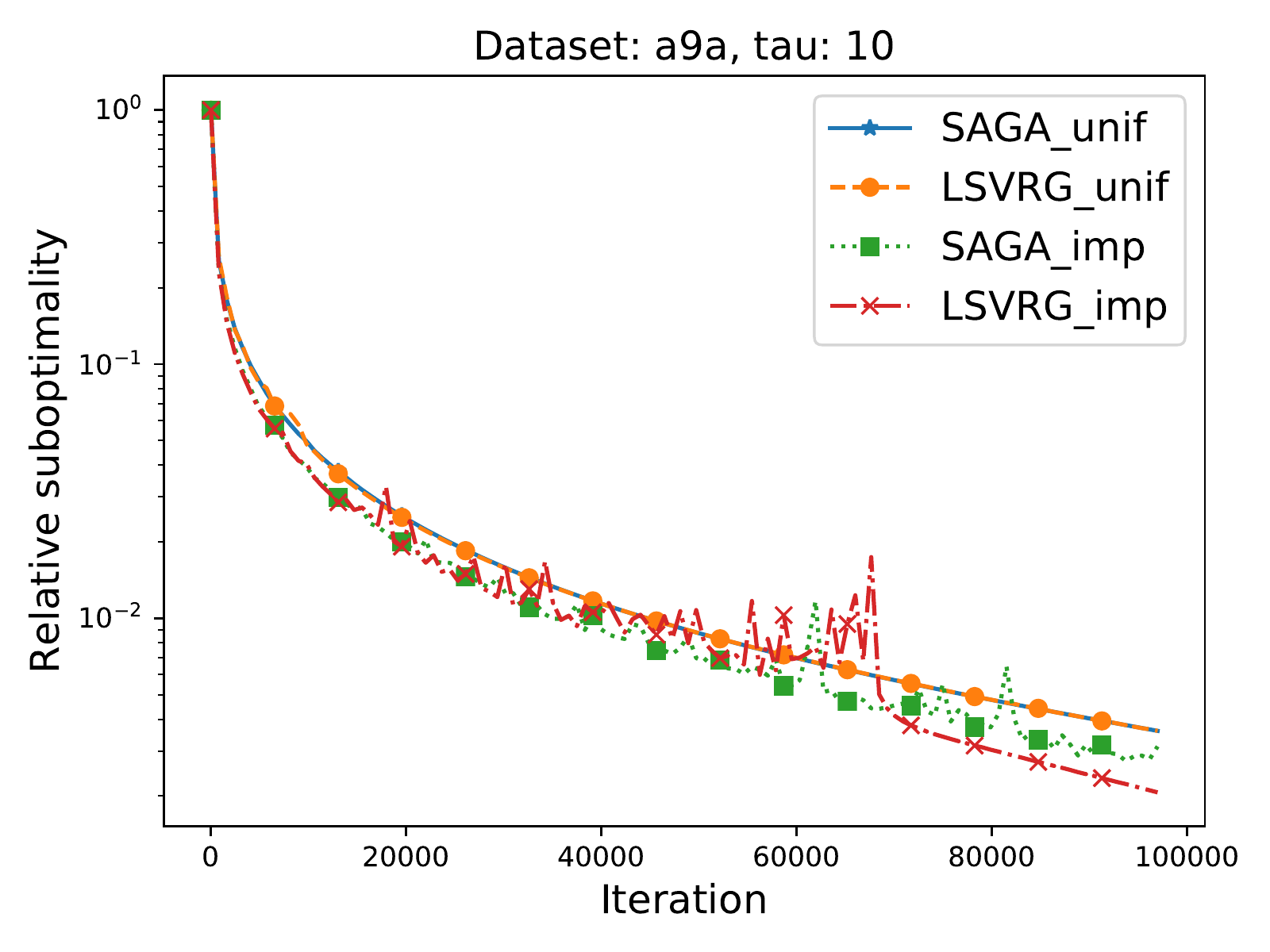}
\end{minipage}%
\begin{minipage}{0.3\textwidth}
  \centering
\includegraphics[width =  \textwidth ]{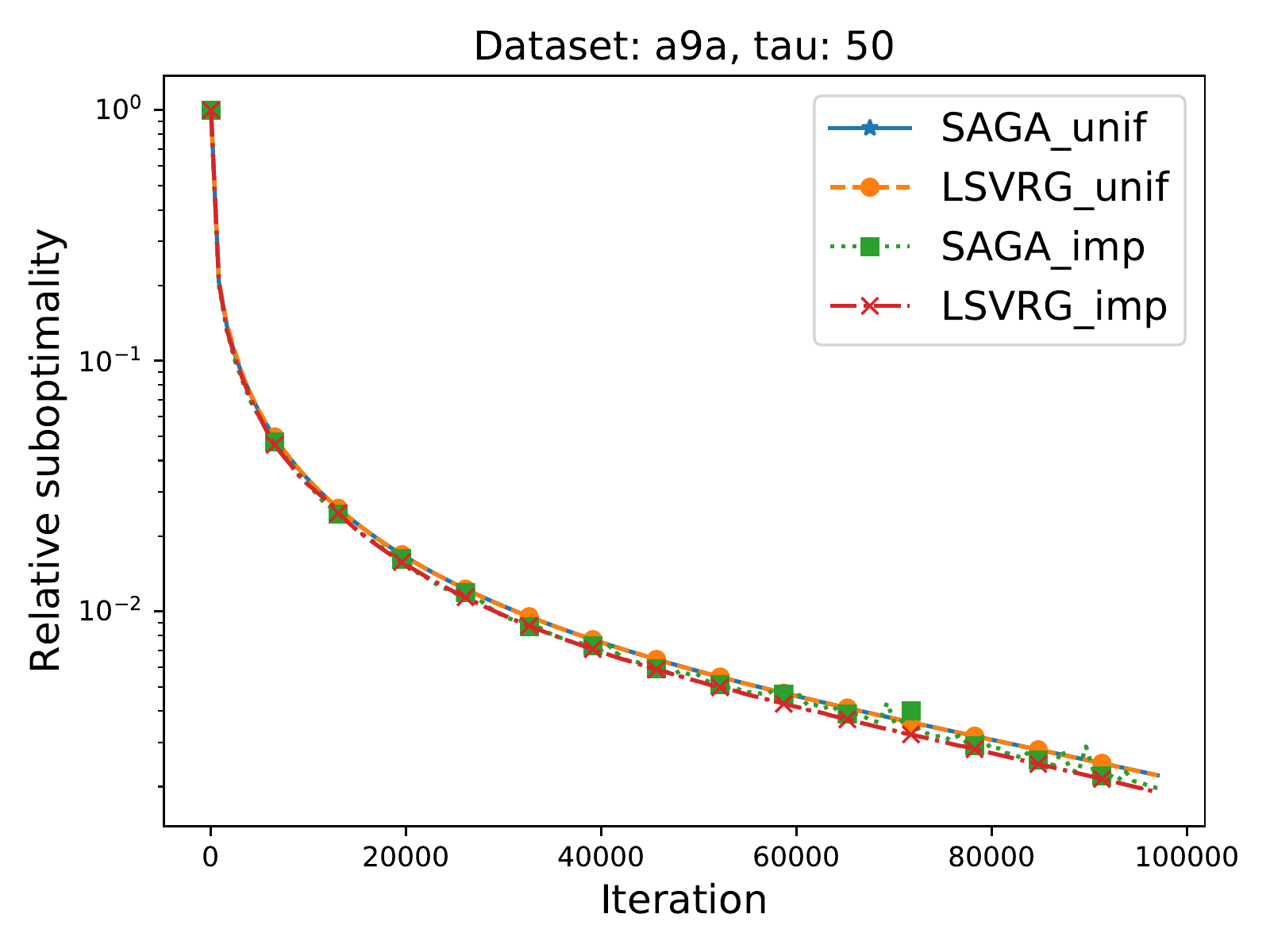}
\end{minipage}
\\
\begin{minipage}{0.3\textwidth}
  \centering
\includegraphics[width =  \textwidth ]{LSVRGDatasetw8atau1meth3.pdf}
\end{minipage}%
\begin{minipage}{0.3\textwidth}
  \centering
\includegraphics[width =  \textwidth ]{LSVRGDatasetw8atau10meth3.pdf}
\end{minipage}%
\begin{minipage}{0.3\textwidth}
  \centering
\includegraphics[width =  \textwidth ]{LSVRGDatasetw8atau50meth3.pdf}
\end{minipage}
\caption{{\tt LSVRG} applied on LIBSVM~\cite{chang2011libsvm} datasets. For {\tt a9a},  $\lambda = 0$ and $\probx = \frac1n$ was chosen; for {\tt w8a}, $\lambda = 10^{-8}$ and $\probx = \frac3n$ was chosen. Axis $y$ stands for relative suboptimality, i.e. $\frac{f(x^k)-f(x^*)}{f(x^k)-f(x^0)}$.
}
\label{fig:LSVRG2}
\end{figure}
 
\clearpage

\section{Several Lemmas}
 
\subsection{Existence lemma} 
\begin{lemma} \label{lem:existence}
Suppose that $\cX \in \Range{\cM}$. Denote $\piop(\mX)\eqdef \cU\mX\eR$. Suppose that $ \E{\left( \piop\cM^{\frac12} \right)^*\piop \cM^{\frac12} }$ exists and $\lambda_{\min}\left(\E{\cS}\right)>0$. Then, there are $\alpha>0$ and $\cB$ such that~\eqref{eq:small_step} and~\eqref{eq:small_step2} hold. Moreover, inequalities \eqref{eq:small_step}, \eqref{eq:small_step2} hold for $\alpha =0, \cB=0$ without any extra assumptions. 

\end{lemma}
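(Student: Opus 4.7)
The ``moreover'' part is immediate: with $\alpha=0$ and $\cB=0$, every term in \eqref{eq:small_step} and \eqref{eq:small_step2} carrying a factor of $\alpha$ or of $\cB$ vanishes identically, leaving the trivial inequalities $0\le 0$ and $0\le \tfrac{1}{n}\|{\cM^\dagger}^{\nicefrac12}\mX\|^2$; no extra hypothesis on $\cU$, $\cS$, or $\mX$ is needed.

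For the existence of a non-trivial pair, the plan is to try the scalar ansatz $\cB = \beta\cI$ with $\beta>0$ to be tuned (it trivially commutes with $\cS$, as required by Theorem~\ref{thm:main}). Both \eqref{eq:small_step} and \eqref{eq:small_step2} then reduce to comparisons between three quadratic forms in $\mX$:
\[
A(\mX) \eqdef \tfrac{1}{n^2}\E{\|\piop\mX\|^2},\qquad C(\mX) \eqdef \|{\cM^\dagger}^{\nicefrac12}\mX\|^2,\qquad B(\mX) \eqdef \|\E{\cS}^{\nicefrac12}{\cM^\dagger}^{\nicefrac12}\mX\|^2.
\]
The main technical step is to control $A$ and $B$ by $C$ using the two lemma hypotheses. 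For $\mX\in\Range{\cM}$, write $\mX = \cM^{\nicefrac12}\mY$ with $\mY = {\cM^\dagger}^{\nicefrac12}\mX$; then $\E{\|\piop\mX\|^2} = \langle \cT\mY,\mY\rangle$, where $\cT \eqdef \E{(\piop\cM^{\nicefrac12})^\ast\piop\cM^{\nicefrac12}}$ exists by assumption with finite norm $\tau \eqdef \|\cT\|$, yielding $A(\mX)\le \tfrac{\tau}{n^2}\,C(\mX)$. For the sandwich on $B$, note that $\cS=\cP_\cS$ is an orthogonal projection so $\E{\cS}\preceq\cI$ and hence $B(\mX)\le C(\mX)$; on the other hand, since $\mu \eqdef \lambda_{\min}(\E{\cS})>0$ and ${\cM^\dagger}^{\nicefrac12}$ commutes with $\E{\cS}$ (inherited from its commutativity with $\cS$), the spectral bound $\E{\cS}\succeq\mu\cI$ transfers to $B(\mX)\ge \mu\,C(\mX)$.

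Plugging these bounds into \eqref{eq:small_step} and \eqref{eq:small_step2} reduces the lemma to the scalar system in $(\alpha,\beta)$:
\[
\tfrac{2\alpha\tau}{n^2}\le \beta^2(\mu-\alpha\sigma),\qquad \tfrac{2\alpha\tau}{n^2}+\beta^2\le \tfrac{1}{n}.
\]
This system has a non-empty positive solution set; for concreteness, pick $\beta^2 \eqdef \tfrac{1}{2n}$ and any $\alpha>0$ small enough that $\alpha\sigma\le \mu/2$ and $\tfrac{2\alpha\tau}{n^2}\le \tfrac{1}{4n}$ (e.g.\ $\alpha \eqdef \min\{\tfrac{\mu}{2\sigma},\,\tfrac{n}{8\tau}\}$), and both inequalities hold. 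The only slightly delicate point is the lower bound $B\ge\mu C$, which genuinely uses the commutativity of ${\cM^\dagger}^{\nicefrac12}$ with $\cS$ so that the operator inequality $\E{\cS}\succeq \mu\cI$ really propagates through the insertion of ${\cM^\dagger}^{\nicefrac12}$; the rest is arithmetic on the scalar system. Conceptually, the lemma just says that the two inequalities, viewed as a system in $(\alpha,\beta)$, admit a positive-$\alpha$ solution as soon as the noise operator $\cT$ is bounded and $\E{\cS}$ is non-degenerate.
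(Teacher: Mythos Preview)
Your approach is essentially the same as the paper's: both substitute $\mY = {\cM^\dagger}^{1/2}\mX$, bound $\E{\|\piop\mX\|^2}$ via the operator norm $\tau$ of $\cT = \E{(\piop\cM^{1/2})^*\piop\cM^{1/2}}$, use the spectral lower bound $\mu = \lambda_{\min}(\E{\cS})$, and reduce to a pair of scalar inequalities in $(\alpha,\beta)$. The paper keeps $\cB$ general and works with $\lambda_{\min}(\cB^*\cB)$, $\lambda_{\max}(\cB^*\cB)$, but this generality is not used; your specialization $\cB=\beta\cI$ loses nothing.

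Two minor points. First, your concrete example $\alpha = \min\{\mu/(2\sigma),\, n/(8\tau)\}$ does not actually satisfy the first scalar inequality when $\mu<1$: with $\beta^2=\tfrac{1}{2n}$ and $\alpha\sigma\le\mu/2$ you need $\tfrac{2\alpha\tau}{n^2}\le \tfrac{\mu}{4n}$, i.e.\ $\alpha\le \tfrac{n\mu}{8\tau}$, whereas your bound only gives $\tfrac{2\alpha\tau}{n^2}\le \tfrac{1}{4n}$. The existence claim is fine; the example just needs an extra factor of $\mu$. Second, the lower bound $B(\mX)\ge\mu\,C(\mX)$ does \emph{not} require commutativity of ${\cM^\dagger}^{1/2}$ with $\cS$: since $\E{\cS}$ is self-adjoint with $\E{\cS}\succeq\mu\cI$, one has $\langle\E{\cS}\mZ,\mZ\rangle\ge\mu\|\mZ\|^2$ for any $\mZ$, in particular $\mZ={\cM^\dagger}^{1/2}\mX$. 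So that hypothesis is not ``genuinely used'' here (it is used elsewhere, in Theorem~\ref{thm:main}).
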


\begin{proof}
Consider only $\alpha, \cB$ such that that $\alpha< \lambda_{\min}\left(\E{\cS}\right) \sigma^{-1}$, $\lambda_{\min}\left(\cB^*\cB\right)>0$, $\lambda_{\max}\left(\cB^*\cB\right)<\infty$.
Let $\mY  = {\cM^\dagger}^{\frac12} \mX$. Thus we have $  \E{ \norm{ \cU  \mX \eR }^2 } \leq   \|\mY\|^2\lambda_{\max}  \E{\left( \piop\cM^{\frac12} \right)^*\piop \cM^{\frac12} }$. 

 Thus 
\begin{eqnarray*}
  (1-\alpha \sigma) \NORMG{ \cB\mY } - \NORMG{  \left(\cI - \E{\cS} \right)^{\frac12}\cB  \mY} 
  &=& \<(\cB\mY)^\top, ( \E{\cS} - \alpha\sigma \cI) \cB\mY > \\
  &\geq& \left(\lambda_{\min}\left(\E{\cS}\right) - \alpha \sigma\cI\right)\| \cB\mY\|^2 \\
  & \geq&   \left(\lambda_{\min}\left(\E{\cS}\right) - \alpha \sigma\cI \right)\lambda_{\min}\left(\cB^*\cB\right)\| \mY\|^2\,.
\end{eqnarray*}
Therefore, to have~\eqref{eq:small_step}, it suffices to set
\[
\alpha \leq  \frac{ \lambda_{\min}\left(\E{\cS}\right) \lambda_{\min}\left(\cB^*\cB\right)}{  \sigma \lambda_{\min}\left(\cB^*\cB\right) + \frac{2}{n^2}  \lambda_{\max}  \left[\E{\left( \piop\cM^{\frac12} \right)^*\piop \cM^{\frac12} }\right]}.
\]
Similarly, to satisfy~\eqref{eq:small_step2}, it suffices to have
\[
\frac{2\alpha}{n} \lambda_{\max}\left(\E{\left( \piop\cM^{\frac12} \right)^*\piop \cM^{\frac12} } \right) +  n\lambda_{\min}\left(\E{\cS}\right) \lambda_{\max}\left(\cB^*\cB\right) \leq 1.
\]
A valid choice to satisfy the above is for example $\alpha, \cB$ such that
\[
\lambda_{\max}\left(\cB^*\cB\right) \leq \frac{1}{2n\lambda_{\min} \left(\E{\cS}\right) }, \quad \alpha \leq  \frac{1}{\frac{1}{n} \lambda_{\max}\left(\E{\piop \left(\cM^{\frac12} \right)^*\piop \cM^{\frac12} }\right) }.
\]

\end{proof}
 
 \subsection{Smoothness lemmas}
 
Let $h:\R^d\to \R$ be a differentiable and convex function. The  Bregman distance of $x$ and $y$ with respect to $h$ is defined by
\begin{equation} \label{eq:b987gf98f}
D_h(x,y) \eqdef h(x) - h(y) - \<\nabla h(y),x-y >.
\end{equation}

 \begin{lemma}[Lemma A.1 from~\cite{hanzely2018sega}]\label{lem:smooth22}
 Suppose that function $h:\R^d\to \R$ is convex and $\mM$-smooth, where $\mM\succeq 0$. Then 
\begin{equation}\label{eq:smooth_lemma}
D_h(x,y) \geq  \frac12 \norm{\nabla h(y)-\nabla h(x)}^2_{\mM^{\dagger}}, \quad \forall x,y \in \R^d.
\end{equation}
Further, 
\begin{equation} \label{eq:smooth_dotprod}
\<\nabla h(x) - \nabla h(y), x-y> \geq \| \nabla h(x) - \nabla h(y)\|^2_{\mM^{\dagger}}.
\end{equation}
 \end{lemma}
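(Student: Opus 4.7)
The plan is to prove the two inequalities separately, treating (\ref{eq:smooth_lemma}) as the main ingredient and obtaining (\ref{eq:smooth_dotprod}) from it by a simple symmetrization.

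For (\ref{eq:smooth_lemma}), the first step is to introduce the auxiliary function $\phi(z) \eqdef h(z) - \langle \nabla h(y), z\rangle$. Since $h$ is convex and the subtracted term is linear, $\phi$ is convex and inherits $\mM$-smoothness from $h$; moreover, $\nabla \phi(y) = 0$, so $y$ is a global minimizer of $\phi$. The key step is then to feed a well-chosen probe point into the upper smoothness inequality at $x$. Applied at $z = x - \mM^{\dagger}\nabla \phi(x)$, the bound
\[
\phi(z) \leq \phi(x) + \langle \nabla \phi(x), z - x\rangle + \tfrac{1}{2}\|z-x\|_{\mM}^2
\]
simplifies, after using $(\mM^{\dagger})^\top = \mM^{\dagger}$ and $\mM^{\dagger}\mM\mM^{\dagger} = \mM^{\dagger}$, to $\phi(z) \leq \phi(x) - \tfrac{1}{2}\|\nabla \phi(x)\|_{\mM^{\dagger}}^2$. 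Combining this with $\phi(y)\leq \phi(z)$ and unpacking $\phi$ yields $D_h(x,y) = \phi(x) - \phi(y) \geq \tfrac{1}{2}\|\nabla h(x) - \nabla h(y)\|_{\mM^{\dagger}}^2$, which is (\ref{eq:smooth_lemma}).

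For (\ref{eq:smooth_dotprod}), the plan is to invoke (\ref{eq:smooth_lemma}) twice, once as stated and once with the roles of $x$ and $y$ swapped, and then add the two inequalities. On the left-hand side the function values $h(x)$ and $h(y)$ cancel, leaving $D_h(x,y) + D_h(y,x) = \langle \nabla h(x) - \nabla h(y), x - y\rangle$, while the right-hand side becomes $\|\nabla h(x) - \nabla h(y)\|_{\mM^{\dagger}}^2$. This yields (\ref{eq:smooth_dotprod}) immediately.

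The only subtle point, and the thing worth double-checking rather than a real obstacle, is that the probe point $z = x - \mM^{\dagger}\nabla\phi(x)$ must yield a valid simplification even when $\mM$ is singular and $\nabla\phi(x)$ may have a component in $\Null{\mM}$. This is exactly where the identity $\mM^{\dagger}\mM\mM^{\dagger} = \mM^{\dagger}$ does the work: components of $\nabla \phi(x)$ in $\Null{\mM}$ are annihilated identically in both $\langle \nabla\phi(x), \mM^{\dagger}\nabla\phi(x)\rangle$ and $\|\mM^{\dagger}\nabla\phi(x)\|_{\mM}^2$, so the computation above goes through verbatim with the pseudoinverse in place of an inverse. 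No additional regularity on $\nabla h(x) - \nabla h(y)$ is required.
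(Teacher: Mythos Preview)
Your proof is correct and follows essentially the same approach as the paper: the same auxiliary function $\phi(z) = h(z) - \langle \nabla h(y), z\rangle$, the same probe point $x - \mM^{\dagger}\nabla\phi(x)$, the same pseudoinverse identities, and the same symmetrization to obtain \eqref{eq:smooth_dotprod} from \eqref{eq:smooth_lemma}. Your closing remark on why the singular case causes no trouble is a nice addition but not a departure from the paper's argument.
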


\begin{proof}
Fix $y$ and consider the function $\phi(x) \eqdef h(x) - \<\nabla h(y), x >$. Clearly, $\phi$ is $\mM$-smooth, and hence
\begin{equation}\label{eq:n98hf8gf}
\phi(x+d) \leq \phi(x) + \langle \nabla \phi(x),d\rangle + \frac{1}{2}\|d\|_{\mM}^2, \quad \forall x,d\in \R^d.
\end{equation}
 Moreover, since $h$ is convex, $\phi$ is convex, non-negative and is minimized at  $y$. Letting $t=\nabla h(x)-\nabla h(y)$, this implies that
\begin{eqnarray*}
\phi(y) &\leq & \phi \left(x - \mM^{\dagger}t \right) \\
&\overset{\eqref{eq:n98hf8gf}}{\leq} & \phi(x) -  \langle \nabla \phi(x),\mM^{\dagger}t\rangle + \frac{1}{2}\|\mM^{\dagger}t\|_{\mM}^2 \\
& = & \phi(x) -  \langle t ,\mM^{\dagger}t\rangle + \frac{1}{2}\|\mM^{\dagger}t\|_{\mM}^2 \\
&= & \phi(x) - \frac12\|t  \|^2_{\mM^{\dagger}},
\end{eqnarray*}
which is equivalent to~\eqref{eq:smooth_lemma}. In the last step we have used the identities $(\mM^\dagger)^\top = \left(\mM^\top \right)^\dagger = \mM^\dagger$ and $\mM^\dagger \mM \mM^\dagger = \mM^\dagger$.

To show~\eqref{eq:smooth_dotprod}, it suffices to sum inequality~\ref{eq:smooth_lemma} applied on vector pairs $(x,y)$ and $(y,x)$.

\end{proof}

\begin{lemma} \label{lem:smooth2} Let \eqref{eq:smooth_ass} hold. That is, assume that function $f_j$ are convex and $\mM_j$-smooth. Then
\begin{equation}\label{eq:smooth}
D_{f_j}(x,y) \geq \frac12 \norm{ \nabla f_j(x)-\nabla f_j(y) }^2_{\mM_j^{\dagger}}, \quad \forall x,y\in \R^d .
\end{equation}
If $x-y\in \Null{\mM_j}$, then 
\begin{enumerate} 
\item[(i)]  \begin{equation}\label{eq:linear_on_subspace} f_j(x) = f_j(y) + \langle \nabla f_j(y), x-y\rangle,\end{equation}
\item[(ii)]
\begin{equation} \label{eq:n98g8ff} \nabla f_j(x)-\nabla f_j(y) \in \Null{\mM_j},\end{equation}
\item[(iii)]  
\begin{equation} \label{eq:nb87sgb} \langle \nabla f_j(x) - \nabla f_j(y),x-y\rangle =0.\end{equation}
\end{enumerate}

If, in addition, $f_j$ is bounded below, then $\nabla f_j(x)  \in \Range{\mM_j}$ for all $x$.
\end{lemma}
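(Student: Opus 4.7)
The proof proceeds in four stages, with each stage feeding into the next and no single step being particularly subtle.

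First, inequality~\eqref{eq:smooth} is an immediate corollary of Lemma~\ref{lem:smooth22}: taking $h := f_j$ and $\mM := \mM_j$ satisfies the hypotheses of that lemma by Assumption~\ref{as:smooth_strongly_convex}, and its conclusion~\eqref{eq:smooth_lemma} is precisely~\eqref{eq:smooth}. So no additional work is required here.

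Next, for part (i), I would read the smoothness upper bound in~\eqref{eq:smooth_ass} as $f_j(x) \leq f_j(y) + \langle \nabla f_j(y), x-y\rangle + \tfrac12\|x-y\|_{\mM_j}^2$, observe that the quadratic term vanishes when $x-y\in\Null{\mM_j}$, and combine this with the convexity lower bound (also from~\eqref{eq:smooth_ass}) to pinch the two sides into the equality~\eqref{eq:linear_on_subspace}. For part (ii), note that (i) says exactly $D_{f_j}(x,y)=0$, so~\eqref{eq:smooth} forces $\|\nabla f_j(x)-\nabla f_j(y)\|_{\mM_j^\dagger}^2 = 0$. Because $\mM_j\succeq 0$ implies $\mM_j^\dagger \succeq 0$ with $\Null{\mM_j^\dagger}=\Null{\mM_j}$ (both equal $\Range{\mM_j}^\perp$ since $\mM_j$ is symmetric), this seminorm vanishing upgrades to the subspace inclusion~\eqref{eq:n98g8ff}. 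For part (iii), I would apply~\eqref{eq:linear_on_subspace} once with the pair $(x,y)$ and once with the pair $(y,x)$ (permissible because $y-x\in\Null{\mM_j}$ too) and add; the terms $f_j(x),f_j(y)$ cancel, leaving $\langle \nabla f_j(x)-\nabla f_j(y), x-y\rangle = 0$, which is~\eqref{eq:nb87sgb}.

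Finally, for the boundedness claim, fix any $y\in\R^d$ and any $u\in\Null{\mM_j}$, and examine the one-dimensional slice $t \mapsto f_j(y+tu)$. The smoothness upper bound in~\eqref{eq:smooth_ass} gives $f_j(y+tu)\leq f_j(y) + t\langle \nabla f_j(y), u\rangle$ for every $t\in\R$, since $\|tu\|_{\mM_j}^2 = 0$. If $\langle \nabla f_j(y), u\rangle$ were nonzero, sending $t\to\pm\infty$ in the sign opposite to $\langle \nabla f_j(y),u\rangle$ would drive the right-hand side to $-\infty$, contradicting the assumption that $f_j$ is bounded below. Hence $\langle \nabla f_j(y), u\rangle = 0$ for every $u\in\Null{\mM_j}$, i.e.\ $\nabla f_j(y)\in \Null{\mM_j}^\perp = \Range{\mM_j}$.

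I do not expect any real obstacle; the only point worth being careful about is the identification $\Null{\mM_j^\dagger}=\Null{\mM_j}$ used in stage (ii), which rests on symmetry and positive semidefiniteness of $\mM_j$ and is what converts the $\mM_j^\dagger$-seminorm vanishing in~\eqref{eq:smooth} into the actual null-space membership claimed in~\eqref{eq:n98g8ff}.
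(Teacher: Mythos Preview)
Your proof is correct and follows essentially the same route as the paper: invoke Lemma~\ref{lem:smooth22} for~\eqref{eq:smooth}, sandwich the convexity and smoothness bounds in~\eqref{eq:smooth_ass} for~(i), combine~(i) with~\eqref{eq:smooth} and $\Null{\mM_j^\dagger}=\Null{\mM_j}$ for~(ii), add~\eqref{eq:linear_on_subspace} to its symmetric copy for~(iii), and push along a null direction to contradict lower boundedness for the final claim. The only cosmetic difference is that the paper cites the already-proved equality~\eqref{eq:linear_on_subspace} in the boundedness step whereas you appeal directly to the smoothness upper bound in~\eqref{eq:smooth_ass}; both yield the same linear-in-$t$ bound and the same contradiction.
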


\begin{proof} 
Inequality \eqref{eq:smooth} follows by applying Lemma~\ref{lem:smooth22} for $h=f_j$ and $\mM=\mM_j$. Identity \eqref{eq:linear_on_subspace} is a direct consequence of  \eqref{eq:smooth_ass}.  Combining \eqref{eq:smooth} and \eqref{eq:linear_on_subspace}, we get $0 \geq \frac12 \norm{ \nabla f_j(x)-\nabla f_j(y) }^2_{\mM_j^{\dagger}}$ , which implies that \begin{equation}\label{eq:nbui8gf7gdnjs}\nabla f_j(x)-\nabla f_j(y) \in \Null{\mM_j^\dagger} = \Null{\mM_j^\top} =\Null{\mM_j}  ,\end{equation} 
recovering \eqref{eq:n98g8ff}.  By adding two copies of  \eqref{eq:linear_on_subspace} (with the roles of $x$ and $y$ exchanged), we get \eqref{eq:nb87sgb}. Finally, if $f_j$ is bounded below, then in view of \eqref{eq:linear_on_subspace} there exists $c\in \R$ such that,
\[c \leq   \inf_{x \in  y +\Null{\mM_j}} f_j(x)  \overset{\eqref{eq:linear_on_subspace} }{=} \inf_{x \in  y +\Null{\mM_j}} f_j(y) +  \langle \nabla f_j(y), x-y\rangle  .\]
This implies that $\nabla f_j(y) \in \Range{\mM_j^\top} = \Range{\mM_j} $.


\end{proof}

\begin{lemma} \label{lem:smooth3}
Assume $f$ is twice continuously differentiable. Then $\mG(x)-\mG(y)\in \Range{\cM}$ for all $x,y\in \R^d$. 
\end{lemma}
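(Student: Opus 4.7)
The plan is to show column-by-column that $(\mG(x)-\mG(y))_{:j} = \nabla f_j(x) - \nabla f_j(y) \in \Range{\mM_j}$, since by the definition $(\cM \mX)_{:j} = \mM_j \mX_{:j}$, we have
\[
\Range{\cM} = \{\mY \in \R^{d\times n} : \mY_{:j} \in \Range{\mM_j} \text{ for all } j \in [n]\}.
\]
So the task reduces to proving $\nabla f_j(x) - \nabla f_j(y) \in \Range{\mM_j}$ for every $x,y \in \R^d$ and every $j\in [n]$.

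The key tool is the fundamental theorem of calculus applied to $\nabla f_j$, which is legitimate under the twice continuously differentiable assumption:
\[
\nabla f_j(x) - \nabla f_j(y) = \int_0^1 \nabla^2 f_j(y + t(x-y))(x-y)\, dt.
\]
Next I would observe that convexity of $f_j$ together with the $\mM_j$-smoothness assumption \eqref{eq:smooth_ass} translates, for twice continuously differentiable $f_j$, into the matrix inequalities $0 \preceq \nabla^2 f_j(z) \preceq \mM_j$ at every $z \in \R^d$. The inequality $\nabla^2 f_j(z) \preceq \mM_j$ between symmetric positive semidefinite matrices forces $\Null{\mM_j} \subseteq \Null{\nabla^2 f_j(z)}$ and therefore $\Range{\nabla^2 f_j(z)} \subseteq \Range{\mM_j}$.

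Consequently, for every $t \in [0,1]$, the vector $\nabla^2 f_j(y + t(x-y))(x-y)$ lies in $\Range{\mM_j}$. Since $\Range{\mM_j}$ is a closed linear subspace of $\R^d$, the Riemann integral of a continuous $\Range{\mM_j}$-valued curve remains in $\Range{\mM_j}$, giving $\nabla f_j(x) - \nabla f_j(y) \in \Range{\mM_j}$. Stacking these columnwise yields $\mG(x) - \mG(y) \in \Range{\cM}$, completing the proof.

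I do not expect serious obstacles: the only slightly non-routine step is the passage from the matrix-smoothness bound in \eqref{eq:smooth_ass} to the Hessian inequality $\nabla^2 f_j(z) \preceq \mM_j$, which is the standard equivalence in the $C^2$ case (Taylor expansion in one direction shows $\langle d, \nabla^2 f_j(z) d\rangle \le \|d\|_{\mM_j}^2$ by taking the limit $t \downarrow 0$ in the smoothness inequality applied at $z$ and $z+td$).
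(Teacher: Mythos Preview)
Your argument is correct, and it takes a different route from the paper's own proof. Both proofs begin with the same column-by-column reduction to $\nabla f_j(x)-\nabla f_j(y)\in\Range{\mM_j}$, but then diverge. The paper changes coordinates so that $x=(z,w)$ with $w$ spanning $\Null{\mM_j}$; from~\eqref{eq:linear_on_subspace} it knows $f_j$ is affine in $w$ for each fixed $z$, hence $\nabla^2_{ww}f_j=0$, and then uses positive semidefiniteness of the full Hessian to force the off-diagonal blocks $\nabla^2_{wz}f_j$ to vanish as well. This shows $f_j$ separates as a function of $z$ plus an affine function of $w$, so the $w$-component of $\nabla f_j$ is constant and the difference $\nabla f_j(x)-\nabla f_j(y)$ lives entirely in the $z$-coordinates, i.e.\ in $\Range{\mM_j}$.

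Your route is more direct: you go straight from $0\preceq\nabla^2 f_j(z)\preceq\mM_j$ to $\Range{\nabla^2 f_j(z)}\subseteq\Range{\mM_j}$ via the null-space inclusion, and then integrate. This avoids the coordinate split and the separability argument altogether, and it makes transparent exactly which implication is doing the work (the order relation $\preceq$ on PSD matrices respects ranges). The paper's argument, by contrast, yields the slightly stronger structural statement that $f_j$ is separable across $\Range{\mM_j}\oplus\Null{\mM_j}$, though that extra information is not used downstream. One small caveat that applies to both proofs: the lemma hypothesizes only that $f$ is $C^2$, but both arguments implicitly need each $f_j$ to be $C^2$; the paper's proof has the same looseness, so this is not a defect of your approach specifically.
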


\begin{proof}
For $\mG(x)-\mG(y)\in \Range{\cM}$, it suffices to show that $\nabla f_j(x) - \nabla f_j(y)\in \Range{\mM_j}$.
Without loss of generality, suppose that $f(z,w)$ (for $x  = [z,w]$) is such that $f(z,\cdot)$ is linear (for fixed $z$; from~\eqref{eq:linear_on_subspace}) and $f(\cdot,w)$ is $\mM'$ smooth for full rank $\mM'$. Note that

\[
0\preceq \nabla^2f(x) = \begin{pmatrix}
 \nabla_{ww}^2f(w,z) &  \nabla_{wz}^2f(w,z) \\
  \nabla_{zw}^2f(w,z) &  \nabla_{zz}^2f(w,z)
\end{pmatrix} = 
\begin{pmatrix}
 \nabla_{ww}^2f(w,z) &  \nabla_{wz}^2f(w,z) \\
  \nabla_{zw}^2f(w,z) &  0
\end{pmatrix} .
\]
Since every submatrix of the above must be positive definite, it is easy to see that we must have both $ \nabla_{wz}^2f(w,z)= 0$, $ \nabla_{zw}^2f(w,z) = 0$. This, however, means that $f(w,z)$ is separable in $z,w$. Therefore indeed  $\nabla f_j(x) - \nabla f_j(y)\in \Range{\mM_j}$ for all $x,y\in \R^d$ and all $j\in [n]$.

\end{proof}

\subsection{Projection lemma}

In the next lemma, we establish some basic properties of the interaction of the random projection matrices $\cS$ and $\cI - \cS$ with various matrices, operators, and norms.

\begin{lemma} \label{lem:nb98gd8fdx}
Let $\cS$ be a random projection operator and $\cA$ any deterministic linear operator commuting with $\cS$, i.e.,  $\cA \cS = \cS \cA$. Further, let $\mX,\mY \in \R^{d\times n}$ and define $\mZ = (\cI-\cS) \mX + \cS \mY$. Then
\begin{itemize}
\item[(i)] $\cA \mZ = (\cI-\cS) \cA \mX + \cS \cA \mY $,
\item[(ii)] $\norm{\cA \mZ}^2 = \norm{(\cI-\cS) \cA \mX}^2  + \norm{\cS \cA \mY}^2 $,
\item[(iii)] $\E{\norm{\cA \mZ}^2} = \norm{(\cI-\E{\cS})^{1/2} \cA \mX}^2  + \norm{\E{\cS}^{1/2} \cA \mY}^2 $, where the expectation is with respect to $\cS$.
\end{itemize}
\end{lemma}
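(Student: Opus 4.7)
The plan is to exploit the structural properties of the orthogonal projection $\cS$, namely self-adjointness $\cS^* = \cS$ and idempotence $\cS^2 = \cS$, which follow from $\cS = \cP_\cS$ being an orthogonal projection. The three parts stack on top of each other: part (i) is a pure linearity-plus-commutativity computation, part (ii) uses (i) plus orthogonality of the ranges of $\cS$ and $\cI-\cS$, and part (iii) follows from (ii) by taking expectations and collapsing $\E{\cS^2}$ and $\E{(\cI-\cS)^2}$ using idempotence.

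For (i), I would just apply $\cA$ to the definition of $\mZ$, use linearity to distribute, and use $\cA\cS = \cS\cA$ (which also gives $\cA(\cI-\cS) = (\cI-\cS)\cA$) to pull $\cA$ past the projectors. For (ii), expanding the squared Frobenius norm of the right-hand side of (i) gives two diagonal terms plus a cross term $2\langle (\cI-\cS)\cA\mX, \cS\cA\mY\rangle$. Moving $\cS$ to the left slot via self-adjointness and using $(\cI-\cS)\cS = \cS - \cS^2 = 0$ (from idempotence) kills the cross term, leaving exactly the claimed decomposition.

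For (iii), I would take expectations of both sides of (ii) and handle each term separately. For $\E{\norm{\cS\cA\mY}^2}$, rewrite as $\E{\langle \cA\mY, \cS^*\cS\,\cA\mY\rangle}$, use $\cS^*\cS = \cS^2 = \cS$ to reduce to $\langle \cA\mY, \E{\cS}\cA\mY\rangle$, and then factor through the positive semidefinite square root of $\E{\cS}$ to recognize this as $\norm{\E{\cS}^{1/2}\cA\mY}^2$. The same argument applied to $\cI-\cS$ (which is itself an orthogonal projection, hence self-adjoint and idempotent) handles the other term.

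I do not anticipate any real obstacle here: the whole argument hinges on $\cS$ being an orthogonal projection, which is given, and on $\cA$ commuting with $\cS$, which is assumed. The only mild subtlety is verifying that $\E{\cS}$ is positive semidefinite so that $\E{\cS}^{1/2}$ makes sense in part (iii); this is immediate since $\cS = \cS^*\cS$ is PSD pointwise and expectations of PSD operators are PSD. Everything else is direct algebraic manipulation.
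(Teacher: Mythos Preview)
Your proposal is correct and follows essentially the same approach as the paper: part (i) via commutativity of $\cA$ with $\cI-\cS$, part (ii) by expanding the square and killing the cross term using $(\cI-\cS)\cS=0$, and part (iii) by taking expectations and invoking idempotence $\cS^2=\cS$, $(\cI-\cS)^2=\cI-\cS$ together with the Frobenius inner product. Your remark that $\E{\cS}$ is positive semidefinite (since $\cS=\cS^*\cS$ pointwise) is a useful justification that the paper leaves implicit.
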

\begin{proof} Part (i) follows by noting that $\cA$ commutes with $\cI-\cS$. Part (ii) follows from (i) by expanding the square, and noticing that  $(\cI-\cS)\cS = 0$. Part (iii) follows from (ii) after using the definition of the Frobenius norm, i.e., $\|\mM\|^2 =\Tr{\mM^\top \mM}$, the identities $(\cI -\cS)^2  = \cI -\cS$, $\cS^2 =\cS$,  and taking expectation on both sides. 
\end{proof}

\subsection{Decomposition lemma}

In the next lemma, we give a  bound on the expected squared distance of the gradient estimator $g^k$ from $\nabla f(x^*)$.

\begin{lemma}\label{lem:g_lemma}
For all $k\geq 0$ we have
\begin{equation}\label{eq:g_lemma}
\E{ \norm{ g^k - \nabla f(x^*)}^2} \leq    \frac{2}{n^2} \E{  \norm{ \cU \left(\mG(x^k) - \mG(x^*) \right) \eR}^2  } + \frac{2}{n^2}  \E{ \norm{ \cU \left(\mJ^k - \mG(x^*) \right) \eR}^2}.
\end{equation}
\end{lemma}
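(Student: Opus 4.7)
The plan is to write $g^k - \nabla f(x^*)$ as a sum of two terms by cleverly inserting $\pm \cU \mG(x^*)\eR$ and $\pm \mG(x^*)\eR$, then apply the elementary inequality $\|a+b\|^2 \le 2\|a\|^2 + 2\|b\|^2$, and finally bound the ``variance'' term using unbiasedness of $\cU$.

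First, I would use $\nabla f(x^*) = \tfrac{1}{n}\mG(x^*)\eR$ from \eqref{eq:nbifg98dz} and the definition \eqref{eq:ni98hffs} of $g^k$ to derive the key identity
\begin{equation*}
g^k - \nabla f(x^*) \;=\; \frac{1}{n}\,\cU\bigl(\mG(x^k) - \mG(x^*)\bigr)\eR \;+\; \frac{1}{n}\,(\cI - \cU)\bigl(\mJ^k - \mG(x^*)\bigr)\eR.
\end{equation*}
This can be verified by direct expansion: $\tfrac{1}{n}\cU\mG(x^k)\eR - \tfrac{1}{n}\cU\mG(x^*)\eR + \tfrac{1}{n}\mJ^k\eR - \tfrac{1}{n}\mG(x^*)\eR - \tfrac{1}{n}\cU\mJ^k\eR + \tfrac{1}{n}\cU\mG(x^*)\eR$ collapses to exactly $g^k - \nabla f(x^*)$.

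Second, apply $\|a+b\|^2 \le 2\|a\|^2 + 2\|b\|^2$ to the identity above and take expectations, yielding
\begin{equation*}
\E{\|g^k - \nabla f(x^*)\|^2} \;\le\; \frac{2}{n^2}\,\E{\|\cU(\mG(x^k)-\mG(x^*))\eR\|^2} \;+\; \frac{2}{n^2}\,\E{\|(\cI-\cU)(\mJ^k - \mG(x^*))\eR\|^2}.
\end{equation*}

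Third, to match the target inequality I need to replace $\cI-\cU$ by $\cU$ in the second term. Conditioning on the filtration up to the start of iteration $k$ (so that $\mJ^k$ is deterministic and the sample of $\cU$ at step $k$ is independent of $\mJ^k$), set $\mX = \mJ^k - \mG(x^*)$ and $Y = \cU\mX\eR$. Unbiasedness $\E{\cU}=\cI$ gives $\E{Y \mid \mathcal{F}_k} = \mX\eR$, hence the standard variance-vs-second-moment bound
\begin{equation*}
\E{\|(\cI-\cU)\mX\eR\|^2 \mid \mathcal{F}_k} \;=\; \E{\|Y\|^2 \mid \mathcal{F}_k} - \|\mX\eR\|^2 \;\le\; \E{\|Y\|^2 \mid \mathcal{F}_k} \;=\; \E{\|\cU\mX\eR\|^2 \mid \mathcal{F}_k}.
\end{equation*}
Taking outer expectations and substituting into the previous display gives the claim. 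The only subtle step is the decomposition in the first paragraph; the rest is routine. No particular obstacle is expected.
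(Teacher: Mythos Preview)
Your proposal is correct and follows essentially the same route as the paper's own proof: the paper writes $g^k-\nabla f(x^*)$ as exactly the same two summands (your $(\cI-\cU)(\mJ^k-\mG(x^*))\eR$ is their ``$b$''), applies $\|a+b\|^2\le 2\|a\|^2+2\|b\|^2$, and then uses the identical variance identity $\E{\|(\cI-\cU)\mX\eR\|^2}=\E{\|\cU\mX\eR\|^2}-\|\mX\eR\|^2$ to drop the $\cI-\cU$. Your explicit mention of conditioning on $\mathcal{F}_k$ is a mild sharpening of presentation but not a different argument.
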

\begin{proof}
In view of \eqref{eq:ni98hffs} and since $\nabla f(x^*) = \frac{1}{n}\mG(x^*) \eR$, we have
\begin{equation}\label{eq:nb87fvdbs8s}
g^k-\nabla f(x^*) = \underbrace{\frac1n \cU \left(\mG(x^k)- \mG(x^*) \right) \eR}_{a}  +  \underbrace{\frac1n \left(\mJ^k - \mG(x^*) \right) \eR - \frac1n \cU \left( \mJ^k - \mG(x^*) \right) \eR}_{b} .
\end{equation}
Applying the bound $\norm{a+b}^2 \leq 2\norm{a}^2 + 2\norm{b}^2$ to \eqref{eq:nb87fvdbs8s} and taking expectations, we get
\begin{eqnarray*}
\E{ \norm{ g^k -\nabla f(x^*) }^2} &\leq &
 \E{ \frac{2}{n^2} \norm{  \cU \left(\mG(x^k)- \mG(x^*) \right) \eR  }^2} \\
 && \qquad + \E{ \frac{2}{n^2} \norm{   \left(\mJ^k-\mG(x^*) \right) \eR -   \cU \left(\mJ^k - \mG(x^*) \right) \eR }^2  }\\
&=&
 \frac{2}{n^2} \E{  \norm{ \cU \left(\mG(x^k) - \mG(x^*) \right) \eR }^2  } \\
 && \qquad + \frac{2}{n^2} \E{  \norm{ \left(\cI- \cU \right) \left(\mJ^k - \mG(x^*) \right) \eR }^2 }.
\end{eqnarray*}

It remains to note that
\begin{eqnarray*}
\E{  \norm{ \left(\cI-  \cU) (\mJ^k - \mG(x^*) \right) \eR }^2 }
 &=&
\E{ \norm{ \cU \left(\mJ^k - \mG(x^*) \right) \eR  }^2 } - \norm{ \left( \mJ^k - \mG(x^*) \right) \eR }^2
  \\
  &\leq& 
  \E{ \norm{ \cU \left(\mJ^k - \mG(x^*) \right) \eR }^2 } .
\end{eqnarray*}

\end{proof}

\section{Proof of Theorem~\ref{thm:main}}

For simplicity of notation, in this proof, all expectations are conditional on $x^k$, i.e., the expectation is taken with respect to the randomness of $g^k$.

Since
\begin{equation}\label{eq:prox_opt}
x^* = \prox(x^* - \alpha \nabla f(x^*)),
\end{equation}
and since the prox operator is non-expansive, we have
\begin{eqnarray}
\E{\norm{x^{k+1} -x^*}^2 } &\overset{ \eqref{eq:prox_opt}}{=} &
 \E{\norm{\prox(x^k-\alpha g^k) - \prox(x^*-\alpha \nabla f(x^*))  }^2}  
 \notag \\
 &\leq &
 \E{\norm{x^k-  x^* -\alpha(   g^{k} - \nabla f(x^*) ) }^2}  
 \notag\\
& \overset{\eqref{eq:unbiased_xx}}{=}& 
 \norm{x^k  -x^*}^2 -2\alpha \<  \nabla f(x^k)- \nabla f(x^*) , x^k  -x^*> \notag \\ 
 && \qquad  + \alpha^2\E{\norm{ g^{k}- \nabla f(x^*) }^2}
 \notag  \\ 
&\overset{\eqref{eq:strconv3}+ \eqref{eq:b987gf98f}}{\leq} & 
   (1-\alpha\sigma)\norm{x^k  -x^*}^2 +\alpha^2\E{\norm{  g^{k} -\nabla f(x^*) }^2}  \notag \\
   && \qquad -2\alpha D_f(x^k,x^*).
 \label{eq:convstepsub1XX}
\end{eqnarray}

Since $f(x)=\frac{1}{n}\sum_{j=1}^n f_j(x)$, in view of \eqref{eq:b987gf98f} and \eqref{eq:smooth} we have
\begin{eqnarray}
D_{f}(x^k,x^*) \quad \overset{\eqref{eq:b987gf98f}}{=} \quad  \frac{1}{n}\sum_{j=1}^n D_{f_j}(x^k,x^*) & \overset{\eqref{eq:smooth}}{\geq} &\frac{1}{2n} \sum_{j=1}^n \norm{\nabla f_j(x^k) -\nabla f_j(x^*)  }^2_{{\mM_j^{\dagger}} }  \notag\\
&=&  
 \frac{1}{2n}  \left\|{\cM^\dagger}^{\frac12}\left(\mG(x^k) -\mG(x^*) \right) \right\|^2. \label{eq:nb98gd8ff}
\end{eqnarray}

By combining \eqref{eq:convstepsub1XX} and \eqref{eq:nb98gd8ff}, we get
\begin{eqnarray*}
\E{\norm{x^{k+1} -x^*}^2 } & \leq &
   (1-\alpha\sigma)\norm{x^k  -x^*}^2 +\alpha^2\E{\norm{g^{k} -\nabla f(x^*) }^2}  \\
   && \qquad -\frac{\alpha}{n}  \norm{ {\cM^\dagger}^{\frac12} \left( \mG(x^k) -\mG(x^*) \right) }^2.
\end{eqnarray*}

Next, applying Lemma~\ref{lem:g_lemma} leads to the estimate
\begin{eqnarray}
\E{\norm{x^{k+1} -x^*}^2 } &\leq &
   (1-\alpha\sigma)\norm{x^k  -x^*}^2 -\frac{\alpha}{n} \norm{ {\cM^\dagger}^{\frac12} \left(\mG(x^k) -\mG(x^*) \right) }^2 \notag 
   \\
   && \qquad  +  \frac{2\alpha^2}{n^2} \E{ \norm{ \cU \left( \mG(x^k) - \mG(x^*) \right) \eR }^2  } \notag \\
   && \qquad + \frac{2\alpha^2}{n^2}  \E{ \norm{ \cU \left( \mJ^k - \mG(x^*) \right) \eR  }^2}   \label{eq:48u34719841234} .
\end{eqnarray}

In view of~\eqref{eq:nio9h8fbds79kjh}, we have $\mJ^{k+1} = (\cI-\cS)\mJ^k + \cS \mG(x^k)$, whence
\begin{equation} \label{eq:h98gf9hh89dsd}
\underbrace{\mJ^{k+1} - \mG(x^*)}_{\mZ} = (\cI-\cS) \underbrace{(\mJ^k -\mG(x^*))}_{\mX} + \cS \underbrace{(\mG(x^k) - \mG(x^*))}_{\mY}.
\end{equation}
Since, by assumption,  both $\cB$ and ${\cM^\dagger}^{\frac12}$ commute with $\cS$, so does their composition $\cA \eqdef \cB {\cM^\dagger}^{\frac12}$. Applying Lemma~\ref{lem:nb98gd8fdx}, we get
 \begin{eqnarray}\label{eq:J_jac_bound}\nonumber
\E{ \NORMG{\cB {\cM^\dagger}^{\frac12} \left(\mJ^{k+1}-\mG(x^*)  \right) }}  &=&  \NORMG{ (\cI - \E{\cS})^{\frac12}  \cB {\cM^\dagger}^{\frac12} \left(\mJ^k-\mG(x^*) \right) }  \\
&& +  \NORMG{\E{\cS}^{\frac12}  \cB {\cM^\dagger}^{\frac12} \left(\mG(x^k)-\mG(x^*) \right) } .
\end{eqnarray}

Adding $\alpha$-multiple of~\eqref{eq:J_jac_bound}  to~\eqref{eq:48u34719841234} yields
\begin{eqnarray*}
&&\E{\norm{x^{k+1} -x^*}^2 } + \alpha\E{ \NORMG{\cB {\cM^\dagger}^{\frac12} \left(\mJ^{k+1}-\mG(x^*)\right)}}
\\
&& \qquad  \leq 
   (1-\alpha\sigma)\norm{x^k  -x^*}^2 + \frac{2\alpha^2}{n^2} \E{  \norm{ \cU \left(\mG(x^k) - \mG(x^*) \right) \eR}^2  }    \\
   && \qquad \qquad  
 + \frac{2\alpha^2}{n^2}  \E{ \norm{ \cU \left(\mJ^k - \mG(x^*) \right) \eR }^2 } +   \alpha\NORMG{ (\cI - \E{\cS})^{\frac12} \cB {\cM^\dagger}^{\frac12} \left(\mJ^k-\mG(x^*) \right)} 
 \\
    && \qquad \qquad   + \alpha \NORMG{\E{\cS}^{\frac12}  \cB {\cM^\dagger}^{\frac12}\left(\mG(x^k)-\mG(x^*) \right)}-\frac{\alpha}{n} \norm{ {\cM^\dagger}^{\frac12}\left(\mG(x^k) -\mG(x^*) \right) }^2
   \\
&& \qquad  \stackrel{\eqref{eq:small_step}}{\leq} 
(1-\alpha\sigma)\norm{x^k  -x^*}^2 + (1-\alpha\sigma)\alpha \NORMG{\cB {\cM^\dagger}^{\frac12} \left(\mJ^{k}-\mG(x^*) \right)} \\
&& \qquad \qquad  
+\frac{2\alpha^2}{n^2} \E{  \norm{ \cU \left(\mG(x^k) - \mG(x^*) \right) \eR }^2  } +  \alpha \NORMG{\E{\cS}^{\frac12}  \cB {\cM^\dagger}^{\frac12} \left(\mG(x^k)-\mG(x^*) \right)}  \\
    && \qquad \qquad  
 -\frac{\alpha}{n} \left\| {\cM^\dagger}^{\frac12} \left(\mG(x^k) -\mG(x^*) \right) \right\|^2
   \\
&& \qquad  \stackrel{\eqref{eq:small_step2}}{\leq} 
(1-\alpha\sigma) \left( \norm{x^k  -x^*}^2 +\alpha \NORMG{\cB {\cM^\dagger}^{\frac12} \left(\mJ^{k}-\mG(x^*) \right)} \right).
\end{eqnarray*}
Above, we have used~\eqref{eq:small_step} with $\mX=  \mJ^k-\mG(x^*) $ and~\eqref{eq:small_step2} with $\mX=  \mG(x^k) -\mG(x^*) $.

\clearpage

\section{Special Cases: {\tt SAGA}-like Methods}

\subsection{Basic variant of {\tt SAGA}~\cite{SAGA} \label{sec:saga_basic}}

Suppose that for all $j$, $f_j$ is $m$-smooth (i.e., $\mM_j = m \mI_d$). To recover basic SAGA~\cite{SAGA}, consider the following choice of random operators $\cS,  \cU$: 
\[
(\forall j) \text{ with probability } \frac1n :  \quad \cS \mX =\mX \eRj \eRj^\top \quad \text{and} \quad   \cU \mX =\mX n \eRj \eRj^\top.
 \]

The resulting algorithm is stated as Algorithm~\ref{alg:SAGA}. Further, as a direct consequence of Theorem~\ref{thm:main}, convergence rate of {\tt SAGA} (Algorithm~\ref{alg:SAGA}) is presented in Corollary~\ref{cor:saga}.

\begin{algorithm}[h]
    \caption{{\tt SAGA} \cite{SAGA}}
    \label{alg:SAGA}
    \begin{algorithmic}
        \Require learning rate $\alpha>0$, starting point $x^0\in\R^d$
        \State Set $\psi_j^0 = x^0$ for each $j\in \{1,2,\dots,n\}$
        \For{ $k=0,1,2,\ldots$ }
        \State{Sample  $j \in [n]$ uniformly at random}
        \State{Set $\phi_j^{k+1} = x^k$ and $\phi_i^{k+1} = \phi_i^{k}$ for $i\neq j$}
        \State{$g^k = \nabla f_j(\phi_j^{k+1}) - \nabla f_j(\phi_j^k) + \frac{1}{n}\sum\limits_{i=1}^n\nabla f_i(\phi_i^k)$}
        \State{$x^{k+1} = \prox( x^k - \alpha g^k)$}
        \EndFor
    \end{algorithmic}
\end{algorithm}

\begin{corollary}[Convergence rate of {\tt SAGA}]\label{cor:saga}  Let $\alpha =  \frac{1}{4m + \sigma n}$. Then, iteration complexity of Algorithm~\ref{alg:SAGA} (proximal {\tt SAGA}) is $\left( 4\frac{m}{\sigma}+ n \right) \log\frac1\epsilon$. 
\end{corollary}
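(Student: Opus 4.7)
The plan is to invoke Theorem~\ref{thm:main} with the trivial choice $\cR \equiv 0$ (so \eqref{eq:small_step} must hold on all of $\R^{d\times n}$, as noted in the remark) and with $\cB = \beta\cI$ for a scalar $\beta>0$ to be determined. Because $\mM_j = m\mI_d$ for every $j$, the operator $\cM^\dagger$ reduces to $m^{-1}\cI$ and its square root to $m^{-1/2}\cI$, which commutes with every operator, including $\cS$; the commutation hypothesis of Theorem~\ref{thm:main} is therefore immediate. I also need to confirm, as a sanity check, that $\cU$ is unbiased ($\E{\cU \mX} = \sum_j \tfrac{1}{n}\cdot n\,\mX\eRj\eRj^\top = \mX$) and that $\cS$ is a projection ($\cS^2 = \cS$ because $\eRj\eRj^\top \eRj\eRj^\top = \eRj\eRj^\top$).

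Next I will compute the two building blocks appearing in \eqref{eq:small_step} and \eqref{eq:small_step2}. Using $\eRj^\top \eR = 1$ one gets $\cU \mX \eR = n\mX\eRj$ with probability $1/n$, hence
\[
\tfrac{1}{n^2}\E{\|\cU \mX\eR\|^2} \;=\; \tfrac{1}{n^2}\sum_{j=1}^n \tfrac{1}{n}\,n^2\|\mX\eRj\|^2 \;=\; \tfrac{1}{n}\|\mX\|^2,
\]
and $\E{\cS}\mX = \tfrac{1}{n}\mX$, so $(\cI-\E{\cS})^{1/2}$ and $\E{\cS}^{1/2}$ act as the scalars $\sqrt{(n-1)/n}$ and $\sqrt{1/n}$ respectively. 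Plugging these into \eqref{eq:small_step}, \eqref{eq:small_step2} and cancelling the common factor $\|\mX\|^2$ reduces the two operator inequalities to the two scalar conditions
\[
\tfrac{2\alpha}{n} + \tfrac{n-1}{n}\cdot\tfrac{\beta^2}{m} \;\le\; (1-\alpha\sigma)\tfrac{\beta^2}{m},
\qquad
\tfrac{2\alpha}{n} + \tfrac{1}{n}\cdot\tfrac{\beta^2}{m} \;\le\; \tfrac{1}{nm}.
\]
Equivalently, $\tfrac{2\alpha m}{1-\alpha\sigma n} \le \beta^2$ and $\beta^2 \le 1-2\alpha m$.

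The remaining step is to verify that the stated stepsize admits such a $\beta$. A feasible $\beta$ exists iff $2\alpha m \le (1-\alpha\sigma n)(1-2\alpha m)$, which simplifies to $\alpha(4m+\sigma n) \le 1 + 2\alpha^2 \sigma m n$. For $\alpha = (4m+\sigma n)^{-1}$ the left-hand side equals $1$ while the right-hand side is $1 + \frac{2\sigma m n}{(4m+\sigma n)^2} > 1$, so the inequality holds strictly and any $\beta^2$ in the (nonempty) interval works. Theorem~\ref{thm:main} then yields $\E{\Psi^k} \le (1-\alpha\sigma)^k \Psi^0$ with $\alpha\sigma = \sigma/(4m+\sigma n)$, and the standard $\log$ inversion gives the iteration complexity $(4m/\sigma + n)\log(1/\epsilon)$.

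The only nontrivial part of the argument is the balancing of \eqref{eq:small_step} against \eqref{eq:small_step2}: both inequalities force $\beta$ from opposite sides, and the window closes precisely around the prescribed $\alpha$. Once the scalar reduction above is carried out, everything else is bookkeeping, so I do not anticipate a genuine obstacle beyond being careful with the $(n-1)/n$ versus $1/n$ factors arising from $\cI-\E{\cS}$ and $\E{\cS}$.
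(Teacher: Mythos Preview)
Your proposal is correct and follows essentially the same approach as the paper: the paper also takes $\cB\mX=\beta\mX$, computes $\E{\cS}=\tfrac1n\cI$ and $\E{\|\cU\mX\eR\|^2}=n\|\mX\|^2$, and reduces \eqref{eq:small_step}--\eqref{eq:small_step2} to the same two scalar inequalities (after clearing the factor $m$). The only cosmetic difference is that the paper directly picks $\beta^2=\tfrac12$ and verifies, whereas you derive the feasibility window $\bigl[\tfrac{2\alpha m}{1-\alpha\sigma n},\,1-2\alpha m\bigr]$ and check it is nonempty at $\alpha=(4m+\sigma n)^{-1}$; in fact $\beta^2=\tfrac12$ is exactly the left endpoint of your window.
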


\subsection{{\tt SAGA} with arbitrary sampling \label{sec:saga_as}}

In contrast to Section~\ref{sec:saga_basic}, here we use the general matrix smoothness assumption, i.e., that $f_j$ is $\mM_j$ smooth. We  recover results from~\cite{qian2019saga}. Denote $\pR$ to be probability vector, i.e., $\pR_i = \Prob{i\in R}$ where $R$ is a random subset of $[n]$.

We shall consider the following choice of random operators $\cS,  \cU$: 
\[
(\forall R) \text{ with probability } \pRR :  \quad \cS \mX = \mX \sum_{j\in R}  \eRj \eRj^\top \quad \text{and} \quad   \cU \mX =\mX \sum_{j\in R} \frac{1}{\pRj} \eRj \eRj^\top .
 \]

The resulting algorithm is stated as Algorithm~\ref{alg:SAGA_AS_ESO}.
\begin{algorithm}[h]
    \caption{{\tt SAGA} with arbitrary sampling (a variant of \cite{qian2019saga})}
    \label{alg:SAGA_AS_ESO}
    \begin{algorithmic}
        \Require learning rate $\alpha>0$, starting point $x^0\in\R^d$, random sampling $R \subseteq \{1,2,\dots,n\}$
        \State Set $\phi_j^0 = x^0$ for each $j\in [n]$
        \For{ $k=0,1,2,\ldots$ }
        \State{Sample random  $R^k \subseteq \{1,2,\dots,n\}$}
        \State{Set $\phi_j^{k+1} = \begin{cases} x^k & j\in R^k\\  \phi_j^{k} &  j\not \in R^k \end{cases}$}
        \State{$g^k = \frac{1}{n}\sum\limits_{j=1}^n\nabla f_j(\phi_j^k) +\sum \limits_{j\in R^k} \frac{1}{n \pRj} \left( \nabla f_j(\phi_j^{k+1}) - \nabla f_j(\phi_j^k) \right)$}
        \State{$x^{k+1} = \prox(x^k - \alpha g^k)$}
        \EndFor
    \end{algorithmic}
\end{algorithm}

In order to give tight rates under $\mM$-smoothness, we need to do a  bit more work. First, let $v\in \R^n$ be a vector for which the following inequality {\em expected separable overapproximation} inequality holds
\begin{equation} \label{eq:ESO_saga}
\E{\left\|\sum_{j \in R} \mM^{\frac12}_{j} h_{j}\right\|^{2}} \leq \sum_{j=1}^{n} \pRj v_{j}\left\|h_{j}\right\|^{2}, \qquad \forall h_1,\dots,h_n \in \R^{d} .
\end{equation}
Since the function on the left is a quadratic in $h = (h_1,\dots,h_n) \in \R^{nd}$, this inequality is satisfied for large enough values of $v_j$.
A variant of~\eqref{eq:ESO_saga} was used to obtain the best known rates for coordinate descent with arbitrary sampling~\cite{ALPHA, ESO}.

Further, we shall consider the following assumption:
\begin{assumption}\label{as:pseudoinverse}
Suppose that  for all $k$ 
\begin{equation}\label{eq:g_in_range}
\mG(x^k) - \mG(x^*) = \cM^{\dagger} \cM \left(\mG(x^k) - \mG(x^*)\right)
\end{equation}
 and 
 \begin{equation}\label{eq:j_in_range}
\mJ^k - \mG(x^*) = \cM^{\dagger} \cM \left(\mJ^k - \mG(x^*)\right).
\end{equation}
\end{assumption}
The assumption, although in a slightly less general form, was demonstrated to obtain tightest complexity results for {\tt SAGA}~\cite{qian2019saga}. Note that if for each $j$, $f_j$ corresponds to loss function of a linear model, then~\eqref{eq:g_in_range} and~\eqref{eq:j_in_range} follow for free. Further, Lemmas~\ref{lem:smooth2} and~\ref{lem:smooth3} give some easy-to-interpret sufficient  sufficient conditions, such as lower boundedness of all functions $f_j$ (which happens for any loss function), or twice differentiability of all functions $f_j$.

\begin{corollary}[Convergence rate of {\tt SAGA}]\label{cor:saga_as2}  Let $ \alpha  = \min_j \frac{n \pRj}{4v_j + n\sigma}$. Then the iteration complexity of Algorithm~\ref{alg:SAGA_AS_ESO} is $\max_j \left(  \frac{4v_j  + n\sigma }{n \sigma \pRj}\right) \log\frac1\epsilon$. 
\end{corollary}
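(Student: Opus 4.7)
\textbf{Proof plan for Corollary~\ref{cor:saga_as2}.} The strategy is to specialize Theorem~\ref{thm:main} to the sketches defining Algorithm~\ref{alg:SAGA_AS_ESO}, with weight operator $\cB \mX = \mX\diag(b)$ for an appropriately chosen vector $b\in\R^n$ and with $\cR = \cI - \cM^\dagger \cM$ (the projection onto $\mathrm{Null}(\cM)$, so that $\Range{\cR}^\perp = \Range{\cM}$). First I will verify the structural hypotheses of the theorem: $\cS \mX = \mX \sum_{j\in R}\eRj\eRj^\top$ is clearly an (orthogonal) projection, and the unbiasedness $\E{\cU}=\cI$ follows from $\E{\sum_{j\in R}\tfrac{1}{\pRj}\eRj\eRj^\top} = \sum_j \eRj\eRj^\top = \mI_n$. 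Since both $\cB$ and $\cM^{\dagger 1/2}$ act column-wise while $\cS$ selects columns, the required commutations hold. Finally, Assumption~\ref{as:pseudoinverse} gives $\mJ^k - \mG(x^*)\in\Range{\cM}=\Null{\cR}$, so $\cR(\mJ^k)=\cR(\mG(x^*))=0$ as needed; likewise $\mG(x^k)-\mG(x^*)\in\Range{\cM}$, so it suffices to verify \eqref{eq:small_step} and \eqref{eq:small_step2} for $\mX\in\Range{\cM}$.

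Next I reduce the two key inequalities to per-coordinate scalar conditions on $b_j$. For $\mX\in\Range{\cM}$, every column satisfies $\mX_{:j}=\mM_j^{1/2}(\mM_j^{\dagger 1/2}\mX_{:j})$, so writing $h_j = \tfrac{1}{\pRj}\mM_j^{\dagger 1/2}\mX_{:j}$ and applying the ESO \eqref{eq:ESO_saga} yields
\[
\E{\norm{\cU\mX\eR}^2} \;=\; \E{\Big\|\sum_{j\in R}\tfrac{1}{\pRj}\mX_{:j}\Big\|^2} \;\leq\; \sum_{j=1}^n \frac{v_j}{\pRj}\,\norm{\mM_j^{\dagger 1/2}\mX_{:j}}^2 .
\]
Since $\cB\cM^{\dagger 1/2}\mX$ has $j$-th column $b_j\,\mM_j^{\dagger 1/2}\mX_{:j}$ and $\E{\cS}\mX = \mX\diag(\pR)$, the matrix-norm terms in \eqref{eq:small_step} and \eqref{eq:small_step2} decouple across columns. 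Both inequalities become equivalent to asking that for every $j\in[n]$,
\begin{align}
\frac{2\alpha v_j}{n^2 \pRj} + (1-\pRj)\, b_j^2 &\;\leq\; (1-\alpha\sigma)\, b_j^2, \label{eq:plan_a}\\
\frac{2\alpha v_j}{n^2 \pRj} + \pRj\, b_j^2 &\;\leq\; \tfrac{1}{n}. \label{eq:plan_b}
\end{align}

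The remaining task is to show that these two inequalities can simultaneously hold at the stepsize $\alpha=\min_j \tfrac{n\pRj}{4v_j+n\sigma}$ for a suitable $b$. Rearranging, \eqref{eq:plan_a} requires $b_j^2 \geq \tfrac{2\alpha v_j}{n^2 \pRj(\pRj-\alpha\sigma)}$ while \eqref{eq:plan_b} requires $b_j^2 \leq \tfrac{n\pRj - 2\alpha v_j}{n^2 \pRj^2}$. Compatibility of these bounds reduces, after elementary algebra, to the quadratic-in-$\alpha$ condition $\alpha \pRj (4v_j+n\sigma) \leq n\pRj^2 + 2\alpha^2 v_j \sigma$. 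The choice $\alpha \leq \tfrac{n\pRj}{4v_j+n\sigma}$ makes the left-hand side at most $n\pRj^2$, so the condition holds for every $j$ at $\alpha=\min_j\tfrac{n\pRj}{4v_j+n\sigma}$ and admits a valid vector $b$. Theorem~\ref{thm:main} then yields $\E{\Psi^k}\leq (1-\alpha\sigma)^k\Psi^0$, from which the iteration complexity $\tfrac{1}{\alpha\sigma}\log\tfrac{1}{\epsilon} = \max_j\tfrac{4v_j+n\sigma}{n\sigma\pRj}\log\tfrac{1}{\epsilon}$ follows.

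The main technical obstacle is the bookkeeping around the range condition: one has to recognize that Assumption~\ref{as:pseudoinverse} (together with the column-wise action of every operator in sight) lets one decouple the matrix inequalities into per-column scalar inequalities, and that the ESO constants $v_j$ are exactly the tools to convert the sketch norm $\E{\norm{\cU\mX\eR}^2}$ into a weighted sum matching the right-hand sides. Once this decoupling is in place, the optimization over $\alpha$ and $b$ is entirely routine.
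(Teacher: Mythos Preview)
Your proof is correct and follows essentially the same route as the paper: specialize Theorem~\ref{thm:main} with the right-sampling $\cS$, right-unbiased $\cU$, and $\cB\mX=\mX\diag(b)$, use Assumption~\ref{as:pseudoinverse} to restrict to $\mX\in\Range{\cM}$, apply the ESO bound \eqref{eq:ESO_saga} with $h_j=\pRj^{-1}\mM_j^{\dagger 1/2}\mX_{:j}$, and decouple \eqref{eq:small_step}--\eqref{eq:small_step2} into the per-$j$ inequalities \eqref{eq:plan_a}--\eqref{eq:plan_b}. The only cosmetic difference is that the paper directly exhibits the weight $b_j^2=\tfrac{1}{2n\pRj}$ and checks both inequalities at $\alpha=\min_j\tfrac{n\pRj}{4v_j+n\sigma}$, whereas you first derive the compatibility window for $b_j^2$ and then verify the stepsize lands inside it; both arguments are equivalent.
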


\begin{remark}
Corollary~\ref{cor:saga_as2} is slightly more general than Theorem 4.6 from~\cite{qian2019saga} does not explicitly require linear models and $\mM$ smoothness implied by the linearity.
\end{remark}

\section{Special Cases: {\tt SEGA}-like Methods}

Let $n=1$. Note that now operators $\cS$ and $\cU$ act on $d\times n$ matrices, i.e., on vectors in $\R^d$. To simplify notation, instead of $\mX\in \R^{d\times n}$ we will write $x = (x_1,\dots,x_d) \in \R^{d}$.

\subsection{Basic variant of {\tt SEGA}~\cite{hanzely2018sega}  \label{sec:sega}}

Suppose that  $f$ is $m$-smooth (i.e., $\mM_1=m\mI_d$) with $m>0$. To recover  basic SEGA from~\cite{hanzely2018sega}, consider the following choice of random operators $\cS$ and $\cU$: 
\[
(\forall i) \text{ with probability } \frac1d :  \quad \cS x = \eLi \eLi^\top x = x_i \eLi \quad \text{and} \quad   \cU x = d \eLi \eLi^\top x =  dx_i \eLi.
 \]
The resulting algorithm is stated as Algorithm~\ref{alg:SEGA}.

\begin{algorithm}[h]
    \caption{{\tt SEGA} \cite{hanzely2018sega}}
    \label{alg:SEGA}
    \begin{algorithmic}
        \Require Stepsize $\alpha>0$, starting point $x^0\in\R^d$
        \State Set $h^0 = 0$
        \For{ $k=0,1,2,\ldots$ }
        \State{Sample  $i\in \{1,2,\dots d \}$ uniformly at random}
        \State{Set $h^{k+1} = h^{k} +( \nabla_i f(x^k) - h^{k}_i)\eLi$}
        \State{$g^k = h^k+ d (\nabla_i f(x^k) - h_i^k) \eLi$}
        \State{$x^{k+1} = \prox(x^k - \alpha g^k)$}
        \EndFor
    \end{algorithmic}
\end{algorithm}

\begin{corollary}[Convergence rate of SEGA]\label{cor:sega}  Let $\alpha =  \frac{1}{4md+ \sigma d}$. Then the iteration complexity of Algorithm~\ref{alg:SEGA} is $\left( 4\frac{md}{\sigma}+d \right) \log\frac1\epsilon$. 
\end{corollary}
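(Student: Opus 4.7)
\textbf{Proof proposal for Corollary~\ref{cor:sega}.} The plan is to specialize Theorem~\ref{thm:main} to the SEGA operators and find a scalar weight $\cB = \beta \cI$ (with $\cR\equiv 0$) for which the stepsize inequalities \eqref{eq:small_step}--\eqref{eq:small_step2} hold at $\alpha = (4md+\sigma d)^{-1}$. Since $n=1$ and $\mM_1=m\mI_d$, the operator $\cM$ is just scalar multiplication by $m$, so $\cM^{\dagger \nicefrac12}=m^{-1/2}\cI$ commutes with every operator on $\R^d$; in particular with $\cS$ and with any $\cB=\beta\cI$. The commuting hypothesis of Theorem~\ref{thm:main} is therefore immediate.

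First I would compute the two random quantities that feed into \eqref{eq:small_step}--\eqref{eq:small_step2}. Direct calculation from the definitions gives
\begin{equation*}
\E{\cS}\, x \;=\; \tfrac{1}{d}\sum_{i=1}^{d}\eLi\eLi^{\top}x \;=\; \tfrac{1}{d}\,x,\qquad \E{\|\cU x\|^{2}} \;=\; \tfrac{1}{d}\sum_{i=1}^{d}\|d\,\eLi\eLi^{\top}x\|^{2} \;=\; d\,\|x\|^{2}.
\end{equation*}
With $\cB=\beta\cI$, $\cR\equiv 0$ and $\eR=1$, both inequalities reduce to scalar conditions on $(\alpha,\beta^{2})$. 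Inequality \eqref{eq:small_step} becomes
$2\alpha d\,\|x\|^{2} + (1-\tfrac{1}{d})\tfrac{\beta^{2}}{m}\|x\|^{2} \leq (1-\alpha\sigma)\tfrac{\beta^{2}}{m}\|x\|^{2}$,
which rearranges to $\beta^{2}\geq \frac{2\alpha d^{2}m}{1-\alpha\sigma d}$. Inequality \eqref{eq:small_step2} becomes $2\alpha d + \tfrac{\beta^{2}}{md}\leq \tfrac{1}{m}$, i.e.\ $\beta^{2}\leq d(1-2\alpha dm)$.

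The second step is to verify that a compatible $\beta$ exists at the claimed stepsize. A compatible $\beta$ exists iff
\begin{equation*}
2\alpha dm \;\leq\; (1-2\alpha dm)(1-\alpha\sigma d),
\end{equation*}
and dropping the nonnegative $\alpha^{2}$ cross-term on the right, a sufficient condition is $4\alpha dm + \alpha\sigma d \leq 1$, equivalently $\alpha \leq (4md+\sigma d)^{-1}$. Plugging in $\alpha=(4md+\sigma d)^{-1}$ thus yields a valid choice of $\beta$.

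Finally, I would invoke Theorem~\ref{thm:main} to obtain $\E{\Psi^{k}}\leq (1-\alpha\sigma)^{k}\Psi^{0}$; since $\Psi^{k}\geq \|x^{k}-x^{*}\|^{2}$, standard rearrangement gives the iteration complexity $\alpha^{-1}\sigma^{-1}\log(1/\epsilon) = (4m d/\sigma + d)\log(1/\epsilon)$. I do not expect any serious obstacle here: the only place needing care is keeping track that $\E{\cS}$ and $\cI-\E{\cS}$ are positive multiples of the identity (so their square roots are trivial), and that the scalar quadratic inequality in $\beta^{2}$ is simultaneously solvable exactly at the stated threshold for $\alpha$.
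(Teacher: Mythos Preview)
Your proposal is correct and follows essentially the same route as the paper: specialize Theorem~\ref{thm:main} with $\cB=\beta\cI$ and $\cR\equiv 0$, compute $\E{\cS}=\tfrac{1}{d}\cI$ and $\E{\|\cU x\|^2}=d\|x\|^2$, and reduce \eqref{eq:small_step}--\eqref{eq:small_step2} to two scalar inequalities in $(\alpha,\beta^2)$. The only cosmetic difference is that the paper fixes $\beta^2=\tfrac{d}{2}$ upfront and verifies both inequalities at $\alpha=(4md+\sigma d)^{-1}$, whereas you derive the compatibility condition $2\alpha dm\le(1-2\alpha dm)(1-\alpha\sigma d)$ and show it is implied by the stepsize bound; both arguments are equivalent.
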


\subsection{{\tt SEGA} with arbitrary sampling \label{sec:sega_is_v1}}
Consider a more general setup to that in Section~\ref{sec:sega} and let us allow the smoothness matrix to be an arbitrary diagonal (positive semidefinite) matrix: $\mM = \diag(m_1,\dots,m_d)$ with $m_1,\dots,m_d>0$. In this regime, we will establish a convergence rate for an arbitrary sampling strategy, and then use this to develop importance sampling. 

Let $\pL\in \R^d$ be a probability vector with entries $\pLi = \Prob{i\in L}$. Consider the following choice of random operators $\cS$ and  $\cU$: 
\begin{equation} \label{eq:sega_choice}
(\forall L) \text{ with prob. } \pLL :  \; \cS x = \sum_{i\in L} \eLi \eLi^\top x = \sum_{i\in L} x_i \eLi  \quad \text{and} \quad   \cU x =  \sum_{i\in L} \frac{1}{\pLi} \eLi \eLi^\top x = \sum_{i\in L} \frac{x_i}{\pLi} \eLi .
\end{equation}
The resulting algorithm is stated as Algorithm~\ref{alg:SEGAAS}.

\begin{algorithm}[h]
    \caption{{\tt SEGA} with arbitrary sampling}
    \label{alg:SEGAAS}
    \begin{algorithmic}
        \Require Stepsize $\alpha>0$, starting point $x^0\in\R^d$, random sampling $L\subseteq \{1,2,\dots,d\}$
        \State Set $h^0 = 0$
        \For{ $k=0,1,2,\ldots$ }
        \State{Sample random  $L^k \subseteq \{1,2,\dots,d\}$}
        \State{Set $h^{k+1} = h^{k} +\sum \limits_{i\in L^k}( \nabla_i f(x^k) - h^{k}_i)\eLi$}
        \State{$g^k = h^k+\sum \limits_{i\in L^k}  \frac{1}{\pLi}(\nabla_i f(x^k) - h_i^k)\eLi$}
        \State{$x^{k+1} = \prox(x^k - \alpha g^k)$}
        \EndFor
    \end{algorithmic}
\end{algorithm}

\begin{corollary}[Convergence rate of {\tt SEGA}]\label{cor:sega_is_11}  Iteration complexity of Algorithm~\ref{alg:SEGAAS} with $\alpha  = \min_i \frac{\pLi}{4 m_i+ \sigma}$ is $  \max_i\left(\frac{4 m_i +\sigma}{\pLi\sigma} \right)\log\frac1\epsilon$. 
\end{corollary}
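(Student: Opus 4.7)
The plan is to invoke Theorem~\ref{thm:main} with $\cR \equiv 0$ and a diagonal weighting operator $\cB \mX = \diag(b)\mX$ for a vector $b=(b_1,\dots,b_d)\in \R^d$ to be chosen. Since $n=1$, every object in play ($\cS$, $\E{\cS}$, $\cU$, $\cM = \diag(m)$, $\cM^{\dagger \nicefrac12}$ and $\cB$) is a diagonal/componentwise operator on vectors $x\in \R^d$, so the commutativity hypotheses of Theorem~\ref{thm:main} are immediate, and the hypothesis $\cR(\mJ^k)=\cR(\mG(x^*))$ holds vacuously.

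First I would compute the basic quantities entering~\eqref{eq:small_step} and~\eqref{eq:small_step2}. Since $\pLi = \Prob{i\in L}$, we have $\E{\cS} = \diag(\pL)$, and a direct calculation using~\eqref{eq:sega_choice} gives $\E{\|\cU x\|^2} = \sum_i x_i^2/\pLi$. Combined with $(\cM^{\dagger\nicefrac12} x)_i = x_i/\sqrt{m_i}$, conditions \eqref{eq:small_step} and \eqref{eq:small_step2} decouple across coordinates. For each $i$, \eqref{eq:small_step} reduces to
\[
\frac{2\alpha}{\pLi} + (1-\pLi)\frac{b_i^2}{m_i} \;\leq\; (1-\alpha\sigma)\frac{b_i^2}{m_i},
\]
which rearranges to $b_i^2 \geq \frac{2\alpha m_i}{\pLi(\pLi - \alpha\sigma)}$, while \eqref{eq:small_step2} reduces to
\[
\frac{2\alpha}{\pLi} + \pLi \frac{b_i^2}{m_i} \;\leq\; \frac{1}{m_i},
\]
which rearranges to $b_i^2 \leq \frac{\pLi - 2\alpha m_i}{\pLi^2}$.

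Next I would check that for the stated stepsize $\alpha = \min_i \pLi/(4m_i+\sigma)$ a valid $b_i$ exists. The two bounds on $b_i^2$ are compatible iff $2\alpha m_i \pLi \leq (\pLi - \alpha\sigma)(\pLi - 2\alpha m_i)$; expanding the right-hand side and discarding the nonnegative term $2\alpha^2 m_i \sigma$ yields the sufficient condition $\alpha\pLi(4m_i+\sigma) \leq \pLi^2$, i.e.\ $\alpha \leq \pLi/(4m_i+\sigma)$. This is exactly ensured by the choice of $\alpha$, so a feasible $b_i$ exists for each $i$ (e.g.\ set $b_i^2$ equal to the lower bound).

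With the hypotheses of Theorem~\ref{thm:main} verified, the theorem yields $\E{\Psi^k} \leq (1-\alpha\sigma)^k \Psi^0$, and since $\|x^k-x^*\|^2 \leq \Psi^k$, the iteration complexity is $\tfrac{1}{\alpha\sigma}\log\tfrac{1}{\epsilon} = \max_i \tfrac{4m_i+\sigma}{\pLi\sigma}\log\tfrac{1}{\epsilon}$. The only nontrivial step is the feasibility check for $b_i$, i.e.\ showing the lower bound does not exceed the upper bound; this boils down to a one-variable quadratic inequality that admits the clean sufficient form above once the positive term $2\alpha^2 m_i \sigma$ is dropped.
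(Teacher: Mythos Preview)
Your proposal is correct and follows essentially the same approach as the paper: apply Theorem~\ref{thm:main} with $\cR\equiv 0$, $\cB x=\diag(b)x$, compute $\E{\cS}=\diag(\pL)$ and $\E{\|\cU x\|^2}=\sum_i x_i^2/\pLi$, and reduce \eqref{eq:small_step}--\eqref{eq:small_step2} to per-coordinate scalar inequalities. The only cosmetic difference is that the paper simply plugs in the explicit choice $b_i^2=\tfrac{1}{2\pLi}$ and checks both inequalities directly, whereas you derive the feasibility interval for $b_i^2$ and verify it is nonempty for the stated $\alpha$; both routes yield the same stepsize bound $\alpha\le \pLi/(4m_i+\sigma)$.
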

Corollary~\ref{cor:sega_is_11} indicates an up to constant factor optimal choice $\pLi \propto m_i$, which yields, up to a constant factor, $\frac{\sum_{i=1}^d m_i}{\sigma}\log\frac1\epsilon$ complexity.  
In the applications where $m$ is not unique\footnote{For example when a general matrix smoothness holds; one has to upper bound it by a diagonal matrix in order to comply with the assumptions of the section. In such case, there is an infinite array of possible choices of $m$.}, it is the best to choose one which minimizes $m^\top \eL$.

\begin{remark}
Note that if $\pLi=1$ for all $i$ (i.e., if $\cU=\cI$), we recover proximal gradient descent as a special case.
\end{remark}

\subsection{{\tt SVRCD} with arbitrary sampling \label{sec:svrcd_is2}}

As as a particular special case of Algorithm~\ref{alg:SketchJac} we get a new method, which  we call {\em Stochastic Variance Reduced Coordinate Descent} ({\tt SVRCD}). The algorithm is similar to {\tt SEGA}. The main difference is that {\tt SVRCD} does not {\em update} a subset $L$ of coordinates of vector $h^k$ each iteration. Instead, with probability $\probx$, it {\em sets} $h^k$ to $\nabla f(x^k)$.  

We choose $\cS$ and $\cU$ via
\[
  \cS \mX = 
  \begin{cases}
    0  & \text{w.p.}\quad 1- \probx \\
    \mX              & \text{w.p.}\quad  \probx
\end{cases} 
\quad \text{and} \quad   (\forall L)\quad \text{w.p.}\quad \pLL : \,  \cU \mX =  \sum_{i\in L} \frac{1}{\pLi} \eLi \eLi^\top \mX,
 \]
 where again $\pLi = \Prob{i\in L}$. The randomness of $\cS$ is independent from the randomness of $\cU$ (which comes from the randomness of $L$). The resulting algorithm is stated as Algorithm~\ref{alg:SVRCD}.

\begin{algorithm}[h]
    \caption{{\tt SVRCD} {\bf [NEW METHOD]}}
    \label{alg:SVRCD}
    \begin{algorithmic}
        \Require starting point $x^0\in\R^d$, random sampling $L \subseteq \{1,2,\dots,d\}$, probability $\probx$, stepsize $\alpha>0$
        \State Set $h^0 = 0$
        \For{ $k=0,1,2,\ldots$ }
        \State{Sample random  $L^k \subseteq \{1,2,\dots,d\}$}
        \State{$g^k = h^k+\sum \limits_{i\in L^k}  \frac{1}{\pLi}(\nabla_i f(x^k) - h_i^k) \eLi$}
        \State{$x^{k+1} = \prox(x^k - \alpha g^k)$}
        \State{Set $h^{k+1} =   \begin{cases}
    h^k  & \text{with probability} \quad 1- \probx \\
    \nabla f(x^k)              & \text{with probability} \quad \probx
\end{cases} $}
        \EndFor
    \end{algorithmic}
\end{algorithm}

As in Section~\ref{sec:sega_is_v1}, we shall assume that $f$ is $\mM= \diag(m_1, \dots, m_d)$- smooth.

\begin{corollary}\label{cor:svrcd}
Iteration complexity of Algorithm~\ref{alg:SVRCD} with $\alpha  = \min_i \frac{1}{4m_i / \pLi + \sigma / \probx}$ is $ \left(\frac{1}{\probx}  +  \max_i \frac{4m_i}{\pLi\sigma}\right) \log\frac1\epsilon$. 
\end{corollary}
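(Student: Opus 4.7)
The plan is to apply Theorem~\ref{thm:main} with the trivial choice $\cR\equiv 0$ and a scalar weight operator $\cB = \beta\cI$, where $\beta>0$ will be selected below. Since $n=1$, I identify $\R^{d\times 1}$ with $\R^d$ and write $x = \mX\eR = \mX$. With $\cS = \xi\cI$ (where $\xi\sim\mathrm{Bernoulli}(\probx)$), $\cB = \beta\cI$, and $\cM^\dagger$ acting as left multiplication by $\mM^{-1}$ (well-defined because $m_i>0$), all operators entering the commutativity hypotheses of the theorem are scalar multiples of $\cI$, so $\cB$ and ${\cM^\dagger}^{1/2}$ commute with $\cS$ for free. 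Moreover $\E{\cS} = \probx\cI$ and $\cI - \E{\cS} = (1-\probx)\cI$, reducing \eqref{eq:small_step} and \eqref{eq:small_step2} to scalar inequalities in $x$.

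Next I would compute the second moment of $\cU x = \sum_{i\in L}(x_i/\pLi)\eLi$. Because the nonzero entries of $\cU x$ sit in disjoint coordinates,
\[
\E{\|\cU x\|^2} \;=\; \E{\sum_{i\in L}\frac{x_i^2}{\pLi^2}} \;=\; \sum_{i=1}^d \frac{x_i^2}{\pLi},
\]
while $\|\mM^{-1/2}x\|^2 = \sum_{i=1}^d x_i^2/m_i$. Both sides of \eqref{eq:small_step} and \eqref{eq:small_step2} are therefore weighted sums of the $x_i^2$ with the same set of nonzero weights, so equating coefficients of each $x_i^2$ reduces the two conditions to the coordinate-wise requirements
\[
\frac{2\alpha m_i}{\pLi} \;\leq\; \beta^2(\probx - \alpha\sigma), \qquad \frac{2\alpha m_i}{\pLi} \;\leq\; 1 - \probx\beta^2, \qquad \forall i\in[d].
\]

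Finally I would set $\beta^2 = 1/(2\probx)$, which balances the two constraints: the second collapses to $\alpha \leq \pLi/(4m_i)$, while the first becomes $\alpha(4m_i/\pLi + \sigma/\probx) \leq 1$, which is strictly tighter. Taking the minimum over $i$ produces precisely the stated stepsize $\alpha = \min_i 1/(4m_i/\pLi + \sigma/\probx)$. Theorem~\ref{thm:main} then yields $\E{\Psi^k}\leq (1-\alpha\sigma)^k\Psi^0$, and the identity $1/(\alpha\sigma) = 1/\probx + \max_i 4m_i/(\pLi\sigma)$ gives the claimed iteration complexity. The only mildly subtle point in the plan is the coordinate-wise decoupling in the second step: it is what makes the scalar choice $\cB = \beta\cI$ optimal, and it relies crucially on both $\mM$ being diagonal and $\cU$ being a coordinate-sampling operator so that all three relevant ``norms'' diagonalize in the same basis.
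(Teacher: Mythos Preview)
Your proposal is correct and follows essentially the same approach as the paper: choose $\cB=\beta\cI$, compute $\E{\|\cU x\|^2}=\sum_i x_i^2/\pLi$, reduce \eqref{eq:small_step}--\eqref{eq:small_step2} to the coordinate-wise inequalities $2\alpha m_i/\pLi+\beta^2\alpha\sigma\le\beta^2\probx$ and $2\alpha m_i/\pLi+\beta^2\probx\le 1$, and then set $\beta^2=1/(2\probx)$ to obtain the stated $\alpha$. Your write-up is slightly more careful than the paper's in spelling out the commutativity check and the reason the inequalities decouple coordinate-wise, but the argument is identical.
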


Corollary~\ref{cor:svrcd} indicates optimal choice $p \propto m$.

\begin{remark}
If $\pLi=1$ for all $i$ and $\probx=1$, we recover proximal gradient descent as a special case.
\end{remark}

\section{Special Cases: {\tt SGD-star}} \label{sec:SGD-AS-star}

Suppose that $\mG(x^*) $ is known. We will show that shifted a version of {\tt SGD-AS} converges with linear rate in such case.  Let $\mJ^0 = \mG(x^*)$. Consider the following choice of random operators $\cS$,  $\cU$: 
\[
\cS \mX = 0  \quad \text{and} \quad    (\forall R) \text{ with probability } \pRR :  \quad \cU \mX =  \mX  \sum_{j\in R} \frac{1}{\pRj} \eRj \eRj^\top.
 \]
The resulting algorithm is stated as Algorithm~\ref{alg:SGD_AS}, which is in fact arbitrary sampling version of {\tt SGD-star} from~\cite{sigmak}.

\begin{algorithm}[h]
    \caption{{\tt SGD-star} \cite{sigmak} }
    \label{alg:SGD_AS}
    \begin{algorithmic}
        \Require learning rate $\alpha>0$, starting point $x^0\in\R^d$,  random sampling  $R \subseteq \{1,2,\dots,n\}$
        \For{ $k=0,1,2,\ldots$ }
        \State{Sample random  $R^k \subseteq \{1,2,\dots,n\}$ }
        \State{$g^k = \frac{1}{n}\mG(x^*) \eR +\sum \limits_{j\in R^k} \frac{1}{n \pRj} \left( \nabla f_j(x^k) -\nabla f_j(x^*) \right)$}
        \State{$x^{k+1} = \prox(x^k - \alpha g^k)$}
        \EndFor
    \end{algorithmic}
\end{algorithm}

\begin{corollary}[Convergence rate of {\tt SGD-AS-star}]\label{cor:sgd}  Suppose that $f_j$ is $\mM_j$-smooth for all $j$ and suppose that $v$ satisfies~\eqref{eq:ESO_saga}. Let $\alpha =n \min_j \frac{\pRj}{v_j} $. Then, iteration complexity of Algorithm~\ref{alg:SGD_AS} is $ \max_j \left( \frac{v_j}{n \pRj \sigma} \right) \log\frac1\epsilon$. 
\end{corollary}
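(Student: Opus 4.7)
The plan is to instantiate Theorem~\ref{thm:main} with $\cS \equiv 0$ and $\cU$ the unbiased right-sampling operator, exploiting the fact that perfect knowledge of $\mG(x^*)$ collapses the Jacobian-tracking term in the Lyapunov function.

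First I would observe that since $\cS\equiv 0$, the Jacobian recursion~\eqref{eq:nio9h8fbds79kjh} reduces to $\mJ^{k+1}=\mJ^k$, so the initialization $\mJ^0=\mG(x^*)$ propagates to $\mJ^k=\mG(x^*)$ for every $k$. This motivates picking $\cB\equiv 0$, so that the second summand in the Lyapunov function~\eqref{eq:Lyapunov} vanishes and $\Psi^k=\|x^k-x^*\|^2$. With these choices, the ``commuting'' hypotheses of Theorem~\ref{thm:main} hold vacuously. To satisfy~\eqref{eq:small_step} I would take $\cR=\cI$, making $\Range{\cR}^\perp=\{0\}$, so~\eqref{eq:small_step} holds trivially (both sides are zero at $\mX=0$) and is automatically consistent with $\mJ^k-\mG(x^*)=0\in\Range{\cR}$.

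The only remaining condition is~\eqref{eq:small_step2}, which with $\cB\equiv 0$ and $\E{\cS}=0$ simplifies to
\begin{equation*}
\frac{2\alpha}{n^{2}}\E{\|\cU\mX\eR\|^{2}} \;\leq\; \frac{1}{n}\,\|{\cM^\dagger}^{1/2}\mX\|^{2}, \qquad \forall\mX\in\R^{d\times n}.
\end{equation*}
Using $\cU\mX\eR=\sum_{j\in R}\tfrac{1}{\pRj}\mX_{:j}$ and writing $\mX_{:j}=\mM_j^{1/2}(\mM_j^{\dagger 1/2}\mX_{:j})+\mX_{:j}^{\perp}$ with $\mX_{:j}^{\perp}\in\Null{\mM_j}$ (so the harmless $\perp$-component does not affect what follows, since the condition is only used with $\mX=\mG(x^k)-\mG(x^*)\in\Range{\cM}$ by Lemma~\ref{lem:smooth2}), I would apply the ESO inequality~\eqref{eq:ESO_saga} with $h_j=\tfrac{1}{\pRj}\mM_j^{\dagger 1/2}\mX_{:j}$ to obtain
\begin{equation*}
\E{\|\cU\mX\eR\|^{2}} \;\leq\; \sum_{j=1}^{n}\frac{v_j}{\pRj}\,\|\mX_{:j}\|^{2}_{\mM_j^{\dagger}} \;=\; \sum_{j=1}^{n}\frac{v_j}{\pRj}\,\|{\mM_j^{\dagger}}^{1/2}\mX_{:j}\|^{2}.
\end{equation*}
Substituting this back, a sufficient condition for~\eqref{eq:small_step2} becomes $\tfrac{2\alpha}{n}\max_j\tfrac{v_j}{\pRj}\leq 1$, i.e.\ $\alpha\leq\min_j\tfrac{n\pRj}{2v_j}$, and then Theorem~\ref{thm:main} yields $\E{\|x^{k}-x^{*}\|^{2}}\leq(1-\alpha\sigma)^{k}\|x^{0}-x^{*}\|^{2}$, giving iteration complexity $\max_j\tfrac{2v_j}{n\pRj\sigma}\log\tfrac{1}{\varepsilon}$.

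The main obstacle is the spurious factor of $2$ relative to the corollary's statement: it arises because Lemma~\ref{lem:g_lemma} splits $g^k-\nabla f(x^*)$ into two terms via $\|a+b\|^{2}\leq 2\|a\|^{2}+2\|b\|^{2}$. To close this gap I would re-run that lemma's argument in the present special case, noting that the ``$b$-term'' $\tfrac{1}{n}(\cI-\cU)(\mJ^{k}-\mG(x^*))\eR$ is identically zero since $\mJ^{k}=\mG(x^*)$. Consequently the factor-$2$ slack disappears and the clean bound $\E{\|g^{k}-\nabla f(x^*)\|^{2}}\leq \tfrac{1}{n^{2}}\E{\|\cU(\mG(x^{k})-\mG(x^{*}))\eR\|^{2}}$ holds; threading this through the proof of Theorem~\ref{thm:main} in place of~\eqref{eq:g_lemma} allows the sharper stepsize $\alpha=n\min_j\tfrac{\pRj}{v_j}$ advertised in the corollary, and yields the claimed complexity $\max_j\tfrac{v_j}{n\pRj\sigma}\log\tfrac{1}{\varepsilon}$.
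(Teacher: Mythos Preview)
Your proposal is correct and follows essentially the same route as the paper: set $\cB\equiv 0$ so the Jacobian term in the Lyapunov function disappears, verify~\eqref{eq:small_step2} via the ESO bound~\eqref{eq:ESO_saga} (under Assumption~\ref{as:pseudoinverse}), and then eliminate the spurious factor~$2$ by noting that the second term in Lemma~\ref{lem:g_lemma} vanishes since $\mJ^k=\mG(x^*)$. Your choice $\cR=\cI$ (making $\Range{\cR}^\perp=\{0\}$ so that~\eqref{eq:small_step} is vacuous) is actually the correct one; the paper's stated ``$\cR=0$'' appears to be a slip, and your argument is the intended one.
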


\begin{remark}
In overparameterized models, one has  $\mG(x^*) =0 $. In such a case, Algorithm~\ref{alg:SGD_AS} becomes {\tt SGD-AS} \cite{gower2019sgd}, and we recover its tight convergence rate.
\end{remark}

\section{Special Cases: Loopless {\tt SVRG} with Arbitrary Sampling ({\tt LSVRG})} \label{sec:LSVRG-AS}

In this section we extend Loopless {\tt SVRG} (i.e., {\tt LSVRG}) from~\cite{hofmann2015variance, LSVRG} to arbitrary sampling.

The main difference to {\tt SAGA} is that {\tt LSVRG} does not update $\mJ^k$ at all with probability $1-\probx$. However, with probability $1-\probx$, it sets $\mJ^k$ to $\mG(x^k)$. 

Define $\cS$ and $\cU$ as follows:
\[
  \cS \mX = 
  \begin{cases}
    0  & \text{w.p.}\quad 1- \probx \\
    \mX              & \text{w.p.}\quad  \probx
\end{cases} 
\quad \text{and} \quad   (\forall R) \text{ with probability }\; \pRR : \,  \cU \mX =  \mX  \sum_{i\in R} \frac{1}{\pRj} \eRj \eRj^\top,
 \]
 where $\pRj = \Prob{j\in R}$.

The resulting algorithm is stated as Algorithm~\ref{alg:LSVRG-AS}.

\begin{algorithm}[h]
    \caption{{\tt LSVRG} ({\tt LSVRG} \cite{LSVRG} with arbitrary sampling) {\bf [NEW METHOD]}}
    \label{alg:LSVRG-AS}
    \begin{algorithmic}
        \Require learning rate $\alpha>0$, starting point $x^0\in\R^d$, random sampling $R\subseteq \{1,2,\dots,n\}$
        \State Set $\phi = x^0$
        \For{ $k=0,1,2,\ldots$ }
        \State{Sample a random subset $R^k \subseteq \{1,2,\dots n \}$ }
        \State{$g^k = \frac{1}{n}\sum\limits_{j=1}^n \nabla f_j(\phi^k) +\sum \limits_{j\in R^k} \frac{1}{n \pRj} \left( \nabla f_j(x^{k}) - \nabla f_j(\phi^k) \right)$}
        \State{$x^{k+1} = \prox(x^k - \alpha g^k)$}
        \State{Set $\phi^{k+1} = \begin{cases} x^k & \text{with probabiliy} \quad  \probx \\ \phi^{k} & \text{with probabiliy} \quad  1- \probx \end{cases}$}
        \EndFor
    \end{algorithmic}
\end{algorithm}

In order to give tight rates under $\mM$-smoothness, we shall consider ESO assumption~\eqref{eq:ESO_saga} and Assumption~\ref{as:pseudoinverse} (same as for {\tt SAGA-AS}).

The next corollary shows the convergence result. 
\begin{corollary}[Convergence rate of {\tt LSVRG}]\label{cor:lsvrg_as}  Let $ \alpha  = \min_j \frac{n}{4 \frac{v_j}{ \pRj} + \frac{ \sigma n}{\probx}}$. Then, iteration complexity of Algorithm~\ref{alg:LSVRG-AS} is $\max_j \left(  4 \frac{v_j}{n \sigma \pRj }  + \frac{1}{ \probx } \right) \log\frac1\epsilon$. 
\end{corollary}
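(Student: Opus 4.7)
The plan is to apply Theorem~\ref{thm:main} with the ansatz $\cB = \beta \cI$ (for a scalar $\beta > 0$ to be chosen) and the trivial $\cR \equiv 0$; as in the proof of Corollary~\ref{cor:saga_as2}, Assumption~\ref{as:pseudoinverse} is in effect so that both $\mG(x^k) - \mG(x^*)$ and $\mJ^k - \mG(x^*)$ lie in $\Range{\cM}$, which is where the sufficient conditions~\eqref{eq:small_step} and~\eqref{eq:small_step2} must actually be verified. Since the Bernoulli $\cS$ is either $0$ or $\cI$, it commutes with every linear operator -- in particular with $\cB$ and $\cM^{\dagger 1/2}$ -- and the commutation hypotheses of Theorem~\ref{thm:main} hold trivially. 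Moreover $\E{\cS} = \probx \cI$, hence $(\cI - \E{\cS})^{1/2} = \sqrt{1-\probx}\,\cI$ and $\E{\cS}^{1/2} = \sqrt{\probx}\,\cI$, so the two Frobenius-norm terms involving $\cS$ on the left-hand side of \eqref{eq:small_step} and \eqref{eq:small_step2} become scalar multiples of $\norm{\cM^{\dagger 1/2}\mX}^2$.

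The main technical step is to bound $\E{\norm{\cU \mX \eR}^2}$ in terms of $\norm{\cM^{\dagger 1/2}\mX}^2$ using the ESO inequality~\eqref{eq:ESO_saga}. For $\mX \in \Range{\cM}$, factor $\mX_{:j} = \mM_j^{1/2}\mY_{:j}$ with $\mY \eqdef \cM^{\dagger 1/2}\mX$, so that $\cU \mX \eR = \sum_{j\in R}\pRj^{-1}\mM_j^{1/2}\mY_{:j}$. Applying ESO with $h_j = \mY_{:j}/\pRj$ gives
\[
\E{\norm{\cU \mX \eR}^2} \;\leq\; \sum_{j=1}^n \frac{v_j}{\pRj}\,\norm{\mY_{:j}}^2 \;\leq\; V\,\norm{\cM^{\dagger 1/2}\mX}^2, \qquad V \eqdef \max_j \frac{v_j}{\pRj}.
\]
Substituting this bound and the formulas for $\E{\cS}$ into~\eqref{eq:small_step} and~\eqref{eq:small_step2} reduces those two functional inequalities to the pair of scalar conditions
\[
\tfrac{2\alpha V}{n^2} \leq (\probx - \alpha\sigma)\,\beta^2 \qquad \text{and} \qquad \tfrac{2\alpha V}{n^2} + \probx\,\beta^2 \leq \tfrac{1}{n}.
\]

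These two inequalities admit a common value of $\beta^2$ iff $2\alpha V(2\probx - \alpha\sigma) \leq n(\probx - \alpha\sigma)$. A direct check shows that the stated stepsize $\alpha = n/(4V + \sigma n/\probx)$ satisfies $4\alpha V\probx = n(\probx - \alpha\sigma)$ exactly, so the required inequality holds with slack $2\alpha^2 V\sigma \geq 0$, and any $\beta^2$ in the resulting nonempty interval works. Theorem~\ref{thm:main} then yields linear convergence of the Lyapunov function at rate $(1-\alpha\sigma)^k$, and the corresponding iteration complexity is
\[
\tfrac{1}{\alpha\sigma}\log\tfrac{1}{\epsilon} \;=\; \tfrac{4V + \sigma n/\probx}{n\sigma}\log\tfrac{1}{\epsilon} \;=\; \max_j\left(\tfrac{4 v_j}{n\sigma\pRj} + \tfrac{1}{\probx}\right)\log\tfrac{1}{\epsilon},
\]
matching the statement. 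The only non-routine step is the ESO-based variance bound; the remainder is direct algebra, and the $\cB = \beta\cI$ ansatz works because both the sketch $\cS$ and the expected covariance of $\cU\mX\eR$ (in the $\cM^\dagger$ geometry) are essentially isotropic here, unlike in methods such as {\tt SAGA-AS} where a diagonal $\cB$ is needed.
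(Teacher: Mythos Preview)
Your proposal is correct and follows essentially the same approach as the paper's own proof: choose $\cB = \beta\cI$, use the Bernoulli structure of $\cS$ to get $\E{\cS} = \probx\cI$, invoke Assumption~\ref{as:pseudoinverse} to restrict to $\mX \in \Range{\cM}$ and write $\mX_{:j} = \mM_j^{1/2}\mY_{:j}$, apply the ESO inequality~\eqref{eq:ESO_saga} to bound $\E{\norm{\cU\mX\eR}^2}$, and reduce \eqref{eq:small_step}--\eqref{eq:small_step2} to scalar conditions. The only cosmetic difference is that the paper keeps the coordinate-wise bound $\sum_j \pRj^{-1}v_j\|\mY_{:j}\|^2$ and checks the resulting inequalities ``for all $j$'' (then simply declares $\beta^2 = \tfrac{1}{2n\probx}$ and verifies), whereas you pass to the uniform bound $V\|\mY\|^2$ with $V = \max_j v_j/\pRj$ and argue abstractly that the two scalar constraints on $\beta^2$ are compatible; both routes yield exactly the same stepsize and complexity.
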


\begin{remark}
One can consider a slightly more general setting with \[\cS \mX = 
  \begin{cases}
    0  & \text{w.p.}\quad 1- \probx \\
    \mX   \sum_{i\in R'} \eRj\eRj^\top    & \text{w.p.}\quad  \probx \end{cases},\] where distribution of $R'\subseteq [n] $ is arbitrary. Clearly, such methods is a special case of Algorithm~\ref{alg:SketchJac}, and setting $R' =[n]$ with probability 1, {\tt LSVRG} is obtained. However, in a general form, such algorithm resembles {\tt SCSG}~\cite{lei2017less}. However, unlike {\tt SCSG}, the described method converges linearly, thus is superior to {\tt SCSG}.
\end{remark}

\section{Special Cases: Methods with Bernoulli $\cU$}
Throughout this section, we will suppose that $ \mM_j = m \mI_d$ for all $j$. This is sufficient to establish strong results. Indeed, Bernoulli $\cU$ does not allow for an efficient importance sampling and hence one can't develop arbitrary sampling results similar to those in Section~\ref{sec:saga_as} or Section~\ref{sec:sega_is_v1}.  

\subsection{{\tt B2} (Bernoulli $\cS$)} \label{sec:B2}
Let $n=1$. Note that now operators $\cS$ and $\cU$ act on $d\times n$ matrices, i.e., on vectors in $\R^d$. To simplify notation, instead of $\mX\in \R^{d\times n}$ we will write $x = (x_1,\dots,x_d) \in \R^{d}$. Given  probabilities $0< \probx, \proby \leq 1$, let both $\cS$ and $ \cU$ be  Bernoulli (i.e., scaling) sketches: 
\[
\cS x= 
  \begin{cases}
    0  & \text{w.p.}\quad 1- \probx \\
    x              & \text{w.p.}\quad  \probx
\end{cases} 
\quad \text{and} \quad
\cU x = 
  \begin{cases}
    0  & \text{w.p.}\quad 1- \proby \\
   \frac{1}{\proby}x             & \text{w.p.}\quad  \proby
\end{cases} .
 \]
The resulting algorithm is stated as Algorithm~\ref{alg:B2}.

\begin{algorithm}[h]
    \caption{{\tt B2} {\bf [NEW METHOD]}}
    \label{alg:B2}
    \begin{algorithmic}
        \Require learning rate $\alpha>0$, starting point $x^0\in\R^d$, probabilities $\proby \in (0,1]$ and $\probx \in (0,1]$
        \State Set $\phi = x^0$
        \For{ $k=0,1,2,\ldots$ }
        \State{$g^k =   \begin{cases}
    \nabla f(\phi^k) & \text{w.p.}\quad 1- \proby \\
   \frac{1}{\proby}\nabla f(x^k) - \left(\frac{1}{\proby}-1\right) \nabla f(\phi^k)             & \text{w.p.}\quad  \proby
\end{cases} $}
        \State{$x^{k+1} = \prox(x^k - \alpha g^k)$}
        \State{Set $\phi^{k+1} = \begin{cases} x^k & \text{ w.p. }  \quad \probx\\ \phi^{k} &  \text{ w.p. }  \quad 1 -\probx \end{cases}$}
        \EndFor
    \end{algorithmic}
\end{algorithm}

\begin{corollary}[Convergence rate {\tt B2}]\label{cor:B2}  Suppose that $f$ is $m$-smooth. Let $\alpha = \frac{1}{4\frac{m}{ \proby} + \frac{\sigma}{ \probx} }$. Then, iteration complexity of Algorithm~\ref{alg:B2} is $\left( 4 \frac{m}{\sigma \proby}+ \frac{1}{\probx} \right) \log\frac1\epsilon$. 
\end{corollary}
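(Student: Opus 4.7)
\textbf{Proof plan for Corollary~\ref{cor:B2}.}
The plan is to instantiate Theorem~\ref{thm:main} with the trivial choice $\cR\equiv 0$ and with weight operator $\cB=\beta\cI$, where $\beta>0$ is a scalar to be fixed at the very end by balancing the two small-step inequalities. In the B2 setup we are in the $n=1$ regime; $\cS$ is a scalar Bernoulli multiplier (so it commutes with every linear operator), and the assumption $\mM_j=m\mI_d$ gives $\cM^{\dagger 1/2}=m^{-1/2}\cI$. In particular both commutativity hypotheses of Theorem~\ref{thm:main} hold automatically, and the Lyapunov function~\eqref{eq:Lyapunov} reduces to $\norm{x^k-x^*}^2+\alpha\beta^2 m^{-1}\norm{\mJ^k-\mG(x^*)}^2$.

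Next I would compute the building blocks appearing in~\eqref{eq:small_step} and~\eqref{eq:small_step2} in closed form. Writing $\cS=\xi_{\probx}\cI$ and $\cU=\proby^{-1}\xi_{\proby}\cI$ with independent Bernoulli indicators of mean $\probx$ and $\proby$ respectively, one obtains $\E{\cS}=\probx\cI$, $\cI-\E{\cS}=(1-\probx)\cI$, and $\E{\norm{\cU\mX\eR}^2}=\proby^{-1}\norm{\mX}^2$ (since $\eR=1$). Combined with $\norm{\cM^{\dagger 1/2}\mX}^2=m^{-1}\norm{\mX}^2$, conditions~\eqref{eq:small_step} and~\eqref{eq:small_step2} collapse, after dividing through by $\norm{\mX}^2/m$, to the two scalar inequalities
\begin{equation*}
\text{(a)}\ \ \frac{2\alpha m}{\proby\beta^2}+\alpha\sigma\leq \probx,\qquad \text{(b)}\ \ \frac{2\alpha m}{\proby}+\probx\beta^2\leq 1.
\end{equation*}

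The remaining step is to choose $\beta^2=\tfrac{1}{2\probx}$. With this choice the Jacobian term of~(b) equals $\tfrac12$, so~(b) reduces to the clean requirement $\alpha\leq \proby/(4m)$, while~(a) becomes $\alpha\bigl(4m/\proby+\sigma/\probx\bigr)\leq 1$. The second inequality is tight at the advertised stepsize $\alpha=1/(4m/\proby+\sigma/\probx)$, and this stepsize automatically satisfies $\alpha\leq \proby/(4m)$ because dropping the nonnegative $\sigma/\probx$ term in the denominator only enlarges $\alpha$ to $\proby/(4m)$. Hence both~(a) and~(b) hold, Theorem~\ref{thm:main} yields $\E{\Psi^k}\leq(1-\alpha\sigma)^k\Psi^0$, and asking for $(1-\alpha\sigma)^k\leq\varepsilon$ translates into the iteration complexity $\bigl(4m/(\sigma\proby)+1/\probx\bigr)\log(1/\varepsilon)$ stated in the corollary.

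There is no real analytic obstacle; the only creative ingredient is the choice $\beta^2=1/(2\probx)$, and it is pinned down by asking that the Jacobian contributions on the left-hand sides of~(a) and~(b) simultaneously leave enough room for a stepsize of the advertised form. Everything else is bookkeeping.
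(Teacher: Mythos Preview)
Your proposal is correct and essentially identical to the paper's own proof: instantiate Theorem~\ref{thm:main} with $\cR\equiv 0$ and $\cB=\beta\cI$, compute $\E{\cS}=\probx\cI$ and $\E{\|\cU x\|^2}=\proby^{-1}\|x\|^2$, reduce \eqref{eq:small_step}--\eqref{eq:small_step2} to the two scalar inequalities you wrote, and verify them with $\beta^2=\tfrac{1}{2\probx}$ and the stated $\alpha$. The paper's derivation is the same line-for-line (it writes the two inequalities before dividing through by $\beta^2$, but the content is identical).
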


\begin{remark}
It is possible to choose correlated $\cS$ and $\cU$ without any sacrifice in the rate. 
\end{remark}

 \subsection{{\tt LSVRG-inv} (Right $\cS$)} \label{sec:SVRG-1}

Given a probability scalar $0< \proby \leq 1$, consider choosing operators $\cS$ and $\cU$ as follows: 
\[
\cS \mX = \mX \sum_{j\in R} \eRj \eRj^\top \quad \text{w.p.}\quad \pRR
\quad \quad\text{and} \quad\quad
\cU \mX = 
  \begin{cases}
    0  & \text{w.p.}\quad 1- \proby \\
   \frac{1}{\proby} \mX              & \text{w.p.}\quad  \proby.
\end{cases} 
 \]
The resulting algorithm is stated as Algorithm~\ref{alg:invsvrg}.

\begin{algorithm}[h]
    \caption{{\tt LSVRG-inv} {\bf [NEW METHOD]}}
    \label{alg:invsvrg}
    \begin{algorithmic}
        \Require starting point $x^0\in\R^d$, random sampling $R\subseteq \{1,2,\dots,n\}$, probability $\proby \in (0,1]$ , learning rate $\alpha>0$
        \State Set $\phi_j^0 = x^0$ for $j=1,2,\dots,n$
        \For{ $k=0,1,2,\ldots$ }
        \State{$g^k =   \begin{cases}
   \frac1n\sum \limits_{j=1}^n \nabla f_j(\phi_j^k) & \text{w.p.}\quad 1- \proby \\
   \frac{1}{\proby}\nabla f(x^k) - \left(\frac{1}{\proby}-1 \right)  \frac1n\sum \limits_{j=1}^n \nabla f_j(\phi_j^k)            & \text{w.p.}\quad  \proby
\end{cases} $}
        \State{$x^{k+1} = \prox(x^k - \alpha g^k)$}
    \State{Sample a random subset $R^k \subseteq \{1,2,\dots n \}$ }
        \State{Set $\phi_j^{k+1} = \begin{cases} x^k &  \quad j\in R^k\\ \phi_j^{k} &  \quad j \notin R^k \end{cases}$}        
        \EndFor
    \end{algorithmic}
\end{algorithm}

\begin{corollary}[Convergence rate of {\tt LSVRG-inv}]\label{cor:inverse_svrg}  Suppose that each $f_i$ is $m$-smooth. Let $\alpha = \min_j \frac{1}{4\frac{m}{\proby} + \frac{\sigma}{\pRj}}$. Then, iteration complexity of Algorithm~\ref{alg:invsvrg} is $\max_j \left( 4\frac{m}{\sigma \proby}+ \frac{1}{\pRj} \right) \log\frac1\epsilon$. 
\end{corollary}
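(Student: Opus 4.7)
The plan is to apply Theorem~\ref{thm:main} with $\cR \equiv 0$ and Lyapunov weight operator $\cB \mX \eqdef \mX \diag(b)$ for a vector $b \in \R^n$ to be determined. Since each $\mM_j = m \mI_d$, the operator ${\cM^\dagger}^{\frac12}$ reduces to scalar multiplication by $m^{-\nicefrac12}$ and hence commutes with everything; meanwhile $\cB$ and $\cS$ both act on columns from the right, so they commute as well. Thus the commutation hypotheses of Theorem~\ref{thm:main} are automatic, and the condition $\cR(\mJ^k) = \cR(\mG(x^*))$ holds trivially.

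The first computational step is to evaluate every term appearing in \eqref{eq:small_step} and \eqref{eq:small_step2}. The Bernoulli form of $\cU$ gives $\E{\|\cU \mX \eR\|^2} = \proby^{-1}\|\mX \eR\|^2$, and Cauchy--Schwarz (applied columnwise) yields $\|\mX \eR\|^2 \le n \sum_{j=1}^n \|\mX_{:j}\|^2$. The $R$-sampling $\cS$ satisfies $\E{\cS} \mX = \mX \diag(\pR)$. Substituting these and $\cB {\cM^\dagger}^{\frac12} \mX = m^{-\nicefrac12} \mX \diag(b)$, every remaining Frobenius norm decouples across columns. The two hypotheses then reduce to the column-wise conditions
\begin{equation*}
\frac{2\alpha m}{n \proby} \;\le\; b_j^2 (\pRj - \alpha \sigma), \qquad \frac{2\alpha m}{\proby} + n\, b_j^2 \pRj \;\le\; 1, \qquad \forall\, j \in [n].
\end{equation*}

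The next step is to choose $b$ and $\alpha$. Setting $b_j^2 \eqdef \tfrac{2\alpha m}{n \proby (\pRj - \alpha\sigma)}$ saturates the first condition; substituting into the second and rearranging (dropping the favorable cross-term $-\alpha\sigma \cdot 2\alpha m/\proby$) gives the sufficient stepsize bound $\alpha \bigl(\sigma/\pRj + 4m/\proby\bigr) \le 1$ for every $j$. Hence the choice $\alpha = \min_j \bigl(4m/\proby + \sigma/\pRj\bigr)^{-1}$ stated in the corollary validates both inequalities of Theorem~\ref{thm:main}, which then guarantees $\E{\Psi^k} \le (1 - \alpha\sigma)^k \Psi^0$. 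Translating into iteration complexity, $\tfrac{1}{\alpha\sigma}\log\tfrac{1}{\varepsilon} = \max_j \bigl(\tfrac{1}{\pRj} + \tfrac{4m}{\sigma\proby}\bigr)\log\tfrac{1}{\varepsilon}$, as claimed.

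The only non-routine step is the algebraic merging of the two column-wise constraints into the single clean bound $\alpha(\sigma/\pRj + 4m/\proby) \le 1$: one must verify that the discarded cross-term is indeed of favorable sign so that the factor $4$ is correct and no additional looseness is introduced. Everything else is direct bookkeeping made possible by the scalar form of $\cM^{\dagger}$ and the right-sided structure of $\cB$, $\cS$, and $\cU$.
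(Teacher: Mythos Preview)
Your proposal is correct and follows essentially the same route as the paper: apply Theorem~\ref{thm:main} with $\cR\equiv 0$ and $\cB\mX=\mX\diag(b)$, use $\E{\|\cU\mX\eR\|^2}=\proby^{-1}\|\mX\eR\|^2\le \proby^{-1}n\|\mX\|^2$ and $\E{\cS}\mX=\mX\diag(\pR)$, and reduce \eqref{eq:small_step}--\eqref{eq:small_step2} to the same pair of column-wise scalar inequalities. The only cosmetic difference is that the paper picks $b_j^2=\tfrac{1}{2n\pRj}$ directly, whereas you saturate the first inequality and then simplify; both choices yield the identical stepsize $\alpha=\min_j(4m/\proby+\sigma/\pRj)^{-1}$ and hence the same complexity.
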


 \subsection{{\tt SVRCD-inv} (Left $\cS$)} \label{sec:SVRCD-inv}

Let $n=1$. Note that now operators $\cS$ and $\cU$ act on $d\times n$ matrices, i.e., on vectors in $\R^d$. To simplify notation, instead of $\mX\in \R^{d\times n}$ we will write $x = (x_1,\dots,x_d) \in \R^{d}$.

Consider again setup where $n=1$. Choose operators $\cS$ and $\cU$ as follows: 
\[
\cS x= \sum_{i\in L} \eLi \eLi^\top x \quad \text{w.p.}\quad  \pLL
\quad \text{and} \quad
\cU x = 
  \begin{cases}
    0  & \text{w.p.}\quad 1- \proby \\
   \frac{1}{\proby} x              & \text{w.p.}\quad  \proby \;.
\end{cases} 
 \]
For convenience, let $\pL$ be the probability vector defined as: $\pLi = \Prob{i\in L}$.

The resulting algorithm is stated as Algorithm~\ref{alg:B_sega}.

\begin{algorithm}[h]
    \caption{{\tt SVRCD-inv} {\bf [NEW METHOD]}}
    \label{alg:B_sega}
    \begin{algorithmic}
        \Require  starting point $x^0\in\R^d$, random sampling $L\subseteq \{1,2,\dots,d\}$, probability $\proby \in (0,1]$, learning rate $\alpha>0$
        \State Choose $h^0 \in \R^d$
        \For{ $k=0,1,2,\ldots$ }
        \State{$g^k =   \begin{cases}
    h^k & \text{w.p.}\quad 1- \proby \\
   \frac{1}{\proby}\nabla f(x^k) - \left(\frac{1}{\proby}-1 \right)h^k             & \text{w.p.}\quad  \proby
\end{cases} $}
        \State{$x^{k+1} = \prox(x^k - \alpha g^k)$}
            \State{Sample a random subset $L^k \subseteq \{1,2,\dots d \}$ }
        \State{Set $h^{k+1} = h^k + \sum \limits_{i\in L^k} (\nabla_i f(x^k)  - h^k_i )\eLi$}
        \EndFor
    \end{algorithmic}
\end{algorithm}

\begin{corollary}[Convergence rate of {\tt SVRCD-inv}]\label{cor:SVRCD-inv}  Suppose that each $f_j$ is $m$-smooth. Let $\alpha = \min_i \frac{1}{4 \frac{m}{\proby} + \frac{\sigma}{ \pLi}}$. Then, iteration complexity of Algorithm~\ref{alg:B_sega} is $\max_i \left( 4\frac{m}{\sigma \proby}+ \frac{1}{\pLi} \right) \log\frac1\epsilon$. 
\end{corollary}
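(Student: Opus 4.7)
The plan is to apply Theorem~\ref{thm:main} with $\cR \equiv 0$ and $\cB \mX = \diag(b) \mX$ for a positive vector $b\in\R^d$ to be chosen below. Since $n=1$, $\cM^{\nicefrac12\dagger} = m^{-\nicefrac12}\cI$, and both $\cB$ (diagonal) and $\cM^{\nicefrac12\dagger}$ (scalar multiple of identity) commute with the coordinate projection $\cS$, so the commutativity hypotheses of Theorem~\ref{thm:main} are met. With $\cR=0$, the condition $\cR(\mJ^k)=\cR(\mG(x^*))$ is automatic and \eqref{eq:small_step} must be verified for all $\mX\in\R^{d\times n}$; writing $\mX$ as $x\in\R^d$.

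First I compute the ingredients. The expected projection is $\E{\cS}x = \diag(\pL)x$, so $\cI-\E{\cS} = \diag(\eL-\pL)$. The Bernoulli sketch gives $\E{\|\cU x\|^2} = \tfrac{1}{\proby}\|x\|^2$, and $\eR = 1$ trivializes the right-factor. Thus the two required inequalities in Theorem~\ref{thm:main} reduce to the coordinate-wise statements
\begin{equation*}
\frac{2\alpha m}{\proby} + (1-\pLi)\,b_i^2 \;\le\; (1-\alpha\sigma)\,b_i^2, \qquad
\frac{2\alpha m}{\proby} + \pLi\,b_i^2 \;\le\; 1,
\end{equation*}
for every $i\in[d]$. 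The first is equivalent to $b_i^2 \ge \tfrac{2\alpha m/\proby}{\pLi-\alpha\sigma}$ (provided $\alpha\sigma<\pLi$), and the second to $b_i^2 \le \tfrac{1-2\alpha m/\proby}{\pLi}$.

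The remaining step is to show that these two box constraints on $b_i^2$ are compatible for the stated stepsize. Cross-multiplying, compatibility is equivalent to $4\alpha m \pLi/\proby \le \pLi - \alpha\sigma + 2\alpha^2 m\sigma/\proby$, which is implied by $4\alpha m \pLi/\proby \le \pLi - \alpha\sigma$, i.e.\ $\alpha\bigl(\tfrac{4m}{\proby} + \tfrac{\sigma}{\pLi}\bigr)\le 1$. The choice $\alpha = \min_i \tfrac{1}{4m/\proby + \sigma/\pLi}$ satisfies this for every $i$, and a valid $b_i$ exists in the resulting nonempty interval.

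Finally, Theorem~\ref{thm:main} then yields $\E{\Psi^k}\le(1-\alpha\sigma)^k\Psi^0$, so the iteration complexity is $\tfrac{1}{\alpha\sigma}\log\tfrac{1}{\epsilon} = \max_i\bigl(\tfrac{4m}{\sigma\proby}+\tfrac{1}{\pLi}\bigr)\log\tfrac{1}{\epsilon}$. The only mildly delicate point is the algebraic manipulation reconciling the upper and lower bounds on $b_i^2$; everything else is mechanical substitution into Theorem~\ref{thm:main}.
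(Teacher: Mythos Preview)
Your proof is correct and follows essentially the same route as the paper: choose $\cR\equiv 0$, $\cB=\diag(b)$, compute $\E{\cS}=\diag(\pL)$ and $\E{\|\cU x\|^2}=\proby^{-1}\|x\|^2$, and reduce \eqref{eq:small_step}--\eqref{eq:small_step2} to the same pair of coordinate-wise inequalities. The only cosmetic difference is that the paper fixes $b_i^2=\tfrac{1}{2\pLi}$ explicitly and checks both inequalities directly, whereas you argue abstractly that the lower and upper box constraints on $b_i^2$ are compatible under the stated stepsize; both arguments are equivalent and equally short.
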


\section{Special Cases: Combination of Left and Right Sketches}

 \subsection{{\tt RL} (right sampling $\cS$, left unbiased sampling $\cU$)}  \label{sec:RL}

Consider choosing $\cS$ and $\cU$ as follows: 
\[
\cS \mX= \mX \sum_{j\in R} \eRj \eRj^\top \quad \text{w.p.} \quad \pR_{R}
\quad \text{and} \quad
\cU \mX= \sum_{i\in L} \frac{1}{\pLi} \eLi  \eLi^\top \mX  \quad \text{w.p.} \quad \pL_{L} \;.
 \]
 
The resulting algorithm is stated as Algorithm~\ref{alg:RL}.

\begin{algorithm}[h]
    \caption{{\tt RL} {\bf [NEW METHOD]}}
    \label{alg:RL}
    \begin{algorithmic}
        \Require starting point $x^0\in\R^d$, random sampling $L\subseteq \{1,2,\dots,d\}$,  random sampling $R\subseteq \{1,2,\dots,n\}$, learning rate $\alpha>0$
        \State Set $\phi_j^0 = x^0$ for each $j$
        \For{ $k=0,1,2,\ldots$ }
        \State{Sample random $R^k \subseteq \{1,2,\dots,n\}$ }
        \State{Set $\phi_j^{k+1} = \begin{cases} x^k & \quad j\in R^k\\ \phi_j^{k} & \quad j\not \in R^k \end{cases}$}
        \State{Sample random $L^k \subseteq \{1,2,\dots,d\}$}
        \State{$g^k = \frac{1}{n}\sum\limits_{j=1}^n\nabla f_j(\phi_j^k) +\sum \limits_{i\in L^k} \frac{1}{\pLi}\left( \nabla_i f(x^k) -\frac{1}{n}\sum\limits_{j=1}^n\nabla_i f_j(\phi_j^k) \right)\eLi$}
        \State{$x^{k+1} = \prox(x^k - \alpha g^k)$}
        \EndFor
    \end{algorithmic}
\end{algorithm}

\begin{corollary}[Convergence rate of {\tt RL}]\label{cor:RL}  Suppose that each $f_j$ is $\diag(m^j)$-smooth, where $m^j\in \R^d$ and $\diag(m^j)\succ 0$. Let $\alpha =\min_{i,j} \left( 4 \frac{m_i^j }{\pLi} + \frac{\sigma}{ \pRj} \right)^{-1}$. Then, iteration complexity of Algorithm~\ref{alg:RL} is $\max_{i,j} \left( 4\frac{m_i^j}{\sigma \pLi} +  \frac{1}{\pRj} \right) \log\frac1\epsilon$. 
\end{corollary}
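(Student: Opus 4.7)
The plan is to apply Theorem~\ref{thm:main} with the trivial choice $\cR\equiv 0$ (rendering the requirement $\cR(\mJ^k)=\cR(\mG(x^*))$ vacuous, so \eqref{eq:small_step} must hold for all $\mX\in\R^{d\times n}$) and $\cB\mX=\beta\mX$ for a scalar $\beta>0$ to be chosen; this matches the entry $\cB\mX=\diag(b)\mX$ from Table~\ref{tbl:all_special_cases_theory} with $b=\beta\eL$. Commutativity with $\cS$ is immediate: $\cS$ acts on $\mX$ by right-multiplication with the column-selector $\sum_{j\in R}\eRj\eRj^\top$, while $\cB$ and ${\cM^\dagger}^{\frac12}$ (the latter because each $\mM_j=\diag(m^j)$ is diagonal) rescale entries of $\mX$ by factors depending only on $(i,j)$, so they commute with any column-wise operation.

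Direct computation then yields
\begin{align*}
\|\cB{\cM^\dagger}^{\frac12}\mX\|^2 &= \beta^2\sum_{i,j}(m_i^j)^{-1}\mX_{ij}^2, \\
\|\E{\cS}^{\frac12}\cB{\cM^\dagger}^{\frac12}\mX\|^2 &= \beta^2\sum_{i,j}\pRj(m_i^j)^{-1}\mX_{ij}^2, \\
\|(\cI-\E{\cS})^{\frac12}\cB{\cM^\dagger}^{\frac12}\mX\|^2 &= \beta^2\sum_{i,j}(1-\pRj)(m_i^j)^{-1}\mX_{ij}^2, \\
\E{\|\cU\mX\eR\|^2} &= \sum_i \pLi^{-1}\Big(\sum_j\mX_{ij}\Big)^2.
\end{align*}
The key manoeuvre is to decouple the non-diagonal quadratic $(\sum_j\mX_{ij})^2$ via the elementary bound $(\sum_j\mX_{ij})^2\le n\sum_j\mX_{ij}^2$, after which both \eqref{eq:small_step} and \eqref{eq:small_step2} are implied by their entry-wise versions, which together collapse to the two-sided condition
\begin{equation*}
\frac{2\alpha m_i^j}{n\pLi(\pRj-\alpha\sigma)}\;\le\;\beta^2\;\le\;\frac{1}{n\pRj}\Big(1-\frac{2\alpha m_i^j}{\pLi}\Big),\qquad\forall\, i,j.
\end{equation*}

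The prescribed stepsize $\alpha=\min_{i,j}(4m_i^j/\pLi+\sigma/\pRj)^{-1}$ is equivalent to $4\alpha m_i^j/\pLi\le\pRj-\alpha\sigma$ for every $(i,j)$, which forces the left-hand side above to be at most $1/(2n\pRj)$ and (using $\pRj\le 1$) the right-hand side to be at least $1/(2n\pRj)$. Hence $\beta^2=1/(2n)$ is admissible for all pairs, Theorem~\ref{thm:main} applies, and one reads off the iteration complexity $\tfrac{1}{\alpha\sigma}\log\tfrac{1}{\varepsilon}=\max_{i,j}\bigl(\tfrac{4m_i^j}{\sigma\pLi}+\tfrac{1}{\pRj}\bigr)\log\tfrac{1}{\varepsilon}$. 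The main obstacle I anticipate is the Cauchy--Schwarz step: the naive bound loses a factor of $n$ in the gradient variance, but this is absorbed exactly by the $\tfrac{1}{n^2}$ prefactor in \eqref{eq:small_step}--\eqref{eq:small_step2}, and the resulting $\tfrac{1}{n}$ pairs perfectly with the constant $4$ in the stepsize, so no weighted Cauchy--Schwarz or row-dependent $b$ is needed.
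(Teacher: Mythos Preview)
Your argument breaks at the algebraic claim that the prescribed stepsize is ``equivalent to $4\alpha m_i^j/\pLi\le \pRj-\alpha\sigma$''. The correct equivalence of $\alpha\le(4m_i^j/\pLi+\sigma/\pRj)^{-1}$ is
\[
\frac{4\alpha m_i^j}{\pLi}+\frac{\alpha\sigma}{\pRj}\le 1,
\qquad\text{i.e.}\qquad
\frac{4\alpha m_i^j}{\pLi}\le \frac{\pRj-\alpha\sigma}{\pRj},
\]
which only gives $\text{LHS}\le \tfrac{1}{2n\pRj}$, not $\text{LHS}\le\tfrac{1}{2n}$. Since you also (correctly) show $\text{RHS}\ge \tfrac{1}{2n\pRj}$, a \emph{scalar} $\beta^2$ must simultaneously satisfy $\beta^2\ge \max_j\tfrac{1}{2n\pRj}$ and $\beta^2\le \min_j\tfrac{1}{2n\pRj}$, which is impossible unless all $\pRj$ coincide. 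A quick sanity check: with $n=2$, $d=1$, $\pL_1=1$, $\pR_1=\tfrac14$, $\pR_2=1$, $m_1^1=m_1^2=m$, condition~\eqref{eq:small_step} at $j=1$ forces $\beta^2\ge 1$, while~\eqref{eq:small_step2} at $j=2$ forces $\beta^2\le \tfrac{m+2\sigma}{4(m+\sigma)}<1$.

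The fix is exactly what Table~\ref{tbl:all_special_cases_theory} actually says for {\tt RL}: $\cB\mX=\mX\diag(b)$ (right multiplication, $b\in\R^n$), not $\diag(b)\mX$. Taking $b_j^2=\tfrac{1}{2n\pRj}$, the entry-wise sufficient conditions become
\[
\frac{2\alpha m_i^j}{n\pLi}+\alpha\sigma b_j^2\le \pRj b_j^2,
\qquad
\frac{2\alpha m_i^j}{n\pLi}+\pRj b_j^2\le \frac1n,
\]
and both reduce precisely to $\alpha\le(4m_i^j/\pLi+\sigma/\pRj)^{-1}$. Your Cauchy--Schwarz step and commutativity checks are fine; only the choice of $\cB$ needs to carry the $j$-dependence coming from $\E{\cS}=\diag(\pR)$ acting on the right.
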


 \subsection{{\tt LR} (left sampling $\cS$, right unbiased sampling $\cU$)}  \label{sec:LR}
 
 Consider choosing $\cS$ and $\cU$ as follows: 
\[
\cS \mX=  \sum_{i \in L} \eLi \eLi^\top  \mX\quad \text{w.p.} \quad \pL_{L}
\quad \text{and} \quad
\cU \mX=  \mX  \sum_{j\in R} \frac{1}{\pRj} \eRj  \eRj^\top \quad \text{w.p.} \quad \pR_{R} \;.
 \]

The resulting algorithm is stated as Algorithm~\ref{alg:LR}.

\begin{algorithm}[h]
    \caption{{\tt LR} {\bf [NEW METHOD]}}
    \label{alg:LR}
    \begin{algorithmic}
        \Require starting point $x^0\in\R^d$, random sampling $L\subseteq \{1,2,\dots,d\}$,  random sampling $R\subseteq \{1,2,\dots,n\}$, learning rate $\alpha>0$
        \State Set $h^0 = x^0$ for each $j$
        \For{ $k=0,1,2,\ldots$ }
        \State{Sample random $L^k \subseteq \{1,2,\dots,d\}$ }
        \State{Set $h^{k+1} = h^{k} +\sum \limits_{i\in L^k} ( \nabla_i f(x^k) - h^{k}_i)\eLi$}
        \State{Sample random $R^k \subseteq \{1,2,\dots,n\}$ }
        \State{$g^k = \nabla f(h^k) +\sum \limits_{j\in R^k} \frac{1}{n\pRj}\left( \nabla f_j(x^k) - \nabla f_j(h^k) \right)$}
        \State{$x^{k+1} = \prox(x^k - \alpha g^k)$}
        \EndFor
    \end{algorithmic}
\end{algorithm}

\begin{corollary}[Convergence rate of {\tt LR}]\label{cor:LR}  Suppose that each $f_j$ is $\mM_j$-smooth, and suppose that $v \in \R^{n}$ is such that~\eqref{eq:ESO_saga} holds. Let $\alpha = \min_{i,j} \frac{1}{4 v_j\pRj^{-1} + \sigma\pLi^{-1}}$. Then, iteration complexity of Algorithm~\ref{alg:LR} is $\max_{i,j} \left( 4\frac{v_i}{\sigma \pRj}+ \frac{1}{\pLi} \right) \log\frac1\epsilon$. 
\end{corollary}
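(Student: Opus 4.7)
The plan is to specialize Theorem~\ref{thm:main} to the operators defining Algorithm~\ref{alg:LR}, using the choices $\cB\mX=\diag(b)\mX$ for a vector $b\in\R^d$ to be selected, and the trivial $\cR\equiv 0$ (so the range condition $\cR\mJ^k=\cR\mG(x^*)$ is automatic and~\eqref{eq:small_step} is required for all $\mX$). First I would verify the standing hypotheses of the theorem. The operator $\cS\mX=\mL_L\mX$ with $\mL_L=\sum_{i\in L}\eLi\eLi^\top$ is self-adjoint and idempotent, hence a random projection, and $\E{\cU}=\cI$ since $\E{\sum_{j\in R}\tfrac{1}{\pRj}\eRj\eRj^\top}=\mI_n$. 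The operator $\cB$, being a left-diagonal scaling, commutes with the left-diagonal projection $\cS$; the commutation of ${\cM^\dagger}^{1/2}$ with $\cS$ (the one genuine structural requirement) holds provided each $\mM_j$ commutes with every realization $\mL_L$, the cleanest sufficient condition being that $\mM_j$ is diagonal.

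Next, the plan is to evaluate each quantity appearing in~\eqref{eq:small_step} and~\eqref{eq:small_step2}. Since $\E{\cS}\mX=\diag(\pL)\mX$, writing $(\mM_j^\dagger)_{ii}$ for the $i$-th diagonal entry of $\mM_j^\dagger$ gives
\[
\norm{\cB{\cM^\dagger}^{1/2}\mX}^2=\sum_{i,j}b_i^2(\mM_j^\dagger)_{ii}\mX_{ij}^2,\qquad \norm{\E{\cS}^{1/2}\cB{\cM^\dagger}^{1/2}\mX}^2=\sum_{i,j}\pLi b_i^2(\mM_j^\dagger)_{ii}\mX_{ij}^2,
\]
and an analogous expression for $\norm{(\cI-\E{\cS})^{1/2}\cB{\cM^\dagger}^{1/2}\mX}^2$ with $1-\pLi$ in place of $\pLi$. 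For the variance of $\cU\mX\eR=\sum_{j\in R}\tfrac{\mX_{:j}}{\pRj}$, I would write $\mX_{:j}=\mM_j^{1/2}y_j$ with $y_j=\mM_j^{\dagger 1/2}\mX_{:j}$ and apply the expected separable overapproximation~\eqref{eq:ESO_saga} with $h_j=y_j/\pRj$, obtaining $\E{\norm{\cU\mX\eR}^2}\le\sum_j\tfrac{v_j}{\pRj}\norm{\mM_j^{\dagger 1/2}\mX_{:j}}^2=\sum_{i,j}\tfrac{v_j}{\pRj}(\mM_j^\dagger)_{ii}\mX_{ij}^2$.

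Both~\eqref{eq:small_step} and~\eqref{eq:small_step2} then reduce to pointwise conditions in $(i,j)$: namely $\tfrac{2\alpha v_j}{n^2\pRj}\le(\pLi-\alpha\sigma)b_i^2$ and $\tfrac{2\alpha v_j}{n^2\pRj}+\pLi b_i^2\le\tfrac{1}{n}$. Setting $b_i^2=\tfrac{1}{2n\pLi}$ neutralizes the second as long as $\alpha\le\tfrac{n\pRj}{4v_j}$, and reduces the first to $\alpha\bigl(\tfrac{4v_j\pLi}{n\pRj}+\sigma\bigr)\le\pLi$; a short algebraic check shows that the (slightly conservative) closed-form choice $\alpha=\min_{i,j}\frac{1}{4v_j\pRj^{-1}+\sigma\pLi^{-1}}$ satisfies both constraints for all $n\ge 1$, so Theorem~\ref{thm:main} delivers $\E{\Psi^k}\le(1-\alpha\sigma)^k\Psi^0$ and hence iteration complexity $\max_{i,j}\bigl(\tfrac{1}{\pLi}+\tfrac{4v_j}{\pRj\sigma}\bigr)\log\tfrac{1}{\varepsilon}$. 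The hard part is neither the algebra nor the ESO step but the commutation hypothesis on $\mM_j$: when the smoothness matrices are not diagonal, one must either replace~\eqref{eq:ESO_saga} by an ESO with respect to a diagonal upper bound on $\mM_j$, or invoke a range argument along the lines of Assumption~\ref{as:pseudoinverse} and Lemma~\ref{lem:smooth3} to confine $\mJ^k-\mG(x^*)$ and $\mG(x^k)-\mG(x^*)$ to a subspace on which the required commutation is automatic.
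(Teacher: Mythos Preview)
Your proposal is correct and follows the same route as the paper's proof: choose $\cB$ to be left multiplication by $\diag(b)$ with $b_i^2=\tfrac{1}{2n\pLi}$, bound $\E{\norm{\cU\mX\eR}^2}$ via the ESO inequality~\eqref{eq:ESO_saga} after the substitution $\mY={\cM^\dagger}^{1/2}\mX$, reduce~\eqref{eq:small_step} and~\eqref{eq:small_step2} to pointwise scalar conditions in $(i,j)$, and verify the stated~$\alpha$. Your explicit discussion of the commutation of ${\cM^\dagger}^{1/2}$ with $\cS$ (forcing diagonal $\mM_j$) and of the implicit range condition in the spirit of Assumption~\ref{as:pseudoinverse} is in fact more careful than the paper's own writeup, which leaves these hypotheses tacit.
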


\section{Special Cases: Joint Left and Right Sketches}

\subsection{{\tt SAEGA} \label{sec:SAEGA}}

Another  new special case of Algorithm~\ref{alg:SketchJac} we propose is {\tt SAEGA} (the name comes from the combination of names {\tt SAGA} and {\tt SEGA}). In {\tt SAEGA}, both $\cS$ and $\cU$ are fully correlated and consist of right and left sketch. However, the mentioned right and left sketches are independent. In particular, we have
\[
\cS \mX = \mX_{LR}  = \left( \sum_{i\in L} \eLi \eLi^\top \right)\mX  \left( \sum_{j\in R} \eRj \eRj^\top \right), \quad L\subset[d], R\subset[n] \quad \text{ are independent random sets.}
\]
Next, $\cU$ is chosen as 
\[
\cU \mX = \cS \left( \left( {\pL}^{-1} \left(\pR^{-1}\right)^\top\right) \circ \mX\right)
\] 
where $\pLi = \Prob{i\in  L}$ and $\pRj =  \Prob{j\in  R}$. The resulting algorithm is stated as Algorithm~\ref{alg:saega}. 

\begin{algorithm}[h]
  \caption{{\tt SAEGA} {\bf [NEW METHOD]}}
  \label{alg:saega}
\begin{algorithmic}
\State{\bfseries Input: }{$x^0\in\R^d$, random sampling $L\subseteq \{1,2,\dots,d\}$, random sampling  $R \subseteq \{1,2,\dots,n\}$, stepsize $\alpha$ }
\State $\mJ^0  = 0$
  \For{$k=0,1,\dotsc$}
         \State Sample random $ L^k \subseteq \{1,2,\dots,d\}$ and $ R^k \subseteq \{1,2,\dots,n\}$
        \State Compute $\nabla_i f_{j}(x^k)$ for all $i\in L^k$ and $j \in R^k $ 
        \State $\mJ^{k+1}_{ij} = \begin{cases} \nabla_{i} f_{j}(x^k) &   i\in L^k \text{ and } j\in  R^k \\  \mJ^k_{ij} & \text{otherwise} \end{cases}$
    \State $g^k = \left(\mJ^k + \left( {\pL}^{-1} \left(\pR^{-1}\right)^\top\right)\circ(\mJ^{k+1}- \mJ^k)\right) \eR$
    \State $x^{k+1} =\prox(x^k - \alpha g^k)$
  \EndFor
\end{algorithmic}
\end{algorithm}

Suppose that for all $j\in [n]$, $\mM_j = \diag(m^j)\succ 0$ is diagonal matrix\footnote{A block diagonal matrix $\mM_j$ with blocks such that $\cM\cP_\cS = \cP_\cS \cM $ would work as well}. 

Let $\PR \in \R^{n\times n}$ be the probability matrix with respect to $R$-sampling , i.e., $\PR_{ jj'} = \Prob{j\in R, j' \in R}$. 

\begin{corollary}\label{cor:saega}  Consider any (elementwise) positive vector $\qR$ such that $ \diag(\pR)^{-1} \PR \diag(\pR)^{-1}  \preceq  \diag(\qR)^{-1}$.  Let $\alpha =  \min_{i,j} \frac{n\pLi \qRj}{4m^j_i + n\sigma}$. Then, iteration complexity of Algorithm~\ref{alg:saega} is $  \max_{i,j}   \left( 4\frac{m^j_i}{\sigma n \pLi \qRj}  + \frac{1}{\pLi} \frac{1}{\qRj}\right)
\log\frac1\epsilon$. 
\end{corollary}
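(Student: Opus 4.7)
My plan is to derive this corollary as a direct instance of Theorem~\ref{thm:main}, using the trivial choice $\cR\equiv 0$ and a Hadamard-product operator $\cB \mX = \mB\circ\mX$ with $\mB_{ij}^2 = \frac{1}{2n\pLi\pRj}$. First I would verify the structural hypotheses: since each $\mM_j$ is diagonal, ${\cM^\dagger}^{\nicefrac12}\mX$ is the entrywise scaling by $1/\sqrt{m_i^j}$, which commutes with $\cS$ (a support-projection of the form ``zero out entries outside $L\times R$''); the Hadamard operator $\cB$ commutes with this support projection for the same reason. Unbiasedness of $\cU$ follows from independence of $L$ and $R$: $\E{[i\in L][j\in R]\tfrac{1}{\pLi\pRj}} = 1$.

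The technical heart of the proof is bounding $\E\|\cU\mX\eR\|^2$. First I would write $(\cU\mX\eR)_i = [i\in L]\tfrac{1}{\pLi}\sum_{j\in R}\tfrac{\mX_{ij}}{\pRj}$, then take expectations iteratively, using $\E_L[[i\in L]]=\pLi$ and $\E_R\bigl[\sum_{j,j'\in R}\tfrac{\mX_{ij}\mX_{ij'}}{\pRj\pRj'}\bigr] = \mX_{i:}\,\diag(\pR)^{-1}\PR\diag(\pR)^{-1}\,\mX_{i:}^\top$. The hypothesis $\diag(\pR)^{-1}\PR\diag(\pR)^{-1}\preceq\diag(\qR)^{-1}$ then gives the clean bound
\begin{equation*}
\E\|\cU\mX\eR\|^2 \;\leq\; \sum_{i,j}\frac{\mX_{ij}^2}{\pLi\qRj}.
\end{equation*}

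With this in hand, the verification of \eqref{eq:small_step} and \eqref{eq:small_step2} reduces to entrywise scalar inequalities. Letting $\mY={\cM^\dagger}^{\nicefrac12}\mX$ and substituting, since $\E{\cS}$ is Hadamard multiplication by $\pLi\pRj$, inequality \eqref{eq:small_step2} becomes
\begin{equation*}
\frac{2\alpha m_i^j}{n^2\pLi\qRj} + \pLi\pRj\,b_{ij}^2 \;\leq\; \frac{1}{n} \qquad\forall i,j,
\end{equation*}
which my choice $b_{ij}^2=\tfrac{1}{2n\pLi\pRj}$ reduces to $\alpha\leq \tfrac{n\pLi\qRj}{4m_i^j}$, a consequence of the stated stepsize since $4m_i^j\leq 4m_i^j+n\sigma$. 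Inequality \eqref{eq:small_step} becomes
\begin{equation*}
\frac{4\alpha m_i^j}{n\pLi\qRj}\cdot\frac{\pLi\pRj}{1} + \frac{\alpha\sigma}{\pLi\pRj}\cdot\pLi\pRj \;\leq\; \pLi\pRj,
\end{equation*}
after dividing by $\pLi\pRj$ this is $\tfrac{4\alpha m_i^j}{n\pLi\qRj}+\tfrac{\alpha\sigma}{\pLi\pRj}\leq 1$; substituting the stepsize and using $\qRj\leq\pRj$ (which follows from inspecting the $(j,j)$ entry of the $\qR$ assumption) yields an upper bound of $\tfrac{4m_i^j+n\sigma s_{ij}/t_{ij}}{4m_i^j+n\sigma}\leq 1$ with $s_{ij}=\pLi\qRj$, $t_{ij}=\pLi\pRj$.

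The main obstacle is the careful second-moment computation for $\cU$ under the correlated right sampling $R$, together with finding a single $\cB$ that simultaneously makes both inequalities hold with the sharpest possible $\alpha$; once $\mB_{ij}^2\propto (n\pLi\pRj)^{-1}$ is identified, everything else is algebra. Applying Theorem~\ref{thm:main} then gives $\E{\Psi^k}\leq (1-\alpha\sigma)^k\Psi^0$, and the claimed iteration complexity $\frac{1}{\alpha\sigma}\log\frac{1}{\epsilon} = \max_{i,j}\bigl(\frac{4m_i^j}{\sigma n\pLi\qRj}+\frac{1}{\pLi\qRj}\bigr)\log\frac{1}{\epsilon}$ follows immediately.
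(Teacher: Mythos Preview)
Your approach is correct and follows the same template as the paper: apply Theorem~\ref{thm:main} with $\cR\equiv 0$ and a Hadamard operator $\cB\mX=\mB\circ\mX$, bound $\E{\|\cU\mX\eR\|^2}$ via the assumption $\diag(\pR)^{-1}\PR\diag(\pR)^{-1}\preceq\diag(\qR)^{-1}$, and reduce \eqref{eq:small_step}--\eqref{eq:small_step2} to entrywise scalar inequalities. The second-moment computation you outline is exactly the paper's (its long chain in Section~\ref{sec:corsaega}).

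The one genuine difference is the weight: you take $\mB_{ij}^2=\tfrac{1}{2n\pLi\pRj}$, whereas the paper takes $\mB_{ij}^2=\tfrac{1}{2n\pLi\qRj}$. Your choice makes the $\E{\cS}$-term in \eqref{eq:small_step2} collapse to exactly $\tfrac{1}{2n}$, and then you need the observation $\qRj\leq\pRj$ (from the diagonal of the $\qR$-hypothesis) to close \eqref{eq:small_step}. The paper's choice, conversely, makes \eqref{eq:small_step} close without that observation, but as written its sufficient condition for \eqref{eq:small_step2} replaces $\pLi\pRj$ by the smaller $\pLi\qRj$ on the left-hand side, which is not actually sufficient in general; your route avoids that slip. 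Either way the stated $\alpha$ and complexity come out the same.

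One minor comment: your displayed equation for \eqref{eq:small_step} is garbled (the factors $\cdot\frac{\pLi\pRj}{1}$ and $\cdot\pLi\pRj$ do not belong there); the correct entrywise form after substituting your $\mB$ and multiplying by $2n$ is precisely the line you write next, $\tfrac{4\alpha m_i^j}{n\pLi\qRj}+\tfrac{\alpha\sigma}{\pLi\pRj}\leq 1$, and your verification of it via $\qRj\leq\pRj$ is fine.
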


\subsection{{\tt SVRCDG} \label{sec:SVRCDG}} 


Next new special case of Algorithm~\ref{alg:SketchJac} we propose is {\tt SVRCDG}. {\tt SVRCDG} uses the same random operator $\cU$ as {\tt SAEGA}. The difference to {\tt SAEGA} lies in operator $\cS$ which is Bernoulli random variable:
\[
\cS \mX = 
\begin{cases}
0 &   \text{w.p.}\quad 1-\probx \\
\mX & \text{w.p.}\quad \probx \\
\end{cases},\qquad  
\cU \mX = \mI_{L:}\left( \left( {\pL}^{-1} \left(\pR^{-1}\right)^\top\right) \circ \mX\right)\mI_{:R},
\] 
where $L\subseteq [d]$, and $R\subseteq [n]$ are independent random sets and  $\pLi = \Prob{i\in  L}$ and $\pRj =  \Prob{j\in  R}$.

 The resulting algorithm is stated as Algorithm~\ref{alg:svrcdg}. 

\begin{algorithm}[h]
  \caption{{\tt SVRCDG} {\bf [NEW METHOD]}}
  \label{alg:svrcdg}
\begin{algorithmic}
\State{\bfseries Input: }{$x^0\in\R^d$, random sampling $L\subseteq \{1,2,\dots,d\}$, random sampling  $R \subseteq \{1,2,\dots,n\}$, stepsize $\alpha$, probability $\probx$ }
\State $\mJ^0  = 0$
  \For{$k=0,1,\dotsc$}
         \State Sample random $ L^k \subseteq \{1,2,\dots,d\}$ and $ R^k \subseteq \{1,2,\dots,n\}$
        \State Observe $\nabla_i f_{j}(x^k)$ for all $i\in L^k$ and $j \in R^k$ 
    \State $g^k = \left(\mJ^k + \left( {\pL}^{-1}\left(\pR^{-1}\right)^\top\right)\circ \left(\mI_{L^k:} \left(\mG(x^k)- \mJ^k \right) \mI_{:R^k} \right)\right) \eR$
    \State $x^{k+1} =\prox(x^k - \alpha g^k)$
                \State  $\mJ^{k+1} = \begin{cases} \mG(x^k) & \text{with probability} \quad\probx \\
                 \mJ^k & \text{with probability} \quad 1-\probx \end{cases}$
  \EndFor
\end{algorithmic}
\end{algorithm}

Suppose that for all $j$, $\mM_j = \diag(m^j)$ is diagonal matrix\footnote{Block diagonal $\mM_j$ with blocks such that $\cM\cS = \cS \cM $ would work as well}. 

For notational simplicity, denote $\mM'\in \R^{d\times n}$ to be the matrix with $j$-th column equal to $m_j$.

Let $\PR \in \R^{n\times n}$ be the probability matrix with respect to $R$ - sampling , i.e., $\PR_{ jj'} = \Prob{j\in R, j' \in R}$. 

\begin{corollary}\label{cor:svrcdg}   Consider any (elementwise) positive vector $\qR$ such that $ \diag(\pR)^{-1} \PR \diag(\pR)^{-1}  \preceq  \diag(\qR)^{-1}$.  Let $\alpha =  \min_{i,j} \frac{1}{4\frac{m^j_i}{\pLi \qRj n} + \frac{1}{\probx}\sigma}$. Then, iteration complexity of Algorithm~\ref{alg:svrcdg} is $  \max_{i,j}   \left( 4\frac{m^j_i}{\sigma n \pLi \qRj}  + \frac{1}{\probx}\right)
\log\frac1\epsilon$. 


\end{corollary}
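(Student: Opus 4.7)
\textbf{Proof plan for Corollary~\ref{cor:svrcdg}.} The plan is to invoke Theorem~\ref{thm:main} with the trivial choice $\cR \equiv 0$ and the scalar weight operator $\cB \mX = \beta \mX$ for a suitable $\beta>0$. Since $\cS$ acts as a Bernoulli scalar multiple of the identity and $\cM^{\dagger 1/2}$ acts coordinate-wise (because each $\mM_j$ is diagonal), both $\cB$ and $\cM^{\dagger 1/2}$ trivially commute with $\cS$, so the hypotheses of the theorem are met. It remains to verify inequalities \eqref{eq:small_step} and \eqref{eq:small_step2} and then read off the rate.

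First I would compute $\E\|\cU \mX \eR\|^2$ in closed form. Because $L$ and $R$ are independent, $(\cU \mX \eR)_i = \frac{\mathbb 1_{i\in L}}{\pLi}\sum_j\frac{\mathbb 1_{j\in R}}{\pRj}\mX_{ij}$. Taking expectation over $L$ gives a factor $\pLi^{-1}$ per coordinate, and taking expectation over $R$ yields
\begin{equation*}
\E\|\cU \mX \eR\|^2 \;=\; \sum_{i,j,j'} \frac{\PR_{jj'}}{\pLi\,\pRj\,\pR_{j'}}\,\mX_{ij}\mX_{ij'}\;\leq\; \sum_{i,j} \frac{1}{\pLi \qRj}\,\mX_{ij}^2,
\end{equation*}
where the last inequality uses the assumption $\diag(\pR)^{-1}\PR\diag(\pR)^{-1}\preceq \diag(\qR)^{-1}$ applied row-wise. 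Substituting $\mY = \cM^{\dagger 1/2}\mX$ (so that $\mX_{ij}^2 = m_i^j \mY_{ij}^2$), I obtain $\E\|\cU\mX\eR\|^2 \leq \sum_{i,j} \frac{m_i^j}{\pLi \qRj}\mY_{ij}^2$, and likewise $\|\cB\cM^{\dagger 1/2}\mX\|^2 = \beta^2\|\mY\|^2$ and $\|\cM^{\dagger 1/2}\mX\|^2 = \|\mY\|^2$.

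Next, using $\E\cS = \probx\,\cI$, both inequalities \eqref{eq:small_step} and \eqref{eq:small_step2} decouple into entrywise conditions on $(i,j)$. Inequality \eqref{eq:small_step2} becomes $\frac{2\alpha m_i^j}{n^2 \pLi \qRj} + \probx\beta^2 \leq \frac{1}{n}$ for all $i,j$, while \eqref{eq:small_step} becomes $\frac{2\alpha m_i^j}{n^2 \pLi \qRj} + (1-\probx)\beta^2 \leq (1-\alpha\sigma)\beta^2$, equivalently $\frac{2\alpha m_i^j}{n^2 \pLi \qRj} \leq (\probx-\alpha\sigma)\beta^2$. The natural balancing is $\beta^2 = \frac{1}{2n\probx}$: then \eqref{eq:small_step2} reduces to $\alpha \leq \frac{n\pLi\qRj}{4m_i^j}$, and \eqref{eq:small_step} reduces, after some elementary manipulation, to $\alpha\bigl(\frac{4m_i^j}{n\pLi\qRj} + \frac{\sigma}{\probx}\bigr) \leq 1$. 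The latter is strictly tighter, so choosing $\alpha = \min_{i,j}\bigl(\frac{4m_i^j}{n\pLi\qRj} + \frac{\sigma}{\probx}\bigr)^{-1}$ satisfies both.

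Finally, Theorem~\ref{thm:main} yields $\E\Psi^k \leq (1-\alpha\sigma)^k\Psi^0$, so the iteration complexity is $1/(\alpha\sigma) = \max_{i,j}\bigl(\frac{4m_i^j}{\sigma n \pLi\qRj} + \frac{1}{\probx}\bigr)$, up to the usual $\log(1/\varepsilon)$ factor, matching the statement. The only nontrivial step is the second-moment computation for $\cU\mX\eR$ and the application of the $\PR \preceq \diag(\pR\circ\qR^{-1}\circ\pR)$ bound to decouple the double sum over $R$; the rest is bookkeeping in which $\beta^2 = \tfrac{1}{2n\probx}$ is picked to balance the two conditions and extract the advertised rate.
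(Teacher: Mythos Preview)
Your proposal is correct and follows essentially the same route as the paper: choose $\cR\equiv 0$, $\cB\mX=\beta\mX$ with $\beta^2=\tfrac{1}{2n\probx}$, use $\E{\cS}=\probx\,\cI$, and reduce \eqref{eq:small_step}--\eqref{eq:small_step2} to the entrywise inequalities you wrote. The paper obtains the second-moment bound $\E{\|\cU\mX\eR\|^2}\leq \sum_{i,j}\tfrac{m_i^j}{\pLi\qRj}\|\mY_{ij}\|^2$ by citing the {\tt SAEGA} computation \eqref{eq:SAEGA_ESO}, while you re-derive it directly from the independence of $L$ and $R$ and the assumption on $\PR$; the content is identical.
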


\subsection{{\tt ISAEGA}  (with distributed data) \label{sec:ISAEGA}}

In this section, we consider a distributed setting from~\cite{mishchenko201999}. In particular,~\cite{mishchenko201999} proposed a strategy of running coordinate descent on top of various optimization algorithms such as {\tt GD}, {\tt SGD} or {\tt SAGA}, while keeping the convergence rate of the original method. This allows for sparse communication from workers to master. 

However, {\tt ISAGA} (distributed {\tt SAGA} with {\tt RCD} on top of it), as proposed, assumes zero gradients at the optimum which only holds for overparameterized models. It was stated as an open question whether it is possible to derive {\tt SEGA} on top of it such that the mentioned assumption can be dropped. We answer this question positively, proposing {\tt ISAEGA} (Algorithm~\ref{alg:isaega}). Next, algorithms proposed in~\cite{mishchenko201999} only allow for uniform sampling under simple smoothness. In contrast, we develop an arbitrary sampling strategy for general matrix smoothness\footnote{We do so only for {\tt ISAEGA}. However, our framework allows obtaining arbitrary sampling results for {\tt ISAGA}, {\tt ISEGA} and {\tt ISGD} (with no variance at optimum) as well. We omit it for space limitations}. 

Assume that we have $\TR$ parallel units, each owning set of indices $\NRt$ (for $1\leq \tR\leq \TR$). Next, consider distributions $\cDR_\tR$ over subsets of $\NRt$ and distributions $\cDL_{\tR}$ over subsets coordinates $[d]$ for each machine. Each iteration we sample $R_\tR \sim\cDR_{\tR}, L_\tR \sim\cDL_{\tR}$ (for $1\leq \tR\leq \TR$) and observe the corresponding part of Jacobian $\mJ^k_{\cap_{\tR} (L_\tR,R_\tR)}$. Thus the corresponding random Jacobian sketch becomes 
\[
\cS \mX = \mX_{\cap_\tR (L_\tR,R_\tR)}  = \sum_{\tR=1}^\TR  \left( \sum_{i\in L_\tR} \eLi \eLi^\top \right)\mX_{:\NRt} \left( \sum_{j\in R_\tR} \eRj \eRj^\top \right).
\]

 Next, for each $1\leq \tR \leq \TR$ consider vector $\ptL \in \R^d$, $\ptR\in \R^{|\NRt|}$ such that $\Prob{ i\in L_\tR} =\ptLi$ and $\Prob{ j\in R_\tR} =\ptRj$. Given the notation, random operator $\cU$ is chosen as 

\[
\cU \mX = \sum_{\tR=1}^\TR   \left( \left(\ptL\right)^{-1} \left(\left(\ptR\right)^{-1} \right)^\top\right) \circ \left( \left( \sum_{i\in L_\tR} \eLi \eLi^\top \right)\mX_{:\NRt} \left( \sum_{j\in R_\tR} \eRj \eRj^\top \right)\right). 
\] 
The resulting algorithm is stated as Algorithm~\ref{alg:isaega}. 

\begin{algorithm}[h]
  \caption{{\tt ISAEGA} {\bf [NEW METHOD]}}
  \label{alg:isaega}
\begin{algorithmic}
\State{\bfseries Input: }{$x^0\in\R^d$, \# parallel units $\TR$, each owning set of indices $N_\tR$ (for $1\leq \tR\leq \TR$), distributions $\cDR_t$ over subsets of $ \NRt$, distributions $\cDL_{\tR}$ over subsets coordinates $[d]$, stepsize $\alpha$ }
\State $\mJ^0  = 0$
  \For{$k=0,1,\dotsc$}
    \For{$\tR=1,\dotsc,\TR$ in parallel}
         \State Sample $ R_\tR \sim \cDR_\tR$; $R_\tR\subseteq  \NRt$ (independently on each machine)
        \State Sample $ L_\tR \sim \cDL_\tR$; $L_\tR\subseteq [d]$  (independently on each machine)
        \State Observe $\nabla_{L_\tR} f_{j}(x^k)$ for $j \in R_\tR $ 
        \State For $i\in [d], j\in \NRt$ set $\mJ^{k+1}_{i,j} = \begin{cases}
        \nabla_{i} f_{j}(x^k) & \text{if} \quad i\in [d], j\in R_\tR, i\in L_{\tR} \\
        \mJ^{k}_{i,j} & \text{otherwise}
        \end{cases} $
        \State Send $\mJ^{k+1}_{:\NRt }- \mJ^k_{:\NRt }$ to master \Comment{Sparse; low communication}
    \EndFor
    \State $g^k = \left(\mJ^k + \sum \limits_{\tR=1}^\TR   \left( {{\ptL}^{-1}} {{\ptR}^{-1}}^\top\right) \circ \left( \left( \sum_{i\in L_\tR} \eLi \eLi^\top \right)\left(\mJ^{k+1}-\mJ^k   \right)_{:\NRt} \left( \sum_{j\in R_\tR} \eRj \eRj^\top \right)\right) \right)\eR$
    \State $x^{k+1} =\prox(x^k - \alpha g^k)$
  \EndFor
\end{algorithmic}
\end{algorithm}

Suppose that for all $1\leq j\leq n$, $\mM_j = \diag(m^j)$ is diagonal matrix\footnote{block diagonal $\mM_j$ with blocks such that $\cM\cS = \cS \cM $ would work as well}. Let $\PtR \in \R^{ \| \NRt \|\times  \| \NRt \|}$ be the probability matrix with respect to $R_{\tR}$ - sampling , i.e., $\PtR_{jj'} = \Prob{j\in R_{\tR}, j' \in R_{\tR}}$.

\begin{corollary}\label{cor:isaega} 
For all $\tR$ consider any (elementwise) positive vector $\qtR$ such that $ \diag(\ptR)^{-1} \PtR \diag(\ptR)^{-1}  \preceq  \diag(\qtR)^{-1}$. Let $\alpha =  \min_{j\in \NRt, i,\tR} \frac{1}{4 m^j_i \left( 1+ \frac{1}{n\ptLi \qtRj}\right) + \frac{\sigma}{\ptLi }\qtRj}$. Then, iteration complexity of Algorithm~\ref{alg:isaega} is $  \max_{j\in \NRt, i,\tR}  \left( 4\frac{m^j_i}{\sigma}\left( 1+ \frac{1}{ n\ptLi\qtRj} \right)  +  \frac{1}{\ptLi\qtRj} \right)
\log\frac1\epsilon$. 
\end{corollary}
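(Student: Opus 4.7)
The plan is to derive the corollary as an instance of Theorem~\ref{thm:main}, with $\cR \equiv 0$ and $\cB$ chosen to be Hadamard multiplication by a nonnegative matrix $\mB = (\beta_{ij}) \in \R^{d\times n}$ to be determined. The sketch $\cS$ zeroes, on each machine $\tR$, all entries of $\mX$ outside the submatrix indexed by $(L_\tR, R_\tR \subseteq N_\tR)$, and is an orthogonal projection so $\cS = \cP_\cS$. Under the diagonality hypothesis $\mM_j = \diag(m^j)$, the operator $\cM^{\dagger \frac12}$ acts entrywise as multiplication by $(m_i^j)^{-\frac12}$, and both $\cB$ and $\cM^{\dagger \frac12}$ are entrywise maps that commute with $\cS$ (itself entrywise, via the indicator $\mathbb{1}[i \in L_\tR, j \in R_\tR]$). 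The argument makes use of independence of $(L_\tR, R_\tR)$ across distinct machines and of independence between $L_\tR$ and $R_\tR$ within each machine.

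The crux is bounding $\E{\|\cU \mX \eR\|^2}$. Introducing the per-machine quantity $A_{i,\tR} \eqdef \mathbb{1}[i \in L_\tR]\sum_{j \in R_\tR \cap N_\tR}\mX_{ij}/(\ptLi \ptRj)$, cross-machine independence yields the decomposition $\E{\|\cU \mX \eR\|^2} = \|\mX \eR\|^2 + \sum_{i,\tR}\mathrm{Var}(A_{i,\tR})$, where the first term is the squared mean. For the variance, $\mathrm{Var}(A_{i,\tR}) \leq \E{A_{i,\tR}^2} = \ptLi^{-1} \sum_{j,j' \in N_\tR}\PtR_{jj'}\mX_{ij}\mX_{ij'}/(\ptRj \ptRj')$, and the hypothesis $\diag(\ptR)^{-1}\PtR \diag(\ptR)^{-1} \preceq \diag(\qtR)^{-1}$ upgrades this to the diagonal bound $\E{A_{i,\tR}^2} \leq \ptLi^{-1} \sum_{j \in N_\tR}\mX_{ij}^2 / \qtRj$. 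Combined with the elementary $\|\mX \eR\|^2 \leq n \sum_{i,j} \mX_{ij}^2$ and the substitution $\mX_{ij}^2 = m_i^j \mY_{ij}^2$ for $\mY \eqdef \cM^{\dagger \frac12}\mX$, this yields
\begin{equation*}
\tfrac{2\alpha}{n^2}\,\E{\|\cU \mX \eR\|^2} \;\leq\; \sum_{\tR}\sum_{i}\sum_{j \in N_\tR} \tfrac{2\alpha m_i^j}{n}\!\left(1 + \tfrac{1}{n \ptLi \qtRj}\right)\mY_{ij}^2,
\end{equation*}
which is the source of the factor $(1 + (n\ptLi \qtRj)^{-1})$ in the final complexity: the constant ``$1$'' comes from the cross-machine mean term $\|\mX\eR\|^2$, and the fractional term from the correlated within-machine variance.

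The remaining quadratic forms are diagonal: $\|\E{\cS}^{\frac12}\cB \cM^{\dagger \frac12}\mX\|^2 = \sum_{\tR, i, j \in N_\tR}\ptLi \ptRj \beta_{ij}^2 \mY_{ij}^2$ and $\|(\cI - \E{\cS})^{\frac12}\cB \cM^{\dagger \frac12}\mX\|^2 = \sum_{\tR, i, j \in N_\tR}(1 - \ptLi \ptRj) \beta_{ij}^2 \mY_{ij}^2$. Substituting into \eqref{eq:small_step2} and \eqref{eq:small_step} reduces both to per-entry scalar inequalities on the $\beta_{ij}$; using $\qtRj \leq \ptRj$ (a direct consequence of the assumed $\preceq$ applied on $\eRj$) to tighten the constraint wherever convenient, and choosing $\beta_{ij}^2$ of order $(n \ptLi \qtRj)^{-1}$, one can balance the two so they hold simultaneously. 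Solving for the largest admissible $\alpha$ yields the stated $\alpha = \min_{\tR, i, j \in N_\tR}\left(4 m_i^j (1 + (n \ptLi \qtRj)^{-1}) + \sigma (\ptLi \qtRj)^{-1}\right)^{-1}$, and the linear rate $(1 - \alpha\sigma)$ from Theorem~\ref{thm:main} simplifies into the advertised iteration complexity $(\alpha \sigma)^{-1}\log(1/\varepsilon)$. The main obstacle is exactly the variance computation above: cleanly separating the independent across-machine contribution from the correlated within-machine part and absorbing the latter via the ESO-type relaxation encoded by $\qtR$.
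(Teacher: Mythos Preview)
Your proposal is correct and mirrors the paper's proof essentially step for step: the same Hadamard-multiplication choice for $\cB$, the same variance decomposition of $\E{\|\cU\mX\eR\|^2}$ into the squared mean $\|\mX\eR\|^2$ plus a sum of per-machine variances (via independence across $\tR$), the same bound $\mathrm{Var}\le$ second moment followed by the ESO-type relaxation $\diag(\ptR)^{-1}\PtR\diag(\ptR)^{-1}\preceq\diag(\qtR)^{-1}$, and the same per-entry balancing with $\beta_{ij}^2=\tfrac{1}{2n\ptLi\qtRj}$. Your explicit remark that $\qtRj\le\ptRj$ is exactly what the paper uses implicitly when it replaces $\ptLi\ptRj$ by $\ptLi\qtRj$ on the right-hand side of the scalar form of~\eqref{eq:small_step}.
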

Thus, for all $j$, it does not make sense to increase sampling size beyond point where $\ptLi\qtRj \geq \frac1n$as the convergence speed would not increase significantly\footnote{For indices $i,j,\tR$ which maximize the rate from Corollary~\ref{cor:isaega}.} .

\begin{remark}
In special case when $R_\tR = \NRt$ always, {\tt ISAEGA} becomes {\tt ISEGA} from~\cite{mishchenko201999}. However~\cite{mishchenko201999} assumes that $|\NRt|$ is constant in $\tR$ and $L_{\tR} = \eLi$ with probability $\frac1d$. Thus, even special case of Corollary~\ref{cor:isaega} generalizes results on {\tt ISEGA} from~\cite{mishchenko201999}. For completeness, we state {\tt ISEGA} as Algorithm~\ref{alg:isega} and Corollary~\ref{cor:isega} provides its iteration complexity. 
\end{remark}

\begin{algorithm}[h]
  \caption{{\tt ISEGA} ({\tt ISEGA}~\cite{mishchenko201999} with arbitrary sampling) {\bf [NEW METHOD]}}
  \label{alg:isega}
\begin{algorithmic}
\State{\bfseries Input: }{$x^0\in\R^d$, \# parallel units $\TR$, each owning set of indices $N_\tR$ (for $1\leq \tR\leq \TR$), distributions $\cDL_{\tR}$ over subsets coordinates $[d]$, stepsize $\alpha$ }
\State $\mJ^0  = 0$
  \For{$k=0,1,\dotsc$}
    \For{$\tR=1,\dotsc,\TR$ in parallel}
        \State Sample $ L_\tR \sim \cDL_\tR$; $L_\tR\subseteq [d]$  (independently on each machine)
        \State Observe $\nabla_{L_\tR} f_{j}(x^k)$ for $j \in \NRt $ 
        \State For $i\in [d], j\in \NRt$ set $\mJ^{k+1}_{i,j} = \begin{cases}
        \nabla_{i} f_{j}(x^k) & \text{if} \quad i\in [d], j\in \NRt , i\in L_{\tR} \\
        \mJ^{k}_{i,j} & \text{otherwise}
        \end{cases} $
        \State Send $\mJ^{k+1}_{:\NRt }- \mJ^k_{:\NRt }$ to master \Comment{Sparse; low communication}
    \EndFor
    \State $g^k = \left(\mJ^k + \sum_{\tR=1}^\TR   \left( {{\ptL}^{-1}} {\eR}^\top\right) \circ \left( \left( \sum_{i\in L_\tR} \eLi \eLi^\top \right)\left(\mJ^{k+1}-\mJ^k   \right)_{:\NRt} \right) \right)\eR$
    \State $x^{k+1} =\prox(x^k - \alpha g^k)$
  \EndFor
\end{algorithmic}
\end{algorithm}

 \begin{corollary}\label{cor:isega} 
 Let $\alpha =  \min_{j\in \NRt, i,\tR} \frac{1}{4 m^j_i \left( 1+ \frac{1}{n\ptLi |\NRt |}\right) + \frac{\sigma}{\ptLi }|\NRt |}$. Then, iteration complexity of Algorithm~\ref{alg:isaega} is $  \max_{j\in \NRt, i,\tR}  \left( 4\frac{m^j_i}{\sigma}\left( 1+ \frac{1}{ n\ptLi|\NRt |} \right)  +  \frac{1}{\ptLi|\NRt |} \right)
\log\frac1\epsilon$. 
\end{corollary}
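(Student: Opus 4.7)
The plan is to derive Corollary~\ref{cor:isega} as a direct specialization of Corollary~\ref{cor:isaega}, obtained by deterministically setting $R_\tR = \NRt$ on every machine~$\tR$. Under this choice Algorithm~\ref{alg:isaega} collapses to Algorithm~\ref{alg:isega}: the line ``Observe $\nabla_{L_\tR} f_{j}(x^k)$ for $j \in R_\tR$'' becomes ``for $j \in \NRt$'', and the Hadamard mask $(\ptL)^{-1}(\ptR)^{-1\top}$ reduces to $(\ptL)^{-1}\eR^\top$, precisely matching the update rule of {\tt ISEGA}. So it suffices to instantiate all sampling-dependent quantities in Corollary~\ref{cor:isaega} for this deterministic $R_\tR$.

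First I would compute $\ptR$ and $\PtR$. Since $R_\tR = \NRt$ with probability one, $\ptRj = 1$ for every $j \in \NRt$, so $\ptR$ equals the all-ones vector in $\R^{|\NRt|}$, and $\PtR_{jj'} = 1$ for all $j,j'\in\NRt$, i.e.\ $\PtR$ is the rank-one all-ones matrix $\eR\eR^\top$.

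Second I would pick the vector $\qtR$ appearing in the admissibility constraint and the Lyapunov weighting. Because $\diag(\ptR)^{-1} = \mI$, the condition $\diag(\ptR)^{-1}\PtR\diag(\ptR)^{-1} \preceq \diag(\qtR)^{-1}$ of Corollary~\ref{cor:isaega} reduces to $\eR\eR^\top \preceq \diag(\qtR)^{-1}$. A short Schur-complement / Cauchy--Schwarz argument shows this is equivalent to $\sum_{j\in\NRt}\qtRj \leq 1$. Since each per-index summand $\frac{1}{\ptLi\qtRj} + \frac{4m_i^j}{\sigma}\bigl(1+\frac{1}{n\ptLi\qtRj}\bigr)$ of the complexity bound is strictly decreasing in $\qtRj$, the worst case over $(i,j,\tR)$ is minimized by making the smallest $\qtRj$ as large as possible subject to the $\ell_1$-budget; the uniform, constraint-saturating choice $\qtRj = 1/|\NRt|$ for every $j\in\NRt$ is therefore optimal.

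Finally I would substitute $\ptRj = 1$ and $\qtRj = 1/|\NRt|$ into the stepsize and iteration-complexity expressions of Corollary~\ref{cor:isaega} and read off the claimed values of $\alpha$ and of the rate. There is no real obstacle here; the only non-mechanical ingredient is the short PSD argument selecting $\qtRj$, which is tight because $\eR\eR^\top$ has a single nonzero eigenvalue equal to $|\NRt|$ along the symmetric direction $\eR$, forcing any admissible $\qtR$ to have at most unit $\ell_1$-mass.
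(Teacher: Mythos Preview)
Your proposal is correct and matches the paper's approach: the paper does not give a separate proof of Corollary~\ref{cor:isega} but obtains it (via the preceding Remark) as the specialization of Corollary~\ref{cor:isaega} with $R_\tR=\NRt$ deterministically, which is exactly what you do. Your explicit verification that $\qtRj=1/|\NRt|$ is the admissible (and tightest) choice satisfying $\eR\eR^\top\preceq\diag(\qtR)^{-1}$ fills in a detail the paper leaves implicit.
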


\section{Special Cases: {\tt JacSketch} \label{sec:jacsketch}}
As next special case of {\tt GJS} (Algorithm~\ref{alg:SketchJac}) we present {\tt JacSketch} ({\tt JS}) motivated by~\cite{gower2018stochastic}. The algorithm observes every iteration a single right sketch of the Jacobian and constructs operators $\cS, \cU$ in the following fashion:
\[
  \cS \mX = \mX \mR 
\quad \text{and} \quad   \cU \mX = \mX \mR \E{\mR}^{-1}
 \]
where $\mR\in \R^{n\times n}$ is random projection matrix.

\begin{algorithm}[!h]
\begin{algorithmic}[1]
\State \textbf{Parameters:} Stepsize $\alpha>0$, Distribution $\cD$ over random projector matrices $\mR\in \R^{n\times n}$
\State \textbf{Initialization:} Choose solution estimate $x^0 \in \R^d$ and Jacobian estimate $ \mJ^0\in \R^{d\times n}$ 
\For{$k =  0, 1, \dots$}
\State Sample realization of $\mR\sim \cD$ perform sketches $\mG(x^k)\mR$ 
\State  $\mJ^{k+1} = \mJ^k - (\mJ^k - \mG(x^k)\mR)$ 
\State $g^k = \frac1n \mJ^k \eR + \frac1n  \left(\mG(x^k) -\mJ^k\right)\mR \E{\mR}^{-1}\eR$  
    \State $x^{k+1} = \prox (x^k - \alpha g^k)$ 
\EndFor
\end{algorithmic}
\caption{{\tt JS} (JacSketch)}
\label{alg:jacsketch}
\end{algorithm}

Note that Algorithm~\ref{alg:jacsketch} differs to what was proposed in~\cite{gower2018stochastic} in the following points. 
\begin{itemize}
\item Approach from~\cite{gower2018stochastic} uses a scalar random variable $\theta_\mR $ to set $\cU \mX = \theta_\mR \mX \mR$. Instead, we set $\E{\cU}= \mX \mR \E{\mR}^{-1}$. This tweak allows Algorithm~\ref{alg:SketchJac} to recover the tightest known analysis of {\tt SAGA} as a special case. Note that the approach from~\cite{gower2018stochastic} only recovers tight rates for {\tt SAGA} under uniform sampling.
\item Unlike~\cite{gower2018stochastic}, our setup allows for proximable regularizer, thus is more general.
\item Approach from~\cite{gower2018stochastic} allows projections under a general weighted norm. Algorithm~\ref{alg:SketchJac} only allows for non-weighted norm; which is only done for the sake of simplicity as the paper is already  notation-heavy. However, {\tt GJS} (Algorithm~\ref{alg:SketchJac}) is general enough to alow for an arbitrary weighted norm. 
\end{itemize} 

The next corollary shows the convergence result. 
\begin{corollary}[Convergence rate of {\tt JacSketch}]\label{cor:jacsketh} Suppose that operator $\cM$ is commutative with right multiplication by $\mR$ always. Consider any $\mB\in \R^{n\times n}$ which commutes with $\mR$ always.
Denote
\[
\mM^{\frac12} \eqdef  \begin{pmatrix}
\mM_1^{\frac12} & & \\
& \ddots &\\
& & \mM_n^{\frac12}
\end{pmatrix} \quad \text{and} \quad \ugly \eqdef 
\lambda_{\max}\left( 
{\mM^{\frac12}}^\top 
 \left(\E{ \mR \E{\mR}^{-1} \eR \eR^\top \E{\mR}^{-1} \mR} \otimes \mI_{d}\right)
\mM^{\frac12}
 \right).
\]
  Let 
  \[
\alpha =\frac{\lambda_{\min} \left(\mB^\top \E{\mR} \mB \right) }{ 4n^{-1}\ugly \lambda_{\max}\left(  \mB^\top \E{\mR} \mB  \right) + \sigma\lambda_{\max} \left(\mB^\top \mB\right) }.
\]
Then, iteration complexity of Algorithm~\ref{alg:jacsketch} is 

\[
 \frac{ 4n^{-1}\ugly \sigma^{-1} \lambda_{\max}\left(  \mB^\top \E{\mR} \mB  \right) + \lambda_{\max} \left(\mB^\top \mB\right) }{\lambda_{\min} \left(\mB^\top \E{\mR} \mB \right) } \log\frac1\epsilon.
 \]

\end{corollary}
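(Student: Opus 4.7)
The strategy is to apply Theorem~\ref{thm:main} with $\cR \equiv 0$, $\cB \mX = \beta \mX \mB$ for a scalar $\beta > 0$ to be tuned, and the sketches $\cS \mX = \mX \mR$, $\cU \mX = \mX \mR \E{\mR}^{-1}$ of Algorithm~\ref{alg:jacsketch}. First I would verify the two commutativity hypotheses of Theorem~\ref{thm:main}: $\cB \cS = \cS \cB$ holds because $\mB \mR = \mR \mB$ for every realization of $\mR$ (assumed), and ${\cM^\dagger}^{\frac12} \cS = \cS {\cM^\dagger}^{\frac12}$ follows from the corollary's hypothesis that $\cM$ commutes with right multiplication by $\mR$, since $\cM^\dagger$ and its PSD square root share $\cM$'s spectral decomposition. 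These commutativities also ensure that $\Range{\cM}$ is $\cS$-invariant, so by Lemma~\ref{lem:smooth3} and an initialization with $\mJ^0 - \mG(x^*) \in \Range{\cM}$, the sequence $\mJ^k - \mG(x^*)$ remains in $\Range{\cM}$, legitimizing the substitution $\mY = {\cM^\dagger}^{\frac12}\mX$, $\mX = \cM^{\frac12}\mY$ throughout.

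The crucial calculation is the variance bound for $\cU$. Expanding $\cU\mX\eR = \mX \mR \E{\mR}^{-1}\eR$ and taking expectation yields
\begin{equation*}
\E{\norm{\cU \mX \eR}^2} = \Tr{\mX^\top \mX \cdot \mA} = \Vect{\mX}^\top (\mA \otimes \mI_d) \Vect{\mX},
\end{equation*}
where $\mA \eqdef \E{\mR \E{\mR}^{-1}\eR\eR^\top\E{\mR}^{-1}\mR}$. Substituting $\Vect{\mX} = \mM^{\frac12}\Vect{\mY}$ and bounding by the Rayleigh quotient gives $\E{\norm{\cU\mX\eR}^2} \leq \ugly \norm{\mY}^2$, directly from the definition of $\ugly$.

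Plugging these into \eqref{eq:small_step}, \eqref{eq:small_step2}, the three squared-norm terms equal $\beta^2 \Tr{\mY^\top\mY\,\mB\mB^\top}$, $\beta^2 \Tr{\mY^\top\mY\,\mB(\mI_n-\E{\mR})\mB^\top}$, and $\beta^2 \Tr{\mY^\top\mY\,\mB\E{\mR}\mB^\top}$. Since $\mB$ commutes with $\E{\mR}^{\frac12}$, the factorizations $\mB\E{\mR}\mB^\top = (\mB\E{\mR}^{\frac12})(\mB\E{\mR}^{\frac12})^\top$ and $\mB^\top\E{\mR}\mB = (\mB\E{\mR}^{\frac12})^\top(\mB\E{\mR}^{\frac12})$ show they share the nonzero spectrum, and similarly for $\mB\mB^\top$ and $\mB^\top\mB$. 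Bounding each trace by extremal eigenvalues reduces \eqref{eq:small_step2} and \eqref{eq:small_step}, respectively, to the scalar conditions
\begin{equation*}
\tfrac{2\alpha\ugly}{n^2} + \beta^2 \lambda_{\max}(\mB^\top \E{\mR}\mB) \leq \tfrac{1}{n}, \qquad \tfrac{2\alpha\ugly}{n^2} + \alpha\sigma\beta^2\lambda_{\max}(\mB^\top\mB) \leq \beta^2\lambda_{\min}(\mB^\top\E{\mR}\mB).
\end{equation*}

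The last step is to balance these: choosing $\beta^2 = \frac{1}{2n\lambda_{\max}(\mB^\top\E{\mR}\mB)}$ turns the first constraint into $\alpha \leq \frac{n}{4\ugly}$, while solving the second for $\alpha$ delivers exactly the stepsize stated in the corollary, and this choice automatically satisfies the first since $\lambda_{\min}\leq\lambda_{\max}$. Theorem~\ref{thm:main} then produces $\E{\Psi^k} \leq (1-\alpha\sigma)^k\Psi^0$, from which the iteration complexity $\tfrac{1}{\alpha\sigma}\log\tfrac{1}{\epsilon}$ immediately follows. The main obstacle is the bookkeeping around the generally non-symmetric $\mB$: although $\mB\mB^\top \neq \mB^\top\mB$ in general, the commutativity of $\mB$ with every $\mR$ (and hence with $\E{\mR}^{\frac12}$) forces every extremal eigenvalue appearing in the analysis to collapse into one of the two symmetric quadratic forms $\mB^\top\E{\mR}\mB$ and $\mB^\top\mB$ used in the final statement.
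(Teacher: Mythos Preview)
Your proposal is correct and follows essentially the same route as the paper: choose $\cB\mX=\beta\mX\mB$, bound $\E{\|\cU\mX\eR\|^2}$ by $\ugly\|\mY\|^2$ via vectorization and the Kronecker identity, reduce \eqref{eq:small_step}--\eqref{eq:small_step2} to the two scalar inequalities you wrote, and then pick $\beta^2=\tfrac{1}{2n\lambda_{\max}(\mB^\top\E{\mR}\mB)}$ together with the stated $\alpha$. Your treatment is in fact more careful than the paper's in two places: you explicitly justify why the extremal eigenvalues of $\mB\E{\mR}\mB^\top$ and $\mB\mB^\top$ (which are what naturally appear in the traces) coincide with those of $\mB^\top\E{\mR}\mB$ and $\mB^\top\mB$ via the $AB$/$BA$ spectrum argument and the commutativity of $\mB,\mB^\top$ with $\E{\mR}^{1/2}$, and you spell out the $\Range{\cM}$ invariance that legitimizes the substitution $\mX=\cM^{1/2}\mY$, which the paper handles by a bare citation of \eqref{eq:j_in_range}--\eqref{eq:g_in_range}.
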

\clearpage

\clearpage
 \section{Special Cases:  Proofs}

In this section, we provide the proofs of all corollaries listed in previous sections.

 For simplicity, we will use the following notation throughout this section: $\Gamma(\mX)=  \cU(\mX)e$.
 
\subsection{SAGA methods: Proofs}
\subsubsection{Setup for Corollary~\ref{cor:saga}}
Note first that the choice of $\cS, \cU$ yields
\begin{eqnarray*}
\E{\cS(\mX)} &=& \frac1n \mX \qquad \\
\E{\|\Gamma(\mX) \|^2}  &=& n^2  \E{\< \mX^\top,  \eRj \eRj^\top \eR \eR^\top \eRj \eRj^\top \mX^\top> }  = n\|\mX\|^2.
\end{eqnarray*}

Next, as we have no prior knowledge about $\mG(x^*)$, let $\cR\equiv \cI$; i.e. $\Range{ \cR} = \R^{d\times n}$. Lastly, consider $\cB$ operator to be a multiplication with constant $\beta$: $\cB (\mX)  = \beta \mX$.

Thus for \eqref{eq:small_step} we should have

\[
\frac{2\alpha}{n}m + \beta^2 \left(1-\frac1n\right) \leq (1-\alpha \sigma) \beta^2
\]
and for~\eqref{eq:small_step2} we should have
\[
\frac{2\alpha}{n} m + \frac{\beta^2}{n} \leq \frac{1}{n}.
\]

It remains to notice that choices $\alpha = \frac{1}{4m + \sigma n}$ and $\beta^2 = \frac{1}{2}$ are valid to satisfy the above bounds.

\subsubsection{Setup for Corollary~\ref{cor:saga_as2} \label{sec:cor:saga_as2}}
First note that $\E{\cS (\mX)} = \mX \diag (\pR)$.

Next, due to~\eqref{eq:j_in_range},~\eqref{eq:g_in_range}, inequalities~\eqref{eq:small_step} and \eqref{eq:small_step2} with choice $\mY=  {\cM^\dagger}^{\frac12}\mX$ become respectively:
\begin{equation}\label{eq:linear_1}
\frac{2\alpha}{n^2} \E{\left \|  
 \sum_{j\in R} \pRj^{-1}
\mM_j^{\frac12} \mY_{:j}\right \|^2} + 
\left\|\left(\cI-\E{\cS}\right)^{\frac{1}{2}}\cB(\mY)\right\|^{2} \leq(1-\alpha \sigma)\|\cB(\mY)\|^{2}
\end{equation}

\begin{equation}\label{eq:linear_2}
\frac{2\alpha}{n^2}  \E{\left \|  \sum_{j\in R} \pRj^{-1}
\mM_i^{\frac12} \mY_{:i}\right \|^2}  + 
\left\|\left(\E{\cS}\right)^{\frac{1}{2}}\cB(\mY)\right\|^{2} \leq \frac1n  \|\mY \|^2
\end{equation}

Note that 
\[
\E{\left \|  
 \sum_{j\in R} \pRj^{-1}
\mM_j^{\frac12} \mY_{:j}\right \|^2}  =
  \E{   \left \| \sum_{j\in R} 
\mM_j^{\frac12}(\pRj^{-1} \mY_{:j}) \right \|^2}  
\leq 
\sum_{j=1}^n \pRj^{-1}v_j \|\mY_{:j}\|^2
\]

where we used ESO assumption~\eqref{eq:ESO_saga} in the last bound above.

Next choose $\cB$ to be right multiplication with $\diag(b)$. Thus, for~\eqref{eq:linear_1} it suffices to have for all $j\in [n]$
\[
\frac{2\alpha}{n^2} v_j \pRj^{-1} + b_j^2(1-\pRj) \leq b_j^2 (1-\alpha \sigma) \qquad \Rightarrow \qquad
\frac{2\alpha}{n^2} v_j \pRj^{-1} + b_j^2\alpha \sigma \leq b_j^2\pRj 
\]
For~\eqref{eq:linear_2} it suffices to have for all $j\in [n]$
\[
\frac{2\alpha}{n^2} v_j \pRj^{-1} + b_j^2 \pRj \leq \frac{1}{n}
\]
It remains to notice that choice $b_j^2 = \frac{1}{2n\pRj}$ and $\alpha  = \min_j \frac{n \pRj}{4v_j + n\sigma}$ is valid.

\subsection{SEGA methods: Proofs}

\subsubsection{Setup for Corollary~\ref{cor:sega}}

 Note that 
\begin{eqnarray*}
\E{\cS x} &=& \frac{1}{d} x\\
\E{\|\Gamma(x) \|^2}  &=& d^2  \E{\< x,  \eLi \eLi^\top \eLi \eLi^\top x> }  = d\|x\|^2.
\end{eqnarray*}

Next, choose operator $\cB$  to be constant; in particular $\cB x = \beta x$. Thus to satisfy~\eqref{eq:small_step} it suffices to have
\[
2\alpha d m  +  \beta^2\left(1-\frac1d \right  ) \leq \beta^2(1-\alpha \sigma ) \qquad 
\Rightarrow \qquad 
2\alpha d m + \alpha \sigma \beta^2 \leq \frac{\beta^2}{d}.
\]
To satisfy~\eqref{eq:small_step2}, it suffices to have
\[
2\alpha d  m+ \frac{\beta^2}{d} \leq 1.
\]
It remains to notice that $\beta^2 = \frac{d}{2}$ and $\alpha= \frac{1}{4md+ \sigma d}$ satisfies the above conditions.

\subsubsection{Setup for Corollary~\ref{cor:sega_is_11}}
Note that 
\begin{eqnarray*}
\E{\cS (x)} &=& \diag(\pL) x
\end{eqnarray*}
and
\begin{equation*}
\E{\left\|\Gamma(x)\right\|^{2}} = \| x\|^2_{ \E{\sum_{i\in L} \frac{1}{\pLi} \eLi \eLi^\top \sum_{i\in L} \frac{1}{\pLi} \eLi \eLi^\top } }   =  \| x\|^2_{\pL^{-1}}.
\end{equation*}

Let us consider $\cB$ to be the operator corresponding to left multiplication with matrix $\diag(b)$: $\cB(x) = \diag(b)x$. Thus, for~\eqref{eq:small_step} it suffices to have for all $i$
\[
2\alpha m_i \pLi^{-1} + b_i^2(1-\pLi) \leq b_i^2 (1-\alpha \sigma) \qquad \Rightarrow \qquad
2\alpha m_i \pLi^{-1} + b_i^2\alpha \sigma \leq b_i^2\pLi
\]
For~\eqref{eq:small_step2} it suffices to have for all $i$
\[
2\alpha m_i \pLi^{-1} + b_i^2 \pLi \leq 1 
\]
It remains to notice that choice $b_i^2 = \frac{1}{2\pLi}$ and $\alpha  = \min_i \frac{\pLi}{4m_i+ \sigma}$ is valid.

\subsubsection{Setup for Corollary~\ref{cor:svrcd}}

Note that 
\begin{eqnarray*}
\E{\cS (x)} &=& \probx x
\end{eqnarray*}

and
\begin{equation*}
\E{\left\|\Gamma(x)\right\|^{2}} = \| x\|^2_{ \E{\sum_{i\in L} \frac{1}{\pLi} \eLi \eLi^\top \sum_{i\in L} \frac{1}{\pLi} \eLi \eLi^\top } }   =  \| x\|^2_{\pL^{-1}}.
\end{equation*}

Let us consider $\cB$ to be the operator corresponding to scalar multiplication with $\beta$. Thus, for~\eqref{eq:small_step} it suffices to have for all $i$
\[
2\alpha w_i \pLi^{-1} + \beta^2(1-\probx) \leq \beta^2 (1-\alpha \sigma) \qquad \Rightarrow \qquad
2\alpha w_i  \pLi^{-1} +\beta^2\alpha \sigma \leq\beta^2\probx.
\]
For~\eqref{eq:small_step2} it suffices to have for all $i$
\[
2\alpha w_i  \pLi^{-1} + \beta^2 \probx \leq 1 .
\]
It remains to notice that choice $\beta^2 = \frac{1}{2\probx}$ and $\alpha  = \min_i \frac{1}{4w_i\pLi^{-1}+ \sigma\probx^{-1}}$ is valid.

\subsection{Setup for Corollary~\ref{cor:sgd}}
Choose $\cB$ to be operator which maps everything into 0. On top of that, by construction we have $\cR = 0$ and thus~\eqref{eq:small_step} is satisfied for free. Moreover, from~\eqref{eq:ESO_saga} we have (following the steps from Section~\ref{sec:cor:saga_as2}):
\[
\E{\|\Gamma(\cM^{\frac12}(\mX))\|^2}\leq \sum_{j=1}^n p_j^{-1}v_j \| \mX_{:j} \|^2.
\]
Further, due to~\eqref{eq:g_in_range} and~\eqref{eq:j_in_range}, to satisfy \eqref{eq:small_step2} we shall have 
\[
\frac{2\alpha}{n^2} \sum_{j=1}^n p_j^{-1}v_j \| \mY_{:j} \|^2 \leq \frac1n \| \mY\|^2
\]
which simplifies to
\[
\frac{2\alpha}{n} \frac{v_j}{p_j}\leq 1
\]
and thus it suffices to choose $\alpha  =  \frac{n}{2} \min_j \frac{p_j}{v_j} $.

\begin{remark}
Factor 2 can be omitted since for Lemma~\ref{lem:g_lemma}, the second factor is 0 and thus we no longer need the Jensen's inequality.
\end{remark}

\subsection{Setup for Corollary~\ref{cor:lsvrg_as}}

First note that $\E{\cS (\mX)} = \probx \mX $.

Next, due to~\eqref{eq:j_in_range},~\eqref{eq:g_in_range}, inequalities~\eqref{eq:small_step} and \eqref{eq:small_step2} with choice $\mY=  {\cM^\dagger}^{\frac12}\mX$ become respectively:
\begin{equation}\label{eq:linear_1_svrg}
\frac{2\alpha}{n^2} \E{\left \|  
 \sum_{j\in R} \pRj^{-1}
\mM_j^{\frac12} \mY_{:j}\right \|^2} + 
\left\|\left(\cI-\E{\cS}\right)^{\frac{1}{2}}\cB(\mY)\right\|^{2} \leq(1-\alpha \sigma)\|\cB(\mY)\|^{2}
\end{equation}

\begin{equation}\label{eq:linear_2_svrg}
\frac{2\alpha}{n^2}  \E{\left \|  \sum_{j\in R} \pRj^{-1}
\mM_i^{\frac12} \mY_{:i}\right \|^2}  + 
\left\|\left(\E{\cS}\right)^{\frac{1}{2}}\cB(\mY)\right\|^{2} \leq \frac1n  \|\mY \|^2
\end{equation}

Note next that
\[
\E{\left \|  
 \sum_{j\in R} \pRj^{-1}
\mM_j^{\frac12} \mY_{:j}\right \|^2}  =
  \E{   \left \| \sum_{j\in R} 
\mM_j^{\frac12}(\pRj^{-1} \mY_{:j}) \right \|^2}  
\leq 
\sum_{j=1}^n \pRj^{-1}v_j \|\mY_{:j}\|^2
\]

where we used ESO assumption~\eqref{eq:ESO_saga} in the last bound above.

Next choose $\cB$ to be multiplication with scalar $\beta$. Thus, for~\eqref{eq:linear_1_svrg} it suffices to have for all $j\in [n]$
\[
\frac{2\alpha}{n^2} v_j \pRj^{-1} + \beta^2(1-\probx) \leq \beta^2 (1-\alpha \sigma) \qquad \Rightarrow \qquad
\frac{2\alpha}{n^2} v_j \pRj^{-1} + \beta^2\alpha \sigma \leq \beta^2\probx
\]
For~\eqref{eq:linear_2_svrg} it suffices to have for all $j\in [n]$
\[
\frac{2\alpha}{n^2} v_j \pRj^{-1} + \beta^2 \probx \leq \frac{1}{n}
\]
It remains to notice that choice $\beta^2 = \frac{1}{2n\probx}$ and $\alpha  = \min_j \frac{n }{4v_j\pRj^{-1} + n\sigma \probx^{-1}}$ is valid.

\subsection{Methods with Bernoulli $\cU$: Proofs}

\subsubsection{Setup for Corollary~\ref{cor:B2}}

Note first that the choice of $\cS, \cU$ yield
\begin{eqnarray*}
\E{\cS(x)} &=& \probx x  \\
\E{\|\Gamma(x) \|^2}  &=& \E{\| \cU x\|^2 }  = \proby^{-1}\|x \|^2
\end{eqnarray*}

Next, consider $\cB$ operator to be a multiplication with a constant $ b$.

Thus for \eqref{eq:small_step} we should have

\[
2\alpha\proby^{-1}L + b^2 \left(1-\probx\right) \leq (1-\alpha \sigma) b^2
\]
and for~\eqref{eq:small_step2} we should have
\[
2\alpha \proby^{-1} L + \probx b^2 \leq 1
\]

It remains to notice that choices $\alpha = \frac{1}{4\proby^{-1}L + \sigma \probx^{-1}}$ and $b^2 = \frac{1}{2 \probx}$ are valid to satisfy the above bounds.

\subsubsection{Setup for Corollary~\ref{cor:inverse_svrg}}

Note first that the choice of $\cS, \cU$ yields
\begin{eqnarray*}
\E{\cS(\mX)} &=&  \mX \diag(\pR) \\
\E{\|\Gamma(\mX) \|^2}  &=& \E{\| \cU(\mX)e\|^2 }  = \proby^{-1}\|\mX e\|^2 \leq  \proby^{-1}n \|\mX \|^2
\end{eqnarray*}

Next, as we have no prior knowledge about $\mG(x^*)$, consider $\cR$ to be identity operator; i.e. $\Range{ \cR} = \R^{d\times n}$. Lastly, consider $\cB$ operator to be a right multiplication with $ \diag(b)$.

Thus for \eqref{eq:small_step} we should have

\[
\forall j:  \quad 
\frac{2\alpha}{n}\proby^{-1}m +\alpha \sigma b^2_j  \leq b^2_j \pRj
\]
and for~\eqref{eq:small_step2} we should have
\[
\forall j: \quad  \frac{2\alpha}{n} \proby^{-1} m + \pRj b^2_j \leq \frac{1}{n}
\]

It remains to notice that choices $\alpha = \min_j \frac{1}{4\proby^{-1}m + \sigma \pRj^{-1}}$ and $b^2_j = \frac{1}{2n \pRj}$ are valid to satisfy the above bounds.

\subsubsection{Setup for Corollary~\ref{cor:SVRCD-inv}}

Note first that the choice of $\cS, \cU$ yields
\begin{eqnarray*}
\E{\cS(x)} &=& \pL \circ x  \\
\E{\|\Gamma(x) \|^2}  &=& \E{\| \cU(x)\|^2 }  = \proby^{-1}\|x \|^2 
\end{eqnarray*}

Next, as we have no prior knowledge about $\mG(x^*)$, consider $\cR$ to be identity operator; i.e. $\Range{ \cR} = \R^{d\times n}$. Lastly, consider $\cB$ operator to be left multiplication with matrix $\diag( b)$.

Thus for \eqref{eq:small_step} we should have

\[
\forall i: \quad 
2\alpha\proby^{-1}m + b^2_i \alpha \sigma \leq b^2_i \pLi
\]
and for~\eqref{eq:small_step2} we should have
\[
2\alpha\proby^{-1}m + \pLi b^2_i \leq 1
\]

It remains to notice that choices $\alpha = \min_i \frac{1}{4\proby^{-1}m + \sigma \pLi^{-1}}$ and $b^2_i= \frac{1}{2 \pLi^{-1}}$ are valid to satisfy the above bounds.

\subsection{Combination of left and right sketches (in different operators): Proofs}

\subsubsection{Setup for Corollary~\ref{cor:RL}}
Note first that the choice of $\cS, \cU$ yields

\begin{eqnarray*}
\E{\cS(\mX)} &=&\mX \diag(\pR),   \\
\E{\|\piop(\mX) \|^2}  &=& \| \cM^{\frac12}(\mX)\eR \|^2_{\diag({\pL}^{-1})} \leq n\sum_{j=1}^n \| \mM_j \mX_{:j}\|^2_{\diag({\pL}^{-1})} = n \sum_{j=1}^n \|  \mX_{:j}\|^2_{ \diag(m^j\circ {\pL}^{-1})}.
\end{eqnarray*}

Let $\cB$ be right multiplication by $\diag(b)$.
Thus for \eqref{eq:small_step} we should have

\[
\forall i,j: \quad 2\frac{\alpha}{n}{m_i}^j {\pLi}^{-1} + b^2_j \alpha \sigma \leq b^2_j \pRj
\]
and for~\eqref{eq:small_step2} we should have
\[
\forall i,j: \quad 2\frac{\alpha}{n}{m_i}^j {\pLi}^{-1}+ \pRj b^2_j \leq \frac1n.
\]

It remains to notice that choices $\alpha = \min_{i,j} \frac{1}{4{m_i}^j {\pLi}^{-1} + \sigma \pRj^{-1}}$ and $b^2_j=\frac{1}{ 2\pRj n}$ are valid to satisfy the above bounds. 

\subsubsection{Setup for Corollary~\ref{cor:LR}}
Note first that the choice of $\cS, \cU$ yields

\begin{eqnarray*}
\E{\cS(\mX)} &=&    \diag(\pL)  \mX  \\
\E{\|\Gamma(\mX) \|^2}  &\leq & \sum_{j=1}^n \pRj^{-1} v_j \|\mX_{:j}\|^2
\end{eqnarray*}
The second inequality is a direct consequence of ESO (which is shown is Section~\ref{sec:cor:saga_as2}).

Let $\cB$ be left multiplication by $\diag(b)$. Thus for \eqref{eq:small_step} we should have

\[
\forall i,j: \quad 2\frac{\alpha}{n}v_j {\pRj}^{-1} + b^2_i \alpha \sigma \leq b^2_i \pLi
\]
and for~\eqref{eq:small_step2} we should have
\[
\forall i,j: \quad  2\frac{\alpha}{n}v_j{\pRj}^{-1} + \pLi b^2_i \leq \frac1n
\]

It remains to notice that choices $\alpha = \min_{i,j} \frac{1}{4{v_j} {\pRj}^{-1} + \sigma \pLi^{-1}}$ and $b^2_i = \frac{1}{ 2\pLi n}$ are valid to satisfy the above bounds.

\subsection{Joint Sketches: Proofs}

\subsubsection{Setup for Corollary~\ref{cor:saega} \label{sec:corsaega}}

For notational simplicity, denote ${\mM'}^{\frac12}\in \R^{d\times n}$ to be the matrix with $j$-th column equal to (elementwise) square root of $m_j$. We have 
\[
\E{\cS(\mX)}  = \left( {\pL}{\pR}^\top\right)\circ \mX
\]
and 

\begin{eqnarray} \nonumber
  \E{\|  \piop(\cM^{\frac12 }\mX)\|^2} &   = &
 \E{ \left\| \left( \left( {\pL}^{-1}{\pR^{-1}}^\top\right) \circ \left( \left( \sum_{i\in L} \eLi \eLi^\top \right) (\mM^{'\frac12}\circ\mX) \left( \sum_{j\in R} \eRj \eRj^\top \right) \right) \right)\eR \right\|^2} 
 \\ \nonumber
 &   = & 
  \E{ \left\|   \left( \left( \sum_{i\in L, j\in R} \eLi \eRj^\top \right) \circ\left( {\pL}^{-1}{\pR^{-1}}^\top\right) \circ \mM^{'\frac12}\circ\mX \right)\eR \right\|^2}
   \\ \nonumber
    &   = & 
  \E{ \left\|  \left( \sum_{i\in L} \eLi \eLi^\top \right)  \left( \left( {\pL}^{-1}{\pR^{-1}}^\top\right) \circ \mM^{'\frac12}\circ\mX\right)\eRR \right\|^2}
   \\ \nonumber
       &   = & 
  \EE{R}{ \left\|  \left(     \left( {\pL}^{-\frac12}{\pR^{-1}}^\top\right) \circ \mM^{'\frac12}\circ\mX \right)\eRR \right\|^2}
   \\ \nonumber
          &   = & 
  \EE{R}{  \trace\left(   \left(     \left( {\pL}^{-\frac12}{\pR^{-1}}^\top\right) \circ \mM^{'\frac12}\circ\mX \right)\mI_{R,R } \left(     \left( {\pL}^{-\frac12}{\pR^{-1}}^\top\right)^\top \circ {\mM^{'\frac12}}^\top\circ\mX^\top \right) \right)}
   \\ \nonumber
             &   = & 
\trace\left(   \left(     \left( {\pL}^{-\frac12}{\pR^{-1}}^\top\right) \circ \mM^{'\frac12}\circ\mX \right)\PR \left(     \left( {\pL}^{-\frac12}{\pR^{-1}}^\top\right)^\top \circ  {\mM^{'\frac12}}^\top\circ\mX^\top \right) \right)
   \\ \nonumber
                &   = & 
\trace\left(   \left(     \left( {\pL}^{-\frac12}{\eR}^\top\right) \circ \mM^{'\frac12}\circ\mX \right) \diag(\pR)^{-1}\PR \diag(\pR)^{-1} \left(     \left( {\pL}^{-\frac12}{\eR}^\top\right)^\top \circ {\mM^{'\frac12}}^\top\circ\mX^\top \right) \right)
   \\ \nonumber
                   &   \leq & 
\trace\left(   \left(     \left( {\pL}^{-\frac12}{\eR}^\top\right) \circ \mM^{'\frac12}\circ\mX \right)  \diag(\qR)^{-1} \left(     \left( {\pL}^{-\frac12}{\eR}^\top\right)^\top \circ {\mM^{'\frac12}}^\top\circ\mX^\top \right) \right)
   \\
                      &   = & 
\left\| \mX \circ \mM^{'\frac12} \circ  \left( {\pL}^{-\frac12}{\qR^{-\frac12}}^\top \right) \right\|^2.
 \label{eq:SAEGA_ESO}
\end{eqnarray}

Next, choose operator $\mB$ to be such that $\cB(\mX) \eqdef \mB \circ \mX$ for $\mB\in \R^{d\times n}$. 
Thus, for~\eqref{eq:small_step} and \eqref{eq:small_step2} we shall have respectively
\[
\forall i, j: \quad 
\frac{2\alpha}{n^2}\left( \frac{m^j_i}{\pLi \qRj}\right)+ \mB_{i,j}^2\alpha \sigma \leq \mB_{ij}^2 \pLi \qRj
\]
and
\[
\forall i, j: \quad 
\frac{2\alpha}{n^2}\left( \frac{m^j_i}{\pLi \qRj}\right)+ \mB_{ij}^2 \pLi \qRj \leq \frac{1}{n}.
\]

It remains to choose $\mB_{i,j}^2= \frac{1}{2n \pLi \qRj}$ and $\alpha = \min_{i,j} \frac{n\pLi \qRj}{4m^j_i + n\sigma}$.

\subsubsection{Setup for Corollary~\ref{cor:svrcdg}}

We have 
\[
\E{\cS(\mX)}  =\probx\mX.
\]

Next, choose operator $\mB$ to be such that $\cB(\mX) \eqdef \beta \circ \mX$ for scalar $\beta$ which would be specified soon. 
Proceeding with bound~\eqref{eq:SAEGA_ESO}, for~\eqref{eq:small_step} and \eqref{eq:small_step2} we shall have respectively
\[
\forall i, j: \quad 
\frac{2\alpha}{n^2}\left( \frac{m^j_i}{\pLi \qRj}\right)+ \beta^2\alpha \sigma \leq \beta^2 \probx
\]
and
\[
\forall i, j: \quad 
\frac{2\alpha}{n^2}\left( \frac{m^j_i}{\pLi \qRj}\right)+ \beta^2 \probx \leq \frac{1}{n}.
\]

It remains to choose $\beta^2= \frac{1}{2n \probx}$ and $\alpha = \min_{i,j} \frac{1}{4\frac{m^j_i}{n\pLi \qRj} + \probx^{-1}\sigma}$.

\subsubsection{Setup for Corollary~\ref{cor:isaega}}

For notational simplicity, denote $\mM'\in \R^{d\times n}$ to be a matrix with $j$-th column equal to $m_j$.

Let $  \piop_\tR(\mX_{:\NRt}) =  \left( {{\ptL}^{-1}} {{\ptR}^{-1}}^\top\right) \circ \left( \left( \sum_{i\in L_\tR} \eLi \eLi^\top \right)\mX_{:\NRt} \left( \sum_{j\in R_\tR} \eRj \eRj^\top \right)\right) e_{\NRt}$. 
Thus 
\[
\E{\cS(\mX)}  = 
\sum_{\tR=1}^\TR
 \left( {{\ptL}} {{\ptR}}^\top\right) \circ\mX_{:\NRt}  
\]
and 
\begin{eqnarray}
\nonumber
\E{\|  \piop(\mX)\|^2} &=& \E{ \left\|   \sum_{\tR=1}^\TR  \piop_\tR(\mX_{:\NRt}) \right \|^2 } 
\\ \nonumber
&=&
\E{ \left\| \sum_{\tR=1}^\TR  \piop_\tR(\mX_{:\NRt})  -  \E{\sum_{\tR=1}^\TR  \piop_\tR(\mX_{:\NRt})  } \right \|^2 } 
+ 
 \left\|   \E{\sum_{\tR=1}^\TR  \piop_\tR(\mX_{:\NRt})  } \right \|^2 
 \\ \nonumber
 &=&
 \E{ \left\| \sum_{\tR=1}^\TR\left(    \piop_\tR(\mX_{:\NRt}) - \mX_{:\NRt}e_{\NRt}  \right)\right \|^2 } 
+ 
 \left\|   \mX \eR \right \|^2 
 \\ \nonumber
  &=&
 \sum_{\tR=1}^\TR  \E{ \left\|     \piop_\tR(\mX_{:\NRt}) - \mX_{:\NRt}e_{\NRt}  \right \|^2 } 
+ 
 \left\|     \mX \eR \right \|^2 
 \\ \nonumber
   &\leq&
 \sum_{\tR=1}^\TR\E{ \left\|   \piop_\tR(\mX_{:\NRt})\right \|^2 } 
+ 
 \left\|    \mX \eR \right \|^2 
 \\ \label{eq:ISEAGA_ESO}
    &\leq&
 \sum_{\tR=1}^\TR \E{ \left\|   \piop_\tR(\mX_{:\NRt}) \right \|^2 } 
+ 
 n \left\|  \mX \right \|^2.
\end{eqnarray}

Using the bounds from Section~\ref{sec:corsaega} we further get
\begin{eqnarray*}
  \E{\|  \piop(\cM^{\frac12 }\mX)\|^2} 
  &  \stackrel{\eqref{eq:ISEAGA_ESO}+\eqref{eq:SAEGA_ESO}}{\leq}&
 \sum_{\tR=1}^\TR   \left\|   \mX_{:\NRt} \circ \left( {\ptL}^{-\frac12} {{\qtR}^{-\frac12}}^\top \right) \circ \mM'_{:\NRt} \right \|^2 
+ 
 n \left\| \mM' \circ \mX \right \|^2. \\
\end{eqnarray*}

Next, choose operator $\mB$ to be such that for any $\mX$: $\cB(\mX) \eqdef \mB \circ \mX$ where $\cB \in \R^{d\times n}$. 
Thus, for~\eqref{eq:small_step} and \eqref{eq:small_step2} we shall have respectively
\[
\forall i, \tR, j\in \NRt: \quad 
\frac{2\alpha}{n^2}m^j_i \left( \frac{1}{ \ptLi \qtRj}+ n\right)+ \mB_{i,j}^2\alpha \sigma \leq \mB_{i,j}^2 \ptLi \qtRj
\]
and
\[
\forall i, \tR, j\in \NRt: \quad 
\frac{2\alpha}{n^2}m^j_i \left( \frac{1}{ \ptLi \qtRj}+ n\right)+ \mB_{i,j}^2\ptLi \qtRj \leq \frac{1}{n}.
\]

It remains to choose $\mB_{i,j}^2= \frac{1}{2n \ptL \qtRj }$ and $\alpha = \min_{j\in \NRt, i,\tR} \frac{1}{4 m^j_i \left( 1+ \frac{1}{n\ptL \qtRj}\right) + \frac{\sigma}{\ptL \qtRj}}$.

\subsection{Setup for Corollary~\ref{cor:jacsketh}}
Let $x$ be column-wise vectorization of $\mX$. 
Note that 
\[
\Gamma(\cM^\frac12(\mX)) = \cM^\frac12(\mX) \mR\E{\mR}^{-1}\eR = \left( \eR^\top \E{\mR}^{-1} \mR \otimes \mI_{d}\right)\begin{pmatrix}
\mM_1^{\frac12} & & \\
& \ddots &\\
& & \mM_n^{\frac12}
\end{pmatrix}x. 
\]
Thus 

\[\E{\left\| \Gamma(\cM^\frac12(\mX))\right\|^2 } \leq 
\|\mX\|^2 \ugly.\]
Let $\cB(\mX) = \beta \mX \mB$. Thus, we have 
\begin{eqnarray*}
  (1-\alpha \sigma) \NORMG{ \cB\mX } - \NORMG{  \left(\cI - \E{\cS} \right)^{\frac12}\cB  \mY} 
 &=& \beta^2 \trace\left(\mX \mB^\top ( \E{\mR} - \alpha\sigma\mI) \mB\mX^\top \right) \\
 &\leq &\beta^2 \lambda_{\min} \left(\mB^\top ( \E{\mR} - \alpha\sigma\mI) \mB \right) \|\mX\|^2 \\
  &\leq &\beta^2 \left( \lambda_{\min} \left(\mB^\top \E{\mR} \mB \right) -  \alpha\sigma\lambda_{\max} \left(\mB^\top \mB\right)\right) \|\mX\|^2.
\end{eqnarray*}

Further, 
\[
\NORMG{\left(\E{\cS}\right)^{\frac12}  \cB  {\cM^\dagger}^{\frac12}\mX  }   =  \beta^2\trace \left( \mX \mB^\top \E{\mR} \mB \mX^\top\right) \leq  \beta^2\|\mX\|^2 \lambda_{\max}\left(  \mB^\top \E{\mR} \mB  \right).
\]

Using the derived bounds together with~\eqref{eq:j_in_range},~\eqref{eq:g_in_range}, for conditions~\eqref{eq:small_step} and \eqref{eq:small_step2} it suffices to have:
\begin{equation}\label{eq:linear_1_js}
\frac{2\alpha}{n^2}\ugly + \beta^2\alpha\sigma\lambda_{\max} \left(\mB^\top \mB\right)
  \leq 
\beta^2\lambda_{\min} \left(\mB^\top \E{\mR} \mB \right)  ,
\end{equation}
and
\begin{equation}\label{eq:linear_2_js}
\frac{2\alpha}{n^2}\ugly + \beta^2 \lambda_{\max}\left(  \mB^\top \E{\mR} \mB  \right) \leq \frac1n.
\end{equation}
It remains to notice that choices $\beta^2 = \frac{1}{2n \lambda_{\max}\left(  \mB^\top \E{\mR} \mB  \right)}$ and 
\[
\alpha =\frac{\lambda_{\min} \left(\mB^\top \E{\mR} \mB \right) }{ 4n^{-1}\ugly \lambda_{\max}\left(  \mB^\top \E{\mR} \mB  \right) + \sigma\lambda_{\max} \left(\mB^\top \mB\right) }
\]
are valid.

\clearpage

\section{Convergence Under Strong Growth Condition \label{sec:sg}}

In this section, we extend the result of Algorithm~\ref{alg:SketchJac} to the case when $F\eqdef f+\psi$ satisfies a strong growth condition instead of quasi strong convexity. Note that strong growth is weaker (more general) than quasi strong convexity~\cite{karimi2016linear}.

Suppose that $\cX^*$ is a set of minimizers of convex function $F$. Clearly, $\cX^*$ must be convex. Define $[x]^*$ to be a projection of $x$ onto $\cX^*$.
\begin{assumption}\label{as:strong_growth}

Suppose that $F$ satisfies strong growth, i.e. for every $x$: 
\begin{equation} \label{eq:strong_growth}
F(x)- F([x]^*) \geq \frac{\sigma}{2}\|x-[x]^* \|^2.
\end{equation}
\end{assumption}

\subsection{Technical proposition and lemma}

In order to establish the convergence results, it will be useful to establish Proposition~\ref{prop:41717892449y8} and Lemma~\ref{lem:gminusnablafk}.

\begin{proposition} \cite{xiao2014proximal, qian2019saga} \label{prop:41717892449y8}
Let $f$ be $\mM$-smooth and suppose that~\eqref{eq:strong_growth} holds. 
Suppose that $x^{k+1} = x^k  - \alpha g^k$ where $\E{g^k} = \nabla f(x^k)$ and $\alpha \leq \frac{1}{3\lambda_{\max}(\mM)}$. Then
 $$
\mathbb{E}_{k}\left[\left\|x^{k+1}-\left[x^{k+1}\right]^{*}\right\|^{2}\right] \leq \frac{1}{1+\sigma \alpha} \mathbb{E}_{k}\left[\left\|x^{k}-\left[x^{k}\right]^{*}\right\|^{2}\right]+\frac{2 \alpha^{2}}{1+\sigma \alpha} \mathbb{E}_{k}\left[\left\|g^{k}-\nabla f\left(x^{k}\right)\right\|^{2}\right].
$$
\end{proposition}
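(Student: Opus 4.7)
}

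The plan is to bridge the two desired terms (the $(1{+}\sigma\alpha)^{-1}$ contraction and the noise term) by pushing an extra $\alpha\sigma\|x^{k+1}-[x^{k+1}]^*\|^2$ onto the left-hand side and paying for it with $2\alpha(F(x^{k+1})-F^*)$ using the strong growth condition~\eqref{eq:strong_growth}, then absorbing $f(x^{k+1})$ via the descent lemma. Concretely, I would begin by exploiting the projection inequality $\|x^{k+1}-[x^{k+1}]^*\|^2 \leq \|x^{k+1}-[x^k]^*\|^2$ (which holds because $[x^{k+1}]^*$ is the closest point of $\cX^*$ to $x^{k+1}$) and then expanding
\[
\|x^{k+1}-[x^k]^*\|^2 = \|x^k-[x^k]^*\|^2 - 2\alpha \<g^k, x^k-[x^k]^*> + \alpha^2 \|g^k\|^2 .
\]
Taking $\E_k[\cdot]$ and using unbiasedness converts $\< g^k, \cdot>$ into $\<\nabla f(x^k),\cdot>$.

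Next, using strong growth at $x^{k+1}$ I would bound $\alpha\sigma\E_k[\|x^{k+1}-[x^{k+1}]^*\|^2] \leq 2\alpha\E_k[F(x^{k+1})-F^*]$, and then upper bound $F(x^{k+1})=f(x^{k+1})$ via the smoothness-based descent lemma applied to the step $x^{k+1}=x^k-\alpha g^k$, giving (in expectation)
\[
\E_k[f(x^{k+1})-f^*] \leq f(x^k)-f^* - \alpha\|\nabla f(x^k)\|^2 + \tfrac{L\alpha^2}{2}\E_k[\|g^k\|^2],
\]
with $L=\lambda_{\max}(\mM)$. Adding the two pieces, convexity of $f$ cancels the $\langle \nabla f(x^k), x^k-[x^k]^*\rangle$ term against $f(x^k)-f^*$, leaving (after writing $\E_k[\|g^k\|^2]=\|\nabla f(x^k)\|^2+\E_k[\|g^k-\nabla f(x^k)\|^2]$)
\[
(1+\alpha\sigma)\E_k[\|x^{k+1}-[x^{k+1}]^*\|^2] \leq \|x^k-[x^k]^*\|^2 + \alpha^2(1+L\alpha)\E_k[\|g^k-\nabla f(x^k)\|^2] + \alpha^2(L\alpha-1)\|\nabla f(x^k)\|^2.
\]
The last term is nonpositive when $\alpha L\leq 1$, so it can be dropped; the factor $1+L\alpha \leq 4/3 \leq 2$ comes from the stepsize bound $\alpha \leq 1/(3L)$, which absorbs it into the claimed $2\alpha^2$.

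I expect the main obstacle to be the careful bookkeeping in the combined inequality: tracking exactly where convexity cancels the gradient–distance inner product against $f(x^k)-f^*$, and verifying that the coefficient on $\|\nabla f(x^k)\|^2$ is nonpositive under $\alpha\leq 1/(3L)$ so that it can be discarded rather than handled (the factor $3$ in $1/(3L)$, rather than just $1/L$ or $1/(2L)$, is what allows us to set the prefactor on the noise to exactly $2$). After dividing through by $1+\alpha\sigma$, the stated inequality follows.
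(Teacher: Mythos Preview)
The paper does not prove this proposition; it is imported directly from \cite{xiao2014proximal, qian2019saga} and stated without proof, so there is no in-paper argument to compare against. Your outline is a valid proof of the statement as written (i.e., for the plain update $x^{k+1}=x^k-\alpha g^k$): the projection inequality, the expansion of the square, the descent lemma for $L$-smooth $f$, strong growth at $x^{k+1}$, convexity to cancel $2\alpha(f(x^k)-f^*)$ against $-2\alpha\langle\nabla f(x^k),x^k-[x^k]^*\rangle$, and the variance decomposition $\E_k\|g^k\|^2=\|\nabla f(x^k)\|^2+\E_k\|g^k-\nabla f(x^k)\|^2$ combine exactly to
\[
(1+\alpha\sigma)\E_k\|x^{k+1}-[x^{k+1}]^*\|^2 \le \|x^k-[x^k]^*\|^2 + \alpha^2(1+L\alpha)\E_k\|g^k-\nabla f(x^k)\|^2 + \alpha^2(L\alpha-1)\|\nabla f(x^k)\|^2,
\]
and the last term is nonpositive while $1+L\alpha\le 2$.

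One small inaccuracy: your argument does not actually use the factor $3$ in $\alpha\le 1/(3L)$. Both conditions you invoke ($L\alpha-1\le 0$ and $1+L\alpha\le 2$) already hold for $\alpha\le 1/L$, so your remark that ``the factor $3$ \dots\ is what allows us to set the prefactor on the noise to exactly $2$'' is not correct as stated. The stricter bound $1/(3L)$ is inherited from the proximal versions of this result in the cited references (which is also how the proposition is ultimately used in the proof of Theorem~\ref{thm:strong}); in the purely unregularized setting your derivation targets, $1/L$ would already suffice.
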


\begin{lemma}\label{lem:gminusnablafk}
For any $x^* \in \cX^*$ we have
\begin{equation} \label{eq:gminusnablafk}
\E{\| g^k - \nabla f(x^k)\|^2} \leq \frac{2}{n^2} \E{\left \| \cU (\mG(x^*)-\mJ^k)\eR \right\|^2} +\frac{2}{n^2}\E{\left\| \cU(\mG(x^k)-\mG(x^*))\eR  \right\|^2}.
\end{equation}
\end{lemma}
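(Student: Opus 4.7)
The plan is to mimic the decomposition used in Lemma~\ref{lem:g_lemma}, but centered at $x^k$ rather than $x^*$. First I would rewrite the error using the definition \eqref{eq:ni98hffs} of $g^k$ and the identity $\nabla f(x^k) = \tfrac{1}{n}\mG(x^k)\eR$ from \eqref{eq:nbifg98dz}, obtaining the compact form
\begin{equation*}
g^k - \nabla f(x^k) \;=\; \tfrac{1}{n}\mJ^k \eR + \tfrac{1}{n}\cU(\mG(x^k)-\mJ^k)\eR - \tfrac{1}{n}\mG(x^k)\eR \;=\; \tfrac{1}{n}(\cU - \cI)\bigl(\mG(x^k) - \mJ^k\bigr)\eR.
\end{equation*}
Notice that this quantity has zero mean with respect to $\cU$, since $\E{\cU}=\cI$.

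Next I would insert $\mG(x^*)$ to split the argument into the two natural pieces that match the right-hand side of \eqref{eq:gminusnablafk}:
\begin{equation*}
g^k - \nabla f(x^k) \;=\; \tfrac{1}{n}(\cU-\cI)\bigl(\mG(x^k) - \mG(x^*)\bigr)\eR \;-\; \tfrac{1}{n}(\cU-\cI)\bigl(\mJ^k - \mG(x^*)\bigr)\eR.
\end{equation*}
Applying the elementary bound $\|a+b\|^2 \leq 2\|a\|^2 + 2\|b\|^2$ and taking expectation then gives
\begin{equation*}
\E{\|g^k - \nabla f(x^k)\|^2} \;\leq\; \tfrac{2}{n^2}\E{\bigl\|(\cU-\cI)(\mG(x^k)-\mG(x^*))\eR\bigr\|^2} + \tfrac{2}{n^2}\E{\bigl\|(\cU-\cI)(\mJ^k-\mG(x^*))\eR\bigr\|^2}.
\end{equation*}

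Finally, for any deterministic matrix $\mY \in \R^{d\times n}$, unbiasedness of $\cU$ yields the variance identity $\E{\|(\cU-\cI)\mY\eR\|^2} = \E{\|\cU \mY\eR\|^2} - \|\mY\eR\|^2 \leq \E{\|\cU \mY\eR\|^2}$. Applying this with $\mY = \mG(x^k)-\mG(x^*)$ and $\mY = \mJ^k-\mG(x^*)$ (conditioning on $x^k$ and $\mJ^k$ so that both are deterministic when $\cU$ is drawn) immediately yields~\eqref{eq:gminusnablafk}. No step here is difficult; the only care needed is to handle the conditioning on $\{x^k,\mJ^k\}$ before taking the outer expectation, which is the same convention used throughout the proof of Theorem~\ref{thm:main}.
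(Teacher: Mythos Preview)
Your proof is correct and follows essentially the same approach as the paper's own proof: write $g^k - \nabla f(x^k)$ using the definition of $g^k$, insert $\mG(x^*)$ to split $\mG(x^k)-\mJ^k$ into the two pieces $(\mG(x^k)-\mG(x^*))$ and $(\mJ^k-\mG(x^*))$, apply $\|a+b\|^2\le 2\|a\|^2+2\|b\|^2$, and then use the variance inequality $\E{\|(\cU-\cI)\mY\eR\|^2}\le \E{\|\cU\mY\eR\|^2}$ for deterministic $\mY$. The only difference is cosmetic: you package everything via the operator $(\cU-\cI)$, whereas the paper expands all terms out before regrouping, but the underlying decomposition and estimates are identical.
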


\begin{proof}
\begin{eqnarray*}
&& \E{\| g^k - \nabla f(x^k)\|^2}  \\
&=& \E{\left \| \frac1n \mJ^k \eR - \frac1n\cU(\mG(x^k)-\mJ^k)\eR - \frac1n\mG(x^k)\eR \right\|^2} \\
&=& \frac{1}{n^2} \E{ \left \| ( \mJ^k - \mG(x^*)) \eR- \cU(\mG(x^*)-\mJ^k)\eR + \cU(\mG(x^k)-\mG(x^*))\eR + (\mG(x^*)- \mG(x^k))\eR \right \|^2} \\
&\leq & 
\frac{2}{n^2} \E{\left \| (\mJ^k  - \mG(x^*))\eR- \cU(\mG(x^*)-\mJ^k)\eR \right\|^2}\\
&& \qquad+\frac{2}{n^2}\E{\left\| \cU(\mG(x^k)-\mG(x^*))\eR + (\mG(x^*)- \mG(x^k))\eR \right\|^2}\\
&\leq &
\frac{2}{n^2} \E{\left \| \cU (\mG(x^*)-\mJ^k)\eR \right\|^2} +\frac{2}{n^2}\E{\left\| \cU(\mG(x^k)-\mG(x^*))\eR  \right\|^2}.
\end{eqnarray*}
\end{proof}

Lastly, it is necessary to assume the null space consistency of solution set $\cX^*$ under $\mM$ smothness. A similar assumption was considered in~\cite{qian2019saga}.
\begin{assumption} \label{as:null_consistency}
For any $x^*, y^* \in \cX$ we have 
\begin{equation}\label{eq:null_consistency}
{\cM^\dagger}^\frac12\mG(x^*) = {\cM^\dagger}^\frac12\mG(y^*).
\end{equation}
\end{assumption}

\subsection{Theorem}
We next state the convergence result of Algorithm~\ref{alg:SketchJac} under strong growth condition.

\begin{theorem} \label{thm:strong}

Suppose that~\eqref{eq:strong_growth} holds. Let $\cB$ be any linear operator commuting with $\cS$, and assume ${\cM^\dagger}^{\nicefrac{1}{2}}$ commutes with $\cS$. Let $\cR$ be any linear operator for which $\cR(\mJ^k) = \cR(\mG(x^*))$ for every $k\geq 0$. Define the Lyapunov function $\Psi^k$ as per~\eqref{eq:Lyapunov} for any $x^*\in \cX^*$. Suppose that $\alpha\leq \frac{1}{\lambda_{\max}(\mM)}$ and $\cB$ are chosen so that 
\begin{eqnarray} \nonumber 
&&  \frac{2\alpha}{n^2} \left(  \frac{3+ \sigma \alpha}{1+\sigma\alpha}\right) \E{ \norm{ \cU  \mX \eR }^2 }   +   \NORMG{  \left(\cI - \E{\cS} \right)^{\frac12}\cB  {\cM^\dagger}^{\frac12} \mX } \\
& & \qquad \qquad \qquad \leq \left(1-\frac{\alpha \sigma}{2+ 2\alpha \sigma}\right)  \NORMG{ \cB {\cM^\dagger}^{\frac12}\mX }\label{eq:sg_small_step}
\end{eqnarray}
whenever  $ \mX\in \Range{\cR}^\perp$ and
\begin{eqnarray}
\frac{2\alpha}{n^2}\left(  \frac{3+ \sigma \alpha}{1+\sigma\alpha}\right) \E{  \norm{ \cU  \mX  \eR }^2  } 
+   \NORMG{\left(\E{\cS}\right)^{\frac12}  \cB  {\cM^\dagger}^{\frac12}\mX  }   &  \leq & \frac{1}{n} \norm{{\cM^\dagger}^{\frac12}\mX}^2
\label{eq:sg_small_step2}
\end{eqnarray}
for all $\mX\in \R^{d\times n}$. Then  for all $k\geq 0$, we have
\[ \E{\Psi^{k}}\leq \left(1-\frac{\alpha \sigma}{2+ 2\alpha \sigma}\right)^k \Psi^{0}.\]

\end{theorem}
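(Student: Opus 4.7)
The plan is to closely follow the proof of Theorem~\ref{thm:main}, but replace the quasi strong convexity argument with the strong growth tools provided in this section. Pick any $x^* \in \cX^*$; by Assumption~\ref{as:null_consistency}, the quantity ${\cM^\dagger}^{\nicefrac{1}{2}}\mG(x^*)$ is independent of the choice of $x^*$, so the Lyapunov function $\Psi^k$ is well-defined, and I may freely substitute $\mG(x^*)$ by $\mG([x^k]^*)$ inside $\|\cB {\cM^\dagger}^{\nicefrac12}(\mJ^k-\mG(x^*))\|^2$. The stepsize bound $\alpha\leq 1/\lambda_{\max}(\mM)$ (actually $1/(3\lambda_{\max}(\mM))$ suffices) is needed to activate Proposition~\ref{prop:41717892449y8}.

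First, I would apply Proposition~\ref{prop:41717892449y8} to the iterate update $x^{k+1}=\prox(x^k-\alpha g^k)$ to get
\[
\E{\|x^{k+1}-[x^{k+1}]^*\|^2}\;\le\;\tfrac{1}{1+\sigma\alpha}\|x^k-[x^k]^*\|^2+\tfrac{2\alpha^2}{1+\sigma\alpha}\,\E{\|g^k-\nabla f(x^k)\|^2},
\]
and then feed Lemma~\ref{lem:gminusnablafk} (applied at $x^*=[x^k]^*$) into the last term, producing the two familiar ``variance'' ingredients $\E{\|\cU(\mJ^k-\mG([x^k]^*))\eR\|^2}$ and $\E{\|\cU(\mG(x^k)-\mG([x^k]^*))\eR\|^2}$ that already drove the proof of Theorem~\ref{thm:main}.

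Second, I would handle the Jacobian part of $\Psi^{k+1}$ exactly as in Theorem~\ref{thm:main}: write
\[
\mJ^{k+1}-\mG([x^k]^*)\;=\;(\cI-\cS)(\mJ^k-\mG([x^k]^*))+\cS(\mG(x^k)-\mG([x^k]^*)),
\]
and invoke Lemma~\ref{lem:nb98gd8fdx} with $\cA=\cB{\cM^\dagger}^{\nicefrac{1}{2}}$ (which commutes with $\cS$ by hypothesis) to split $\E{\|\cB{\cM^\dagger}^{\nicefrac{1}{2}}(\mJ^{k+1}-\mG([x^k]^*))\|^2}$ into a $(\cI-\E{\cS})^{\nicefrac12}$-weighted term in $\mJ^k-\mG([x^k]^*)$ and an $\E{\cS}^{\nicefrac12}$-weighted term in $\mG(x^k)-\mG([x^k]^*)$.

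Third, I would combine the two bounds, multiplying the Jacobian identity by $\alpha$ and adding it to the iterate bound, and then regroup the right-hand side by matching terms in $\mJ^k-\mG([x^k]^*)$ against~\eqref{eq:sg_small_step} (applicable because $\mJ^k-\mG([x^k]^*)\in\Range{\cR}^\perp$ by the assumption $\cR(\mJ^k)=\cR(\mG(x^*))$) and terms in $\mG(x^k)-\mG([x^k]^*)$ against~\eqref{eq:sg_small_step2}. The two conditions are designed so that the $\mG(x^k)$-side of the sum collapses to zero, while the $\mJ^k$-side contracts by the factor $1-\tfrac{\alpha\sigma}{2+2\alpha\sigma}$; combined with the $\tfrac{1}{1+\sigma\alpha}$ factor on $\|x^k-[x^k]^*\|^2$ (and the identity $1-\tfrac{\alpha\sigma}{2+2\alpha\sigma}=\tfrac{1}{1+\sigma\alpha}+\tfrac{\alpha\sigma}{2(1+\sigma\alpha)}$, which shows the iterate coefficient is dominated by the advertised contraction), this gives the per-step inequality $\E{\Psi^{k+1}}\le\bigl(1-\tfrac{\alpha\sigma}{2+2\alpha\sigma}\bigr)\Psi^k$, and iteration yields the claim.

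The main obstacle is the bookkeeping: the coefficient $\tfrac{3+\sigma\alpha}{1+\sigma\alpha}$ in~\eqref{eq:sg_small_step}--\eqref{eq:sg_small_step2} is slightly larger than the $\tfrac{2}{1+\sigma\alpha}$ one would expect from Proposition~\ref{prop:41717892449y8} and Lemma~\ref{lem:gminusnablafk} alone, and this extra slack has to be traced to the mismatch between $\tfrac{1}{1+\sigma\alpha}$ and $1-\tfrac{\alpha\sigma}{2+2\alpha\sigma}$ together with the $\alpha$ vs.\ $\alpha^2$ scaling coming from the factor of $\alpha$ in front of the Jacobian term of the Lyapunov. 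Pinning down the exact algebraic route that uses precisely $(3+\sigma\alpha)/(1+\sigma\alpha)$ (rather than some larger constant) is the delicate step; everything else is a direct transcription of the proof of Theorem~\ref{thm:main}.
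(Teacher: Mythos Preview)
There is a genuine gap. Your plan relies solely on Proposition~\ref{prop:41717892449y8}, which gives
\[
\E{\|x^{k+1}-[x^{k+1}]^*\|^2}\leq \tfrac{1}{1+\sigma\alpha}\|x^k-[x^k]^*\|^2+\tfrac{2\alpha^2}{1+\sigma\alpha}\E{\|g^k-\nabla f(x^k)\|^2},
\]
and then Lemma~\ref{lem:gminusnablafk}. But this route produces \emph{no negative term} of the form $-\tfrac{\alpha}{n}\|{\cM^\dagger}^{\nicefrac12}(\mG(x^k)-\mG([x^k]^*))\|^2$. That negative term is exactly the right-hand side of~\eqref{eq:sg_small_step2}, and without it you cannot make ``the $\mG(x^k)$-side of the sum collapse to zero'': after adding the Jacobian recursion you are left with strictly positive terms $\tfrac{4\alpha^2}{n^2(1+\sigma\alpha)}\E{\|\cU(\mG(x^k)-\mG(x^*))\eR\|^2}$ and $\alpha\|\E{\cS}^{\nicefrac12}\cB{\cM^\dagger}^{\nicefrac12}(\mG(x^k)-\mG(x^*))\|^2$ that nothing absorbs. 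In the proof of Theorem~\ref{thm:main} this negative term came from quasi strong convexity via $-2\alpha D_f(x^k,x^*)$; here that tool is unavailable, and Proposition~\ref{prop:41717892449y8} alone does not replace it.

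The paper's fix is to derive a \emph{second} bound on $\E{\|x^{k+1}-[x^{k+1}]^*\|^2}$ directly from nonexpansiveness of the prox (at the point $[x^k]^*$) together with the smoothness inequality~\eqref{eq:smooth_dotprod}, which yields $\|x^k-[x^k]^*\|^2-\tfrac{2\alpha}{n}\|{\cM^\dagger}^{\nicefrac12}(\mG(x^k)-\mG([x^k]^*))\|^2+\alpha^2\E{\|g^k-\nabla f([x^k]^*)\|^2}$. Averaging this with the bound from Proposition~\ref{prop:41717892449y8} gives both the contraction factor $\tfrac12+\tfrac{1}{2+2\sigma\alpha}=1-\tfrac{\alpha\sigma}{2+2\alpha\sigma}$ on the iterate distance \emph{and} the needed negative smoothness term. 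The two variance expressions $\E{\|g^k-\nabla f([x^k]^*)\|^2}$ and $\E{\|g^k-\nabla f(x^k)\|^2}$ are then controlled by Lemma~\ref{lem:g_lemma} and Lemma~\ref{lem:gminusnablafk} respectively, and this is precisely where the constant $1+\tfrac{2}{1+\sigma\alpha}=\tfrac{3+\sigma\alpha}{1+\sigma\alpha}$ arises. Your ``main obstacle'' paragraph sensed the mismatch in constants but misdiagnosed its source: it is not a bookkeeping issue, it is the missing second bound.
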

\begin{proof}

Consider any $x^*\in \cX^*$. Due to non-expansiveness of the prox operator we have
\begin{eqnarray*}
\E{\norm{x^{k+1} -[x^{k+1}]^*}_2^2 } &\leq&
\E{\norm{x^{k+1} -[x^{k}]^*}_2^2 }\\
&\overset{ \eqref{eq:prox_opt}}{=} &
 \E{\norm{\prox(x^k-\alpha g^k) - \prox([x^k]^*-\alpha \nabla f([x^k]^*))  }_2^2}  
 \\
 &\leq&
 \E{\norm{x^k - \alpha g^{k} -( [x^k]^* -\alpha \nabla f([x^k]^*) ) }_2^2}  
\\
& =& 
 \norm{x^k  -[x^k]^*}_2^2 -2\alpha \dotprod{\nabla f(x^k)- \nabla f([x^k]^*) , x^k  -[x^k]^*}   \\
 && \qquad + \alpha^2\E{\norm{g^{k}- \nabla f([x^k]^*) }_2^2}
 \\
& \stackrel{\eqref{eq:smooth_dotprod}}{\leq}& 
 \norm{x^k  -[x^k]^*}_2^2 -\frac{2\alpha}{n} \left\| {\cM^{\dagger}}^\frac12( \mG(x^k)- \mG([x^k]^*)) \right \|^2 \\
 && \qquad + \alpha^2\ED{\norm{g^{k}- \nabla f([x^k]^*) }_2^2}.
\end{eqnarray*}

Combining the above bound with Proposition~\ref{prop:41717892449y8} yields
\begin{eqnarray*}
&& \E{\norm{x^{k+1} -[x^{k+1}]^*}_2^2 }\\
&\leq & 
\left(\frac{1}{2+2\alpha \sigma}+\frac12\right)\left\|x^{k}-\left[x^{k}\right]^{*}\right\|^{2}-\frac{\alpha}{n}\left\| {\cM^{\dagger}}^\frac12( \mG(x^k)- \mG([x^k]^*)) \right\|^2 \\
&& 
+\frac12\alpha^{2}\E{\left\|g^{k}-\nabla f\left(\left[x^{k}\right]^{*}\right)\right\|^{2}
}+\frac{ \alpha^{2} }{1+\sigma \alpha} \E{\left\|g^{k}-\nabla f\left(x^{k}\right)\right\|^{2}}
\\
&\leq & 
\left(\frac{\alpha \sigma +2}{2+2\alpha \sigma}\right)\left\|x^{k}-\left[x^{k}\right]^{*}\right\|^{2}-\frac{\alpha}{n} \left\| {\cM^{\dagger}}^\frac12( \mG(x^k)- \mG([x^k]^*)) \right\|^2 
\\
&& 
+\frac12\alpha^{2}\E{\left\|g^{k}-\nabla f\left(\left[x^{k}\right]^{*}\right)\right\|^{2}}+\frac{ \alpha^{2} }{1+\sigma \alpha}\E{\left\|g^{k}-\nabla f\left(x^{k}\right)\right\|^{2}}
\\
&\stackrel{\eqref{eq:gminusnablafk}}{\leq} & 
\left(\frac{\alpha \sigma +2}{2+2\alpha \sigma}\right)\left\|x^{k}-\left[x^{k}\right]^{*}\right\|^{2}-\frac{\alpha}{n}\left\| {\cM^{\dagger}}^\frac12( \mG(x^k)- \mG([x^k]^*))\right\|^2 
\\
&& 
+\frac{ 2\alpha^{2} }{n^2(1+\sigma \alpha)} \left(\E{\left \| \cU(\mG(x^*)-\mJ^k)\eR \right\|^2}  +  \E{\left\| \cU(\mG(x^k)-\mG(x^*))\eR  \right\|^2} \right) 
\\
&&
+\frac12\alpha^{2}\E{\left\|g^{k}-\nabla f\left(\left[x^{k}\right]^{*}\right)\right\|^{2}}
\\
&\stackrel{\eqref{eq:g_lemma}}{\leq} & 
\left(\frac{\alpha \sigma +2}{2+2\alpha \sigma}\right)\left\|x^{k}-\left[x^{k}\right]^{*}\right\|^{2}-\frac{\alpha}{n} \left\| {\cM^{\dagger}}^\frac12 ( \mG(x^k)- \mG([x^k]^*)) \right\|^2 
\\
&& 
+\frac{\alpha^{2}}{n^2}\left( \frac{ 2 }{1+\sigma \alpha}+1\right)\left(\E{\left \| \cU(\mG(x^*)-\mJ^k)\eR \right\|^2}  +  \E{\left\| \cU(\mG(x^k)-\mG(x^*))\eR  \right\|^2} \right)
\\
&\stackrel{\eqref{eq:null_consistency}}{\leq} & 
\left(\frac{\alpha \sigma +2}{2+2\alpha \sigma}\right)\left\|x^{k}-\left[x^{k}\right]^{*}\right\|^{2}-\frac{\alpha}{n} \left\| {\cM^{\dagger}}^\frac12 ( \mG(x^k)- \mG(x^*)) \right\|^2 
\\
&& 
+\frac{\alpha^{2}}{n^2}\left( \frac{ 2 }{1+\sigma \alpha}+1\right)\left(\E{\left \| \cU(\mG(x^*)-\mJ^k)\eR \right\|^2}  +  \E{\left\| \cU(\mG(x^k)-\mG(x^*))\eR  \right\|^2} \right).
\end{eqnarray*}
 Since, by assumption,  both $\cB$ and ${\cM^\dagger}^{\frac12}$ commute with $\cS$, so does their composition $\cA \eqdef \cB {\cM^\dagger}^{\frac12}$. Applying Lemma~\ref{lem:nb98gd8fdx}, we get
 \begin{eqnarray}\label{eq:J_jac_bound9999}\nonumber
\E{ \NORMG{\cB {\cM^\dagger}^{\frac12} \left(\mJ^{k+1}-\mG(x^*)  \right) }}  &=&  \NORMG{ (\cI - \E{\cS})^{\frac12}  \cB {\cM^\dagger}^{\frac12} \left(\mJ^k-\mG(x^*) \right) }  \\
&& +  \NORMG{\E{\cS}^{\frac12}  \cB {\cM^\dagger}^{\frac12} \left(\mG(x^k)-\mG(x^*) \right) } .
\end{eqnarray}

 Adding $\alpha$ multiple of~\eqref{eq:J_jac_bound9999} to the previous bounds yields
 \begin{eqnarray*}
&& \E{\norm{x^{k+1} -[x^{k+1}]^*}_2^2 }  + \alpha\E{ \NORMG{\cB {\cM^\dagger}^{\frac12} \left(\mJ^{k+1}-\mG(x^*)  \right) }}    \\
&\leq & 
\left(1- \frac{\alpha \sigma }{2+2\alpha \sigma}\right)\left\|x^{k}-\left[x^{k}\right]^{*}\right\|^{2}-\frac{\alpha}{n} \left\| {\cM^{\dagger}}^\frac12 ( \mG(x^k)- \mG(x^*)) \right\|^2 
\\
&& 
+\frac{\alpha^{2}}{n^2}\left( \frac{ 3+ \sigma\alpha }{1+\sigma \alpha}\right)\left(\E{\left \| \cU(\mG(x^*)-\mJ^k) \eR\right\|^2}  +  \E{\left\| \cU(\mG(x^k)-\mG(x^*)) \eR \right\|^2} \right)
\\
&& 
+ \alpha \NORMG{ (\cI - \E{\cS})^{\frac12}  \cB {\cM^\dagger}^{\frac12} \left(\mJ^k-\mG(x^*) \right) }   + \alpha\NORMG{\E{\cS}^{\frac12}  \cB {\cM^\dagger}^{\frac12} \left(\mG(x^k)-\mG(x^*) \right) }
\\
&\stackrel{\eqref{eq:sg_small_step2}}{\leq} & 
\left(1- \frac{\alpha \sigma }{2+2\alpha \sigma}\right)\left\|x^{k}-\left[x^{k}\right]^{*}\right\|^{2}
+\frac{\alpha^{2}}{n^2}\left( \frac{ 3+ \sigma\alpha }{1+\sigma \alpha}\right)\E{\left \| \cU(\mG(x^*)-\mJ^k)\eR \right\|^2}  
\\
&& 
+ \alpha \NORMG{ (\cI - \E{\cS})^{\frac12}  \cB {\cM^\dagger}^{\frac12} \left(\mJ^k-\mG(x^*) \right) }
\\
&\stackrel{\eqref{eq:sg_small_step}}{\leq} & 
\left(1- \frac{\alpha \sigma }{2+2\alpha \sigma}\right) \left( \left\|x^{k}-\left[x^{k}\right]^{*}\right\|^{2}+  \alpha\NORMG{\cB {\cM^\dagger}^{\frac12} \left(\mJ^{k}-\mG(x^*)  \right) } \right).
\end{eqnarray*}
\end{proof}

\begin{remark}
Since $2+2\alpha \sigma = \cO(1)$ and $\frac{3\sigma \alpha }{1+ \sigma\alpha} =\cO(1)$ the convergence rate under strong growth provided by Theorem~\ref{thm:strong} is of the same order as the convergence rate under quasi strong convexity (Theorem~\ref{thm:main}).  
\end{remark}

\end{document}